\patchcmd{\thebibliography}{*}{}{}{}
\pretocmd\thebibliography{\csname c@secnumdepth\endcsname=-2 }{}{}
\numberwithin{equation}{chapter}
\renewcommand\d{\partial}
\renewcommand\a{\alpha}
\renewcommand\b{\beta}
\renewcommand\o{\omega}
\newcommand\s{\sigma}
\renewcommand\t{\tau}
\newcommand\R{\mathbb R}\newcommand\N{\mathbb N}\newcommand\Z{\mathbb Z}
\newcommand\C{\mathbb C}
\newcommand\un{\underline}
\def\g{\gamma}
\def\G{\Gamma}
\def\t{\tau}
\def\O{\Omega}
\def\th{\theta}
\def\k{\kappa}
\def\l{\lambda}
\def\e{\varepsilon}
\def\S{\mathbb S}
\def\dsp{\displaystyle}
\def\inn{\mbox{in}}
\def\out{\mbox{out}}
\newcommand\br{\begin{rema}\phantomsection}
\newcommand\er{\end{rema}}
\newcommand\bp{\begin{pmatrix}}
\newcommand\ep{\end{pmatrix}}
\newcommand\be{\begin{equation}}
\newcommand\ee{\end{equation}}
\newcommand\ba{\begin{equation}\begin{aligned}}
\newcommand\ea{\end{aligned}\end{equation}}
\newcommand{\mA}{{\mathbb A}}
\newcommand{\T}{{\mathbb T}}
\newcommand{\Id}{{\rm Id }}
\newcommand{\tr}{{\rm tr }}
\newcommand{\supp}{{\rm supp }}
\newcommand{\B}{{\mathcal B}}
\newtheorem{assump}{Assumption}
\newtheorem{notation}{Notation}
\def\op{{\rm op} }
\def\ope{{\rm op}_\e^\psi }
\begin{document}

\frontmatter 

\title{A stability criterion for high-frequency oscillations}

\author{Yong Lu}
\address{Mathematical Institute,
Charles University, Prague}
\email{luyong@karlin.mff.cuni.cz}

\author{Benjamin Texier}
\address{Institut de Math\'ematiques de Jussieu-Paris Rive Gauche UMR CNRS 7586, Universit\'e Paris-Diderot}
\email{benjamin.texier@imj-prg.fr}

\thanks{Research of both authors was partially supported by the Project ``Instabilities in Hydrodynamics'' funded by the Mairie de Paris (under the ``Emergences'' program) and the Fondation Sciences Math\'ematiques de Paris. B.T. thanks Kevin Zumbrun for interesting discussions. Both authors thank Jeffrey Rauch for his comments on an earlier version of the manuscript.}

\date{\today}

\begin{abstract} We show that a simple Levi compatibility condition determines stability of WKB solutions to semilinear hyperbolic initial-value problems issued from highly-oscillating initial data with large amplitudes. The compatibility condition involves the hyperbolic operator, the fundamental phase associated with the initial oscillation, and the semilinear source term; it states roughly that hyperbolicity is preserved around resonances.

 If the compatibility condition is satisfied, the solutions are defined over time intervals independent of the wavelength, and the associated WKB solutions are stable under a large class of initial perturbations. If the compatibility condition is not satisfied, resonances are exponentially amplified, and arbitrarily small initial perturbations can destabilize the WKB solutions in small time.

 The amplification mechanism is based on the observation that in frequency space, resonances correspond to points of weak hyperbolicity. At such points, the behavior of the system depends on the lower order terms through the compatibility condition.

 The analysis relies, in the unstable case, on a short-time Duhamel representation formula for solutions of zeroth-order pseudo-differential equations.

 Our examples include coupled Klein-Gordon systems, and systems describing Raman and Brillouin instabilities.

  \end{abstract}

\subjclass{35L03, 35B35, 35Q60}

\maketitle

\tableofcontents


\mainmatter



\renewcommand{\refname}{References}


\chapter{Introduction}

We study highly-oscillating solutions to semi-linear systems of the form
 \begin{equation} \label{0}
   \d_t u + \frac{1}{\e} A_0 u + \sum_{1 \leq j \leq d} A_j \d_{x_j} u  = \frac{1}{\sqrt \e} B(u,u),
  \end{equation}
   in the small wavelength limit $\e \to 0.$ The unknown $u$ depends on time $t \in \R_+$ and space $x \in \R^d;$ it takes values in $\R^N.$
   The first-order differential operator is symmetric hyperbolic, in the sense that $A_0 \in \R^{N \times N}$ is skew-symmetric, and the $A_j \in \R^{N \times N}$ are symmetric. The source term is $B(u,u) \in \R^N,$ where $B: \R^N \times \R^N \to \R^N$ is bilinear; it has a large prefactor $1/\sqrt\e$ which blows up in the limit $\e \to 0.$

\smallskip

  Thus in \eqref{0} we are considering {\it large perturbations} of symmetric hyperbolic systems. In other words, the regime in \eqref{0} is supercritical: we are considering the propagation, over times $O(1),$ of solutions with amplitude $O(1)$ to systems \eqref{0} with characteristic frequencies $O(1/\e)$ and large $O(1/\sqrt \e)$ source terms.

\smallskip

 The underlying physical problems concern the propagation of light, and relevant data are highly oscillating, of the form
  \be \label{generic-data}
   u(\e,0,x) = \Re e \, a(x) e^{i k \cdot x/\e} + \e^K \phi(\e,x),
  \ee
  where $a$ has a high Sobolev regularity, $k$ is a given wavenumber in $\R^d,$ and $\e^K \phi$ is a small, real perturbation that is smooth in $x$ and may depend singularly on $\e.$

\smallskip

 In this setting, the existence and uniqueness of local-in-time solutions to \eqref{0}-\eqref{generic-data} for fixed $\e > 0$ in smooth Sobolev spaces $H^s,$ with $s > d/2,$ is classical. The a priori existence time is only $O(\e^{1/2}).$ Indeed, the symmetric hyperbolic operator conserves Sobolev norms, so that an energy estimate leads to a differential inequality of the form $(\star)\,\, y' \leq \e^{-1/2} y^2,$ where $y(0)$ is an appropriate Sobolev norm of the datum, typically a semiclassical norm in which derivatives appear as $\e \d_x,$ so that the fast oscillations are bounded: $y(0) = O(1);$ from $(\star)$ we deduce an existence time $O(\e^{1/2}).$

\smallskip

 {\it We consider the situation in which \eqref{0} admits a family of WKB approximate solutions which are defined over time intervals independent of $\e,$ and examine their stability with respect to small initial perturbations.}

\smallskip

 That is, given a WKB approximate solution $u_a$ issued from $u_a(0,x) = \Re e \, a(x) e^{i k \cdot x/\e},$ with an existence time $T_a$ that is uniformly bounded from below as $\e \to 0,$ given an initial perturbation $\e^K \phi,$ possibly with a very small amplitude $\e^K,$ we examine the question whether the exact solution to \eqref{0} issued from \eqref{generic-data} is defined over time intervals independent of $\e$ and stays close to $u_a.$

\smallskip

 Our answer to the above question takes the form of a scalar stability index, which involves the initial wavenumber $k,$ the initial amplitude $a,$ the source term $B,$ and the hyperbolic operator. The associated stability condition is a Levi condition, after E. E. Levi \cite{Levi}, in the sense that it involves both the principal and subprincipal symbols ($A$ and $B,$ respectively).

\smallskip

 Our examples include systems describing the Raman and Brillouin instabilities, and coupled Klein-Gordon systems.

\section{Background} \label{sec:background}

The class of problems \eqref{0} originates in Joly, M\'etivier and Rauch's article on the Maxwell-Bloch equations (see \cite{JMR-TMB}, and paragraph 6.3 of Dumas' survey \cite{Du}). There these authors considered Maxwell-Bloch systems in the critical regime of geometric optics, that is
\be \label{critical} \d_t u + \frac{1}{\e} A_0 u + \sum_{1 \leq j \leq d} A_j \d_{x_j} u = B(u,u), \qquad u(\e,0,x) = \Re e \,  a(x) e^{i k \cdot x/\e}.\ee
By {\it critical}, we mean here that \eqref{critical} is a regime in which nonlinear effects ought to be detected in the small wavelength limit $\e \to 0$ in time $O(1).$ Indeed, the nonlinear source has prefactor $O(1),$ and the amplitude of the datum is $O(1).$

Joly, M\'etivier and Rauch observed that for Maxwell-Bloch systems in the scaling \eqref{critical}, the limiting equations are {\it linear} transport equations. They called {\it transparency} this phenomenon, and explained how it originates in compability conditions involving the hyperbolic operator, the oscillations in the datum, and the source $B.$

Following Joly, M\'etivier and Rauch, it was verified by the second author that the Euler-Maxwell equations satisfy a form of transparency \cite{T1,T3}, and by the first author that the Maxwell-Landau-Lifschitz equations also are transparent in one spatial dimension \cite{Lu}. Cheverry, Gu\`es and M\'etivier showed in \cite{CGM} that for systems of conservation laws, linear degeneracy of a field implies transparency. Jeanne showed in \cite{Je} that the Yang-Mills equations provide another example of a physical system exhibiting transparency properties.

These results imply in particular that for the aforementioned physical systems, relevant regimes are supercritical, meaning that the appropriate scalings (of the observation time or the amplitudes) lead to systems with large nonlinear source terms, as in \eqref{0}.

Being a compatibility condition bearing on a nonlinear term, transparency is analogous to the {\it null form} conditions which imply global existence for nonlinear wave equations, as in the classical work of Klainerman \cite{Kla}. The link between transparency and null forms is one of the topics covered by Lannes in his Bourbaki review \cite{LBo}.

As formulated in \cite{JMR-TMB}, the two main questions in the analysis of the high-frequency limit in supercritical regimes are: (a) does there exist WKB approximate solutions? (b) are WKB solutions stable with respect to initial perturbations? If the answer to question (a) is positive, then typically the leading terms of WKB solutions satisfy limiting equations that are much simpler than the original system. If the answer to question (b) is positive, then the limiting equations can be used to describe the original system, in particular in numerical simulations.

The article \cite{JMR-TMB} shows existence and stability of WKB solutions to Maxwell-Bloch equations in a supercritical regime (different from \eqref{0}; we briefly comment on the difference in Remarks \ref{rem:jmr} and \ref{rem:last} in the Appendix). Later on existence and stability of some supercritical WKB solutions was shown for Yang-Mills in \cite{Je}, for Euler-Maxwell in \cite{T3}, for systems of conservation laws in \cite{CGM}, for Maxwell-Landau-Lifshitz in \cite{Lu}.

The present work all but completes the analysis of systems in the scaling \eqref{0}, as we exhibit a scalar index, which when positive implies instability and when negative implies stability.

\section{Resonances, transparency, and WKB solutions} \label{sec:restransp}

We introduce here the notions of resonance and transparency, which play a preeminent role in Joly, M\'etivier and Rauch's article \cite{JMR-TMB} and the present work.

Consider the initial-value problem \eqref{0}-\eqref{generic-data} with $\phi = 0.$ Under an appropriate \emph{polarization} condition bearing on the initial amplitude $a,$ the spatial oscillations in the datum are propagated in time by the hyperbolic operator in \eqref{0}, at some temporal frequency $\o = \o(k).$ Thus we posit the ansatz
 \be \label{ansatz}
  u(\e, t, x) = u_{0,-1}(t,x) e^{-i(k \cdot x - \o t)/\e} + u_{0,1}(t,x) e^{i (k \cdot x - \o t)/\e} + O(\sqrt \e),
 \ee
 for an approximate solution $u$ to \eqref{0}-\eqref{generic-data}. The bilinear term $B(u,u)$ in \eqref{0} will create {\it harmonics} of the fundamental phases $\pm (\o,k),$ so that the $O(\sqrt \e)$ term in \eqref{ansatz} will likely include oscillations $e^{i q (k \cdot x - \o t)/\e},$ with $q \in \{-2, 0 ,2\},$ in addition to the fundamental harmonics $\{-1,1\}.$ A refinement of \eqref{ansatz} is then
 \be \label{ansatz2}
 u(\e,t,x) = \sum_{p \in \{-1,1\}} u_{0,p}(t,x)  e^{i p (k \cdot x - \o t)/\e}   + \sqrt \e \sum_{|q| \leq 2}u_{1,q} (t,x)  e^{i q (k \cdot x - \o t )/\e}  + O(\e).
 \ee
 We inject \eqref{ansatz2} into \eqref{0} and sort out oscillating frequencies and powers of $\e.$ Thus conditions
 \be \label{wkb1}
 \Big( - i p \o + A_0 + A(i p k) \Big) u_{0,p} = 0, \quad p \in \{-1, 1 \},
 \ee
 and
 \be \label{wkb2}
 \Big( - i q \o + A_0 + A(i q k) \Big) u_{1,q}  + \Big( \d_t + A(\d_x) \Big) u_{0,q} = \sum_{q_1 + q_2 = q} B(u_{0,q_1}, u_{0,q_2}), \quad |q| \leq 2.
 \ee
 with notation
 $$A(\vec e\,) := \sum_j A_j e_j, \qquad \mbox{for any $\vec e = (e_1, \dots, e_d) \in \R^d,$}$$
 imply that \eqref{ansatz2} is an approximate solution to \eqref{0}, with a remainder of size $O(\e).$ In the case $A_0 \neq 0,$ the family of matrices $- i p \o + A_0 + A(i p k),$ for $p \in \Z,$ is not $1$-homogeneous in $(\o,k).$ As a result, only a finite number of these matrices is singular, for instance only those corresponding to $p \in \{ -1, 0, 1\}.$ Then, equation \eqref{wkb1} holds with non-trivial $u_{0,p}$ only if for all $(t,x),$ $u_{0,p}(t,x)$ is pointing in the direction of a element of the kernel of $-i p \o + A_0 + A(ip k):$
 \be \label{pola0}
  u_{0,p}(t,x) \equiv \langle u_{0,p}(t,x), \vec e_{p} \rangle \vec e_{p}, \qquad \vec e_{p} \in \ker \, \big( -i p \o + A_0 + A(ip k) \big), \,\, p \in \{-1,0,1\}.
 \ee
 Condition \eqref{pola0} is the {\it polarization} condition\footnote{For Maxwell's equations, with $u = (B,E),$ condition \eqref{pola0} takes the explicit form $\o B_{0,\pm 1} = k \times E_{0,\pm 1},$ corresponding to polarization of light.}. The mean mode for the initial datum \eqref{generic-data} vanishes identically. In our context, no mean mode is created by the nonlinearity\footnote{We show in Appendix \ref{app:onwkb} that this is a consequence of the bilinearity of $B,$ assumption \eqref{harmonics} on the set of characteristic harmonics, and transparency in the form \eqref{weak:transp}. The creation of a mean mode is called rectification; it was studied in depth in \cite{Lt,CL}.}: $u_{0,0}(t,x) \equiv 0.$ At this stage \eqref{wkb1} is solved and we turn to \eqref{wkb2}. For $q = 0,$ denoting ${\bf \Pi}(0)$ the orthogonal projector onto the kernel of the skew-symmetric matrix $A_0,$ we find
$$
  {\bf \Pi}(0) \Big( B(u_{0,1}, u_{0,-1}) + B(u_{0,-1}, u_{0,1}) \Big) = 0.
$$
With the polarization \eqref{pola0}, the above condition takes the form
\be \label{compa}
{\bf \Pi}(0) \Big( B(\vec e_1, \vec e_{-1}) + B(\vec e_{-1}, \vec e_1) \Big) = 0.
 \ee
The {\it compatibility} condition \eqref{compa} was called {\it transparency} by Joly, M\'etivier and Rauch (Assumption 2.1 in \cite{JMR-TMB}).  This condition is a necessary condition for the existence of WKB solutions for general data \eqref{generic-data}\footnote{The case in which \eqref{compa} does not hold is briefly discussed in Remark \ref{rem:ghosts} on page \pageref{rem:ghosts}.}. Under \eqref{compa}, a WKB approximate solution can be constructed, and the leading amplitudes $u_{0,\pm 1}$ are seen to satisfy \emph{nonlinear transport equations}. This is explained in detail in Appendix \ref{app:onwkb}.

The central question of the present work is whether such WKB solutions are stable under small initial perturbations. This is a perturbative analysis: the question is whether small data generate solutions to
\be \label{perturb}
 \d_t v + \frac{1}{\e} A_0 v + A(\d_x) v = \frac{1}{\sqrt \e} \big( B(u_a, v) + B(v, u_a)\big) + \e^{K_a} r_a
\ee
which grow in time. Here $\e^{K_a} r_a$ is the consistency error of the WKB solution. In this discussion we assume an infinite order of approximation $K_a = \infty,$ so that $\e^{K_a} r_a \equiv 0.$ We denote $B(u_a) v = B(u_a, v) + B(v,u_a)$ in the following. In $u_a,$ the important term is the leading term ${\bf u}_0,$ so that in \eqref{perturb} we may simplify $B(u_a)$ into $B\big( e^{- i \theta} u_{0,-1} + e^{i\theta} u_{0,1}\big),$ with $\theta := (k \cdot x - \o t)/\e.$ The solution to \eqref{perturb} is then given by
\be \label{sol:perturb}
 v(t) = {\mathcal L}_\e(t) v(0) + \frac{1}{\sqrt \e} \int_0^t {\mathcal L}_\e(t - t') B\big( e^{- i \theta} u_{0,-1}(t') + e^{i \theta} u_{0,1}(t')\big) v(t') \, dt',
 \ee
 where
$\dsp{{\mathcal L}_\e(t) := \exp\Big( - \frac{t}{\e} \big( A_0 + A(\e \d_x) \big)\Big).}$
Assuming a smooth spectral decomposition
\be \label{smooth:spec} A_0 + A(i \xi) = \sum_{j} i \l_j(\xi) \Pi_j(\xi),\ee
where $\l_j$ are real eigenvalues and $\Pi_j$ orthogonal projectors, the solution \eqref{sol:perturb} then appears as the sum, over $p,i,j,$ of
\be \label{rep00} e^{- i t \l_i(\e D)/\e} \Pi_i(\e D) v(0) + \frac{1}{\sqrt \e} \int_0^t e^{- i (t - t') \l_i(\e D)/\e} \Pi_i(\e D) B\big(e^{i p \theta} u_{0,p}\big) \Pi_j(\e D) v(t') \, dt',\ee
in which the first term is the free evolution under the solution operator of the initial perturbation. The goal is to bound the second term in \eqref{rep00}, that is the Duhamel term encoding the accumulated response of $B(u_a),$ considered as a linear source. In \eqref{rep00}, the operators $\l_j(\e D)$ and $\Pi_j(\e D)$ are Fourier multipliers in semi-classical quantization\footnote{Notations pertaining to symbols and pseudo-differential operators are set up in Appendix \ref{app:symbols}, where also classical results on action and composition of such operators are recalled.}. There holds, by linearity of $B,$
 $$ \Pi_i(\e D) B(e^{i p \theta} u_{0,p}) = e^{i p \theta} \Pi_i(\e D + p k) B(u_{0,p}),$$ so that the Duhamel term in \eqref{rep00} takes the form
$$
 \frac{1}{\sqrt \e}  e^{i (p k \cdot x - t \l_i(\e D + p k ))/\e} \int_0^t e^{i t' (- p \o + \l_i(\e D + p k ))/\e} \Pi_i(\e D + p k) B(u_{0,p}(t')) \Pi_j(\e D) v(t') \, dt'.
$$
 In the following we overlook the unitary prefactor $e^{i (\dots)}$ in front of the integral. For short times $t \ll \sqrt \e,$ it makes sense to approximate $u_{0,p}$ by its datum $a$ or $a^*,$ and $v$ by the free evolution term in \eqref{sol:perturb}\footnote{In other words, we are considering the first Picard iterate for \eqref{perturb}.}. Thus we are looking at
 $$ \frac{1}{\sqrt \e} \int_0^t e^{i t' (- \o + \l_i(\e D + k))/\e} \Pi_i(\e D + k) B(a) e^{- i t' \l_j(\e D)/\e} \Pi_j(\e D) v(0) \, dt',$$
 where we let $p = 1$ for definiteness. For $t' = O(\sqrt \e),$ up to operators which are $O(\sqrt \e)$ and regularizing, the function $B(a)$ and the Fourier multiplier $e^{- i t' \l_j(\e D)/\e}$ commute\footnote{For a precise statement, we refer to estimate \eqref{est:fourier-mult} in Appendix \ref{app:symbols}.}, and we arrive at
 \be \label{to:est} \frac{1}{\sqrt \e} \int_0^t e^{i t' (- \o + \l_i(\e D + k) - \l_j(\e D))/\e}  \Pi_i(\e D + k) B(a) \Pi_j(\e D) v(0) \, dt'.\ee
 The question is whether we can bound \eqref{to:est} uniformly in $\e.$ This would provide short-time uniform bounds for the solution $v$ to \eqref{perturb}, and thus would represent a first step in a proof of stability of the WKB solution.

 The key frequencies are $\xi$ such that the phase in \eqref{to:est} is stationary. These are the resonances, defined as the solutions $\xi \in \R^d$ to
 \be \label{res0}
  - \o + \l_i(\xi + k) - \l_j(\xi) = 0.
 \ee
 Far from these resonant frequencies, we can integrate by parts in time in the Fourier formulation of \eqref{to:est} and gain a factor $\e.$ Near resonant frequencies, unless the {\it interaction coefficient} $\Pi_j(\xi + k) B(a) \Pi_j(\xi)$ is small, the integral is $\sim (1/\sqrt \e),$ which could lead to an amplification by $e^{c /\sqrt \e}$ of $v.$

 For systems in $u = (u_1, u_2) \in \R^{N_1 \times (N - N_1)}$ and triangular source terms
 $$ \dsp{B(u,u) = \left(\begin{array}{cc} 0 \\ B_2(u_1,u_1) \end{array}\right)},$$
 this sketch of analysis was made rigorous in \cite{em2}, following \cite{JMR-TMB}\footnote{The scaling in \cite{em2} is actually slightly different from \eqref{0}, and yet another scaling was considered in \cite{JMR-TMB}. Remark \ref{rem:jmr} expands on this point.}. That is, smallness of the interaction coefficients at the resonances was seen as a sufficient condition for stability of the WKB solutions.

\section{A criterion for stability} \label{sec:criterion}

As discussed just above, previous analyses
\cite{JMR-TMB,C,em2,T3,Lu} gave only sufficient conditions for
stability of WKB solutions in supercritical regimes. We give here a
condition that is almost necessary and sufficient\footnote{The
degenerate case ${\bf \G} = 0$ (with notation introduced in
\eqref{def:trace}) is not covered by our analysis, hence an
``almost" necessary and sufficient condition.}. The first step in
our analysis is a {\it reduction to $2 \times 2$ interacting
systems}. Then, depending on {\it symbolic spectrum of the
propagator}, we either {\it symmetrize} the interacting system and
prove stability, or use a {\it Duhamel representation} in order to
prove instability.

\medskip

{\it Reduction to $2 \times 2$ interacting systems.} The resonance relation \eqref{res0} appears only implicitly in the sketch of analysis given in Section \ref{sec:restransp} above. We make it play an explicit role by introduction of the variables
 $$ v_i = \op_\e(\chi_{ij}) \big( e^{- i (k \cdot x - \o t)/\e} \op_\e(\Pi_i) \dot u\big) , \quad v_j = \op_\e(\chi_{ij} \Pi_j) \dot u,$$
 where $\dot u$ is the perturbation variable, defined by $u =: u_a + \dot u,$ the $\Pi_j$ are the spectral projectors introduced in \eqref{smooth:spec}, and $\chi_{ij}$ is a frequency cut-off that is supported in a neigborhood of the resonant set $\{ \l_{i}(\cdot + k) = \o + \l_j(\cdot)\},$ which we assume to be bounded.

  The question of the stability of $u_a$ reduces to the question of the growth in time of $(v_i,v_j),$ for all relevant couples of indices $(i,j).$ We denote $\tilde V$ the total variable, that is the collection of relevant couples $(v_i,v_j).$

  Our first key observation is that under a mild {\it partial transparency condition} for the resonances (formulated as Assumption \ref{ass:several-res}(ii), page \pageref{ass:several-res}), the {\it normal form} of the time-evolution system in $(v_i,v_j)$ has the following features:
  \begin{itemize}
  \item it is {\it decoupled} from the system in the other components of the solution (corresponding to resonances $(i',j'),$ with $(i',j') \neq (i,j)$),
  \item it has {\it non-oscillating sources}, and
  \item resonances $\{ \l_i(\cdot + k) = \o + \l_j(\cdot)\}$ appear as the locus of weak hyperbolicity.
 \end{itemize}
 This normal form of the system in $(v_i,v_j)$ is
\be \label{int:systems}
 \d_t \check U_{ij} + \frac{1}{\e} \op_\e\left( \begin{array}{cc} i (\l_i(\cdot + k) - \o) & - \sqrt \e b_{ij} \\ -\sqrt \e b_{ji} & i \l_j\end{array}\right) \check U_{ij} = f,
 \ee
 where $\check U_{ij}$ is the $(i,j)$-component of the total solution $\check U$ after the change of variable to normal form:
 $$ \check U = (\Id + \sqrt \e \op_\e(Q))^{-1} \tilde V(t, x), \quad \mbox{for some appropriate symbol $Q,$}$$
 and the interaction coefficients are
 $$ b_{ij} := \Pi_i(\xi + k) B(u_{0,1}(t,x)) \Pi_j(\xi), \qquad b_{ji} := \Pi_j(\xi) B(u_{0,-1}(t,x)) \Pi_i(\xi + k).$$
 Here we are using notation $u_{0,\pm 1}, \l_i, \l_j$ from Section \ref{sec:restransp} and $\op_\e(\cdot)$ from Appendix \ref{app:symbols}. In \eqref{int:systems}, the source $f = f(u)$ is bounded in $u.$

 System \eqref{int:systems} is nominally $2 N \times 2N.$ However, if the projectors $\Pi_j, \Pi_j$ have rank equal to one, then the matrix of the propagator has rank two, essentially making \eqref{int:systems} a $2 \times 2$ system.

\medskip

{\it Spectrum of the symbol of the propagator.} The eigenvalues of the symbol of the propagator in \eqref{int:systems}, a $2 \times 2$ complex matrix, is
 \be \label{sp:symbol} \frac{i}{2} \big(\l_i(\xi + k) - \o - \l_j(\xi) \big) \pm
\frac{1}{2}
  \Big( 4 \e \tr\,b_{ij} b_{ji} - (\l_i(\xi + k) - \o - \l_j(\xi))^2
  \Big)^{1/2}.\ee
  Thus it appears that the crucial quantity is the sign of the trace of the product of the interaction coefficients at the resonance:
$$
\mbox{sign}\, \mbox{tr}\, b_{ij} b_{ij} \qquad \mbox{at $\xi$ such that $\l_i(\xi + k) - \o - \l_j(\xi) = 0.$}
$$
If the sign of positive, then real eigenvalues occur in \eqref{sp:symbol}, meaning a loss of hyperbolicity around the resonance. Otherwise, eigenvalues are purely imaginary. In the latter case, $\mbox{sign}\,\mbox{tr}\,b_{ij} b_{ji} < 0,$ the propagator in \eqref{int:systems} can be symmetrized. For scalar $b_{ij}$ and $b_{ji},$ a symmetrizer is indeed $\bp 1&0\\0& - b_{ij}^*/b_{ji} \ep.$ Uniform estimates, hence stability, follow.

\medskip

{\it Duhamel representation and instability.} In the case of real eigenvalues in \eqref{sp:symbol}, indicating instability, the task ahead is to convert a spectral information at the level of symbols into bounds for the corresponding system of pseudo-differential equations \eqref{int:systems}.

This is achieved with the Duhamel representation formula introduced by the second author in \cite{T4}. This representation extends the Fourier analysis of the above Section \ref{sec:restransp} (of which a good example is \eqref{rep00}) by incorporating the zeroth-order source terms $b_{ij}$ and $b_{ji}$ into the propagator. Since resonances are points of weak hyperbolicity, and since at such points the stability analysis must include lower-order terms, the source terms $b_{ij}$ and $b_{ji}$ indeed belong in the propagator.

The instability occurs in time $O(\sqrt\e |\ln \e|).$ Indeed, the source term in \eqref{0} or \eqref{int:systems} has a $O(1/\sqrt \e)$ prefactor. Hence a potential growth $\sim e^{t B/\sqrt \e}.$ If we start from a small $\sim \e^K$ initial perturbation, then the instability is recorded only when the time exponential $e^{t B/\sqrt \e}$ reaches a fraction of the size of the initial perturbation $\e^K,$ meaning an instability time of order $\sqrt \e |\ln \e|.$

For this reason in the unstable case we rescale in time
$$ U_{ij}(t, x) := \check U_{ij}(\sqrt \e t, x),$$ so that $U_{ij}$ solves
$$
 \d_t U_{ij} + \frac{1}{\sqrt \e} \op_\e\left( \begin{array}{cc} i (\l_i(\cdot + k) - \o) & - \sqrt \e b_{ij} \\ -\sqrt \e b_{ji} & i \l_j\end{array}\right) U_{ij} = \sqrt \e f,
$$
 where $b_{ij}, b_{ji}$ and $f$ are evaluated at $(\sqrt \e t, x).$

We then localize around resonant frequencies. Since the resonant set is assumed to be bounded, this means multiplying the equation to the left by $\op_\e(\chi),$ where $\chi$ is a smooth, compactly supported frequency cut-off that is identically equal to one in a neighborhood of the resonances. Then $V := \op_\e(U_{ij})$ solves
\be \label{int:systems:truncated}
 \d_t V + \frac{1}{\sqrt \e} \op_\e\left( \chi \left(\begin{array}{cc} i (\l_i(\cdot + k) - \o) & - \sqrt \e b_{ij} \\ -\sqrt \e b_{ji} & i \l_j\end{array}\right)\right) V = \sqrt \e f_V,
 \ee
 where $b_{ij}, b_{ji}$ and $f_V$ are evaluated at $(\sqrt \e t, x),$ and $f_V$ enjoys the same bounds as $f.$

 The representation formula of \cite{T4} states that the solution operator to \eqref{int:systems:truncated} is well approximated, in time $O(|\ln \e|),$ by the para-differential operator $\ope(S_0),$ where $S_0$ is the finite-dimensional solution operator, defined for all $(x,\xi)$ by
 \be \label{ode} \left\{\begin{aligned} \d_t S_0 + \frac{1}{\sqrt \e } \chi(\xi) \left( \begin{array}{cc} i (\l_i(\xi + k) - \o) & - \sqrt \e b_{ij}(\sqrt \e t,x,\xi) \\ -\sqrt \e b_{ji}(\sqrt \e t, x, \xi) & i \l_j(\xi)\end{array}\right)   S_0 & = 0, \\ S_0(\t;\t,x,\xi) \equiv \Id.
 \end{aligned}\right.\ee
That is, the solution to \eqref{int:systems:truncated} admits the representation
\be \label{rep:intro}
 V = \ope(S_0(0;t)) V(0) + \sqrt \e \int_0^t \ope(S_0(t';t)) \tilde f(t') \, dt',
 \ee
 where $\tilde f \simeq f_V.$ Appendix \ref{app:duh} is devoted to a proof of \eqref{rep:intro}.

 A key consequence is that in time $O(|\ln \e|)$ bounds for \eqref{int:systems:truncated} can be deduced from bounds on $S_0:$ the approximation result of \cite{T4} simplifies the analysis of an ordinary differential equation in infinite dimensions (namely, \eqref{int:systems:truncated}\footnote{The propagator in \eqref{int:systems:truncated} is indeed bounded $L^2 \to L^2;$ this is a consequence of the Calder\'on-Vailancourt theorem \cite{CV,CM}, of which a very simple proof is given in \cite{Hw}. A precise statement is given in Appendix \ref{app:symbols}.}) into the analysis of a family of ordinary differential equations in finite dimensions (namely, \eqref{ode}).

Bounds for $S_0$ do not derive trivially from consideration of the spectrum \eqref{sp:symbol}, since the resonant locus is at a distance $O(\sqrt \e)$ from the singular locus
 $$ \Big\{ \xi \in \R^d, \quad 2 \big( \e \tr\, b_{ij} b_{ji}\big)^{1/2} = \l_i(\xi + k) - \o - \l_j(\xi) \Big\},$$
 where eigenvalues \eqref{sp:symbol} coalesce. In particular, the eigenprojectors are {\it not} uniformly bounded in $\e$ near the resonances, and bounds for \eqref{ode} cannot be derived by simply diagonalizing the system. Appendix \ref{app:symb-bound} is devoted to a precise derivation of these bounds in the unstable case of a positive trace.

From \eqref{rep:intro}, armed with optimal bounds for $S_0,$ meaning a lower rate of exponential growth that is arbitrarily close to the upper rate of growth, the task ahead is to derive {\it lower bounds} for the free component of the solution $\ope(S_0(0;t)) \check U_{ij}(0),$ and {\it upper bounds} for the time-integral term in \eqref{rep:intro}.

\smallskip

Lower bounds for $\ope(S_0(0;t)) U_{ij}(0)$ with a maximal rate of growth are achieved by a careful choice of the initial perturbation $U_{ij}(0).$ Essentially, we choose to initially excite frequencies that grow at the highest rate. This is the purpose of Section \ref{sec:lower} in the proof of Theorem \ref{theorem1}. Upper bounds for $\dsp{\int_0^t  \ope(S_0(t';t)) \tilde f(t')}$ derive from bounds for $\ope(S_0),$ which are deduced from bounds on $S_0$ via Calder\'on-Vaillancourt type theorems. Details are given in Sections \ref{est-upper-V0} and \ref{est-upper-V0-a} in the main proof. The comparison of lower bounds with upper bounds in Section \ref{sec:end-insta}
concludes the proof.

 \medskip

 There is a specific difficulty associated with the large prefactor $(1/\sqrt \e)$ in \eqref{ode}. This prefactor implies indeed that $S_0$ has large variations in $\xi:$ $\d_\xi S_0 \sim S_0/\sqrt\e.$ This is problematic in view of Calder\'on-Vaillancourt type theorems, which typically assert boundedness of pseudo-differential operators given boundedness of the symbols {\it and} their $(x,\xi)$-derivatives. We overcome this issue by using a result from H\"ormander \cite{Hom3} (formulated as Proposition \ref{prop:actionH} in Appendix \ref{app:symbols}) which gives operator bounds involving spatial $L^1$ norms of the symbols, and no $\xi$-derivatives. This requires a spatial localization step, since the symbols that we handle are a priori not $L^1$ in space.

\section{On the class of initial perturbations} \label{sec:init:pert}
A salient feature of our analysis in the stable case is that we allow for initial perturbations $\phi(\e,x),$ which do not necessarily depend on $(\e,x)$ periodically through $k \cdot x/\e.$ In particular, we give a geometric optics result for a class of perturbations which is much larger than the class of perturbations allowed in a number of results of the JMR school  \cite{JMR0,DJMR,JMR1,JMR-TMB,Lt,CL,C,Du0,em2}.

In these references, WKB solutions $u_a$ and initial perturbations $\phi$ which are $2\pi$-periodic in the fast variable $(k \cdot x - \o t)/\e$ allow for a representation of the solution in the form of a profile, that is a map ${\bf u}$ of $(t,x,\theta)$ with a $2\pi$-periodic dependence in $\theta,$ the trace of which over $\theta = (k \cdot x - \o t)/\e$ is equal to the original solution:
$$ u(t,x) = {\bf u}\Big(t,x, \frac{k \cdot x - \o t}{\e}\Big).$$
This representation de-singularizes the initial datum, which for profiles appears as
$$ {\bf u}(0,x,\theta) = \Re e \, \big( a(x) e^{i \theta} \big) + \e^K \phi(x, \theta),$$
where $\phi$ is $2\pi$-periodic in $\theta,$ by assumption.
In particular, the leading term $\Re e \, a e^{i \theta}$ is bounded in $H^s(\R^d_x \times \T_\theta).$ The drawback is that the equation in ${\bf u}$ is more singular than the original system \eqref{0}, since it features the singular differential operator $\dsp{\frac{1}{\e} (- \o \d_\theta + \sum_j k_j A_j \d_\theta).}$ This operator, however, contributes zero to $L^2$ estimates in $(x,\theta),$ by symmetry.

\medskip

By contrast, in the present work we do not insist on a periodic dependence in the fast variable $k \cdot x/\e$ for the initial perturbation $\phi$ in \eqref{generic-data}. In particular, $\phi(\e,x)$ may take the form $\phi_0(x/\e),$ where $\phi_0$ is only assumed a high Sobolev regularity. In this context, $\e$-uniform Sobolev estimates may be derived only for $\e$-weighted norms, defined as
 \be \label{weighted:norm} \| u \|_{\e,s} := \left( \int_{\R^d} (1 + |\e \xi|^2)^s |\hat u(\xi)|^2 \, d\xi\right)^{1/2},\ee
 and an important tool is the Sobolev product estimate
\begin{equation} \label{product2:intro}
 \| u v \|_{\e,s} \leq C \big( |u |_{L^\infty} \| v \|_{\e,s} + |(\e \d_x)^s u|_{L^\infty} |v|_{L^2}\big),
 \end{equation}
which can be proved by approximating the product $uv$ by the para-product of $v$ by $u.$ Details are given in Appendix \ref{app:products}. In our use of \eqref{product2:intro}, $u$ is the approximate solution $u_a,$ with a periodic dependence in $(k \cdot x - \o t)/\e,$ and $v$ is the solution to \eqref{0}, with a priori a singular dependence in $x$ via $x/\e,$ just like the initial perturbation. In particular, $|u_a|_{L^\infty}$ and $|(\e \d_x)^s u_a|_{L^\infty}$ are both bounded uniformly in $\e,$ implying the bound
 $$ \| u_a v \|_{\e,s} \leq C(u_a) \|v\|_{\e,s}, \quad \mbox{for $s \geq 0.$}$$
By comparison, the Moser-type estimate
\be \label{moser:intro} \| u v \|_{\e,s} \leq C \big( |u |_{L^\infty} \| v \|_{\e,s} + |v |_{L^\infty} \| u \|_{\e,s}\big)
\ee
would here give only
$$ \| u_a v \|_{\e,s} \leq C(u_a) \big( \| v \|_{\e,s} +  |v|_{L^\infty} \big) \leq \e^{-d/2} C(u_a) \| v \|_{\e,s}, \quad \mbox{for $s > d/2,$}$$
since the Sobolev embedding $H^s \hookrightarrow L^\infty,$ for $s > d/2,$ has a large norm when $H^s$ is equipped with \eqref{weighted:norm}:
 \be \label{embed:intro} |u|_{L^\infty} \leq C_{d,s} \e^{-d/2} \| u \|_{\e,s}, \qquad s > d/2, \,\, C_{d,s} > 0.\ee
However, for {\it semi-linear} terms of the type $v^2$ (or $B(v,v)$) where $v$ is the solution, both \eqref{product2:intro} and \eqref{moser:intro} lead to $\e^{-d/2}$ losses, via \eqref{embed:intro}. This is the main drawback of our approach: while it allows for quite general perturbations, it requires {\it smallness} of these, typically in the form of the lower bound $K > (1 + d)/2,$ in order to prove stability.

\section{Overview of the results} \label{sec:overview}

We give five theorems:

\smallskip

$\bullet$ the first, Theorem \ref{theorem1} (page \pageref{theorem1}), states that stability of WKB solutions is determined by a scalar index, which when positive indicates instability, and when negative indicates stability. The degenerate case of a vanishing stability index covers different possible situations with regard to stability, one of them treated in \cite{T3}. For a discussion of the precise meaning of stability/instability in our context, see in particular Section \ref{sec:comments}. Theorem \ref{theorem1} is formulated under the strong assumption that there be only one non-transparent resonance (Assumption \ref{ass:transp}). The reason for this assumption is that it simplifies the exposition of our main ideas by allowing for relatively simple notation.

\smallskip

$\bullet$ In Theorem \ref{th-two} (page \pageref{th-two}), we allow for several non-transparent resonances, with the same conclusions as in Theorem \ref{theorem1}. This is the framework that is encountered in many examples, in particular the coupled Klein-Gordon systems described in Sections \ref{sec:KGintro} and \ref{sec:KG}, \ref{sec:KG2}. The proof (Section \ref{sec:proof-th-extension}) relies on the same ideas as the proof of Theorem \ref{theorem1}, the extra difficulty being only notational.

\smallskip

The last three results are variations on Theorem \ref{th-two} and its proof:

\smallskip

$\bullet$ We first remark, in Theorem \ref{th-three}, that all non-transparent resonances are amplified. That is, we can observe an instability even though we initially turn on a resonance that is not associated with the maximal rate of growth. Here our assumptions are weakest, in particular are essentially only local in frequency, but the amplification is accordingly weaker.

\smallskip

$\bullet$ Next we remark that instability occurs in asymptotically vanishing balls, provided that we give up a little on the amplification rate. This is Theorem \ref{th:4}.

\smallskip

$\bullet$ Finally, in Theorem \ref{th:5} we prove that if arbitrarily small perturbations of the WKB datum generate exact solution that persist and are bounded uniformly in $(\e,t,x)$ over time intervals $T \sqrt \e |\ln \e|,$ with $T$ large enough, then the amplification goes from $O(\e^K)$ to $O(\e^{K'}),$ with $K$ arbitrarily large and $K'$ arbitrarily small, in time $O(\sqrt\e |\ln\e|).$ Of course, if small perturbations do not generate solutions over such asymptotically small time intervals, or if these solutions are unbounded, then this means instability, in another form, for the WKB solution.

\section{On related instability results} \label{rem:jmr17april} The article \cite{JMR-TMB}, cited in Section \ref{sec:background} as the main inspiration of the current work, contains limited instability results. In Section 10 and 11 of \cite{JMR-TMB}, Joly, M\'etivier and Rauch show that under a condition (Assumption 10.3 in \cite{JMR-TMB}) that is very similar to our instability condition ${\bf \G} > 0$ below, WKB solutions are unstable. They do so for linear equations, and, most importantly, for constant amplitudes, that is, for WKB solutions of the form $\vec a\, e^{i k \cdot x/\e},$ where $\vec a \in \R^N$ is fixed.  This allows an analysis by Fourier transform. For the solution $u,$ there holds $|u|_{L^2} \geq |\hat u|_{L^2(B_\e)},$ with $B_\e = \{ \xi \in \R^d, \, |\e \xi - \xi_0| \leq h \sqrt \e\},$ where $\xi_0$ is a distinguished resonant frequency and $h > 0$ is small. This reduces the analysis to our Lemma \ref{lem:S-resonance}.

For systems of conservation laws, under the strong assumption of a constant eigenvalue, Cheverry, Gu\`es and M\'etivier prove an instability result for high-frequency WKB solutions. This assumption is (6.5) and Hypoth\`ese 6.1 in \cite{CGM}; constancy is in $u,$ in the context of \cite{CGM} eigenvalues are $1$-homogeneous in $\xi.$ Then Cheverry studied in \cite{CG} the viscous relaxation of these instabilities.

 We note that our approach to instabilities in nonlinear equations differs fundamentally from the approach of Grenier in his classical work \cite{Gre}, in which Grenier formulated spectral assumptions {\it bearing on linear differential operators}. By contrast, our spectral assumptions are formulated for {\it symbols of linear pseudo-differential operators}. In particular, our spectral assumptions are, at least theoretically, readily verifiable, since they bear on spectra of matrices. The key Lemma that allows us to transpose the spectral information at the symbolic level into estimates for corresponding systems of partial differential equations is the Duhamel representation Theorem \ref{th:duh}, drawn from \cite{T4}.

The article \cite{LNT} also uses the Duhamel representation of \cite{T4}, to prove a strong Lax-Mizohata result for weakly non-hyperbolic quasi-linear systems.

\section{Examples} \label{sec:KGintro}

Our first class of examples (Section \ref{sec:BK}) are three-wave interaction systems, of the form
 \be\label{raman:intro}\left\{ \begin{aligned} \d_t u_1+ c_1 \d_x u_1 & = \frac{b_1}{\sqrt\e} \bar u_2 u_3,\\
\d_t u_2 + c_2 \d_x u_2 & =\frac{b_2}{\sqrt \e} \bar u_1 u_3,\\ \d_t u_3+ c_3 \d_x u_3 & = \frac{b_3}{\sqrt \e} u_1 u_2,
                          \end{aligned} \right.\ee
                          and
    \be\label{brillouin:intro}\left\{ \begin{aligned} \d_t u_1+ \frac{c_1}{\e} \d_x u_1 & = \frac{b_1}{\e} \bar u_2 u_3,\\
\d_t u_2 + \frac{c_2}{\e}\d_x u_2 & =\frac{b_2}{\e}\bar u_1 u_3,\\ \d_t u_3+ c_3 \d_x u_3 & = b_3 u_1 u_2.
                          \end{aligned} \right.\ee
 We show in Section \ref{sec:der3EM} how these systems are derived in the high-frequency limit from the Euler-Maxwell equations describing laser-plasma interactions. Systems \eqref{raman:intro} and \eqref{brillouin:intro} can be used to describe Raman and Brillouin scattering, respectively.

 In the case $b_2 b_3  >0,$ for any $c_1, c_2, c_3,$ our instability results apply to the reference solutions
 $$ \big( a( x - c_1 t), 0,0\big) \quad \mbox{for \eqref{raman:intro}}, \quad \mbox{and} \quad \big( a(\e x - c_1 t), 0,0 \big) \quad \mbox{for \eqref{brillouin:intro}},$$
  and give a description of the growth of the Raman and Brillouin waves $u_3.$

\medskip

Our second class of examples (Sections \ref{sec:KG} and \ref{sec:KG2}) comprises coupled Klein-Gordon systems of the form
\begin{equation} \label{coupled-KG-explicit}
    \left\{ \begin{aligned}
    \d_t u + \left(\begin{array}{ccc} 0 & \d_x & 0 \\ \d_x \cdot & 0 & 0 \\ 0 & 0 & 0 \end{array}\right) u + \frac{1}{\e} \left(\begin{array}{ccc} 0 &0  &0 \\0 & 0 & \a_0 \o_0 \\ 0 & - \a_0 \o_0 & 0  \end{array}\right) u & = \frac{1}{\sqrt \e} B^1(u,v), \\
 \d_t v +  \left(\begin{array}{ccc} 0 & \theta_0 \d_x & 0 \\ \theta_0 \d_x \cdot & 0 & 0 \\ 0 & 0 & 0 \end{array}\right) v + \frac{1}{\e} \left(\begin{array}{ccc} 0 &0  &0 \\0 & 0 & \o_0 \\ 0 & -  \o_0 & 0  \end{array}\right) v & = \frac{1}{\sqrt \e} B^2(u,v),
     \end{aligned}
    \right.
    \end{equation}
  where $(u,v) = (u_1,u_2,u_3,v_1,v_2,v_3)\in\R^{d+2}\times \R^{d+2}$, $x \in \R^d,$ $0 < \o_0,$ $0 < \a_0,$ $0 < \theta_0 < 1.$ The eigenvalues (as in \eqref{smooth:spec}) are
   $$\Big\{ 0, \, \pm \sqrt{\a_0^2 \o_0^2 + |\xi|^2}, \, \pm \sqrt{\o_0^2 + \theta_0^2 |\xi|^2}\Big\}.$$
 The characteristic varieties for $\a_0 = 1$ and $\a_0 \neq 1,$ depicting the branches of eigenvalues as functions of $\xi,$ are depicted on Figure \ref{fig1} page \pageref{fig1} and Figure \ref{fig2} page \pageref{fig2}, respectively.

   If $\a_0 = 1,$ the masses (corresponding to threshold frequency $\o_0$) are equal. In the context of laser-plasma interactions, the masses are both equal to the plasma frequency, and systems \eqref{coupled-KG-explicit} are simplified Euler-Maxwell systems. This case is covered in Section \ref{sec:KG}. The case of different masses is covered in Section \ref{sec:KG2}.

    In both cases, we give examples of bilinear terms $B^1$ and $B^2$ to which our results, stability or instability of WKB solutions, apply.

\section{Open problems} \label{sec:open}

We conclude this introduction with a list of open problems, listed in what we perceive as an increasing level of difficulty:

\smallskip

{\it Allow for rank-two interaction coefficients.} It would be interesting, especially in view of the extension of our results to the Euler-Maxwell equations (see Section \ref{sec:der3EM}), to handle rank-two interaction coefficients. This would mean extending the bounds of Appendix \ref{app:symb-bound} to symbolic flows defined by interaction matrices of the form
 $$ M = \left(\begin{array}{ccc} i \mu_1 & 0 & -\sqrt \e b_{ijx}^+ \\ 0 & i \mu_1 & - \sqrt \e b_{ijy}^- \\ - \sqrt \e b_{jix}^- & -\sqrt \e b_{jiy}^- & i \mu_2 \end{array}\right).$$

\smallskip

 {\it Weaken the partial transparency condition \eqref{new:1605}.} In our first class of Klein-Gordon examples (Section \ref{sec:KG}), condition \eqref{new:1605} is satisfied only in one space dimension. We note that condition \eqref{new:1605} is used only in the proof of Proposition \ref{prop:several-res-new} (normal form reduction) which decouples the components of the solution associated with non-transparent resonances. Given a specific set of non-transparent resonances, we could probably make appropriate coordinatization choices so as to forgo, or at least weaken, condition \eqref{new:1605}.

\smallskip

 {\it Consider larger initial perturbations.} We take into account the presence of high frequencies $\sim 1/\e$ by measuring $L^2$ norms of $\e$-derivatives. The main drawback of this approach is a very poor control of sup norms, as already seen in \eqref{embed:intro}. By using \eqref{embed:intro}, we are essentially uniformly bounding $a(x) \sin(x/\e)$ by $C \e^{-d/2},$ even if $a$ is smooth and compactly supported.

  This raises the question: {\it Does there exist a Banach algebra of distributions in which high-frequency families $\{ \varphi(x,x/\e) \}_{0 < \e < 1},$ with $\varphi \in C^\infty_c,$ are uniformly bounded, and in which good pseudo-differential bounds are available?}

  The minimal requirements for pseudo-differential bounds would be inclusion of the space of pseudo-differential operators of order zero into the space of linear bounded operators from the Banach algebra to itself, and stability under composition.

   The space ${\mathcal F}L^1$ of distributions with $L^1$ Fourier transform satisfies the first two conditions (algebra, uniform bounds for oscillating families), but not the third (pseudo-differential bounds). The Sobolev space $H^s$ equipped with the semi-classical norm $\| \cdot \|_{\e,s}$ satisfies the last two conditions, but is not a Banach algebra.

   In the absence of a positive answer to the above question, we perform in Section \ref{est-upper-V0-a} estimates in both ${\mathcal F}L^1$ and $H^s,$ so as to combine the advantages of both functional settings. This gives an existence time $T_0$ \eqref{def:T0-K0}, that approaches the ``optimal" existence time $T_\infty = K/(\g |a|_{L^\infty}),$ using notation introduced in Section \ref{sec:assres}. The optimal character of $T_\infty$ is seen on Theorem \ref{th:5}: this existence time allows for the amplification exponent $K'$ to be arbitrarily small, hence for the instability to be almost Lyapunov.

\smallskip

 {\it Allow for more singular scalings.} Laser pulses typically propagate in one spatial dimension $x$ and have large transverse variations in transverse directions $y:$ they have the form $$a(x,y) \sin((k x - \o t)/\e) \sin(y/\sqrt \e),$$ where $a$ is a slowly varying amplitude. A corresponding scaling would be, instead of \eqref{0}, the more singular
 \be \label{0fast} \d_t u + \Big(\frac{1}{\e} A_0 + \frac{1}{\sqrt \e} A(\d_y) + A \d_x\Big) u = \frac{1}{\sqrt \e} B(u,u),\ee
 with data oscillating in $x$ at frequencies $\sim 1/\e.$ In this scaling, the Zakharov equations with non-zero group velocity were formally derived from the Euler-Maxwell equations in \cite{T2}. A stability analysis of WKB solutions to \eqref{0fast} would lead to consideration of symbolic flows of interaction matrices as in \eqref{ode}. The important difference would an $\sqrt\e$-semiclassical scaling of the relevant pseudo-differential operators, with the catastrophic consequence
  $$ \frac{1}{\sqrt \e} \Big( \op_{\sqrt \e}(M S_0) - \op_{\sqrt \e}(M) \op_{\sqrt \e}(S_0)\Big) = \op_{\sqrt \e}(i \d_\xi M \d_y S_0) + \sqrt \e \big( \dots \big),$$
 meaning an error $O(1),$ instead of $O(\sqrt \e),$ in the first step of the construction of a solution operator. Then, $\op_{\sqrt \e}(S_0)$ would not appear as an approximation of the solution operator. What would constitute a good approximation of the solution operator, then ?

\chapter{Assumptions and results} \label{sec:assres}

For the family of systems \eqref{0}, reproduced here:
 $$  \d_t u + \frac{1}{\e} A_0 u + \sum_{1 \leq j \leq d} A_j \d_{x_j} u  = \frac{1}{\sqrt \e} B(u,u),$$
 we make the following assumptions.
  
\begin{assump}[Smooth spectral decomposition]\phantomsection \label{ass:spectral} We assume that the matrices $A_j,$ for $1 \leq j \leq d,$  are real symmetric, that the matrix $A_0$ is real skew-symmetric, and that the family of hermitian matrices $\big\{ A_0/i + \sum_{1 \leq j \leq d} \xi_j A_j\big\}_{\xi \in \R^d}$ has the spectral decomposition
\be\label{sp dec}
A_0/i + \sum_{1 \leq j \leq d} \xi_j A_j :=\sum_{1 \leq j \leq J} \l_j(\xi)\Pi_j(\xi), \ee
for some fixed $J,$ where $\l_j$ are smooth
eigenvalues and $\Pi_j$ are smooth eigenprojectors, satisfying bounds, for all $\b \in \N^d,$ for some $C_\b > 0:$
 \be \label{bd:spectral} |\d_\xi^\b \l_j(\xi)| \leq C_\b (1 + |\xi|^2)^{(1 - |\b|)/2}, \quad |\d_\xi^\b \Pi_j(\xi)| \leq C_\b (1 + |\xi|^2)^{ - |\b|/2}.\ee
\end{assump}

We do {\it not} assume that eigenvalues do not cross; indeed, for physical systems, crossing does typically occur at least for $\xi =0:$ examples are given in Sections \ref{sec:BK}, \ref{sec:KG}, \ref{sec:KG2} of Chapter \ref{chap:ex} and Appendix \ref{app:structure-resonant-set}. The smoothness condition in Assumption \ref{ass:spectral} means that at coalescence points, there is an ordering of the eigenvalues so that regularity is preserved. Bounds \eqref{bd:spectral} mean that $\l_j \in S^1$ and $\Pi_j \in S^0.$ The classical classes $S^m$ of pseudo-differential operators of order $m$ are introduced in Appendix \ref{app:symbols}.

Assumption \ref{ass:spectral} is discussed in Appendix \ref{app:regdec}. There we give, in particular, a sufficient condition for bounds \eqref{bd:spectral} to hold.

\begin{assump}[WKB approximate
solution]\phantomsection\label{ass-u-a}
For some $K_a\in \N$, some $T_a > 0,$ all $\e > 0,$ there exists $u_a$ an approximate solution to \eqref{0}, in the sense that there holds in $[0,T_a]:$
\be \label{0a} \d_t u_a + \frac{1}{\e}A_0u_a + \sum_{1 \leq j \leq d} A_j \d_{x_j} u_a = \frac{1}{\sqrt \e} B(u_a,u_a) + \e^{K_a} r_a^\e.\ee
The approximate solution has the form of a WKB expansion
\begin{equation} \label{def-ua}
 u_a(\e,t,x) = e^{-i(k \cdot x - \o t)/\e} u_{0,-1}(t,x) + e^{i (k\cdot x - \o t )/\e} u_{0,1}(t,x) + \sqrt \e v_a(\e,t,x) \,\, \in \R^N,
\end{equation}
where
\begin{itemize}
\item
 the phases $(\o,k)$ and $(-\o,-k) \in \R^{1 + d}$ are \emph{characteristic} for the hyperbolic operator, in the sense that
 \be \label{e+-} \big(\d_t + \frac{1}{\e} A_0 + \sum_{1 \leq j \leq d} A_j \d_{x_j} \big) \big( e^{\pm i (k \cdot x - \o t)/\e} \vec e_{\pm  1} \big) = 0, \qquad \vec e_{-1} = (\vec e_1)^*,\ee
where $\vec e_{-1}$ and $\vec e_1$ are fixed, unit vectors in $\C^N$ and $(\vec e_1)^*$ denotes component-by-component complex conjugation, and the leading amplitudes $u_{0,\pm1}$ are \emph{polarized} along $\vec e_{\pm 1},$ in the sense that
\be\label{pol-con}u_{0,1}(t,x)= g(t,x) \vec e_{1}, \quad u_{0,-1} = g(t,x)^* \vec e_{-1}, \qquad g \in C^1([0,T_a], H^{s_a}(\R^d)),
\ee
where $g^*$ denotes complex conjugation of the amplitude $g \in \C,$
\item there holds $v_a, r^\e_a \in C^0([0,T_a], H^{s_a}(\R^d)),$ with
\begin{equation} \label{est:va-ra} \sup_{\e > 0} \Big( \sup_{|\a| \leq s_a}  | (\e \d_x)^{\a} (v_a, r_a^\e) |_{L^\infty([0,T],L^2)} + | {\mathcal F}(v_a, r_a^\e)|_{L^\infty(0,T_a], L^1)} \Big) < \infty.
\ee
\end{itemize}
\end{assump}

We give in Appendix \ref{app:onwkb} sufficient conditions for Assumption \ref{ass-u-a} to hold true. An example of non-oscillating data, corresponding to $(\o,k) = (0,0),$ is described in Section \ref{sec:BK}.

We note that it suffices for $v_a$ and $r_a^\e$ to be trigonometric polynomials in $\theta,$ as is typically the case in WKB expansions, in order for the above uniform bound on $|{\mathcal F}(v_a, r_a^\e)|_{L^1}$ to hold.

\begin{notation}\phantomsection \label{notation:b} Given $\vec u \in \R^N,$ we denote $B(\vec u): \R^N \to \R^N$ the map defined by
 $$ B(\vec u) v := B(\vec u, v) + B(v, \vec u).$$
\end{notation}

\begin{defi}[Resonances and interaction coefficients]\phantomsection\label{def-reson}
Given $(i,j) \in \{1, \dots, J\}^2,$ with $J$ as in Assumption {\rm \ref{ass:spectral}}, we define the set of resonant frequencies associated with $(i,j)$ by
$$
{\bf \mathcal R}_{ij}:=\big \{\xi\in \R^d, \quad \o = \l_i(\xi+k) - \l_{j}(\xi) \big\}.
$$
The families of matrices
$$
\Pi_{i}(\xi+k)B(\vec e_1)\Pi_{j}(\xi) \in \C^{N \times N} \quad \mbox{and} \quad
\Pi_{j}(\xi)B(\vec e_{-1})\Pi_{i}(\xi+k) \in \C^{N \times N}
,$$
indexed by $\xi \in \R^d,$ are called the interaction coefficients associated with $(i,j).$

 The scalar function $\xi \to \l_i(\xi + k) - \l_j(\xi) - \o$ is called the resonant phase associated with $(i,j).$
\end{defi}

We often say {\it the $(i,j)$ resonance}, and {\it the $(i,j)$ interaction coefficients}.

We note that auto-resonances, i.e. resonances associated with $(i,i),$ for $1 \leq i \leq J,$ are taken into account in this definition. We also note that ${\mathcal R}_{ij} \neq {\mathcal R}_{ji}$ in general.

\medskip

We introduce the property of {\it transparency}:

\begin{defi}[Transparency] \phantomsection\label{def:transp} An interaction
coefficient $\Pi_{i}(\cdot+k)B(\vec e_{1})\Pi_{j}$ is said to be {\it transparent} if the associated resonant phase can be factored out, that is if for some $C > 0,$ there holds for all $\xi \in \R^d$ the bound
\be \label{factor1}
|\Pi_{i}(\xi+k)B(\vec e_{1})\Pi_{j}(\xi)|\leq C
|\l_i(\xi+k)-\o-\l_{j}(\xi)|.
\ee
Similarly, an interaction coefficient $\Pi_{j}B(\vec e_{-1})\Pi_{i}(\cdot+k)$ is said to be transparent if for some $C > 0,$ there holds for all $\xi$
\be \label{factor2}
|\Pi_{j}(\xi)B(\vec e_{-1})\Pi_{i}(\xi+k)| \leq C |\l_i(\xi+k)-\o-\l_{j}(\xi)|.\ee
If both interaction coefficients associated with resonance $(i,j)$ are transparent, then the resonance is said to be transparent.\end{defi}

We can now state our main, and provisory, assumption:
\begin{assump} \phantomsection \label{ass:transp} We suppose
\begin{itemize}
 \item[{\rm (i)}] (boundedness) the set ${\mathcal R}_{12}$ is nonempty and bounded,

 \smallskip

 \item[{\rm (ii)}] (partial transparency) for all $(i,j) \neq (1,2),$ the $(i,j)$ resonance is transparent;

 \smallskip

 \item[{\rm (iii)}] (rank-one coefficients) for all $\xi$ in an open set containing ${\mathcal R}_{12},$ the ranks of the $(1,2)$ interaction coefficients are at most 1.
   \end{itemize}
\end{assump}

Assumption \ref{ass:transp}(i) is discussed in Appendix \ref{app:structure-resonant-set}; there we show in particular that Assumption \ref{ass:transp}(i) satisfied as soon as the eigendecomposition \eqref{sp dec} is smooth at infinity, and $\l_1$ and $\l_2$ non asymptotic at infinity, that is $|\l_j(\xi)| = c_j |\xi| + o(|\xi|)$ with $c_1 \neq c_2.$

Assumption \ref{ass:transp}(ii) is here only to simplify the exposition; we will see in Section \ref{sec:extension} that our results fully extend to the case of several non-transparent resonances, under a mild partial transparency condition.

 Assumption \ref{ass:transp}(iii) is satisfied as soon as (but not only if) $\l_1$ and $\l_2$ are simple eigenvalues.

\section{Main result} \label{sec:main-result}

We denote $\G$ the trace of the product of the $(1,2)$ interaction coefficients:
 \be \label{def:G0} \dsp{\G(\xi) := {\rm
tr}\, \Pi_{1}(\xi+k)B(\vec e_{1})\Pi_{2}(\xi)B(\vec e_{-1})\Pi_{1}(\xi+k).}
\ee
In the context of Assumption \ref{ass:transp}, the stability index is
\begin{equation} \label{def:trace}
 {\bf \Gamma} := \max\Big( \, \max_{\xi\in {\mathcal R}_{12}} \Re e \, \G(\xi), \, \max_{\xi \in {\mathcal R}_{12}} |\Im m \, \G(\xi)|\,\Big),
 \end{equation}
 so that
 \begin{itemize}
 \item
if ${\bf \G} > 0,$ then for some $\xi \in {\mathcal R}_{12},$ there holds $\G(\xi) \notin (-\infty,0].$

\smallskip

\item If ${\bf \G} < 0,$ then for all $\xi \in {\mathcal R}_{12},$ $\G(\xi) \in (-\infty,0).$
\end{itemize}

In the unstable case, corresponding to ${\bf \G} > 0,$ the limiting observation time $T_0$ and amplification exponent $K_0$ are
 \begin{equation} \label{def:T0-K0}
  T_0 := \max\Big( \frac{K}{|B|_0 |\hat a|_{L^1}}, \,\frac{K - d/2}{\g |a|_{L^\infty}}\Big), \qquad  K_0 := \min\Big(K \Big( 1 - \frac{\g |a|_{L^\infty}}{|B|_0 |\hat a |_{L^1}} \Big), \, d/2\Big),
  \end{equation}
 where
\be \label{def:B0}
|B|_0 := \max_{\xi \in {\mathcal R}_{12}}  \max\Big(\,|\Pi_{1}(\xi+k)B(\vec e_{1}) \Pi_{2}(\xi)|, \, |\Pi_2(\xi) B(\vec e_{-1})\Pi_{1}(\xi+k)| \, \Big),
\ee
and
 \be \label{def:g}
 \g := \Big| \max_{\xi \in {\mathcal R}_{12}} \Re e \, \big( \G(\xi)^{1/2}\big) \Big|.
 \ee
In the definition of $|B|_0$ above, we denote $|Z|$ the norm \label{matrix:norm} of a matrix $Z \in \C^{N \times N},$ deriving from the $L^\infty$ norm in $\C^N:$ $|Z| := \max_{|u| = 1} |Z u|,$ where $|u| = |u_1,\dots,u_N| := \max_i |u_i|.$ These norms in finite dimensions are used throughout the paper.

\begin{theo}\phantomsection \label{theorem1}
Under Assumptions {\rm \ref{ass:spectral}} (regularity of the spectral decomposition), {\rm \ref{ass-u-a}} (existence of WKB solutions) and {\rm \ref{ass:transp}} (resonances and transparency), the sign of index ${\bf \G}$ determines stability of the WKB approximate solution $u_a$ with respect to initial perturbations, as follows:

\medskip

$\bullet$ If ${\bf {\Gamma}} > 0,$ then for some $\phi(\e,\cdot) \in C^\infty_c$ such that $\sup_{0 < \e < 1} \big(\|\phi(\e,\cdot)\|_{\e,s} + |\phi(\e,\cdot)|_{L^\infty}\big) < \infty$ for all $s,$ the solution $u$ to \eqref{0} issued from the initial datum \be\label{ini org1}u(0)=u_a(0)+\e^{K}\phi_\e\ee satisfies:
 $$ \mbox{for any $K > 0,$ \,\, if $K_a +1/2 \geq K,$ \,\, for any $K' > K_0,$ \,\, for some $T < T_0,$ \,\, some $\e_0 > 0,$}$$
for all $\e \in (0,\e_0),$ $u \in C^0([0, T \sqrt \e |\ln \e|],H^s(\R^d))$ for $d/2 < s \leq s_a,$
 and
   \be\label{est:th3}\sup_{0<\e<\e_0}\sup_{0\leq t\leq T \sqrt \e |\ln \e|} \e^{- K'} | (u - u_a)(t)|_{L^2(B(x_0,\rho))}  =\infty,\ee
for some $x_0 \in \R^d,$
some $\rho > 0.$%

\medskip

$\bullet$ If ${\bf {\Gamma}} < 0,$ then for any $\phi(\e,\cdot) \in H^s$ such that $\sup_{0 < \e < 1} \| \phi(\e,\cdot) \|_{\e,s} < \infty$ for all $s \leq s_a,$ the solution $u$ to \eqref{0} issued from the initial datum \eqref{ini org1} satisfies:
$$ \mbox{for any $K \geq (d+1)/2,$ \,\, if $K_a \geq (d + 1)/2,$ \,\,for some $\e_0 > 0,$ \,\, some $C(T_a) > 0,$}$$
for all $\e \in (0,\e_0),$ $u \in C^0([0,T_a],H^s(\R^d)),$ for $d/2 < s \leq s_a,$ and
\be \label{est:stab}
\sup_{t \in [0, T_a]} \|(u-u_a)(t)\|_{\e,s} \leq C(T_a) \| (u - u_a)(0)\|_{\e,s}.
\ee
\end{theo}

In the above Theorem, we use the semiclassical Sobolev norm $\|\cdot\|_{\e,s}$ defined in \eqref{weighted:norm} on page \pageref{weighted:norm}. The time $T_a$ is any existence time for the WKB solution $u_a,$ such that the bounds stated in Assumption \ref{ass-u-a} hold, the index $s_a$ is the Sobolev index of regularity of $u_a,$ and $T_0$ is the limiting observation time defined in \eqref{def:T0-K0}.

Theorem \ref{theorem1} is proved in Sections \ref{p-o-th1} and \ref{sec:stability1}.

\section{Comments} \label{sec:comments}

{\it On our assumptions:}

\medskip

$\bullet$ Assumption \ref{ass:spectral} asserts regularity of the spectral decomposition, and bounds at infinity. We show in Appendix \ref{app:regdec} that these conditions at infinity follow from smoothness of the spectral decomposition of an associated ``short-wave" operator.

\medskip

$\bullet$ In Assumption \ref{ass-u-a}, the polarization condition \eqref{pol-con} and the bound \eqref{est:va-ra} on the correctors and remainder are standard. What is not clear, however, is that system \eqref{0} admits WKB solutions at all. Indeed, as noted by Joly, M\'etivier and Rauch \cite{JMR-TMB}, and briefly discussed on page \pageref{compa} above, in the context of supercritical geometric optics, for WKB solutions to exist the large source term must satisfy compatibility conditions, which are similar to, and weaker than, transparency in the sense of Definition \ref{def:transp}. In the context of \eqref{0}, these conditions are given in Appendix \ref{app:onwkb}. Also, in Proposition \ref{rem:weaktransp}, we give sufficient conditions for Assumption \ref{ass:several-res} to imply Assumption \ref{ass-u-a}.

\medskip

$\bullet$ Point (i) of Assumption \ref{ass:transp} is typically easy to check, and discussed in Appendix \ref{app:structure-resonant-set}. Point (ii) is too strong (indeed in examples there is typically more than one non-transparent resonance); this is remedied in Assumption \ref{ass:several-res}. In theory, the verification of a transparency condition is a simple matter; see our computations in Section \ref{sec:KG} in the case of coupled Klein-Gordon equations. In practice, the computations are sometimes involved, see in particular \cite{T3} in the case of Euler-Maxwell. Point (iii) simplifies the linear algebra in Appendix \ref{app:symb-bound}. As mentioned in Section \ref{sec:open}, it would be interesting to handle two-dimensional eigenspaces, especially in view of the extension of our results to the Euler-Maxwell system in three space dimensions, for which the longitudinal modes are two-dimensional.

\medskip

{\it On the nature of the instability and parameters $T_0$ and $K_0:$}

\smallskip

$\bullet$ The smaller the amplification exponent $K_0$ defined in \eqref{def:T0-K0}, the stronger the amplification. There holds $K_0 = K - T_0 \g |a|_{L^\infty}:$ that is, the limitation on the amplification exponent is the existence time $T_0.$ An existence proof up to time $K/(\g |a|_{L^\infty})$ would allow for $K_0 = 0,$ corresponding to an $O(1)$ deviation; this observation is exploited in Theorem \ref{th:5} below.

\smallskip

$\bullet$ For the existence part in Theorem \ref{theorem1}, two approaches are combined in the proof. This explains why $T_0$ in \eqref{def:T0-K0} is the maximum of two quantities.

We first use the constant-coefficient nature of the hyperbolic operator in \eqref{0}, and the semilinear nature of the source term in the right-hand side of \eqref{0}. Such equations are amenable to estimates in ${\mathcal F}L^1,$ the Banach algebra of functions with Fourier transforms in $L^1.$ The key is that the ${\mathcal F}L^1$ norm controls the $L^\infty$ norm. Combined a priori estimates in ${\mathcal F}L^1$ and $H^s$ yield the existence time $K/(|B|_0 |\hat a|_{L^1}),$ where $|B|_0$ is defined in \eqref{def:B0}. Details are given in Section \ref{sec:FL1}. Here we note that $\g \leq |B|_0.$ Indeed, if the trace $\G(\xi)$ is positive, then $\G(\xi)$ is the only non-zero eigenvalue of the product of the interaction coefficients. In particular, the modulus $|\G(\xi)|$ is bounded from above by the norm of the product of the interaction coefficients: $|\G(\xi)| \leq |B|_0^2,$ which implies $\g \leq |B|_0.$  We could hope for the better existence time $K/(\g |\hat a|_{L^1}).$ The issue here is that we are unable to use the precise Duhamel representation of Section \ref{est-upper-V0}. Indeed, this representation introduces pseudo-differential operators, the action of which cannot be easily estimated in ${\mathcal F}L^1.$
Remark \ref{rem:FL1-bis} expands on this point.

In a second approach, we perform Sobolev estimates. A control of the $L^\infty$ norm is then given by Sobolev embedding. An issue here is that in semiclassical norms this embedding has a large $\sim \e^{-d/2}$ norm. Hence, via the Duhamel representation of Section \ref{est-upper-V0}, in which we use Theorem \ref{th:duh}, the existence time $(K - d/2)/(\g |a|_{L^\infty}).$

 \smallskip

$\bullet$ In accordance with the above two comments, the amplification exponent $K_0$ is the minimum of two positive quantities. The first is equal to a fraction of $K,$ which goes to 0 as $|a|_{L^\infty}/|\hat a|_{L^1} \to 1$ and $\g \to |B|_0.$

 In general, however, and in particular as $K$ becomes larger, for a given system and a given initial amplitude, the amplification factor will be equal to $d/2.$ That is, even though the deviation from $u_a$ is small ($\sim \e^{d/2}$), the amplification in Theorem \ref{theorem1}, on top of being localized in space and asymptotically instantaneous in time, is {\it absolute}, meaning that the perturbation grows from $O(\e^K)$ to $O(\e^{K'}),$ with $K/K' \to \infty$ as $K \to \infty.$ In other words, the flow at $t = 0$ is not H\"older continuous: the ratio $\dsp{|u - u_a|_{L^2(B(x_0,\rho))}/|(u - u_a)(0)|^\a_{\e,s}}$ is unbounded in the limit $\e \to 0,$ with a H\"older exponent $\a = K'/K$ which tends to $0$ as $K \to \infty.$ Still, the instability is weaker than a Lyapunov instability, which would correspond to $K_0 = 0.$

\medskip

{\it On the initial perturbation in the unstable case:}

\smallskip

In the unstable case, we pick the initial perturbation that will lead to a maximal amplification. In the more general context of Theorem \ref{th:5}, these initial perturbations are precisely described in \eqref{datum} below. Essentially, $u(0) - u_a(0)$ has an $\e^K$ prefactor, oscillates at frequency $(\xi_0 + k)/\e,$ where $\xi_0$ is a distinguished resonant frequency, is spatially localized around a point at which $|a|$ is maximum, and is pointing in an eigendirection of the product of the interaction coefficients.

\medskip

{\it On the stability result and the class of initial perturbations:}

\smallskip

A relative weakness of our stability result is that initial perturbations are small $\sim \e^K,$ with $K \geq (1+ d)/2,$ where $1/2$ accounts for the large prefactor in $(1/\sqrt \e) B(u,u),$ while $d/2$ accounts for the Sobolev embedding \eqref{embed:intro}; this point was briefly discussed in Section \ref{sec:open}.

A strong point, however, is that stability is meant here with respect to a large class of initial perturbations, as mentioned in Section \ref{sec:init:pert} above. We allow indeed initial perturbations of the form $\varphi(x/\e),$ where $\varphi$ is smooth, as opposed to perturbations in the form of {\it profiles,} that is, with a dependence in $x/\e$ that is identical to the one in $u_a(0).$ In particular, the perturbations that we use in the unstable case are allowed in the stable case.

We finally note that, by Sobolev embedding, the stability estimate \eqref{est:stab} implies the pointwise estimate
 $|(u - u_a)(t,x) | \leq C(T_a) \e^{K - d/2}.$

\section{Extensions} \label{sec:extension}

 We give here four results that complement Theorem \ref{theorem1}.

\subsection{Several non-transparent resonances} \label{sec:several0}

We announced that the role of Assumption \ref{ass:transp} was only to simplify the exposition. Here is a more general, and more satisfactory, version of Assumption \ref{ass:transp}, in which we do not assume that the set of non-transparent resonances is reduced to a singleton:

\begin{assump}\phantomsection \label{ass:several-res} We suppose
\begin{itemize}
\item[{\rm (i)}] (boundedness) The resonant set $\dsp{{\mathfrak R} = \bigcup_{i,j} {\mathcal R}_{ij}}$ is bounded.

\smallskip

\item[{\rm (ii)}] (partial transparency) For some subset ${\mathfrak R}_0 \subset {\mathfrak R}:$ given $(i,j) \in {\mathfrak R} \setminus {\mathfrak R}_0,$ the $(i,j)$ resonance is transparent; given $(i,j) \in {\mathfrak R}_0,$ the $(i,j)$ interaction coefficients are transparent on a neighborhood of
\be \label{new:1605} \dsp{{\mathcal R}_{ij} \bigcap \Big( \big( {\mathcal R}_{i'i} - k \big) \bigcup \big( {\mathcal R}_{jj'} + k \big) \Big), \quad \mbox{for all $i',j'$ with $(i',i) \in {\mathfrak R}_0$ and $(j,j') \in {\mathfrak R}_0,$}}
\ee
and on a neighborhood of
  \be \label{new:1605:2} {\mathcal R}_{ij} \bigcap \Big( {\mathcal R}_{ii'} \bigcup {\mathcal R}_{j'j}\Big), \quad \mbox{for all $i' \neq j,$ all $j' \neq i,$ with $(i,i') \in {\mathfrak R}_0$ and $(j',j) \in {\mathfrak R}_0.$}
 \ee
 Besides, ${\mathfrak R}_0$ does not contain auto-resonances: for all $1 \leq i \leq J,$ $(i,i) \notin {\mathfrak R}_0.$

\smallskip

 \item[{\rm (iii)}] (rank-one coefficients) For all $(i,j)\in {\mathfrak R}_0$, for all $\xi$ in an open set containing ${\mathcal R}_{ij},$ the ranks of the $(i,j)$ interaction coefficients are at most 1; except for $(i,j) \in {\mathfrak R}_0$ such that one interaction coefficient is identically equal to zero, in which case we make no assumption on the rank of the other coefficient.
\end{itemize}
\end{assump}

In condition (ii), transparency of an interaction coefficient over a frequency set means factorization of the phase, as in \eqref{factor1} or \eqref{factor2}, for $\xi$ restricted to the frequency set in question.

We note the inclusions
$${\mathcal R}_{ij} \bigcap {\mathcal R}_{ii'} \subset \{ \l_j = \l_{i'}\}, \qquad {\mathcal R}_{ij} \bigcap {\mathcal R}_{j'j} \subset \{ \l_i(\cdot + k) = \l_{j'}(\cdot + k)\}.$$
Thus condition (ii) in Assumption \ref{ass:several-res} means that, while we allow for an arbitrarily large subset ${\mathfrak R}_0$ of non-transparent resonances, we cannot allow for some $(i,j)$ interaction coefficient to be non-transparent in all of the resonant set ${\mathcal R}_{ij}:$ we need to assume transparency at these exceptional frequencies in ${\mathcal R}_{ij}$ which correspond to a translate of another non-transparent resonance involving $\l_i$ or $\l_j$ (condition \eqref{new:1605}) or to a coalescing point in the spectrum involving $\l_i$ or $\l_j$ and another branch associated with a non-transparent resonance (condition \eqref{new:1605:2}).

\bigskip

 Given a resonance $(i,j) \in {\mathfrak R}_0,$ we let
 $$ \G_{ij}(\xi) := {\rm
tr}\, \Pi_{i}(\xi+k)B(\vec e_{1})\Pi_{j}(\xi)B(\vec e_{-1})\Pi_{i}(\xi+k).$$
The stability index is
\begin{equation} \label{def:index}
 {\bf \Gamma} := \max_{(i,j) \in {\mathfrak R}_0} \max\Big( \, \max_{\xi\in {\mathcal R}_{ij}} \Re e \, \G_{ij}(\xi), \, \max_{\xi \in {\mathcal R}_{ij}} |\Im m \, \G_{ij}(\xi)|\,\Big).
 \end{equation}
 The observation time $T_0$ and amplification rate $K_0$ are defined as in \eqref{def:T0-K0}, with $|B|_0$ defined by
 \be \label{def:B0several}
 |B|_0 := \max_{(i,j) \in {\mathfrak R}_0} \max_{\xi \in {\mathcal R}_{ij}} \max\Big(\,   | \Pi_{i}(\xi+k)B(\vec e_{1})\Pi_{j}(\xi)|\, \, , \, \, |\Pi_j(\xi) B(\vec e_{-1})\Pi_{i}(\xi+k)|\,\Big),
\ee
 and $\g$ defined by
 \be \label{def:several-gamma} \g := \max_{(i,j) \in {\mathfrak R}_0} \g_{ij}, \qquad \g_{ij} :=  \Big| \max_{\xi \in {\mathcal R}_{ij}} \Re e \, \big( \G_{ij}(\xi)^{1/2}\big) \Big|.\ee

\begin{theo}\phantomsection\label{th-two} The conclusions of Theorem {\rm \ref{theorem1}} still hold when {\rm Assumption \ref{ass:transp}} is replaced by {\rm Assumption
\ref{ass:several-res}}, with stability index ${\bf \Gamma}$ defined in
\eqref{def:index}.
\end{theo}

Theorem \ref{th-two} is proved in Section \ref{sec:proof-th-extension}.

\subsection{All non-transparent resonances are amplified}

Next we modify the partial transparency condition of Assumption \ref{ass:several-res} into a separation condition. The result (Theorem \ref{th-three} below) states that any non-transparent resonance that is separated from other resonances gives rise to an amplification.

Recall that the notation ${\mathfrak R},$ introduced in Assumption \ref{ass:several-res}, denotes the set of resonant indices.

\begin{assump}\phantomsection \label{ass:12-ampli} We suppose
\begin{itemize}
\item[{\rm (i)}] (boundedness) The resonant set ${\mathfrak R}$ is bounded.

\smallskip

\item[{\rm (ii)}] (separation) For any resonant pair $(i,j)$ that is distinct from $(1,2),$ there holds
\be \label{sep} \big( {\mathcal R}_{12} + qk \big) \bigcap {\mathcal R}_{ij} = \emptyset, \quad 0 \leq |q| \leq 1.\ee

\smallskip

\item[{\rm (iii)}] (rank-one coefficients) For all $\xi$ in an open set containing ${\mathcal R}_{12},$ the ranks of the $(1,2)$ interaction coefficients are at most 1.
\end{itemize}
\end{assump}

The separation condition \eqref{sep} asserts non-intersection of resonant sets, a much stronger property than the partial transparency condition of Assumption \ref{ass:several-res}, but only relative to the $(1,2)$ resonance. With regard to other resonances, we make no assumption besides boundedness of resonant sets and separation from ${\mathcal R}_{12}$ as prescribed by \eqref{sep}.

In the context of Assumption \ref{ass:12-ampli}, the limiting observation time is
$$
  T'_0 := \min\Big( \, \max\Big(\, \frac{K-1/2}{|B| |\hat a|_{L^1}}\, , \,\, \frac{K - (d+1)/2}{|B| |a|_{L^\infty}}\, \Big) \, \, , \,\, \frac{1}{2 ( |B| - \g_{12}) |a|_{L^\infty}}\, \Big),
$$
 and the amplification exponent is
 \be \label{def:K0prime}
 K'_0 := K - T'_0 \g_{12} |a|_{L^\infty},
 \ee
with $\g_{12}$ defined in \eqref{def:several-gamma}.

\begin{theo}\phantomsection \label{th-three} Under Assumptions {\rm \ref{ass:spectral}}, {\rm \ref{ass-u-a}} and {\rm \ref{ass:12-ampli}}, if $\g_{12} > 0,$ then the WKB solution $u_a$ is unstable, in the sense of Theorem {\rm \ref{theorem1}}, where $T_0$ and $K_0$ are replaced with $T'_0$ and $K'_0$ defined above.
\end{theo}

Theorem \ref{th-three} is proved in Section \ref{sec:proof-three}.

The limiting time $T'_0$ is the minimum of two quantities. The first looks very much like (but is smaller than) the limiting time $T_0$ in Theorems \ref{theorem1} and \ref{th-two}. The main difference is that we have $|B|$ here in the denominator in $T'_0,$ where we had $|B|_0,$ possibly much smaller than $|B|,$ in the denominator in $T_0$ \eqref{def:T0-K0}.
The second term in the definition of $T'_0$ is independent of $K$ and, for $K$ large enough, smaller than the first. The point is that the instability here is only {\it relative} to the initial size of the perturbation: there holds for $K$ large enough (depending on $B$ and $a$):
\be \label{def:tildeT0} \sup_{0 < \e < \e_0} \sup_{ 0\leq t \leq \tilde T_0 \sqrt \e |\ln \e| } \frac{ | u(t) - u_a(t)|_{L^2(B(x_0,\rho))}}{\| u(0) - u_a(0)\|_{\e,s}} = \infty, \qquad \tilde T_0 := \frac{1}{2 (|B| - \g_{12}) |a|_{L^\infty}},\ee
 and the way that the above diverges to $\infty$ is quantified by $K'_0.$ In other words, the flow is not Lipschitz continuous from $\| \cdot \|_{\e,s}$ to $\| \cdot \|_{L^2(B(x_0,\rho))}$ at $t = 0.$ 

The important point in Theorem \ref{th-three} is that we do not assume maximality of the growth rate $\g_{12}.$ That is, in order to record an Hadamard instability, it is not necessary to initially activate unstable frequencies with the larger rate of growth.

An interesting feature of Assumption \ref{ass:12-ampli} is that, given boundedness of ${\mathfrak R},$ the separation condition (ii) is local in frequency, and bears only on the eigenvalues of the hyperbolic operator. The assumption of boundedness of ${\mathfrak R}$ is not local, but typically easily verified by asymptotic expansions of the eigenvalues at infinity, as discussed in Appendix \ref{app:structure-resonant-set}.

\subsection{Improved spatial localization} \label{sec:thbetterloc}

 Next we remark that we can improve on the radius of the instability ball in the statement of Theorem \ref{theorem1}, at the price of a smaller amplification rate. That is, introducing the observation time
 $$ T''_0 := \max\Big( \frac{K - 1/2}{|B|_0 |\hat a|_{L^1}}, \,\frac{K - (d+1)/2}{\g |a|_{L^\infty}}\Big),$$
 and the amplification exponent
 \be \label{def:K''0} K''_0 := K + \frac{\b d}{2} - T''_0 \g |a|_{L^\infty},\ee
 we have the following result, with stability index ${\bf \G}$ defined in \eqref{def:index}.

 \begin{theo}\phantomsection \label{th:4} Under Assumptions {\rm \ref{ass:spectral}}, {\rm \ref{ass-u-a}} and {\rm \ref{ass:several-res}}, in the unstable case ${\bf \G} > 0,$ the conclusion of Theorem {\rm \ref{theorem1}} still holds with $\rho = \e^\b,$ for any $\b < 1/d,$ with $\e_0 = \e_0(\b),$ if $T_0$ and $K_0$ are replaced with $T''_0$ and $K''_0$ as above.
 \end{theo}

Theorem \ref{th:4} is proved in Section \ref{sec:pftheobetterloc}. Note that the observation time $T''_0$ is strictly smaller than $T_0$ \eqref{def:T0-K0}, hence the amplification exponent $K''_0$ is strictly greater than $K_0.$ That is, the deviation estimate is better localized, but we are able to follow the solution only on a shorter time interval, meaning a smaller magnitude of the deviation from $u_a.$

\subsection{A greater deviation estimate}

As noted in Section \ref{sec:comments}, the limitation on the amplification exponent $K_0$ in Theorems \ref{theorem1} and \ref{th-two} is the upper bound $T_0$ on the existence time. In our final result, we {\it assume} a better existence time, and from there deduce a greater deviation estimate.

In the context of Assumption \ref{ass:several-res}, the maximum $\g$ of the coefficients $\g_{ij},$ for $(i,j)$ ranging over the set ${\mathfrak R}_0$ of non-transparent resonances is attained at $(i_0,j_0).$  We consider the same datum as in Theorems \ref{theorem1} and \ref{th-two}, that is
\begin{equation} \label{datum}
 u(0,x)  := u_a(0,x) + \e^{K} e^{i x \cdot(\xi_0 + k)/\e} \varphi_{i_0j_0}(x) \vec e_{i_0j_0},
 \ee
 where
 \begin{itemize}
 \item $\xi_0$ is such that $\g =\big| \max_{\xi\in {\mathcal R}_{i_0j_0}} \Re e \, \big( \G_{i_0j_0}(\xi)^{1/2} \big) \big|  \quad \mbox{is attained at $\xi_0$.}$

\smallskip

 \item $x_0$ is such that $|a|_{L^\infty}$ is attained at $x_0.$

\smallskip

 \item $\varphi_{i_0j_0} \in C^\infty_c(\R^d)$ is a spatial truncation around $x_0$ (precisely defined in Section \ref{several:loc}).

\smallskip

 \item $\vec e_{i_0j_0}$ generates the range of matrix $\Pi_{i_0}(\xi_0 + k) B(\vec e_1) \Pi_{j_0}(\xi_0) B(\vec e_{-1}) \Pi_{i_0}(\xi_0 + k).$
 \end{itemize}
 Let $\dsp{T_\infty := \frac{K}{\g |a|_{L^\infty}}.}$

\begin{theo}\phantomsection \label{th:5} Under Assumptions {\rm \ref{ass:spectral}}, {\rm \ref{ass-u-a}} and {\rm \ref{ass:several-res}}, in the unstable case ${\bf \G} > 0,$ for any $K > 0,$ if $K_a +1/2 \geq K,$ for $d/2 < s \leq s_a:$ 
\begin{itemize}
\item either for some $T < T_\infty,$ for any $\e$ small enough, the initial-value problem \eqref{0}-\eqref{datum} does not have a solution $u \in C^0([0, T \sqrt \e |\ln \e|], H^s(\R^d)),$
\item or for some $T < T_\infty,$ for any $\e_0 >0,$ the solution $u$ to \eqref{0}-\eqref{datum} satisfies
 $$\sup_{0 < \e < \e_0} \, \sup_{0 \leq t \leq T \sqrt \e |\ln \e|} \, |u(t,\cdot)|_{L^\infty} = \infty,$$
\item or for any $K' > 0,$ for some $T < T_\infty,$ there holds the deviation estimate
   \be\label{est:th5}\sup_{0<\e<\e_0}\sup_{0\leq t\leq T \sqrt \e |\ln \e|} \e^{- K'} | (u - u_a)(t)|_{L^2(B(x_0,\e^\b))}  =\infty,\ee
for some $x_0 \in \R^d,$
some $\b > 0,$ some $\e_0 > 0.$
 \end{itemize}
 \end{theo}

The proof (Section \ref{sec:end-insta2}) shows that $\b \to 0$ as $K' \to 0:$ the localization becomes less precise as the amplification becomes larger.

Theorem \ref{th:5} states that, with $(u - u_a)(0)$ given in \eqref{datum}, in particular compactly supported and $O(\e^K)$ in $L^\infty$ and $\| \cdot \|_{\e,s}$ norms, where $K$ and $s$ are arbitrarily large:
\begin{itemize}
\item either the solution to \eqref{0}-\eqref{datum} is not defined in time $[0, T \sqrt \e |\ln \e|],$ meaning a catastrophic collapse of the existence time around $u_a$ (if we consider $u_a$ as a solution, with an existence time $T_a > 0,$ independent of $\e),$
\item or the solution is defined in time $[0, T \sqrt \e |\ln \e|]$ but is unbounded, meaning in particular that $|u - u_a|_{L^\infty}$ is unbounded, in the limit $\e \to 0,$
\item or the deviation $|u - u_a|$ goes from $O(\e^K),$ as measured in $\| \cdot \|_{\e,s}$ norm, to $O(\e^{K'}),$ as measured in an $L^2(B(x_0,\e^\b))$ norm, over a time interval of length $O(\sqrt \e |\ln \e|),$ with $K'$ arbitrarily small.
\end{itemize}

In conclusion, under the assumptions of Theorem \ref{th:5}, in the case ${\bf \G} > 0$ the WKB solution can certainly be deemed unstable, although there is some imprecision as to the terms of the instability.

\chapter{Main proof} 

%

\section{Proof of Theorem \lowercase{\ref{theorem1}}: instability}\label{p-o-th1}

Under the assumptions of Theorem \ref{theorem1}, we suppose ${\bf \G} > 0,$ and proceed to prove instability of $u_a.$

\subsection{Overview of the instability proof} Section \ref{sec:proj} contains the first important change of variable, in which resonances appear explicitly as crossing points for the eigenvalues of the propagator. In Section \ref{normal-form1}, we perform a normal form reduction; this essentially reduces the linear source term $B(u_a)$ to the pair of interaction coefficients associated with resonance $(1,2).$  Then in Section \ref{s-f loc}, we localize the analysis around a distinguished point $(x_0,\xi_0),$ with $\xi_0 \in {\mathcal R}_{12},$ in the cotangent space. In Section \ref{est-upper-V0}, we use the Duhamel representation formula of Appendix \ref{app:duh} in order to describe semi-explicitly the component of the solution associated with resonance $(1,2).$ Section \ref{sec:lower} is devoted to the derivation of lower bounds for the action of the solution operator on the datum. In Section \ref{est-upper-V0-a}, we give existence results and upper bounds in time $O(\sqrt \e |\ln \e|).$ These are based on the representation formula, and also on ${\mathcal F}L^1$ and Sobolev bounds. The comparison of lower bounds with upper bounds in Section \ref{sec:end-insta}
concludes the proof.

\subsection{Preparation}

By symmetry of the hyperbolic operator, for $\e > 0$ the solution $u$ to \eqref{0} issued from \eqref{ini org1} is defined over a short time interval $[0,T(\e)],$ for some $T(\e) > 0.$ It has a high Sobolev regularity: $u \in C^0([0,T(\e)],H^{s_a}(\R^d)),$ where $s_a$ is the Sobolev regularity index introduced in Assumption \ref{ass-u-a}.
The perturbative unknown $\dot u,$ defined by
\begin{equation} \label{def:dot-u}
 u =: u_a + \dot u,
 \end{equation}
  solves
\begin{equation} \label{eq:dot-u} \d_t \dot u + \frac{1}{\e} A_0 \dot u + \sum_{1 \leq j \leq d} A_j \d_{x_j} \dot u = \frac{1}{\sqrt \e} B(u_a) \dot u + \frac{1}{\sqrt \e} B(\dot u, \dot u) - \e^{K_a} r_a^\e,
\end{equation}
with the datum $\dot u(\e,0,x) = \e^{K} \phi(\e,x).$ In \eqref{eq:dot-u}, the term $r_a^\e$ is the WKB remainder introduced in Assumption \ref{ass-u-a}, and we used Notation \ref{notation:b} for $B(\cdot).$

The goal is to choose $\phi$ so that $\dot u$ grows exponentially in time.

\medskip

It is understood in this proof that Sobolev indices $s$ are strictly smaller than $s_a - d/2,$ where $s_a$ is the Sobolev index of regularity of $u_a.$ We need $s > d/2$ in order to prove short-time existence (since the system is semilinear), but the deviation estimate \eqref{est:th3} is expressed in a localized $L^2$ norm.

\medskip

We frequently use the notation $a \lesssim b$ to indicate that an inequality $a \leq C b$ holds true, with a constant $C > 0$ depending only on fixed parameters, such as dimensions, in particular not on $a,b,$ nor on $\e.$\label{notation:lesssim}

\subsubsection{Projection and frequency shift} \label{sec:proj}

We decompose $\dot u$ according to the eigenmodes of the hyperbolic operator and shift the component associated with $\Pi_1$ as we define $U = (U_1, \dots, U_J) \in \R^{NJ}$ by
\be\label{de-U} U_1 := e^{- i \theta} \op_\e(\Pi_1) \dot u, \quad U_j := \op_\e(\Pi_j) \dot u, \,\, 2 \leq j \leq J,
\ee
with notation
\be \label{def:theta}
 \theta := (k \cdot x - \o t)/\e.
\ee
The projectors $\Pi_j(\xi)$ are eigenprojectors of $A(i \xi) + A_0$ (Assumption \ref{ass:spectral}), and $\op_\e(\Pi_j)$ are the associated Fourier multipliers \eqref{quantiz}. The perturbation unknown $\dot u$ can be reconstructed from $U$ via
 \begin{equation} \label{dotu-U}
  \dot u = e^{i \theta} U_1 + \sum_{2 \leq j \leq J} U_j.
 \end{equation}
From \eqref{eq:dot-u}-\eqref{de-U}, we find that $U = (U_1, U_2 \,| \, U_3, \dots, U_J) \in \R^{2N} \times \R^{(J-2)N}$ solves
\begin{equation} \label{sys U}
 \d_t U + \frac{1}{\e} \op_\e( i \mathcal{A}) U  = \frac{1}{\sqrt\e} \op_\e({\mathcal B}) U + F.
\end{equation}
The symbol of the propagator is the diagonal matrix
\be \label{def:calA} \mathcal{A} := {\rm diag}\, \big( \l_{1,+1} - \o, \l_2 \, | \, \l_3, \dots, \l_J \big),\ee
with the notation $\l_{1,+1}(\xi) := \l_1(\xi + k),$ where $k$ is the spatial frequency of the WKB datum. More generally, we will often use the notation
\be \label{def:shift} \s_{+p}(x,\xi) := \s(x, \xi + p k).\ee
In the symbol ${\mathcal A},$ the frequency shift is caused by the fast spatial oscillation in the definition of $U_1:$ there holds indeed the identity
 $$\op_\e(\s) (e^{i p \theta} v) = e^{i p \theta} \op_\e(\s_{+p}) v, \qquad \mbox{ for all $\s, p, v.$}$$
The symbol of the singular source term is
$$ {\mathcal B} := \left(\begin{array}{c|c} {\mathcal B}_{[1,2]} & {\mathcal B}_{[1,2,J]} \\ \hline {\mathcal B}_{[J,1,2]} & {\mathcal B}_{[J,J]} \end{array}\right),$$
where the top left block is
$$ {\mathcal B}_{[1,2]} := \sum_{p =\pm1} \left(\begin{array}{cc} e^{ip
\theta} \Pi_{1,+(p+1)} B_p \Pi_{1,+1}  & e^{i(p-1) \theta} \Pi_{1,+p} B_p \Pi_2 \\ e^{i(p+1) \theta} \Pi_{2,+(p+1)} B_p \Pi_{1,+1} & e^{ip\th}\Pi_{2,+p} B_p \Pi_2 \end{array}\right) \in \R^{2N \times 2N},$$
using notation $\Pi_{j,+q}(\xi) := \Pi_j(\xi + q k),$ for $q \in \Z,$ in accordance with \eqref{def:shift}, and
\be \label{def:Bp}
 B_p := B(u_{0,p}), \qquad p \in \{-1,1\},
\ee
where $u_{0,\pm 1}$ are the leading amplitudes in the WKB solution, introduced in \eqref{def-ua}. The other blocks in the source are
$$\begin{aligned} {\mathcal B}_{[1,2,J]} & := \displaystyle
\sum_{p=\pm1}  \left(\begin{array}{ccc} e^{i(p-1) \theta}\Pi_{1,+p}B_p \Pi_{3} & \dots & e^{i(p-1) \theta}\Pi_{1,+p}B_p\Pi_{J}\\ e^{ip\th}\Pi_{2,+p}B_p\Pi_3 & \dots & e^{ip\th}\Pi_{2,+p}B_p\Pi_J\end{array}\right) \in \R^{2N \times (J-2)N}, \\ {\mathcal B}_{[J,1,2]}  & :=  {\displaystyle
\sum_{p=\pm1}} \left(\begin{array}{cc}  e^{i(p+1)\th}\Pi_{3,+(p+1)}B_p\Pi_{1,+1} &
e^{ip\th}\Pi_{3,+p}B_p\Pi_2 \\ \vdots & \vdots \\e^{i(p+1)\th}\Pi_{J,+(p+1)}B_p\Pi_{1,+1} &
e^{ip\th}\Pi_{J,+p}B_p\Pi_2  \end{array}\right) \in \R^{(J-2) N \times 2N},
\end{aligned}$$
and
$$ {\mathcal B}_{[J,J]}  := {\displaystyle  \sum_{p=\pm1}} \Big( e^{ip\th}\Pi_{i,+p}B_p\Pi_j\Big)_{3 \leq i,j, \leq J} \in \R^{(J-2)N \times (J-2)N}.$$
In \eqref{sys U}, the remainder $F$ is the sum of
\begin{itemize}
 \item the quadratic term
  $$ \e^{ - 1/2} \Big( e^{- i \theta} \op_\e(\Pi_1) B(\dot u, \dot u), \op_\e(\Pi_2) B(\dot u, \dot u), \dots, \op_\e(\Pi_J) B(\dot u, \dot u)\Big),$$
 \item the projected WKB remainder
  $$ \e^{K_a} \Big(  e^{- i \theta} \op_\e(\Pi_1) r_a^\e, \op_\e(\Pi_2) r_a^\e, \dots, \op_\e(\Pi_J) r_a^\e\Big),$$
 \item the contribution of the higher-order WKB terms $v_a^\e:$
  $$ \Big( e^{- i \theta} \op_\e(\Pi_1) B(v_a^\e)\dot u, \op_\e(\Pi_2) B(v_a^\e) \dot u, \dots, \op_\e(\Pi_J) B(v_a^\e )\dot u\Big),$$
 \item and remainder terms arising from compositions of pseudo-differential operators; these terms have the form
  $$ \e^{-1/2} e^{i q_1 \theta} \Big( \op_\e(\Pi_{i,+q_2}) B_{q_3} \op_\e(\Pi_{j,+q_4}) \dot u - \op_\e\Big(\Pi_{i,+q_2}  B_{q_3} \Pi_{j,+q_4}\Big) \dot u \Big),$$
  where $q_j \in \Z$ and $B_{q_3}$ is defined in \eqref{def:Bp}.
    \end{itemize}
From this description of $F,$ we deduce the following bound:

\begin{lemm}\phantomsection \label{lem:bd-F} There holds for $s \geq 0$ the bound
   $$\| F \|_{\e,s} \lesssim (1 + \e^{- 1/2} |\dot u|_{L^\infty}) \|\dot u\|_{\e,s} + \e^{K_a},$$
   and the bound
   $$ |\hat F|_{L^1} \lesssim (1 + \e^{-1/2} |{\mathcal F} \dot u|_{L^1}) |{\mathcal F} \dot u|_{L^1} + \e^{K_a}.$$
 \end{lemm}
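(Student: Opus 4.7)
The plan is to estimate each of the four contributions to $F$ listed just above the lemma statement, in both the semiclassical Sobolev norm $\|\cdot\|_{\e,s}$ and the Wiener-type norm $|\hat \cdot|_{L^1}$. The phases $e^{\pm i q \theta}$ are pure phase factors, so they leave both norms invariant and can be ignored in the estimates. Likewise the Fourier multipliers $\op_\e(\Pi_j)$ are bounded on $L^2$ and on every $H^s$ by Calder\'on--Vaillancourt (the bounds \eqref{bd:spectral} put $\Pi_j \in S^0$), and they trivially act as $L^1$-bounded Fourier multipliers since $\Pi_j$ is $L^\infty$ in $\xi$. I will therefore discard the projectors and phases and estimate the ``cores'' of the four contributions.

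For the quadratic term $\e^{-1/2} B(\dot u, \dot u)$, bilinearity of $B$ and the standard Moser--type inequality \eqref{moser:intro} give $\|B(\dot u,\dot u)\|_{\e,s} \lesssim |\dot u|_{L^\infty}\|\dot u\|_{\e,s}$; this produces the summand $\e^{-1/2}|\dot u|_{L^\infty}\|\dot u\|_{\e,s}$. For the Wiener norm, since $\CalF L^1$ is a Banach algebra, $|\CalF B(\dot u, \dot u)|_{L^1} \lesssim |\CalF \dot u|_{L^1}^2$, which produces the summand $\e^{-1/2}|\CalF\dot u|_{L^1}^2 = \e^{-1/2}|\CalF\dot u|_{L^1}\cdot|\CalF\dot u|_{L^1}$. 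For the projected WKB remainder $\e^{K_a} r_a^\e$, Assumption \ref{ass-u-a}, and specifically the uniform bound \eqref{est:va-ra}, immediately yields contributions $\lesssim \e^{K_a}$ in both norms. For the corrector term $B(v_a^\e)\dot u$, the amplitude $v_a^\e$ is trigonometric polynomial in $\theta$ with smooth coefficients, so by Assumption \ref{ass-u-a} both $|v_a^\e|_{L^\infty}$ and $|(\e\d_x)^s v_a^\e|_{L^\infty}$ are uniformly bounded in $\e$; the product estimate \eqref{product2:intro} therefore gives $\|B(v_a^\e)\dot u\|_{\e,s} \lesssim \|\dot u\|_{\e,s}$, and for the Wiener norm we use that $\CalF v_a^\e$ is a finite sum of $\e$-independent Dirac masses times Schwartz factors (hence bounded in $L^1$), so $|\CalF (B(v_a^\e)\dot u)|_{L^1} \lesssim |\CalF \dot u|_{L^1}$.

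The main work lies in the last family of contributions, the pseudo-differential composition remainders of the form
$$ R = \e^{-1/2}\bigl( \op_\e(\Pi_{i,+q_2}) B_{q_3} \op_\e(\Pi_{j,+q_4}) - \op_\e(\Pi_{i,+q_2} B_{q_3}\Pi_{j,+q_4})\bigr) \dot u,$$
where $B_{q_3}=B(u_{0,q_3})$ depends only on $(t,x)$. Since $B_{q_3}$ is a multiplication operator whose symbol has bounded $(\e\d_x)$-derivatives by the regularity of $u_{0,\pm 1}$ in Assumption \ref{ass-u-a}, and $\Pi_{j,+q_4} \in S^0$, the semiclassical symbolic calculus recalled in Appendix \ref{app:symbols} yields a composition remainder of order $\e$ bounded from $H^s$ into $H^s$ (in fact regularizing in $\xi$), so that $\|R\|_{\e,s} \lesssim \e^{1/2} \|\dot u\|_{\e,s}\lesssim \|\dot u\|_{\e,s}$. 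The analogous $\CalF L^1$ bound follows from the same calculus coupled with Proposition \ref{prop:actionH} in Appendix \ref{app:symbols}, which furnishes operator bounds in terms of spatial $L^1$ norms of the symbol and so controls Fourier multipliers acting on $\CalF L^1$.

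Summing the four contributions gives the two displayed inequalities, with implicit constants depending only on the fixed parameters of the problem (dimensions, $|B|$, the WKB profile bounds from Assumption \ref{ass-u-a}, and finitely many symbol seminorms of the $\Pi_j$). The only slightly delicate point is ensuring that the composition remainder is indeed $O(\e)$ in operator norm on $H^s$ and $\CalF L^1$ simultaneously; this is precisely the content of the abstract pseudo-differential estimates collected in Appendix \ref{app:symbols}, so the proof reduces to citing them.
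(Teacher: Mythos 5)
Your proposal is correct and follows essentially the same route as the paper: term-by-term estimates using the Moser law \eqref{product} for the quadratic term, the asymmetric product law \eqref{product2} together with \eqref{est:va-ra} for the $B(v_a^\e)\dot u$ term, and the tensor-product (Fourier-multiplier) calculus of Appendix \ref{app:symbols} to show the composition remainders are $O(\e)$, with the Banach-algebra property handling the ${\mathcal F}L^1$ side. One citation slip: for the ${\mathcal F}L^1$ bound of the composition remainders, Proposition \ref{prop:actionH} is not the right tool (it only yields weighted Sobolev bounds); the estimate you need is the ${\mathcal F}L^1$ commutator bound \eqref{est:fouriermultFL1}, together with \eqref{action:fouriermultFL1}, which is what the paper invokes.
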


In Lemma \ref{lem:bd-F} we are using the semi-classical Sobolev norms $\| \cdot \|_{\e,s}$ introduced in \eqref{def:es}.

\begin{proof} By \eqref{action:fourier-mult} and \eqref{product}, the quadratic terms satisfy for $s \geq 0:$
$$
\e^{ - 1/2} \big\| \op_\e(\Pi_j)  B(\dot u, \dot u) \big\|_{\e,s} \lesssim \e^{ - 1/2} |\dot u|_{L^\infty} \|\dot u\|_{\e,s}.
$$
 By \eqref{action:fourier-mult} and the product law \eqref{product2},
 $$\big\| \op_\e(\Pi_j)  B(v_a^\e) \dot u \big\|_{\e,s} \lesssim |v_a^\e|_{L^\infty} \|\dot u\|_{\e,s} + |(\e \d_x^s) v_a^\e|_{L^\infty} \| \dot u \|_{L^2},$$
 and with \eqref{est:va-ra}, this gives
 $$
 \| \op_\e(\Pi_j)  B(v_a^\e) \dot u \|_{\e,s} \lesssim \|\dot u\|_{\e,s}.
 $$
By the commutator estimate \eqref{est:fourier-mult}, given $v \in H^s,$ if $s_a$ is large enough then
  $$ \big\| \op_\e(\Pi_{i,+q_2}) B_{q_3} v - \op_\e\big(\Pi_{i,+q_2}  B_{q_3}\big) v \|_{\e,s} \lesssim \e | u_{0,\pm1} |_{H^{s_a}} \| v \|_{\e,s},$$
  We apply this bound to $v = \op_\e(\Pi_{j,+q_4}) \dot u,$ satisfying $\| v \|_{\e,s} \leq \| \dot u \|_{\e,s}$ and find
  $$ \big\| e^{i q_1 \theta} \e^{-1/2} \Big( \op_\e(\Pi_{i,+q_2}) B_{q_3} \op_\e(\Pi_{j,+q_4}) \dot u -  \op_\e\big(\Pi_{i,+q_2}  B_{q_3} \Pi_{j,+q_4}\big) \dot u \Big) \big\|_{\e,s} \lesssim \e^{1/2} \| u  \|_{\e,s}.$$
The bound in ${\mathcal F}L^1$ is found similarly, using $|B(\dot u, \dot u)|_{{\mathcal F}L^1} \lesssim |{\mathcal F} \dot u|_{L^1}^2,$ and \eqref{est:fouriermultFL1}.
\end{proof}

 The point of the change of variable \eqref{de-U} is that ${\mathcal R}_{12} = \{ \o = \l_{1,+1} - \l_2\}$ is now included in the locus of coalescing eigenvalues of symbol ${\mathcal A}.$

 We will see in the next paragraph that, under Assumption \ref{ass:transp}, in the source term ${\mathcal B}$ only the top left block ${\mathcal B}_{[1,2]}$ matters.

\subsubsection{Normal form reduction}\label{normal-form1}

The resonant set ${\mathcal R}_{12}$ introduced in Definition \ref{def-reson} is bounded by Assumption \ref{ass:transp}, hence compact by continuity of the eigenvalues. For $h > 0,$ to be chosen small enough below\footnote{This will be done in Section \ref{sec:end-insta}, at the end of this proof; see Remark \ref{rem:parameters} on page \pageref{rem:parameters}.}, we consider the neighborhood ${\mathcal R}_{12}^h$ of ${\mathcal R}_{12},$ corresponding to resonant phases bounded by $h$:
\begin{equation} \label{def:Rh}
 {\mathcal R}_{12}^h := \big\{ \xi \in \R^d, \quad |\l_1(\xi + k) - \l_2(\xi) - \o| \leq h \big\},
 \end{equation}
 We let $\chi_0$ be a smooth cut-off function in frequency space, such that $0 \leq \chi_0 \leq 1,$ $\chi_0 \equiv 1$ on a neigborhood of ${\mathcal R}_{12}^{h},$ and $\chi_0 \equiv 0$ further away from the $(1,2)$ resonant locus, for instance on $\R^d \setminus {\mathcal R}_{12}^{2h}.$

The top left block in the symbol ${\mathcal B}$ of the singular source term decomposes as
 $$  {\mathcal B}_{[1,2]}  = {\mathcal B}^r + {\mathcal B}^{nr},$$
 with the notation
 \be \label{def:Br} {\mathcal B}^r := \left(\begin{array}{cc} 0 & \Pi_{1,+1} B_1 \Pi_2 \\ \Pi_2 B_{-1} \Pi_{1,+1} & 0 \end{array}\right).\ee
The following proposition will imply that the operator with symbol
 $$ {\mathcal D} := \left(\begin{array}{cc} (1 - \chi_0) {\mathcal B}^r + {\mathcal B}^{nr} & {\mathcal B}_{[1,2,J]} \\ {\mathcal B}_{[J,1,2]} & {\mathcal B}_{[J,J]} \end{array}\right),$$
 can be eliminated from the evolution equation \eqref{sys U} in $U,$ up to negligible {\it uniform remainders}, defined as follows:

 \label{uniform remainder}
 \begin{defi} \phantomsection\label{def:ur} By uniform remainder, we mean any family $R_0 = R_0(\e,t)$ of linear bounded operators $H^s \to H^s,$ for $t \in [0,T_a],$  where $T_a$ is an existence time for $u_a,$ with semi-classical Sobolev norms that are bounded in $\e$ and $t:$ for some $C > 0,$ for all $v \in H^s,$ all $\e > 0,$ all $t \in [0,T_a],$ there holds
$ \| R_0 v \|_{\e,s} \leq C \| v \|_{\e,s}.$
\end{defi}

\begin{prop}\phantomsection\label{normal-form-prop}Under {\rm Assumption
\ref{ass:transp}}, there exists $Q \in S^0,$ with $\d_t Q \in S^0,$ such that \be\label{homo
eq2}\e [\d_t,\op_\e(Q)]+ [\op_\e(i \mathcal{A}),\op_\e(Q)] = \op_\e({\mathcal D}) +\e
R_0,\ee
where $R_0$ is a uniform remainder.
\end{prop}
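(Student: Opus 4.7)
The plan is to reduce the operator identity \eqref{homo eq2} to a pointwise symbolic identity $i[\mathcal{A},Q] = \mathcal{D}$, solve this symbolic identity entry by entry by dividing by the relevant (shifted) difference of eigenvalues, verify that the resulting $Q$ belongs to $S^0$ using the cut-off and the transparency assumption, and finally close the operator identity modulo $\e$ times a uniform remainder by semiclassical composition.

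I would first decompose each entry of $\mathcal{D}$ as a finite sum of pieces $e^{iq\theta}\tilde m_{\alpha\beta,q}(t,x,\xi)$ with $\tilde m_{\alpha\beta,q} \in S^0$ and $q \in \{-2,-1,0,1,2\}$, as readable from the blockwise formulas for $\mathcal{B}_{[1,2]}, \mathcal{B}_{[1,2,J]}, \mathcal{B}_{[J,1,2]}, \mathcal{B}_{[J,J]}$ (with the cut-off factor $1-\chi_0$ attached to the two entries of $\mathcal{B}^r$). Making the ansatz $Q_{\alpha\beta}(t,x,\xi) = \sum_q e^{iq\theta}\tilde q_{\alpha\beta,q}(t,x,\xi)$ with the same phase pattern and using the intertwining identity $\op_\e(\sigma)(e^{iq\theta} v) = e^{iq\theta}\op_\e(\sigma_{+q}) v$, the leading symbol of $[\op_\e(i\mathcal{A}), \op_\e(Q)]$ on each block and each oscillating component becomes
\[
 i \bigl( \mathcal{A}_{\alpha\alpha,+q}(\xi) - \mathcal{A}_{\beta\beta}(\xi) \bigr)\, \tilde q_{\alpha\beta,q}(t,x,\xi),
\]
and solving $i[\mathcal{A},Q] = \mathcal{D}$ reduces to the pointwise division problem of equating this to $\tilde m_{\alpha\beta,q}$.

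I would then check, piece by piece, that the quotient lies in $S^0$. For $q=0$ on the off-diagonal of the top-left block, the numerator is $(1-\chi_0)\Pi_{1,+1}B_1\Pi_2$ (resp.\ $(1-\chi_0)\Pi_2 B_{-1}\Pi_{1,+1}$) and the denominator is the $(1,2)$ resonant phase $\lambda_{1,+1}-\omega-\lambda_2$; the factor $1-\chi_0$ vanishes on the neighborhood of $\mathcal{R}_{12}^h$ where the denominator has its only zeros, and off $\mathcal{R}_{12}^h$ the denominator is bounded from below in absolute value by $h$, so the quotient is smooth with $S^0$ seminorms controlled by $h^{-1}$ and the $S^0$ seminorms of the numerator. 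For every other block and every $q$, the denominator $\mathcal{A}_{\alpha\alpha,+q}-\mathcal{A}_{\beta\beta}$ is, after a translation $\xi \mapsto \xi + q'k$ for a suitable $q'$, a resonant phase of the form $\lambda_{i'}(\cdot+k)-\omega-\lambda_{j'}$ with $(i',j') \neq (1,2)$, and the numerator $\tilde m_{\alpha\beta,q}$ is (up to a smooth scalar amplitude $g(t,x)$ or $g(t,x)^*$) the corresponding $(i',j')$ interaction coefficient; Assumption \ref{ass:transp}(ii) then provides the pointwise factorization of the denominator out of the numerator, so the quotient is smooth and $S^0$, with derivative bounds obtained from $\lambda_j \in S^1$, $\Pi_j \in S^0$, and Leibniz, the factorization removing the division at the zero set of the denominator.

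Time regularity $\d_t Q \in S^0$ follows from $u_{0,\pm 1} \in C^1([0,T_a], H^{s_a})$ (Assumption \ref{ass-u-a}), hence $B_p \in C^1$ in $t$. The operator identity \eqref{homo eq2} is then obtained from $[\d_t,\op_\e(Q)] = \op_\e(\d_t Q)$ together with the standard semiclassical composition formula
\[
 [\op_\e(i\mathcal{A}), \op_\e(Q)] = \op_\e\bigl( i[\mathcal{A},Q]\bigr) + \e\, \op_\e(r_1),
\]
valid with $r_1 \in S^0$ because $\mathcal{A} \in S^1$ and $Q \in S^0$, so that $R_0 := \op_\e(r_1) + \op_\e(\d_t Q)$ is bounded $H^s \to H^s$ uniformly in $\e$ and $t$ by Calder\'on--Vaillancourt, i.e.\ is a uniform remainder in the sense of Definition \ref{def:ur}. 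The main obstacle I anticipate is in the second step: for each oscillating factor $q \neq 0$ one must identify the translated denominator with a resonant phase $\lambda_{i'}(\cdot+k)-\omega-\lambda_{j'}$ covered by Assumption \ref{ass:transp}(ii) and match the numerator with the corresponding translated $(i',j')$ interaction coefficient; the cases $|q|=2$ are the most delicate, since the translation involves $\pm 2k$ and one has to keep careful track of which of the two families of interaction coefficients in Definition \ref{def-reson} is being invoked.
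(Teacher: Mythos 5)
There is a genuine gap, and it sits exactly where your setup diverges from the structure of \eqref{homo eq2}. You reduce the problem to the symbolic identity $i[\mathcal{A},Q]=\mathcal{D}$ and treat $\e[\d_t,\op_\e(Q)]$ as an $O(\e)$ error, writing $[\d_t,\op_\e(Q)]=\op_\e(\d_t Q)$ with $\d_t Q\in S^0$ and absorbing it into $\e R_0$. But your ansatz $Q_{\a\b}=\sum_q e^{iq\theta}\tilde q_{\a\b,q}$ carries the phases $e^{iq\theta}$ with $\theta=(k\cdot x-\o t)/\e$, so $\d_t$ acting on these phases produces a factor $-iq\o/\e$: the term $\e[\d_t,\op_\e(Q)]$ contributes $\sum_q e^{iq\theta}(-iq\o)\,Q_q$ at order \emph{one}, not at order $\e$ (and, for the same reason, $\d_t Q$ is of size $1/\e$, so your claim ``$\d_t Q\in S^0$ hence $\op_\e(\d_t Q)$ goes into the uniform remainder'' is inconsistent with your own ansatz). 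This $-iq\o Q_q$ term must be kept in the homological equation; it is precisely what turns the denominator into the genuine resonant phase $-q\o+\mu_{\a,+q}-\mu_\b$ rather than your $\mathcal{A}_{\a\a,+q}-\mathcal{A}_{\b\b}$.

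The consequence is not cosmetic. For the blocks with $q\neq 0$ (all of $\mathcal{B}_{[J,J]}$, the diagonal entries of $\mathcal{B}_{[1,2]}$, and the $|q|=2$ pieces of the off-diagonal blocks), your denominator $\l_i(\xi+qk)-\l_j(\xi)$ differs from the true phase $\l_i(\xi+qk)-q\o-\l_j(\xi)$ by $q\o$, and no translation $\xi\mapsto\xi+q'k$ can manufacture the missing $\mp q\o$ (the shift $-\o$ only appears ``for free'' in $\mu_1=\l_{1,+1}-\o$, i.e.\ in blocks carrying the index $1$ with $q=0$). So your identification of the denominators with resonant phases of the form $\l_{i'}(\cdot+k)-\o-\l_{j'}$ fails, Assumption \ref{ass:transp}(ii) gives no factorization of the numerators on the zero set of your (wrong) denominators, and the division generally produces unbounded quotients. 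Once you restore the $-q\o$ contribution from $\e[\d_t,\op_\e(Q)]$, the homological equations become $i(-q\o+\mu_{\a,+q}-\mu_\b)\,\tilde q_{\a\b,q}=\tilde m_{\a\b,q}$, and then your case analysis (cut-off $1-\chi_0$ near $\mathcal{R}_{12}$ for the $q=0$ off-diagonal top-left entries, transparency plus a $\xi$-translation for the remaining blocks, tensor structure and \eqref{est:fourier-mult} for the $O(\e)$ commutator errors) goes through essentially as in the paper.
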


\begin{proof} The source ${\mathcal D}$ contains oscillations in $e^{i \ell \theta},$ with $|\ell| \leq 2.$ We denote ${\mathcal D} = \sum_{|\ell| \leq 2} e^{i \ell \theta} {\mathcal D}_\ell.$ Accordingly, we look for $Q$ in the form
 \be \label{def:Q0} Q(\e,t,x,\xi) = \sum_{|\ell| \leq 2} e^{i \ell \theta} Q_\ell(t,x,\xi), \qquad Q_\ell(t,x,\xi) \in \R^{JN \times JN},\ee
 where the symbols $Q_\ell$ are tensor products:
 \begin{equation} \label{def:Q} Q_\ell(t,x,\xi) = \k_\ell(t,x) \tilde Q_\ell(\xi), \quad \mbox{$\k_\ell$ scalar, $\k_\ell \in C^1([0,T_a], H^{s_a}),$ $\tilde Q_\ell \in S^0.$}\end{equation}
  Such symbols $Q$ satisfy $Q \in S^0,$ $\d_t Q \in S^0,$ and, by \eqref{action:fourier-mult} and \eqref{product2}, the associated operators $\op_\e(Q)$ are uniform remainders.

\medskip

In the coordinatization \eqref{sys U}, the variable $U$ belongs to $\C^{N J},$ so that we are looking for the Fourier coefficients $Q_\ell$ of $Q$ in the form of $\C^{N J \times NJ}$ matrices, which depend on $(\e,t,x,\xi).$ We will denote $Z_{(i,j)} \in \C^{N \times N}$ the $(i,j)$ block of a matrix $Z \in \C^{NJ \times NJ}.$ In particular, we will use notation $(Q_{\ell})_{(i,j)} \in \C^{N \times N}$ to denote block $(i,j)$ of $Q_{\ell}.$

\medskip

With $Q$ in the form \eqref{def:Q0}, there holds
 $$ \e [\d_t,\op_\e(Q)] = \sum_{|\ell| \leq 2} e^{i \ell \theta} \Big( - i \ell \o Q_\ell + \e \op_\e(\d_t Q_\ell) \Big).$$
The symbol ${\mathcal A}$ being diagonal, there holds
 $$ \begin{aligned} {} [\op_\e({\mathcal A}), \op_\e(Q) ]_{(i,j)} & = \op_\e(\mu_i) \op_\e(Q_{(i,j)} -  \op_\e(Q_{(i,j)}) \op_\e(\mu_j) \\ & = \sum_{|\ell| \leq 2} e^{i \ell \theta} \Big( \op_\e(\mu_{i,+\ell}) \op_\e((Q_\ell)_{i,j}) - \op_\e((Q_\ell)_{i,j}) \op_\e(\mu_j) \Big), \end{aligned}$$
where the $\mu_j$ are the diagonal entries of ${\mathcal A},$ so that
\be \label{def:mu}
 \mu_1 := \l_{1,+1} - \o, \qquad \mu_j := \l_j, \quad \mbox{for $j \geq 2.$}
\ee
By \eqref{def:Q},
 $$ \op_\e(\mu_{i,+\ell}) \op_\e((Q_\ell)_{i,j}) = \op_\e(\mu_{i,+\ell}) \big( \k_\ell \op_\e\big( \, (\tilde Q_\ell)_{(i,j)}\, \big) \big),$$
 and with the commutator estimate \eqref{est:fourier-mult} and the assumed regularity of $\k_\ell,$
 $$ \op_\e(\mu_{i,+\ell}) \big( \k_\ell \op_\e(\tilde Q_\ell))_{(i,j)} \big) = \op_\e(\mu_{i,+\ell} (Q_\ell)_{i,j}) + \e R_0,$$
 where $R_0$ is a uniform remainder. This implies
$$
  [\op_\e({\mathcal A}), \op_\e(Q) ]_{(i,j)} = \sum_{|q| \leq 2} e^{i q \theta} \op_\e\Big( \big( \mu_{i,+q} - \mu_j \big) (Q_q)_{(i,j)} \Big) + \e R_0,
$$
 where $R_0$ is a uniform remainder.

From the above, we deduce that in order to solve \eqref{homo eq2}, it is sufficient to solve
 \begin{equation} \label{eq:homo} i \big(- \ell \o + \mu_{i,+\ell} - \mu_j\big) (Q_\ell)_{(i,j)} = ({\mathcal D}_\ell)_{(i,j)}, \qquad |\ell| \leq 2, \quad 1 \leq i,j \leq NJ.
 \end{equation}
 We consider equation \eqref{eq:homo} for all possible values of $\ell,$ using the definition of ${\mathcal B}$ in Section \ref{sec:proj}:

\smallskip

$\bullet$ For $\ell = 0,$ equation \eqref{eq:homo} reduces to the system
$$ \begin{aligned}
  i \big( \l_{1,+1} - \o - \l_2 \big) (Q_0)_{(1,2)} & = (1 - \chi_0) \Pi_{1,+1} B_1 \Pi_2 \quad \mbox{\rm (top left block)},\\
  i \big( \l_2 - \l_{1,+1} + \o \big) (Q_0)_{(2,1)} & = (1 - \chi_0) \Pi_{2} B_{-1} \Pi_{1,+1} \quad \mbox{\rm (top left block)}, \\
  i \big( \l_{1,+1} - \o - \l_j \big) (Q_0)_{(1,j)} & = \Pi_{1,+1} B_1 \Pi_j, \qquad 3 \leq j \leq J \quad \mbox{\rm (top right block)},\\
 i \big( \l_j - \l_{1,+1} + \o \big) (Q_0)_{(j,1)} & = \Pi_j B_{-1} \Pi_{1,+1}, \qquad 3 \leq j \leq J \quad \mbox{\rm (bottom left block)}.
  \end{aligned}$$
The first two equations involve the $(1,2)$ resonance relation. On the support of $1 - \chi_0,$ the phase $\l_{1,+1} - \o - \l_2$ is bounded away from zero. Thus we can divide the right-hand sides by the phase and thereby define $(Q_0)_{(1,2)}$ and $(Q_0)_{(2,1)}$ as an element of $S^0.$ The last two equations involve the $(1,j)$ resonance relation. The phase $\l_{1,+1} - \o - \l_j$ might vanish for some $\xi \in \R^d,$ but the transparency assumption (Assumption \ref{ass:transp}(ii)) ensures that this phase factors out in the right-hand sides, so that we can solve for $(Q_0)_{(1,j)}$ and $(Q_0)_{(j,1)}$ in $S^0.$

\smallskip

$\bullet$ For $|\ell| = 1,$ equation \eqref{eq:homo} reduces to
$$ \begin{aligned}
  i \big( \l_{1,+(\ell+1)} - \ell \o - \l_{1,+1} \big) (Q_\ell)_{(1,1)} & = \Pi_{1,+(\ell+1)} B_q \Pi_{1,+1}\quad \mbox{\rm (top left block)}, \\
  i \big( \l_{i,+\ell} - \ell \o - \l_j\big) (Q_\ell)_{(i,j)} &  = \Pi_{i,+\ell} B_\ell \Pi_j, \qquad 2 \leq i,j \leq J \quad \mbox{\rm (all blocks)} \end{aligned}$$
The first equation involves the $(1,1)$ resonance relation, and the second involves the $(i,j)$ resonance relation. We use again Assumption \ref{ass:transp} to solve for the corresponding coefficient of $Q$ in $S^0.$

\smallskip

$\bullet$ For $|\ell| = 2,$ equation \eqref{eq:homo} reduces to
$$ \begin{aligned}
 i ( \l_{1,-1} + \o - \l_j ) (Q_{-1})_{(1,j)} & = \Pi_{1,-1} B_{-1} \Pi_j, \quad 2 \leq j \leq J,\\
 i ( \l_{j,+2} - \o - \l_{1,+1}) (Q_1)_{(j,1)} & = \Pi_{j,+2} B_1 \Pi_{1,+1},  \quad 2 \leq j \leq J.\end{aligned}$$
In both equation we find the $(j,1)$ resonant phase as a prefactor in the left-hand side and an interaction coefficient associated with $(j,1)$ in the right-hand side; both phase and interaction coefficient are translated by $\pm \o.$ For instance, the first equation can be written
 $$
 - i \big( \l_j(\xi + \o) - \l_1(\xi) - \o \big) (Q_{-1}(t,x,\xi + k))_{(1,j)} = \Pi_{1}(\xi) B(u_{0,-1}(t,x)) \Pi_j(\xi + k),$$
 which we can solve for $(Q_{-1})_{(1,j)} \in S^0,$ by Assumption \ref{ass:transp}.
\end{proof}

\begin{rema}\phantomsection \label{rem:Q} With $Q$ defined in the above Proposition, there holds $Q_{(i,j)} \in S^{-1},$ unless $\l_i$ and $\l_j$ are bounded, or $i = j.$ In both these cases, $Q_{(i,j)} \in S^0.$ Indeed, $\l_i(\xi + k) - \l_i(\xi)$ is typically bounded in $\xi,$ as shown in Lemma {\rm \ref{rem:sw}}.
\end{rema}

By estimates \eqref{action:fourier-mult} and \eqref{product2}, with the symbol $Q$ given in Proposition \ref{normal-form-prop} is associated an operator $\op_\e(Q),$ which is a uniform remainder, in particular satisfies $\| \op_\e(Q) v \|_{\e,s} \lesssim \| v \|_{\e,s},$ for all $v \in H^s.$ For $\e$ small enough, $\Id + \sqrt \e \op_\e(Q)$ is invertible for all $t.$ We consider the change of variable
 \begin{equation} \label{def:check-u}
  \check U(t) := \Big(\Id + \sqrt \e \op_\e\big(Q(\sqrt \e t)\big)\Big)^{-1}U(\sqrt \e t),
  \end{equation}
corresponding to a normal form reduction and a rescaling in time.

 \begin{coro}\phantomsection \label{cor:normal-form} The equation in $\check U$ is
 \begin{equation} \label{eq:check-u}
 \d_t \check U + \frac{1}{\sqrt \e} \op_\e(i {\mathcal A}) \check U = \op_\e(\check {\mathcal B}) \check U + \sqrt \e \check F, \qquad \check {\mathcal B} := \left(\begin{array}{cc} \chi_0(\xi) {\mathcal B}^r(\sqrt \e t,x,\xi) & 0 \\ 0 & 0 \end{array}\right)
 \end{equation}
 where ${\mathcal B}^r$ is defined in \eqref{def:Br}, and $\check F$ satisfies the same bounds as $F$ in Lemma {\rm \ref{lem:bd-F}.}
\end{coro}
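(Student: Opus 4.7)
The proof is a direct conjugation calculation: one rescales time and conjugates the system \eqref{sys U} by the gauge $M(t) := \Id + \sqrt\e \op_\e(Q(\sqrt\e t))$, and the cohomological identity \eqref{homo eq2} of Proposition \ref{normal-form-prop} is designed precisely to kill, after conjugation, the non-resonant symbol $\op_\e(\mathcal{D})$, leaving only $\op_\e(\check{\mathcal{B}})$ at order $O(1)$. By the uniform remainder property of $\op_\e(Q)$ (a consequence of \eqref{action:fourier-mult}, \eqref{product2}, and the tensor form \eqref{def:Q}), the gauge $M(t)$ is invertible on $H^s$ for $\e$ small, with Neumann expansion $M^{-1} = \Id - \sqrt\e \op_\e(Q) + \e R$, where $R$ is a uniform remainder.

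First I would rescale time. Setting $V(t) := U(\sqrt\e t)$, the system \eqref{sys U} becomes, after multiplication by $\sqrt\e$,
\begin{equation*}
 \d_t V + \tfrac{1}{\sqrt\e}\op_\e(i\mathcal{A}) V = \op_\e(\mathcal{B}(\sqrt\e t,\cdot)) V + \sqrt\e F(\sqrt\e t).
\end{equation*}
From $\check U = M^{-1} V$ I compute $\d_t \check U = M^{-1}\d_t V - M^{-1}(\d_t M)\check U$. Using $\d_t M = \e \op_\e((\d_t Q)(\sqrt\e t))$ and the elementary identity $M^{-1}\op_\e(i\mathcal{A}) M = \op_\e(i\mathcal{A}) + \sqrt\e M^{-1}[\op_\e(i\mathcal{A}),\op_\e(Q)]$, I obtain
\begin{equation*}
 \d_t \check U + \tfrac{1}{\sqrt\e}\op_\e(i\mathcal{A}) \check U = -M^{-1}\bigl( [\op_\e(i\mathcal{A}),\op_\e(Q)] + \e \op_\e(\d_t Q)\bigr)\check U + M^{-1}\op_\e(\mathcal{B}) M \check U + \sqrt\e M^{-1} F.
\end{equation*}

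Next I would invoke Proposition \ref{normal-form-prop} at time $\sqrt\e t$: the parenthetical term equals $\op_\e(\mathcal{D}) + \e R_0$. By direct inspection of the block definitions in Sections \ref{sec:proj}--\ref{normal-form1}, one has $\mathcal{B} - \mathcal{D} = \check{\mathcal{B}}$. Writing $\op_\e(\mathcal{B}) M - \op_\e(\mathcal{D}) = \op_\e(\check{\mathcal{B}}) + \sqrt\e \op_\e(\mathcal{B})\op_\e(Q)$ and then $M^{-1}\op_\e(\check{\mathcal{B}}) = \op_\e(\check{\mathcal{B}}) + \sqrt\e R'$ for a uniform remainder $R'$, the equation collapses to \eqref{eq:check-u} with
\begin{equation*}
 \sqrt\e \check F = \sqrt\e R' \check U + \sqrt\e M^{-1}\op_\e(\mathcal{B})\op_\e(Q)\check U - \e M^{-1} R_0 \check U + \sqrt\e M^{-1} F.
\end{equation*}

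Finally I would verify that $\check F$ inherits the bounds of Lemma \ref{lem:bd-F}. Each of the first three terms is a uniform remainder applied to $\check U$, hence controlled in $\|\cdot\|_{\e,s}$; since \eqref{dotu-U} expresses $\dot u(\sqrt\e t)$ as a linear combination of components of $V = M \check U$, bounds in terms of $\dot u$ translate to bounds in terms of $\check U$ up to uniform constants. The last term $M^{-1} F$ obeys the Sobolev estimate of Lemma \ref{lem:bd-F} directly, since $M^{-1}$ is a uniform remainder. The chief obstacle I foresee is the $\mathcal{F}L^1$ control of $\check F$: general pseudo-differential operators need not be bounded on $\mathcal{F}L^1$, so one must exploit the tensor form $Q_\ell = \kappa_\ell(t,x)\tilde Q_\ell(\xi)$ together with the Banach algebra property of $\mathcal{F}L^1$ and the Fourier-multiplier bound \eqref{est:fouriermultFL1} to show that $\op_\e(Q)$, and hence $M^{-1}$, map $\mathcal{F}L^1$ to itself with constants uniform in $\e$, at which point the $\mathcal{F}L^1$ component of Lemma \ref{lem:bd-F} transfers to $\check F$.
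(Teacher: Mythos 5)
Your proposal is correct and takes essentially the same route as the paper: rescale time, conjugate by $\Id + \sqrt\e\,\op_\e(Q(\sqrt\e t))$, invoke the homological identity \eqref{homo eq2} of Proposition \ref{normal-form-prop} so that only $\op_\e(\check{\mathcal B})$ survives at leading order, and absorb the commutator, $\op_\e(\mathcal B)\op_\e(Q)$ and Neumann-series corrections into uniform remainders defining $\check F$. Your explicit treatment of the ${\mathcal F}L^1$ bound on $\check F$ (via the tensor structure $Q_\ell = \k_\ell(t,x)\tilde Q_\ell(\xi)$, the algebra property of ${\mathcal F}L^1$ and \eqref{est:fouriermultFL1}) fills in a detail the paper's proof leaves implicit, and is the correct way to transfer that part of Lemma \ref{lem:bd-F}.
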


\begin{proof} By definition of $\check U$ and \eqref{sys U}, there holds
 $$ \begin{aligned} \d_t \check U &  = (\Id + {\bf Q}) ^{-1} \Big( -\frac{1}{\sqrt \e} \op_\e(i {\mathcal A}) + \op_\e({\mathcal B}(\sqrt \e t)) \Big) (\Id + {\bf Q}) \check U
 \\ & \quad
 - (\Id + {\bf Q})^{-1} \Big( \e \op_\e\big( (\d_t Q)(\sqrt \e t)\big) \check U + \sqrt \e F\Big),
 \end{aligned}$$
where ${\bf Q}$ is short for $\sqrt \e \op_\e\big(Q(\sqrt \e t)\big).$
There holds
$$ \op_\e(i {\mathcal A}) (\Id + {\bf Q}) = (\Id + {\bf Q}) \op_\e(i {\mathcal A}) + \sqrt \e \big[\op_\e(i {\mathcal A}), \op_\e(Q(\sqrt \e t))\big],$$
so that
$$ (\Id + {\bf Q})^{-1}  \op_\e(i {\mathcal A}) (\Id + {\bf Q}) = \op_\e(i {\mathcal A}) + \sqrt \e (\Id + {\bf Q})^{-1}  \big[\op_\e(i {\mathcal A}), \op_\e(Q(\sqrt \e t))\big].$$
Besides,
$$ (\Id + {\bf Q}) ^{-1} \op_\e({\mathcal B}(\sqrt \e t)) (\Id + {\bf Q}) = (\Id + {\bf Q}) ^{-1}  \op_\e({\mathcal B}(\sqrt \e t)) + \sqrt \e R_0,$$
where $R_0$ denotes a uniform remainder, in the sense of Definition \ref{def:ur}.
The equation in $\check U$ thus appears as
$$ \begin{aligned} \d_t \check U & + \frac{i}{\sqrt \e} \op_\e({\mathcal A}) \check U  \\ & = (\Id + {\bf Q})^{-1} \Big( \op_\e({\mathcal B}(\sqrt \e t)) - [\op_\e(i {\mathcal A}), \op_\e(Q(\sqrt \e t))] - \e \op_\e(\d_t Q( \sqrt \e t))  \Big) \check U + \sqrt \e \check F, \end{aligned}$$
with the source
 $$\check F :=  R_0 \check U - (\Id + {\bf Q})^{-1} F.$$
 By Proposition \ref{normal-form-prop}, the leading term in the right-hand side in the equation in $\check U$ reduces to $(\Id + {\bf Q})^{-1} \op_\e(\check {\mathcal B}).$ Now, expanding the inverse of $\Id + {\bf Q}$ in Neumann series and using the fact that $\op_\e(Q)$ is a uniform remainder, we see that $(\Id + {\bf Q})^{-1} = \Id + \sqrt \e R_0,$ so that
  $$ (\Id + {\bf Q})^{-1} \op_\e(\check {\mathcal B}) = \op_\e(\check {\mathcal B}) + \sqrt \e R_0.$$
 Thus  the above equation in $\check U$ reduces to \eqref{eq:check-u}.  Since $\op_\e(Q)$ is a uniform remainder, there holds $\| \check U\|_{\e,s} \lesssim \| U \|_{\e,s} \lesssim \| \dot u \|_{\e,s}$ and $\|(\Id + {\bf Q})^{-1} F\|_{\e,s} \lesssim \| F \|_{\e,s},$ so that $\| \check F\|_{\e,s} \lesssim \| F \|_{\e,s}.$
\end{proof}

 \begin{rema}\phantomsection \label{rem:normal-form-S-bound} Note that we do not really need $Q$ bounded, only $\sqrt \e Q$ small. In this sense we could approach the resonance much closer. We will do exactly so in Appendix {\rm \ref{app:symb-bound}}, specifically in the proof of Lemma {\rm \ref{lem:away}}, where we derive bounds for the symbolic flow.
 \end{rema}

\subsubsection{Space-frequency localization}\label{s-f loc}

By the assumed polarization condition (see equation \eqref{pol-con} in Assumption \ref{ass-u-a}), there holds
$$
{\rm
tr}\,\big(\Pi_{1}(\xi+k)B(u_{01}(0,x))\Pi_{2}(\xi)B(u_{0,-1}(0,x))\Pi_{1}(\xi+k)\big) = |a(x)|^2 \G(\xi),
$$
where $a$ is the leading amplitude in the initial datum \eqref{generic-data}, and $\G$ is introduced in \eqref{def:G0}.

By continuity and decay of $a$ at spatial infinity, there exists $x_0 \in \R^d$ such that
 \begin{equation} \label{def:x0}
  0 < |a(x_0)|  =\sup_{x\in \R^d}|a(x)|.
  \end{equation}
By compactness of ${\mathcal R}_{12}$ and positivity of the stability index ${\bf \Gamma}$ (defined in \eqref{def:trace}), the function $\Re e\, \G(\xi)^{1/2}$ is not identically zero on ${\mathcal R}_{12}.$ Then, for some $\xi_0 \in {\mathcal R}_{12},$
\begin{equation} \label{def:xi0}
0 < \g = \Big| \max_{\xi\in {\mathcal R}_{12}} \Re e \, \big( \G(\xi)^{1/2} \big) \Big|  \quad \mbox{is attained at $\xi_0$.}
\ee

\begin{notation}\phantomsection \label{notation:cut-off} Given two cut-offs $\theta_1, \theta_2 \in C^\infty_c(\R^d),$ with $0 \leq \theta_1, \theta_2 \leq 1,$ we denote
$$
 \theta_1 \prec \theta_2
$$
 to indicate that $\theta_2$ is an extension of $\theta_1,$ in the sense that $(1 - \theta_2) \theta_1 \equiv 0.$ In other words: $\theta_2 \equiv 1$ on the support of $\theta_1.$
 \end{notation}

 \label{intro cut-offs} We denote $\varphi_0, \varphi, \varphi_1$ spatial cut-offs, and $\chi_0, \chi, \chi_1$ frequency cut-offs, with $\varphi_j \in C^\infty_c(\R^d_x),$ $\chi_j \in C^\infty_c(\R^d_\xi),$ such that $0 \leq \varphi_j \leq 1,$ $0 \leq \chi_j \leq 1,$ $\varphi_j \equiv 1$ on a neighborhood of $x_0,$ $\chi_j \equiv 1$ on the neighborhood ${\mathcal R}_{12}^h$ of the resonant set ${\mathcal R}_{12},$ and
   $$ \varphi_0 \prec \varphi \prec \varphi_1, \quad \chi_0 \prec \chi \prec \chi_1.$$
We will further (see in particular Propositions \ref{lem:Sob-bd} and \ref{prop:ex-Sob} and Remark \ref{rem:parameters}) choose the support of $\chi_1$ to be {\it small enough}, and the support of $\varphi_0$ to be {\it large enough}. Corresponding small parameters are $h > 0$ for the frequency truncations (as in the first paragraph of Section \ref{normal-form1}) and $\delta_{\varphi_0} > 0$ for the spatial truncations (see the proof of Proposition \ref{lem:Sob-bd} below).

We let
 \be\label{V-loc0}
V :=\op_\e\big(\chi\big)\big(\varphi \check U\big),
\ee
and
\be \label{W-loc}
 W = (W_1, W_2) := \Big( \op_\e\big( \chi\big)\big((1-\varphi)\check U\big), \, \big(1-\op_\e(\chi)\big)\check U \Big),
\ee
so that
\begin{equation} \label{re:checkU}
 \check U = V + W_1 + W_2.
 \end{equation}

\begin{lemm}\phantomsection \label{lem:sys-V} The system in $(V,W)$ is
\begin{equation}\label{V11}
\left\{ \begin{aligned}
         \d_t V + \frac{1}{\sqrt\e} \ope(M) V & = \sqrt \e F_V, \\
       \d_t W + \frac{1}{\sqrt\e} \op_\e\big(i {\bf A}) W & = \op_\e(D) W + \sqrt \e F_W,
       \end{aligned}\right.
       \ee
with symbols
 $$ \label{bfA} M := i \chi_1 {\mathcal A} - \sqrt \e \varphi_1 \check {\mathcal B}, \qquad {\bf A} := \left(\begin{array}{cc} {\mathcal A} & 0 \\ 0 & {\mathcal A} \end{array}\right), \qquad D := \left(\begin{array}{cc} (1-\varphi_0) \chi_1 \check {\mathcal B} & 0 \\ 0 & 0\end{array}\right),$$
 and source terms $F_V, F_W$ satisfying the same bound as $F$ in Lemma {\rm \ref{lem:bd-F}.}
\end{lemm}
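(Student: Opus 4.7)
The plan is to derive the $V$-equation by applying $\op_\e(\chi)\varphi$ to \eqref{eq:check-u} and the $W$-equations by applying $\op_\e(\chi)(1-\varphi)$ and $1-\op_\e(\chi)$, then to collect all commutator and composition errors into the sources $F_V$ and $F_W$. Because the cutoffs $\chi$ and $\varphi$ are time-independent, $\d_t V = \op_\e(\chi)\varphi\,\d_t \check U$ and similarly for $W_1, W_2$, so the time derivatives on the left-hand side of \eqref{V11} are immediate.

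For the $V$-equation, I first treat the transport term $\op_\e(\chi)\varphi\op_\e(i{\mathcal A})\check U$. Since ${\mathcal A}$ depends only on $\xi$, the commutator $[\varphi,\op_\e(i{\mathcal A})]$ equals $-\e\op_\e(\d_\xi{\mathcal A}\cdot\d_x\varphi) + O(\e^2)$, an $O(\e)$ operator of order zero by the symbolic calculus and bounds \eqref{bd:spectral}; the composition $\op_\e(\chi)\op_\e(i{\mathcal A})$ equals $\op_\e(i\chi{\mathcal A})$ exactly since two Fourier multipliers commute. This yields $\op_\e(i\chi{\mathcal A})(\varphi\check U) + O(\e)\check U$. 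Using $\chi\chi_1 = \chi$ (valid because $\chi_1\equiv 1$ on $\mathrm{supp}\,\chi$) and the $O(\e)$ discrepancy between $\ope$ and $\op_\e$, this becomes $\ope(i\chi_1{\mathcal A})V$ modulo $O(\e)$. For the source $\op_\e(\check{\mathcal B})\check U$, the identity $\varphi = \varphi_1\varphi$ allows me to insert $\varphi_1$ on the $x$-side of $\check{\mathcal B}$ up to $O(\e)$ errors (the corrections $\d_\xi\check{\mathcal B}\cdot \d_x\varphi_1$ are controlled via the Sobolev regularity of $u_{0,\pm1}$ from Assumption \ref{ass-u-a}), and I recognize the resulting operator as $\ope(\varphi_1\check{\mathcal B})V$. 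All $O(\e)$-bounded error operators, once divided by $\sqrt\e$ from the transport normalization, become $O(\sqrt\e)$ bounded operators on $\check U$; combined with $\sqrt\e\,\op_\e(\chi)\varphi\check F$ they form $\sqrt\e F_V$, which inherits the bound of Lemma \ref{lem:bd-F}.

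For the $W$-equation the same sliding argument on the propagator produces the block-diagonal $\op_\e(i{\bf A})$ modulo $O(\e)$. The key simplification in the source is that, by its definition in \eqref{eq:check-u}, the symbol $\check{\mathcal B}$ carries the frequency cutoff $\chi_0$, and $\chi_0 \prec \chi$ gives $(1-\chi)\chi_0 \equiv 0$. Consequently $(1-\op_\e(\chi))\op_\e(\check{\mathcal B})\check U$ is an $O(\e)$ operator on $\check U$, which is precisely why $D$ has a zero lower-right block. For the $W_1$-block, the identity $(1-\varphi_0)(1-\varphi) = 1-\varphi$ (which follows from $\varphi_0 = \varphi_0\varphi$ via $\varphi\equiv 1$ on $\mathrm{supp}\,\varphi_0$) lets me insert $1-\varphi_0$ on the $x$-side, while $\chi\chi_1 = \chi$ inserts $\chi_1$ on the $\xi$-side, producing the entry $\op_\e((1-\varphi_0)\chi_1\check{\mathcal B})W_1$ modulo $O(\e)$.

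The main obstacle is simply careful bookkeeping of remainder sizes: every commutator or composition error must be verified to be $O(\e)$ in $L^2\to L^2$ operator norm, so that after division by $\sqrt\e$ from the transport term it remains at order $O(\sqrt\e)$ as required for $F_V$ and $F_W$. The ingredients are the symbol-class bounds \eqref{bd:spectral} to control $\xi$-derivatives of ${\mathcal A}$, the Sobolev regularity of the WKB amplitudes from Assumption \ref{ass-u-a} to control $x$-derivatives of $\check{\mathcal B}$, and the Calder\'on--Vaillancourt-type bounds of Appendix \ref{app:symbols} to pass from symbolic estimates to operator bounds on $H^s$ and ${\mathcal F}L^1$. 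Once every remainder has the correct size, the symbols $M$, ${\bf A}$, and $D$ emerge by reading off the principal parts.
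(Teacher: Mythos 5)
Your derivation follows the same route as the paper: apply $\op_\e(\chi)\varphi$, $\op_\e(\chi)(1-\varphi)$ and $1-\op_\e(\chi)$ to \eqref{eq:check-u}, exploit the cut-off identities $\chi\chi_1=\chi$, $\varphi_1\varphi=\varphi$, $(1-\varphi_0)(1-\varphi)=1-\varphi$ and $(1-\chi)\chi_0\equiv0$, and sweep the commutator errors into $\sqrt\e\,F_V$, $\sqrt\e\,F_W$. The treatment of the transport term, of the $W_1$ source, and of the $W_2$ block is correct and matches the paper.

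There is one step you treat as routine that is not: the replacement of the multiplication/Fourier-multiplier operator $\varphi_1\op_\e(\check{\mathcal B})$ by the para-differential operator $\ope(\varphi_1\check{\mathcal B})$, which you justify by invoking an ``$O(\e)$ discrepancy between $\ope$ and $\op_\e$''. For a symbol depending on $x$, the difference $a-\ope(a)$ is \emph{not} an $O(\e)$-bounded operator on $H^s$; the para-linearization remainders \eqref{r:para1}--\eqref{r:para2} gain a factor $\e$ only at the price of an $L^\infty$ (or $L^2$) norm of the function it acts on, so a blanket $O(\e)$ operator bound would fail. The paper closes this by using the tensor structure of $\check{\mathcal B}$ (Remark \ref{para:tensor}): the difference acts on $\tilde V=\op_\e(\chi_0\Pi_{1,+1}B(\vec e_1)\Pi_2)V$, whose semiclassical frequency support is compact thanks to $\chi_0$, so the Bernstein-type estimate \eqref{est:bernstein} converts $|\tilde V|_{L^\infty}$ into $|V|_{L^2}$, and Proposition \ref{prop:remainder} then gives the bound \eqref{est:remainder:in:proof}, $\lesssim \e\,|V|_{L^2}$, which is what allows this term, after division by $\sqrt\e$, to satisfy the same bound as $F$ in Lemma \ref{lem:bd-F}. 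Without this frequency-localization argument your error term is only controlled by $\e\,|{\cdot}|_{L^\infty}$, which is not covered by the stated bound; supplying it makes your proof complete and identical in substance to the paper's.
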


\begin{proof}
 There holds
$$ \d_t V = - \frac{i}{\sqrt \e} \op_\e(\chi) \big( \varphi \op_\e({\mathcal A}) \check U \big) + \op_\e(\chi) \big( \varphi \op_\e(\check {\mathcal B}) \check U \big) + \sqrt \e \op_\e(\chi) \big( \varphi \check F\big).$$
Using the identity $\op_\e(\chi) \equiv \op_\e(\chi_1) \op_\e(\chi),$ we compute
$$ \begin{aligned} \op_\e(\chi) \big( \varphi \op_\e({\mathcal A}) \check U \big) &  = \op_\e(\chi) [\varphi, \op_\e({\mathcal A})] \check U + [\op_\e(\chi), \op_\e({\mathcal A})] ( \varphi \check U) + \op_\e( \chi_1 {\mathcal A}) V.
\end{aligned}$$
Similarly, using the identity $\varphi_1 \varphi \equiv \varphi,$
$$ \begin{aligned} \op_\e(\chi)  \big( \varphi \op_\e(\check {\mathcal B}) \check U \big)
& =  \op_\e(\chi) \big( \varphi_1 [\varphi, \op_\e(\check {\mathcal B})] \check U) + [\op_\e(\chi), \varphi_1] \op_\e(\check {\mathcal B}) \big( \varphi \check U) \\ & \quad +  \varphi_1 [\op_\e(\chi), \op_\e(\check {\mathcal B})] (\varphi \check U) + \varphi_1 \op_\e(\check {\mathcal B}) V.\end{aligned}$$
Commutators being $O(\e)$ (in the sense of Proposition \ref{prop:composition}), the leading order term in the above right-hand side is the fourth term $\varphi_1 \op_\e(\check {\mathcal B}).$

We now use the tensor product structure of every entry of $\check {\mathcal B}$ in order to express $\ope(\varphi_1 \check {\mathcal B})$ as a {\it para}-differential operator. Going back to the definition of $\check {\mathcal B}$ in \eqref{eq:check-u} and ${\mathcal B}^r$ in \eqref{def:Br}, we denote $${\mathcal B}^r_{12} = g(t,x) \Pi_{1,+1} B(\vec e_1) \Pi_2$$ the top right entry of ${\mathcal B}^r,$ and similarly $\check {\mathcal B}_{12}$ the corresponding entry in $\check {\mathcal B}:$
 $$ \varphi_1 \op_\e(\check {\mathcal B}_{12}) = \varphi_1 (x) g(\sqrt\e t, x) \op_\e(\chi_0 \Pi_{1,+1} B(\vec e_1) \Pi_2).$$
By Remark \ref{para:tensor},
 $$ \begin{aligned} \varphi_1 (x) g(\sqrt\e t, x) & \op_\e(\chi_0 \Pi_{1,+1} B(\vec e_1) \Pi_2) - \ope\Big(\varphi_1 g(\sqrt \e t) \chi_0 \Pi_{1,+1} B(\vec e_1) \Pi_2)\Big) \\ & = \Big( \varphi_1(x) g(\sqrt \e t, x) - \ope(\varphi_1 g(\sqrt \e t, x)\Big) \op_\e(\chi_0 \Pi_{1,+1} B(\vec e_1) \Pi_2),
 \end{aligned}$$
 hence, by Proposition \ref{prop:remainder},
 $$ \big\| \Big( \varphi_1 \op_\e(\check {\mathcal B}_{12}) - \ope(\varphi_1 \check {\mathcal B}_{12})\Big) V \big\|_{\e,s} \lesssim \e \| \varphi_1 g(\sqrt \e t) \|_{\e,s} | \tilde V |_{L^\infty},$$
 with $\tilde V :=  \op_\e(\chi_0 \Pi_{1,+1} B(\vec e_1) \Pi_2) V.$ By \eqref{est:bernstein}, $|\tilde V|_{L^\infty} \lesssim |V|_{L^2},$ since $\chi_0$ is smooth and compactly supported.

The same is true of course for the other entry of $\check {\mathcal B},$ and we arrive at
 \be \label{est:remainder:in:proof} \big\| \Big( \varphi_1 \op_\e(\check {\mathcal B}) - \ope(\varphi_1 \check {\mathcal B})\Big) V \big\|_{\e,s} \lesssim \e \| V |_{L^2}.
 \ee

 Gathering the above results, and using $\op_\e(\chi_1 {\mathcal A}) \equiv \ope(\chi_1 {\mathcal A})$ (Remark \ref{para:tensor}), we obtain \eqref{V11}(i), with the source term
$$ \begin{aligned} F_V & := \sqrt \e \op_\e(\chi) \big( \varphi \check F\big) - \frac{i}{\sqrt \e} \Big(\op_\e(\chi) [\varphi, \op_\e({\mathcal A})] \check U +[\op_\e(\chi), \op_\e({\mathcal A})] \big( \varphi \check U) \Big) \\ & + \op_\e(\chi) \big( \varphi_1 [\varphi, \op_\e(\check {\mathcal B})] \check U) + [\op_\e(\chi), \varphi_1] \op_\e(\check {\mathcal B}) \big( \varphi \check U) +  \varphi_1 [\op_\e(\chi), \op_\e(\check {\mathcal B})] (\varphi \check U) \\ & + \e^{-1/2} \Big( \varphi_1 \op_\e(\check {\mathcal B}) - \ope(\varphi_1 \check {\mathcal B})\Big) V.
 \end{aligned}$$
The fact that $F_V$ satisfies the same bound as $\check F$ and $F$ follows from \eqref{est:remainder:in:proof} and the elementary results of Appendix \ref{sec:foumult}.

The equation in $W_1$ is derived in the same way, the only difference being the use of $(1 - \varphi_0)(1 - \varphi) \equiv (1 - \varphi)$ in place of $\varphi_1 \varphi \equiv \varphi.$

Finally, the equation in $W_2$ involves the source term $\op_\e(1 - \chi) \op_\e(\check {\mathcal B}) \check U.$ The symbol $(1 - \chi) \check {\mathcal B}$ vanishes identically, by $(1 - \chi) \chi_0 = 0$ and definition of $\check {\mathcal B}$ in \eqref{eq:check-u}. Hence, by estimate \eqref{est:fourier-mult},
 $\op_\e(1 - \chi) \op_\e(\check {\mathcal B}) = \e R_0,$
 where $R_0$ is a uniform remainder (in the sense of Definition \ref{def:ur}  page \pageref{uniform remainder}).
\end{proof}
System \eqref{V11} is the {\it prepared system}, in which
\begin{itemize}
 \item the symbol $M$ is the key term; it involves the diagonal hyperbolic operator ${\mathcal A}$ in a neighborhood of the $(1,2)$ resonance, and the interaction coefficients associated with $(1,2)$ via ${\mathcal B}^r,$
 \item the source term in the right-hand side of the equation in $W_1$ will be made small, by choice of a spatial cut-off $\varphi_0$ with a {\it large} support, exploiting decay at infinity of the leading profile of the WKB solution;
 \item the equation in $W_2$ is non-singular, a consequence of the normal form reduction of Section \ref{normal-form1}.
 \end{itemize}

\subsection{Duhamel representation} \label{est-upper-V0}

In this Section we use Theorem \ref{th:duh} from Appendix \ref{app:duh} and write an integral representation formula for the variable $V$ introduced in \eqref{V-loc0}. From this representation we derive an upper bound for $\|V\|_{\e,s}.$

The symbol $M$ of the propagator in the equation \eqref{V11}(i) in $V$ is
\begin{equation} \label{M-spelled-out}
 M(\e,t,x,\xi) = i \chi_1 {\mathcal A} - \sqrt \e \varphi_1 \check {\mathcal B} = \bp i \chi_1 \mu_1 &-\sqrt\e \, \tilde b_{12} &0&\cdots&0\\-\sqrt\e \, \tilde b_{21}&i \chi_1 \mu_2&0&\cdots&0\\0&0&i \chi_1 \l_3&\cdots&0\\\vdots & \vdots & \vdots & \ddots & \vdots \\0&0&0&\cdots&i \chi_1 \l_J\ep
\ee
 where
 \begin{itemize}
 \item the cut-offs functions $\varphi_1,$ $\chi_1$ were introduced just below Notation \ref{notation:cut-off} on page \pageref{intro cut-offs},

\smallskip

 \item the shifted eigenvalues $\mu_1 = \l_1(\cdot + k) - \o$ and $\mu_2 = \l_2$ were introduced in \eqref{def:mu},

\smallskip

 \item the $N \times N$ extra-diagonal blocks are
  \be \label{def:b12-21}
  \tilde b_{12}  := \chi_0(\xi) \varphi_1(x) g(\sqrt \e t, x) b^+_{12}(\xi), \quad \tilde b_{21}  = \chi_0(\xi_0) \varphi_1(x) g(\sqrt \e t ,x)^* b^-_{21}(\xi),
  \ee
  where
  \be \label{def:int-coeff}
 b_{12}^+(\xi) := \Pi_1(\xi + k) B(\vec e_1) \Pi_2(\xi), \qquad b_{21}^-(\xi) := \Pi_2(\xi) B(\vec e_{-1}) \Pi_1(\xi +k),
 \ee
 are the interaction coefficients associated with resonance $(1,2),$ in the sense of Definition \ref{def:transp}.
\end{itemize}

 By regularity of the eigenprojectors (Assumption \ref{ass:spectral}) and the approximate solution (Assumption \ref{ass-u-a}), Assumption \ref{ass:B} is satisfied, where $x_\star$ is the maximum of all $|x|$ with $x$ in the support of $\varphi_1.$

The symbolic flow $S_0$ of $M$ is defined as the solution to the initial-value problem
\begin{equation}\label{S} \d_t S_0+\frac{1}{\sqrt\e}M S_0=0, \qquad S_0(\t,\t)=\Id.
                          \end{equation}

\begin{prop}\phantomsection\label{est-flow-sym} For all $T> 0,$ all $0 \leq \t \leq t \leq T |\ln \e|,$ $\a \in \N^d,$ there holds
$$
|\d_x^\a S_0(\t,t)| \lesssim |\ln \e|^* \exp\big((t-  \t) \g^+ \big),
$$
where
\begin{equation} \label{def:g+}
 \g^+  : = |a|_{L^\infty} \, \big| \max_{\xi\in {\mathcal R}_{12}^h} \Re e \, \big(\G(\xi)^{1/2}\big)\big|,
 \end{equation}
 and $|\ln \e|^*$ denotes $|\ln \e|^{N^*}$ for some large constant $N^* > 0$ depending on all parameters, but not on $\e$ nor on $\t,t.$
\end{prop}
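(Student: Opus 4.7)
The plan is to work pointwise in $(x,\xi)$ and treat \eqref{S} as a matrix-valued ODE in $t$ parametrized by $(x,\xi)$. The block structure of $M$ in \eqref{M-spelled-out} immediately reduces the analysis: the scalar diagonal entries $i\chi_1 \lambda_j$, $j\geq 3$, are purely imaginary and generate isometric flows (so $|S_0|\leq 1$ on those components), while the nontrivial content is concentrated in the top $2\times 2$ block
$$M_{12} = \begin{pmatrix} i\chi_1 \mu_1 & -\sqrt\varepsilon\,\tilde b_{12}\\ -\sqrt\varepsilon\,\tilde b_{21}& i\chi_1 \mu_2\end{pmatrix}.$$
The eigenvalues of $M_{12}$ are as displayed in \eqref{sp:symbol}; using $\mathrm{tr}(\tilde b_{12}\tilde b_{21}) = |g(\sqrt\varepsilon t,x)|^2 \chi_0(\xi)^2 \varphi_1(x)^2 \Gamma(\xi)$ together with $|g(\sqrt\varepsilon t,\cdot)|_{L^\infty}\leq |a|_{L^\infty} + O(\sqrt\varepsilon t)$, one checks that the real part of $-M_{12}/\sqrt\varepsilon$ is bounded by $\gamma^+$ up to an $O(\sqrt\varepsilon t)$ correction that is harmless on our time window $t\leq T|\ln\varepsilon|$.

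Converting this spectral information into an operator bound on $S_0$ is the main obstacle, because the eigenvalues of $M_{12}$ coalesce at distance $O(\sqrt\varepsilon)$ from the resonant locus $\{\mu_1=\mu_2\}$, so the diagonalizing change of basis is singular and the eigenprojectors are not uniformly bounded. The strategy is a zonal decomposition in $\xi$: on the region $\{|\mu_1-\mu_2|\geq C\sqrt\varepsilon\}$ away from coalescence, an explicit (but non-uniform) symmetrizer turns \eqref{S} into an energy estimate with imaginary spectrum, giving $|S_0|\lesssim 1$; on the complementary zone near the resonance, one applies an $\varepsilon$-adapted normal form (essentially the one sketched in Remark~\ref{rem:normal-form-S-bound}, getting closer to the resonance than in Section~\ref{normal-form1}) to bring $M_{12}$ to a form where the maximal real part $\gamma^+$ of the spectrum can be read off on the diagonal up to a bounded remainder, and then Gronwall yields $|S_0(\tau,t)|\lesssim \exp((t-\tau)\gamma^+)$. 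The matching of the two regions produces powers of $|\ln\varepsilon|$ from the size of the transition layer. This is exactly the computation that Appendix~\ref{app:symb-bound} is advertised to carry out, and I would quote it at this point.

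Once the bound on $|S_0|$ itself is in hand, the derivative bounds follow from a soft Duhamel argument exploiting the fact that $M$ depends on $x$ only through $\varphi_1(x)$ and $g(\sqrt\varepsilon t,x)$, both of which appear in $M$ multiplied by $\sqrt\varepsilon$: differentiating \eqref{S} once gives
$$\partial_t(\partial_x S_0) + \frac{M}{\sqrt\varepsilon}\,\partial_x S_0 = -\frac{\partial_x M}{\sqrt\varepsilon}\,S_0,$$
and since $\partial_x M = -\sqrt\varepsilon\,\partial_x(\varphi_1 \check{\mathcal B})$ is $O(\sqrt\varepsilon)$ (the diagonal part $\chi_1\mathcal A$ does not depend on $x$), the inhomogeneous term $\partial_x M/\sqrt\varepsilon$ is uniformly bounded. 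Duhamel then gives
$$|\partial_x S_0(\tau,t)|\lesssim \int_\tau^t |S_0(s,t)|\,|S_0(\tau,s)|\,ds \lesssim (t-\tau)\exp((t-\tau)\gamma^+),$$
and iterating for $\partial_x^\alpha$ produces a prefactor $(t-\tau)^{|\alpha|}\leq (T|\ln\varepsilon|)^{|\alpha|}$, which is absorbed into $|\ln\varepsilon|^*$.

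The principal difficulty is thus the first step, the uniform-in-$\varepsilon$ control of $|S_0|$ across the coalescing-eigenvalue zone of width $\sqrt\varepsilon$ around $\mathcal R_{12}$; everything else (the scalar blocks, the time dependence via $g(\sqrt\varepsilon t,\cdot)$, and the $x$-derivatives) is by comparison routine. I expect the bookkeeping of the exponent $N^*$ in $|\ln\varepsilon|^*$ to be tedious but not delicate, since it only needs to be some finite constant.
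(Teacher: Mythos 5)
Your overall architecture matches the paper's: reduce to the top block (the scalar branches $i\chi_1\l_j$ being harmless), note that the real parts of the eigenvalues of $M$ are everywhere bounded by $\g^+,$ identify the coalescence zone of width $\sim\sqrt\e$ around ${\mathcal R}_{12}$ as the crux, and obtain the $x$-derivative bounds by Duhamel using $\d_x M = O(\sqrt\e)$ (this last step is essentially identical to Section \ref{sec:app pert}, including the frozen-time/perturbation treatment of the dependence through $g(\sqrt\e t,\cdot)$). But the core step has a genuine gap. In the near zone your plan is ``normal form so that the maximal real part $\g^+$ can be read off on the diagonal up to a \emph{bounded} remainder, then Gronwall''; over times $t\leq T|\ln\e|$ a bounded remainder fed into Gronwall costs $e^{O(t)}=\e^{-O(T)},$ which destroys the sharp rate. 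The whole point of Lemma \ref{lem:S-resonance} is to avoid exactly this loss: after a Schur triangularization of $\e^{-1/2}\tilde M_0$ one conjugates by ${\rm diag}\,(1,|\ln\e|^4,\dots,|\ln\e|^{4(2N-1)}),$ which shrinks the off-diagonal entries to $O(|\ln\e|^{-2})$ at the price of a $|\ln\e|^*$ prefactor, so the energy estimate gives growth at rate $\max\Re\,({\rm eig}) + O(|\ln\e|^{-2}) \leq \g^+ + O(|\ln\e|^{-2}),$ harmless on the time window. (Equivalently one could exploit nilpotency of the strictly triangular part to get only a polynomial-in-$t$ loss; some such device is indispensable, and your sketch does not contain it. Your citation of Remark \ref{rem:normal-form-S-bound} is also misplaced: that remark refers to the \emph{away} zone, i.e.\ Lemma \ref{lem:away}.)

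The away zone is also not as you describe. With your cutoff at $|\mu_1-\mu_2|\geq C\sqrt\e$ and a ``symmetrizer \dots with imaginary spectrum'' you implicitly assume $\G$ real; the hypotheses allow $\Im\,\G\neq 0,$ in which case the eigenvalues off the resonance have real parts of size $\sim \sqrt\e\,|\Im\,{\rm tr}\,b_{12}b_{21}|/|\mu_1-\mu_2|,$ which at distance $C\sqrt\e$ is $O(1)$ and gives $\e^{-c}$ growth over logarithmic times, so $|S_0|\lesssim 1$ fails there. The paper instead cuts at $|\mu_1-\mu_2|>\sqrt\e|\ln\e|^2$ and proves Lemma \ref{lem:away} by integrating by parts in time against the fast phase $e^{\pm it(\mu_1-\mu_2)/\sqrt\e}$ (a non-stationary phase argument, not a symmetrizer), each integration gaining $|\ln\e|^{-2},$ which closes a Gronwall loop on $[0,T|\ln\e|].$ Your zone could be repaired (for $|\mu_1-\mu_2|\geq C\sqrt\e$ the eigenprojectors are uniformly bounded and the pointwise eigenvalue bound $\leq\g^+$ suffices), but as written the claim is wrong, and ``powers of $|\ln\e|$ from the transition layer'' does not substitute for an actual estimate in the layer $C\sqrt\e\leq|\mu_1-\mu_2|\leq\sqrt\e|\ln\e|^2.$ Finally, note that ``quoting Appendix \ref{app:symb-bound}'' is circular here: that appendix \emph{is} the proof of this proposition.
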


Note that $\g^+$ is $1$-homogeneous in $a,$ while $\g,$ defined in \eqref{def:g}, does not depend on $a.$

\begin{proof} The proof is postponed to Appendix \ref{app:symb-bound}. It uses elementary linear algebra, rendered non-trivial by the fact that the resonant frequencies are asymptotically close to crossing points in the spectrum of $M,$ and a non-stationary phase argument analogous (and complementary) to the normal form reduction of Section \ref{normal-form1}.
\end{proof}

Proposition \ref{est-flow-sym} verifies that the flow of $M$ defined in \eqref{M-spelled-out} satisfies Assumption \ref{ass:BS}.

Thus, Theorem \ref{th:duh} from Appendix \ref{app:duh} page \pageref{th:duh} applies, and the unique solution to \eqref{V11}(i) satisfies the representation
\begin{equation} \label{rep:V}
 V = \ope(S(0;t)) V(0) + \sqrt \e \int_0^t \ope(S(t';t)) \tilde F_V (t')\,dt',
\end{equation}
where $S(t';t) := \sum_{0\leq q \leq q_0} S_q,$ the leading term $S_0$ being the symbolic flow \eqref{S}, and the correctors $S_q,$ for $1 \leq q \leq q_0,$ being defined in \eqref{resolventk}. The order $q_0$ of the expansion is a function of $\Gamma$ and $T_0,$ as seen on equation \eqref{def:zeta} page \pageref{def:zeta}. The source term $\tilde F_V$ can be expressed in terms of $F_V$ and the datum $V(0),$ as in \eqref{buff0.01}. The bound \eqref{bd-R121} implies
 \be \label{bd:tildeFV} \| \tilde F_V \|_{\e,s} \leq \| F_V \|_{\e,s} + \| V(0) \|_{\e,s}.\ee
 Proposition \ref{est-flow-sym} and Lemma \ref{lem:bd-actionS} imply that $\ope(S(t';t))$ satisfies the bound
 \be \label{action:S} \| \ope(S(t';t)) v \|_{\e,s} \lesssim |\ln \e|^* e^{( t - t') \g^+} \| v \|_{\e,s}, \qquad v \in H^s.\ee
 From there, we deduce the bound, for $s \geq 0:$
 $$  \| V(t) \|_{\e,s} \lesssim e^{t \g^+} |\ln \e|^*  \| V(0) \|_{\e,s} + \e^{1/2} |\ln \e|^* \int_0^t e^{(t - t') \g^+} \| F_V(t') \|_{\e,s} \, dt'.$$
 According to Lemmas \ref{lem:bd-F} and \ref{lem:sys-V}, there holds
 \be \label{bd:FV} \| F_V\|_{\e,s} \lesssim \big(1 + \e^{-1/2} |\dot u (\e^{1/2} t) |_{L^\infty}\big) \|\dot u (\sqrt \e t)\|_{\e,s} + \e^{K_a}.
 \ee
Going up the chain of changes of variables \eqref{V-loc0}-\eqref{W-loc}, \eqref{def:check-u}, \eqref{dotu-U}, we see that
\be \label{u-vw} \|\dot u(\sqrt \e t) \|_{\e,s} \lesssim \| (V, W)(t) \|_{\e,s}.
\ee
 Since by assumption $K \leq K_a + 1/2,$ we conclude that
\begin{equation} \label{up:0}
 \begin{aligned} \| V(t) & \|_{\e,s}  \lesssim \e^K |\ln \e|^* e^{t \g^+}
 \\ & \qquad +
   \e^{1/2} |\ln \e|^* \int_0^t e^{(t - t') \g^+} (1 + \e^{-1/2} |\dot u(\e^{1/2} t') |_{L^\infty}\big) \| (V, W)(t') \|_{\e,s}  \, dt'.
 \end{aligned}
  \ee

\subsection{Lower bound} \label{sec:lower}

We now choose the datum
\begin{equation} \label{datum-dot-u}
 \dot u(0,x)  := \e^{K} e^{i x \cdot(\xi_0 + k)/\e} \varphi_{0}(x) \vec e_0,
 \ee
 where $\varphi_{0}$ is the spatial truncation introduced just below Notation \ref{notation:cut-off} on page \pageref{notation:cut-off}, and the fixed vector $\vec e_0$ satisfies
 \be \label{def:e0} \vec e_0 =\Pi_1(\xi_0 + k) B(\vec e_1) \Pi_2(\xi_0) B(\vec e_{-1}) \Pi_1(\xi_0 + k) \vec e_0, \quad |\vec e_0| = 1.\ee
The matrix $\Pi_1(\xi_0 + k) B(\vec e_1) \Pi_2(\xi_0) B(\vec e_{-1}) \Pi_1(\xi_0 + k)$ has rank one by assumption (rank at most one by Assumption \ref{ass:transp}(iii), and at least one by $\G \neq 0$), so that by \eqref{def:e0} the vector $\vec e_0$ is defined as the unitary generator of its image.
\begin{lemm}\phantomsection \label{lem:datum} With the choice \eqref{datum-dot-u}, the datum for $V$ is
$$ V(0) = \e^{K}  e^{i x \cdot \xi_0/\e} V_0 + \e^{K+1/2} \tilde V^\e_0, \quad V_0 := \varphi_{0}(x) \big( \vec e_0, 0, \dots, 0 \big), \quad \sup_{0 < \e < \e_0} \| \tilde V^\e_0\|_{\e,s} < \infty.$$
\end{lemm}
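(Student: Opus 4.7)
The plan is to unwind at $t = 0$ the three changes of variable leading from $\dot u$ to $V$: the spectral decomposition with frequency shift \eqref{de-U}, the normal-form reduction \eqref{def:check-u}, and the space-frequency localization \eqref{V-loc0}. At each step I will keep track of the leading term and estimate the correction in the semiclassical norm $\|\cdot\|_{\e,s}.$

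First I would compute $U(0).$ Every projector $\Pi_j$ is a pure Fourier multiplier, so the exact semiclassical shift identity $\op_\e(\s)(e^{ix \cdot \eta/\e} v) = e^{ix \cdot \eta/\e}\op_\e(\s(\cdot + \eta)) v$ applies, giving
$$
U_1(0) = \e^K e^{ix\cdot\xi_0/\e}\op_\e(\Pi_1(\cdot+\xi_0+k))(\varphi_0 \vec e_0), \quad U_j(0) = \e^K e^{ix\cdot(\xi_0+k)/\e}\op_\e(\Pi_j(\cdot+\xi_0+k))(\varphi_0 \vec e_0),\,\, j \geq 2,
$$
the phase $e^{-ik\cdot x/\e}$ built into the definition of $U_1$ combining with the initial oscillation $e^{ix\cdot(\xi_0+k)/\e}$ to produce $e^{ix\cdot\xi_0/\e}.$ Taylor-expanding each symbol $\Pi_j(\cdot+\xi_0+k)$ at $\xi=0$ reduces $\op_\e(\Pi_j(\cdot+\xi_0+k))(\varphi_0 \vec e_0)$ to $\Pi_j(\xi_0+k)\varphi_0 \vec e_0$ plus a uniformly $\|\cdot\|_{\e,s}$-bounded remainder of size $O(\e);$ this uses $\d_\xi \Pi_j \in S^{-1}$ (Assumption \ref{ass:spectral}) together with the Schwartz character of $\hat \varphi_0.$ By the construction \eqref{def:e0}, $\vec e_0$ lies in $\mathrm{Range}\,\Pi_1(\xi_0+k),$ so $\Pi_1(\xi_0+k)\vec e_0 = \vec e_0$ and $\Pi_j(\xi_0+k)\vec e_0 = 0$ for $j \geq 2,$ yielding $U(0) = \e^K e^{ix\cdot\xi_0/\e}\varphi_0(x)(\vec e_0, 0, \dots, 0) + \e^{K+1} R_1^\e$ with $\sup_\e \|R_1^\e\|_{\e,s} < \infty.$

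Second, the change of variable \eqref{def:check-u} reads $\check U(0) = (\Id + \sqrt \e\, \op_\e(Q(0)))^{-1} U(0).$ Proposition \ref{normal-form-prop} provides $Q \in S^0,$ and by \eqref{action:fourier-mult} and \eqref{product2} the associated operator $\op_\e(Q(0))$ is a uniform remainder in the sense of Definition \ref{def:ur}. Its Neumann expansion then gives $(\Id + \sqrt\e \op_\e(Q(0)))^{-1} = \Id + \sqrt\e\, \tilde R^\e$ with $\tilde R^\e$ uniformly bounded on $H^s;$ applied to $U(0),$ of $\|\cdot\|_{\e,s}$-size $\e^K,$ this produces a correction of size $\e^{K+1/2},$ which absorbs the $O(\e^{K+1})$ residual from Step 1. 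Third, multiplication by $\varphi$ leaves the leading term invariant, since $\varphi \varphi_0 \equiv \varphi_0,$ and one final application of the shift identity to the Fourier multiplier $\chi,$ using $\chi(\xi_0) = 1$ (because $\xi_0 \in {\mathcal R}_{12} \subset {\mathcal R}_{12}^h \subset \{ \chi \equiv 1\}$), converts $\op_\e(\chi)(e^{ix\cdot\xi_0/\e}\varphi_0 \vec e_0)$ into $e^{ix\cdot\xi_0/\e}\varphi_0 \vec e_0$ modulo a further $O(\e)$ error acting on a datum of size $\e^K.$

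Assembling the three steps yields the announced decomposition, with $\tilde V_0^\e$ uniformly $\|\cdot\|_{\e,s}$-bounded, since each remainder is the action of an $S^0$ symbol on the smooth compactly supported function $\varphi_0 \vec e_0.$ I do not expect a genuine obstacle here: the whole computation is essentially algebraic, made sharp by the Fourier-multiplier character of both $\Pi_j$ and $\chi,$ so that frequency translates act exactly and the $\sqrt \e$ correction seen in the statement is entirely produced by the normal-form transformation of Step 2. The only point deserving a line of verification is the uniform $O(\e)$ control of $\op_\e(\s(\cdot + \eta))f - \s(\eta) f$ for $\s \in S^0$ and $f$ smooth compactly supported, which follows from $\s(\eta + \e \xi) - \s(\eta) = O(\e|\xi|)$ near the origin together with the fast decay of $\hat f$ at infinity.
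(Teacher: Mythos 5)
Your proposal is correct and follows essentially the same route as the paper's proof: the exact shift identity for Fourier multipliers, the freezing of the symbol at the oscillation frequency (the paper's estimate $\op_\e(P)f = P(0)f + \e f_0^\e$ is precisely the "line of verification" you sketch), the identities $\Pi_1(\xi_0+k)\vec e_0 = \vec e_0$, $\Pi_j(\xi_0+k)\vec e_0 = 0$, the Neumann series for $(\Id + \sqrt\e\,\op_\e(Q))^{-1}$ producing the $O(\e^{K+1/2})$ correction, and finally $\varphi\varphi_0 \equiv \varphi_0$ with $\chi(\xi_0) = 1$. No gap.
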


\begin{proof} We denote in this proof $f_0^\e$ any family of $H^s$ maps such that
 $$
  \sup_{0 < \e < \e_0} \| f_0^\e\|_{\e,s} < \infty.
 $$
 With this notation, given a Fourier multiplier $P \in S^0$ and $f \in C^\infty_c,$ there holds
 \be \label{P-f}
  \op_\e(P) f = P(0) f + \e f_0^\e.
 \ee
 With the choice \eqref{datum-dot-u}, there holds
 $$ \begin{aligned} U_1(0) = \e^K e^{- i k \cdot x/\e} \op_\e(\Pi_1) \big( e^{i x \cdot (\xi_0 + k)/\e} \varphi_{0} \vec e_0 \big) = \e^K e^{i x \cdot \xi_0/\e} \op_\e(\Pi_{1,+(\xi_0 + k)}) \varphi_{0} \vec e_0.\end{aligned}$$
 This implies, by \eqref{P-f},
 $$ U_1(0) = \e^{K} e^{i x \cdot \xi_0/\e} \varphi_{0}(x)  \Pi_1(\xi_0 + k) \vec e_0 + \e f_0^\e = \e^{K} e^{i x \cdot \xi_0/\e} \varphi_{0}(x) \vec e_0 + \e f_0^\e,$$
 the second equality by $\Pi_1(\xi_0 + k) \vec e_0 = \vec e_0.$
 Next we compute, for $j \geq 2,$ using \eqref{P-f} again,
  $$ \begin{aligned} \op_\e(\Pi_j) \big( e^{i x \cdot (\xi_0 + k)/\e} \varphi_{0} \vec e_0 \big)
  = e^{i x \cdot (\xi_0 + k)/\e} \varphi_{0}(x) \Pi_{j}(\xi_0 + k) \vec e_0 + \e f_0^\e.\end{aligned}$$
 This gives $U_j(0) = \e f_0^\e,$ since $\Pi_j(\xi_0 + k) \vec e_0 = 0$ for $j \geq 2.$ From there, we obtain
  $$ \check U_1(0) = U_1(0) - \sqrt \e R_0 U(0) =  \e^{K} e^{i x \cdot \xi_0/\e} \varphi_{0}(x) \vec e_0 + \sqrt \e f_0^\e,$$ and
  $$ V_1(0) = \e^K \op_\e(\chi) \big( \varphi e^{i x \cdot \xi_0/\e} \varphi_{0}\big) \vec e_0  + \e^{K + 1/2} f_0^\e = \e^K \op_\e(\chi_{+\xi_0}) \varphi_{0} \vec e_0,$$
  since $\varphi \varphi_{0} \equiv \varphi_{0},$ and then with \eqref{P-f},
  $$ V_1(0) = \e^K e^{i x \cdot \xi_0/\e} \chi(\xi_0) \varphi_{0}(x) \vec e_0 + \e^{K + 1/2} f_0^\e.$$
 Since $\chi(\xi_0) = 1,$ we obtained the first component of $V(0).$ We conclude with
 $$ V_j(0) = \op_\e(\chi) \big( \varphi (\e f_0^\e - \sqrt \e R_0 U(0))\big) = \sqrt \e f_0^\e,
 \quad j\geq 2.$$
\end{proof}

\begin{lemm}\phantomsection\label{lem-S0}
 For the datum $V(0)$ described in the above Lemma, there holds for small enough $\rho > 0,$ for some $C(\rho) > 0:$
 \begin{equation} \label{low0}
\big|\ope(S(0;t)) V(0) \big|_{L^2(B(x_0,\rho))} \geq  C(\rho) \e^{K} \Big(e^{t \g^-} - \e^{1/2} |\ln \e|^{*} e^{t \g^+}\Big),
\end{equation}
where $\dsp{\g^- := \g \min_{|x - x_0| \leq \rho} |a(x)|,}$
with $\g$ as in \eqref{def:g}.
 \end{lemm}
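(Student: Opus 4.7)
The strategy is to decompose $V(0)$ using Lemma \ref{lem:datum} into the principal oscillating part $V(0)_{\rm princ} := \e^K e^{ix\cdot\xi_0/\e} V_0$ and the remainder $\e^{K+1/2}\tilde V_0^\e$, and to treat the two pieces separately. For the remainder, the upper bound \eqref{action:S} derived from Proposition \ref{est-flow-sym} and Lemma \ref{lem:bd-actionS} immediately gives
\[
\|\ope(S(0;t))\,\e^{K+1/2}\tilde V_0^\e\|_{L^2} \lesssim \e^{K+1/2}|\ln\e|^* e^{t\g^+}\,\|\tilde V_0^\e\|_{\e,s},
\]
which accounts for the negative term in \eqref{low0}.

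The heart of the matter is therefore to produce a lower bound for $\ope(S(0;t))V(0)_{\rm princ}$ on the ball $B(x_0,\rho)$. The plan is to exploit the WKB structure of the datum: I would show that for a pseudo-differential operator $\ope(P)$ with symbol $P$ satisfying the bounds of Assumption \ref{ass:BS}, and for a highly oscillating input $e^{ix\cdot\xi_0/\e}\varphi_0(x)\vec e_0$ with $\varphi_0\in C^\infty_c$, there holds the WKB identity
\[
\ope(P)\big(e^{ix\cdot\xi_0/\e}\varphi_0\vec e_0\big) = e^{ix\cdot\xi_0/\e}\,\varphi_0(x)\,P(\cdot,\xi_0)\vec e_0 + r^\e,
\]
with $\|r^\e\|_{L^2} \lesssim \sqrt\e\,|\ln\e|^* e^{t\g^+}$. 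This is essentially a stationary-phase/frozen-coefficient statement for the para-differential quantization introduced in the appendices, applied to $P = S(0;t)$; the error bound uses the symbol bounds from Proposition \ref{est-flow-sym}, together with the smoothness of $\varphi_0$.

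Applied at the resonant frequency $\xi_0$, this reduces the lower bound to a pointwise estimate for the matrix $S_0(0;t,x,\xi_0)\vec e_0$. By construction \eqref{def:x0}--\eqref{def:xi0}, $(x_0,\xi_0)$ lies on the resonant set ${\mathcal R}_{12}$ and realizes $\g = |\Re e\,\G(\xi_0)^{1/2}|$; moreover $\vec e_0$ is, by \eqref{def:e0}, the unit generator of the range of the rank-one matrix $\Pi_1(\xi_0+k)B(\vec e_1)\Pi_2(\xi_0)B(\vec e_{-1})\Pi_1(\xi_0+k)$. I would then compute the action of the $2\times 2$ symbolic flow \eqref{S} on $\vec e_0$ at $(x,\xi_0)$ for $x$ close to $x_0$: since $\chi_0(\xi_0)=1$, $\chi_1(\xi_0)=1$ and $\varphi_1(x)=1$ on $B(x_0,\rho)$ for $\rho$ small, the $(1,2)$ block of $M(\sqrt\e t,x,\xi_0)$ has trace product exactly $|g(\sqrt\e t,x)|^2\G(\xi_0)$; the explicit integration of the corresponding constant-coefficient ODE in the frozen $(x,\xi_0)$-fiber yields $|S_0(0;t,x,\xi_0)\vec e_0|\geq c\,e^{t\,\g\,|a(x)|}$ for some $c>0$, up to lower-order terms coming from the non-diagonal part of the propagator. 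Integrating this lower bound over $B(x_0,\rho)$ and using continuity of $a$ produces the exponent $\g^-=\g\min_{|x-x_0|\leq\rho}|a(x)|$ and a prefactor $C(\rho)>0$.

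The main obstacle, and the step requiring the most care, is the WKB identity for $\ope(S(0;t))$: because $S_0$ has derivatives in $\xi$ that blow up like $\e^{-1/2}$ near the resonance (as emphasized in the introduction and treated in Appendix \ref{app:symb-bound}), one cannot use Calder\'on--Vaillancourt in the naive way. I would instead exploit the para-differential (as opposed to Weyl) quantization, together with the $L^1$-in-$x$ control of the symbol afforded by the spatial localization via $\varphi_1$, so that the error terms are bounded by $\sqrt\e|\ln\e|^* e^{t\g^+}$, in line with the correctors $S_q$, $q\geq 1$, of the Duhamel representation. Combining the principal-part lower bound $C(\rho)\e^K e^{t\g^-}$ with the remainder upper bound $\lesssim \e^{K+1/2}|\ln\e|^* e^{t\g^+}$ via the triangle inequality in $L^2(B(x_0,\rho))$ yields \eqref{low0}.
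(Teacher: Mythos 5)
Your plan is correct and follows essentially the same route as the paper's proof: split $V(0)$ via Lemma \ref{lem:datum}, absorb the $\e^{K+1/2}$ remainder with the action bound \eqref{action:S}, freeze the symbol at $\xi_0$ under the para-differential quantization with an $O(\sqrt\e\,|\ln\e|^*e^{t\g^+})$ error (the $\e$ gained from the semiclassical frequency shift beats the $\e^{-1/2}$ loss on $\d_\xi S$), then reduce to the $2\times2$ exponential of the interaction block at the resonance, whose spectrum $\pm|a(x)|(\g+i\a)$ and explicit eigenprojectors applied to $\vec e_0$ give the pointwise growth $e^{t\g|a(x)|}$, and finally integrate over $B(x_0,\rho)$ to obtain $\g^-$ and $C(\rho)$. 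The only point you gloss over is the non-autonomy of the frozen ODE ($g(\sqrt\e t,x)$ depends on $t$), which the paper handles by writing $M=M(0)+\e M_1$ and a perturbation argument (Section \ref{sec:app pert}); this fits under your ``lower-order terms'' caveat.
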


We recall that notation $|\ln \e|^*,$ introduced in the statement of Proposition \ref{est-flow-sym}, denotes $|\ln \e|^{N^*},$ for some $N^* > 0$ independent of $\e, t.$

Note that the lower rate of growth $\g^-$ in \eqref{low0} is $1$-homogeneous in $a,$ just like $\g^+$ \eqref{def:g+} and unlike coefficient $\g$ \eqref{def:g}.

\begin{proof}[Proof of Lemma \ref{lem-S0}] By Lemma \ref{lem:datum}, the datum $V(0)$ decomposes as a leading term and a remainder $\e^{K + 1/2} \tilde V_0^\e.$  By \eqref{action:S}, there holds
 $$ \| \ope(S(0;t)) (\e^{K + 1/2} \tilde V_0^\e) \|_{L^2} \lesssim \e^{K + 1/2} |\ln \e|^* e^{t \g^+}.$$
 We turn to the action of $\ope(S)$ on the leading term $e^{i x \cdot \xi_0/\e} V_0$ in $V(0).$
 By Remark \ref{rem:para},
 $$ \ope(S(0;t)) \big( e^{i x \cdot\xi_0/\e}  V_0\big) = e^{i x \cdot \xi_0/\e} \int e^{i x \cdot \xi} {\bf S}(0;t,x,\xi_0 + \e \xi)  \, \hat V_0(\xi)  \, d\xi,
 $$
 where
 $ {\bf S}(0;t,x,\xi) := \displaystyle{\left({\mathcal F}^{-1} \psi \star \tilde S(0;t) \right)\left(\frac{x}{\e}, \xi\right)},$ with $\tilde S(0;t,x,\xi) = S(0;t,\e x,\xi).$
 This gives
$$
  \ope(S(0;t)) \big( e^{i x \cdot\xi_0/\e}  V_0\big)  =  e^{i x \cdot\xi_0/\e}  S(0;t,x,\xi_0) V_0(x) + \e \tilde V_0,
$$
where the remainder $\tilde V_0$ is the sum $\tilde V_0 = \tilde V_{01} + \tilde V_{02}:$
 $$ \begin{aligned} \tilde V_{01} & :=
  \sum_{|\a| = 1} \int e^{i x \cdot \xi} \left( \int_0^1 \left(\d^\a_\xi {\bf S}\right)(0;t,x,\xi_0 + \e \t \xi) \, d\t \right) \widehat{ \d_x^\a V_0}(\xi) \, d\xi, \\ \tilde V_{02} & :=  \sum_{|\a| = 1} \int_{\R^d} {\mathcal F}^{-1}(\d_\eta^\a \psi)(y, \xi_0) \left(\int_0^1 \d^\a_x S(0;t,x - \e \t y, \xi_0) \, d\t \right) \, dy \, V_0(x).
 \end{aligned}$$

There holds
$$ | \tilde V_{01} |_{L^2(B(x_0,\rho))} \leq \rho^{d/2} \sup_{\xi \in \R^d} |\d_\xi {\bf S}(\cdot,\xi)|_{L^\infty(B(x_0,\rho))} \| V_0 \|_{H^{1 + d/2^+}}.$$
By Remark \ref{rem:cut-off} and Lemma \ref{lem:bd-S},
 $$ |\d_\xi {\bf S}|_{L^\infty} \lesssim |S|_{L^\infty} + |\d_\xi S|_{L^\infty} \lesssim \e^{-1/2} |\ln \e|^* e^{t \g^+}.$$
By Remark \ref{rem:cut-off} and Proposition \ref{est-flow-sym},
 $$ |\tilde V_{02} |_{L^2(B(x_0,\rho))} \lesssim \sup_{x \in \R^d} |\d_x S(0;t,\cdot,\xi_0)| |V_0|_{L^2(B(x_0,\rho))} \lesssim |\ln \e|^* e^{t \g^+}.$$
It remains to bound from below the function $S(0;t,x,\xi_0) V_0$ on $B(x_0,\rho).$ Note that here $\xi$ is frozen at $\xi_0,$ so that the regularity issues of Appendix \ref{app:symb-bound} do not come into play.

 The symbolic flow $S$ is defined just above Lemma \ref{lem:duh-remainder} on page \pageref{lem:duh-remainder} as
  $$ S(0;t) = S_0 + \e^{1/2}\big(S_1 + \dots + \e^{q_0 - 1/2} S_{q_0}\big).$$
  Lemma \ref{lem:bd-S} implies the uniform bound
   $$ \e^{1/2} \big| S_1 + \dots + \e^{q_0 - 1/2} S_{q_0}\big| \lesssim \e^{1/2} |\ln \e|^* e^{t \g^+}.$$
 According to Section \ref{sec:app pert}, and especially \eqref{rep:S0} on page \pageref{rep:S0}, the leading term $S_0$ decomposes as
 $$ S_0(0;t,x,\xi_0) = \exp\big(t M(0,x,\xi_0)/\sqrt \e\big) + \e^{1/2} \Sigma, \qquad \mbox{ where $|\Sigma| \lesssim e^{t \g^+}.$}$$

We are left with the matrix exponential $\exp\big(t M(0,x,\xi_0)/\sqrt \e\big),$ where $M$ is given explicitly in \eqref{M-spelled-out}. At $\xi = \xi_0,$ there holds $\mu_1 = \mu_2,$ so that
 $$ \exp\big(t M(0,x,\xi_0)/\sqrt \e\big) = \mbox{diag}\,\Big(e^{i t \l_2(\xi_0)/\sqrt \e} \exp \big( t \tilde M(x)\big), \, \big(e^{i t \l_j(\xi_0)/\sqrt\e}\big)_{3 \leq j \leq J}\Big).$$
The matrix $\tilde M$ is
 $$\dsp{\tilde M(x) := \left(\begin{array}{cc} 0 & \tilde b_{12}(0,x,\xi_0) \\ \tilde b_{21}(0,x,\xi_0) & 0 \end{array}\right),}$$
 where $\tilde b_{12}$ and $\tilde b_{21}$ are defined in \eqref{def:b12-21}.
It has rank two, by Assumption \ref{ass:transp}(iii), and spectrum
 $$ \Big\{ 0, \, \, \pm \mbox{tr}\,(\tilde b_{12}(0,x,\xi_0) \tilde b_{21}(0,x,\xi_0)^{1/2}\Big\}  = \big\{ 0, \, \pm |a(x)| \big( \g + i \a\big)\big\},$$
 where $\a := \Im m \, \big(\G(\xi_0)^{1/2}\big).$
(For a detailed computation, see the paragraphs just above the statement of Lemma \ref{lem:S-resonance} in Appendix \ref{app:symb-bound} on page \pageref{eig-m}.) By definition of $x_0$ and $\xi_0$ in Section \ref{s-f loc} on page \pageref{def:x0}, there holds $\g \neq 0,$ and $|a(x)| \neq 0$ locally around $x_0.$ As a consequence, locally around $x_0$ the matrix $\tilde M$ has a smooth spectral decomposition
$$\dsp{
\tilde M(x) = a(x) (\g + i \a)\big( P_{+}(x) - P_{-}(x)\big),}$$
with rank-one projectors.
The ranges of the eigenprojectors (eigenspaces of $\tilde M$) are
$$\mbox{Ran}\, P_\pm(x) = \Big\{ c \Big(\vec e_0, \,\, \pm \frac{\tilde b_{21}(0,x,\xi_0)}{|a(x)|\g} \vec e_0\Big), \quad c \in \C\Big\}.$$
 In particular,
 \be \label{p:pm} P_\pm(x) \big(\vec e_0, 0\big) \equiv \Big(1 + \big(\tilde b_{21}(0,x,\xi_0) \g^{-1} |a(x)|^{-1}\big)^2\Big)^{-1/2} (\vec e_0, 0).
 \ee
 This gives for $x$ close to $x_0:$
  $$ \big| \exp\big(t \tilde M(x)\big)  V_0(x) \big| \geq C(x) e^{t a(x) \g} - \big| P_-(x) (\vec e_0,0) \big|,$$
 where $C(x) \neq 0$ in a neighborhood of $x_0,$ and the result follows by
 \be \label{for:beta}\left(\int_{B(x_0,\rho)} e^{2 t a(x) \g} \, dx\right)^{1/2} \geq C(\rho) e^{t \g^-},\ee
 with $C(\rho) = O(\rho^d).$
\end{proof}

\begin{rema}\phantomsection \label{rem:for-endgame} In \eqref{p:pm} we see that $P_\pm V_0 = (\star,0).$ For $\ope(S(0;t)) V,$ which, at a given $(t,x),$ is a vector in $\C^{NJ}:$
 $$ \ope(S(0;t) V(0) = \Big( (\ope(S(0;t) V(0))_1, \, \ope(S(0;t) V(0))_2, \dots\Big) \in \C^{N \times N \times \dots},$$
 this implies that the leading term is $(\ope(S(0;t) V(0))_1.$ This observation will be useful at the end of Section {\rm \ref{sec:end-insta}}.
\end{rema}

\subsection{Existence over logarithmic times and upper bound}\label{est-upper-V0-a}

We denote
\be \label{datum:VW}
 (V(0), W(0)), \quad \mbox{with $V(0)$ as described by Lemma \ref{lem:datum},  and $\| W(0) \|_{\e,s} = O(\e^K)$},
\ee
the datum derived from $\dot u(0)$ \eqref{datum-dot-u} in the coordinates $(V,W)$ of the prepared system \eqref{V11}.

We prove here existence and uniqueness of a solution $(V,W)$ to \eqref{V11} issued from \eqref{datum:VW}, over the interval $[0, T_0 \sqrt \e |\ln \e|),$ where the limiting time $T_0$ is defined in \eqref{def:T0-K0}.

The difficulty is in the treatment of the $L^\infty$ norm.

In a first part (Section \ref{sec:FL1}), we perform estimates in ${\mathcal F} L^1$ norm on the prepared system \eqref{V11} and combine these with Sobolev estimates on $W$ from \eqref{V11} and the upper bound \eqref{up:0} for $V$ that we derived from the Duhamel representation \eqref{rep:V}; here we use the bound  $|u|_{L^\infty} \leq |\hat u|_{L^1}.$

In a second part (Section \ref{sec:Sob}), we only use Sobolev estimates on $W$ from \eqref{V11} and the upper bound \eqref{up:0}; there we use the Sobolev embedding $|u|_{L^\infty} \lesssim \e^{-d/2} \| u \|_{\e,s},$ for $s > d/2.$

\subsubsection{In ${\mathcal F}L^1$ and $H^s$} \label{sec:FL1}

An observation time $T_1$ is given, such that
\be \label{def:delta} T_1 < \frac{K}{|B|_0 |\hat a|_{L^1}}, \ee
where notation $|B|_0$ is introduced in \eqref{def:B0}.

\begin{lemm}\phantomsection \label{lem:Linfty-bd} If $\varphi_0 \equiv 1$ on a large enough ball around $x_0,$ and if $\e$ is small enough, then the initial value problem \eqref{V11}-\eqref{datum:VW} is well-posed in ${\mathcal F} L^1$ over the interval $[0, T_1 |\ln \e|],$ and there holds the bound
 \be \label{ap-infty}
  \sup_{0 \leq t \leq T_1 |\ln \e|} |{\mathcal F} (V,W)(t)|_{L^1} \leq \e^{\eta_1},
 \ee
for some $\eta_1 = \eta_1(\e,T_1) > 0,$ with $\eta_1 \to 0$ as $T_1 \to K/(|B|_0 |\hat a|_{L^1})$ and $\e \to 0.$
\end{lemm}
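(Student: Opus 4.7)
The plan is to run a Duhamel argument in the Wiener algebra $\mathcal{F}L^1$, exploiting the fact that the principal semigroups $\exp(-it\op_\e(\chi_1\mathcal{A})/\sqrt\e)$ and $\exp(-it\op_\e(\mathbf{A})/\sqrt\e)$ are Fourier multipliers with purely imaginary symbols, hence isometries on $\mathcal{F}L^1$. This is exactly the framework needed to avoid the $\e^{-d/2}$ loss of the semiclassical Sobolev embedding, because $\mathcal{F}L^1$ is a Banach algebra with $|u|_{L^\infty}\leq |\hat u|_{L^1}$. Splitting $M = i\chi_1\mathcal{A} - \sqrt\e\,\varphi_1\check{\mathcal{B}}$ in the first line of \eqref{V11} yields
\[
V(t) = e^{-it\op_\e(\chi_1\mathcal{A})/\sqrt\e}V(0) + \int_0^t e^{-i(t-t')\op_\e(\chi_1\mathcal{A})/\sqrt\e}\bigl(\ope(\varphi_1\check{\mathcal{B}})V+\sqrt\e F_V\bigr)(t')\,dt',
\]
and analogously for $W$, so the scheme reduces to a quantitative $\mathcal{F}L^1$ bound on the source terms.

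The crucial source-term estimate uses the tensor-product factorization $\check{\mathcal{B}}_{12}(\sqrt\e t,x,\xi) = \chi_0(\xi)\varphi_1(x)g(\sqrt\e t,x)b_{12}^+(\xi)$ and the paradifferential reduction of the type \eqref{est:remainder:in:proof}, together with the Banach-algebra property:
\[
|\ope(\varphi_1\check{\mathcal{B}})V|_{\mathcal{F}L^1}\leq \bigl(|B|_0+o_h(1)\bigr)\,|\varphi_1\,g(\sqrt\e t)|_{\mathcal{F}L^1}\,|V|_{\mathcal{F}L^1} + O(\e)\,|V|_{\mathcal{F}L^1},
\]
where $o_h(1)\to 0$ as the support of $\chi_0$ contracts to $\mathcal{R}_{12}$, by continuity of $b_{12}^+$ and of the analogous symbol for $\check{\mathcal{B}}_{21}$. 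The amplitude $g$ is transported by the equations of Appendix \ref{app:onwkb}; since $\sqrt\e\,T_1|\ln\e|\to 0$, we have $|g(\sqrt\e t)-a|_{\mathcal{F}L^1\cap H^{s_a}}=o(1)$, and with $\varphi_1\equiv 1$ on a large enough ball, $|\varphi_1 g(\sqrt\e t)|_{\mathcal{F}L^1}\leq |\hat a|_{L^1}+o(1)$. The same mechanism shows that $|\op_\e(D)W|_{\mathcal{F}L^1}$, whose symbol carries the factor $1-\varphi_0$, is made arbitrarily small by choosing $\varphi_0\equiv 1$ on a sufficiently large ball around $x_0$. Finally, the remainder $\sqrt\e F_{V,W}$ is controlled via Lemma~\ref{lem:bd-F} after noting that the change of variables gives $|\mathcal F\dot u|_{L^1}\lesssim y(t):=|\mathcal F(V,W)(t)|_{L^1}$.

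Assembling these bounds, with $y(0)\lesssim \e^K$ by Lemma~\ref{lem:datum} and $K\leq K_a+1/2$, I would obtain an integral inequality of the form
\[
y(t)\leq C\e^K + \int_0^t\bigl(|B|_0|\hat a|_{L^1}+o(1)\bigr) y(t')\,dt' + \int_0^t\e^{-1/2}y(t')^2\,dt' + CT_1|\ln\e|\e^{K_a+1/2}.
\]
Fix any $\eta_1\in\bigl(0,\,K-T_1|B|_0|\hat a|_{L^1}\bigr)$, nonempty by the assumption on $T_1$. On a maximal subinterval on which $y\leq \e^{\eta_1}$ the quadratic term contributes only $T_1|\ln\e|\e^{2\eta_1-1/2}\ll \e^{\eta_1}$, so Gronwall gives
\[
y(t)\leq C\e^K\exp\bigl(t(|B|_0|\hat a|_{L^1}+o(1))\bigr)\leq C\e^{K-T_1(|B|_0|\hat a|_{L^1}+o(1))} \quad\text{at }t=T_1|\ln\e|,
\]
and the bootstrap closes for $\e$ small. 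Letting $T_1\uparrow K/(|B|_0|\hat a|_{L^1})$ and $\e\to 0$ forces $\eta_1\to 0$, yielding the claim.

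The main obstacle is the asymptotic sharpness of the rate $|B|_0|\hat a|_{L^1}$ in the linear source bound: producing the exact threshold $K/(|B|_0|\hat a|_{L^1})$ requires tuning three parameters simultaneously — shrinking the frequency cut-off $\chi_0$ to $\mathcal{R}_{12}$ so that $|\chi_0 b_{12}^+|_{L^\infty}\to|B|_0$, enlarging the spatial cut-off $\varphi_1$ so it covers the bulk of $a$ without inflating its Wiener norm (and symmetrically arranging $\varphi_0$ so that $|(1-\varphi_0)g|_{\mathcal{F}L^1}$ is negligible), and controlling the spreading of $g$ under its nonlinear transport over the asymptotically short time $\sqrt\e\,T_1|\ln\e|$. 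Each of these is an $o(1)$ correction, but it is the combination that pins the constant to the announced optimal value.
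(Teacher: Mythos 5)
Your strategy is essentially the paper's: a Duhamel/Fourier argument in the Wiener algebra, using that the free propagators are Fourier multipliers with real diagonal symbols (hence of norm $\leq 1$ on ${\mathcal F}L^1$), Young's inequality on the tensor-product symbols to extract the rate $|B|_0|\hat a|_{L^1}$ up to corrections in $h$, $\delta_{\varphi_0}$ and $\sqrt\e|\ln\e|$, and a continuation/bootstrap at level $\e^{\eta_1}$ with $\eta_1$ just below $K-T_1|B|_0|\hat a|_{L^1}$. The parameter tuning you describe at the end is exactly what the paper does.

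There is, however, one step that fails as written, precisely in the regime the lemma targets. In \eqref{V11} the source is $\sqrt\e F_V$, and Lemma \ref{lem:bd-F} gives $|\hat F_V|_{L^1}\lesssim(1+\e^{-1/2}|{\mathcal F}\dot u|_{L^1})|{\mathcal F}\dot u|_{L^1}+\e^{K_a}$; after the $\sqrt\e$ prefactor the quadratic contribution to your integral inequality is $\int_0^t y^2$, not $\int_0^t\e^{-1/2}y^2$. With your version, the closing claim $T_1|\ln\e|\,\e^{2\eta_1-1/2}\ll\e^{\eta_1}$ requires $\eta_1>1/2$, whereas the lemma is interesting exactly when $\eta_1$ is small ($\eta_1\to0$ as $T_1$ approaches $K/(|B|_0|\hat a|_{L^1})$), so the bootstrap does not close there. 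Restoring the $\sqrt\e$ prefactor, or absorbing $y^2\leq\e^{\eta_1}y$ into the Gronwall coefficient on the bootstrap interval as the paper does (it adds $\e^{\eta_1}$ to the linear rate, contributing a factor $\exp(\e^{\eta_1}T_1|\ln\e|)\to1$), repairs this immediately. A smaller point: you bound $\ope(\varphi_1\check{\mathcal B})$ in ${\mathcal F}L^1$ by invoking a comparison of the type \eqref{est:remainder:in:proof}, but that is a $\|\cdot\|_{\e,s}$ estimate, not a Wiener-algebra estimate; the paper sidesteps this by reverting to the pseudo-differential formulation of \eqref{V11}(i) (pushing the para/pseudo discrepancy back into $F_V$), so that the operator is literally multiplication by $\varphi_1 g$ composed with the Fourier multiplier $\chi_0 b_{12}^+$ and Young's inequality applies directly. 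You should either do the same or supply an ${\mathcal F}L^1$ bound for the para-linearization remainder.
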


Above, ${\mathcal F}L^1$ is the Banach algebra of maps $u$ with Fourier transform $\hat u$ in $L^1.$

\begin{proof} While a para-differential formulation of \eqref{V11} was useful for the Duhamel representation of Section \ref{est-upper-V0}, we return here to a purely {\it pseudo}-differential formulation of \eqref{V11}. This simply means changing $\ope(M)$ into $\op_\e(M)$ in the left-hand side, an operation that takes one term out of the source $F_V$ (namely, the term in the third line of the definition of $F_V$ in the proof of Lemma \ref{lem:sys-V}).

Being symmetric hyperbolic and semilinear, the initial-value problem \eqref{eq:dot-u}-\eqref{datum-dot-u} is locally well-posed in ${\mathcal F}L^1.$ Since the prepared system \eqref{V11} derives from \eqref{eq:dot-u} via Fourier multipliers, and since Fourier multipliers operate in ${\mathcal F}L^1$ (as evidenced by \eqref{action:fouriermultFL1}), the initial-value problem \eqref{V11}-\eqref{datum:VW} is also locally well-posed in time.

From \eqref{datum-dot-u}, we infer, via \eqref{action:fouriermultFL1}, that there holds $| (V,W)(0)|_{{\mathcal F}L^1} \lesssim \e^K,$ on top of the bounds given in Lemma \ref{lem:datum} and \eqref{datum:VW}.

The local-in-time existence theory (based on the Cauchy-Lipschitz theorem) gives a notion of maximal existence time, which we denote $T_\star(\e).$ The map $t \to |{\mathcal F} (V,W)(t)|_{L^1}$ is continuous over $[0, T_\star(\e)).$
Consider the set
 $$ J := \Big\{ t \in (0, T_\star(\e)) \cap (0, T_1 \sqrt \e |\ln \e|], \,\,\,\, \forall\, t' \in (0,t),\,\,\,\, |{\mathcal F} (V,W)(t')|_{L^1} \leq \e^{\eta_1} \Big\},$$
 where $0 < \eta_1 < K$ will be appropriately chosen below, depending on $T_1.$

We are going to prove that, for $\e$ small enough, $J$ is non-empty, open and closed in $(0, T_1 \sqrt \e |\ln \e|].$ This will prove well-posedness over $[0, T_1 \sqrt \e |\ln \e|],$ by connectedness.

The fact that $J$ is not empty is a direct consequence of $|{\mathcal F} (V,W)(0)|_{L^1} = O(\e^K) \ll \e^{\eta_1}$ and continuity of $t \to |{\mathcal F} (V,W)(t)|_{L^1}.$ The fact that $J$ is closed follows immediately from its definition.

Now given $t \in J,$ there certainly holds $(t - \zeta, t] \subset
J$ for some $\zeta > 0.$ Therefore we only have to prove that $[t, t
+ \zeta) \cap (0, T_1 \sqrt \e |\ln \e|] \subset J$ for some $\zeta
> 0.$

After applying the Fourier transform to both equations in \eqref{V11} and factorizing the oscillations, we find, for a given $t \in J:$
$$ \hat V(t) = e^{- i t (\chi_1 {\mathcal A})(\e \xi)/\sqrt \e} \hat V(0) + \int_0^t e^{-i( t- t') (\chi_1 {\mathcal A})(\e \d_x)/\sqrt \e} \Big( {\mathcal F}\big( \op_\e(\varphi_1 \check {\mathcal B}) V\big) + \sqrt \e \hat F_V\Big) \, dt'.$$
and
$$ \hat W(t) = e^{ - i t {\bf A}(\e \xi)/\e} \hat W(0) + \int_0^t e^{- i (t - t') {\bf A}(\e \xi)/\e} \Big( {\mathcal F} \big( \op_\e(D) W \big) + \hat F_W\Big) \, dt'.$$
The symbols ${\mathcal A}$ and ${\bf A}$ are diagonal and real, so that $|e^{ it {\mathcal A}(\xi)}| \leq 1,$ $|e^{it {\bf A}(\xi)}| \leq 1.$
Besides, by Young's convolution inequality, and recalling that the norm in use in $\C^N$ is the sup norm,
$$
\Big| \op_\e(\varphi_1 \check {\mathcal B}) V\Big|_{{\mathcal F}L^1} \leq  |B|_0 | \hat \varphi_1|_{L^1} |\hat u_0(\sqrt \e t)|_{L^1} |\hat V|_{L^1},
$$
where $|B|_0$ is defined in \eqref{def:B0}, and similarly
$$
\big| \op_\e(D) W  \big|_{{\mathcal F}L^1} \leq |(1 - \varphi_0)
u_0(\sqrt \e t)|_{{\mathcal F} L^1} |B| |\hat W|_{L^1}.
$$
There holds over $[0, T_1 |\ln \e|]:$
$$ |\hat u_0(\sqrt \e t)|_{L^1} \leq |\hat a|_{L^1} + C_0 \sqrt \e |\ln\e|, \qquad |(1 - \varphi_0) u_0(\sqrt \e t)|_{{\mathcal F} L^1} \leq |(1 - \varphi_0) a|_{{\mathcal F} L^1} + C_0 \sqrt \e |\ln \e|,$$
where $C_0 > 0$ is independent of $\e,t,$ and depends on $\d_t u_0,$ which according to Assumption \ref{ass-u-a} belongs to $C^0 H^{s_a},$ hence to ${\mathcal F} L^1.$ As $\varphi_0 \to 1$ (the function identically equal to 1), there holds $|\hat \varphi_1|_{L^1} \to 1$ and $|(1 - \varphi_0) a|_{{\mathcal F}L^1} \to 0.$ In particular, \label{def:delta:phi0} for any $\delta_{\varphi_0} > 0,$ we can choose $\varphi_0, \varphi_1$ such that
$$ |\hat \varphi_1|_{L^1} \leq 1 + \delta_{\varphi_0}, \qquad |(1 - \varphi_0) a|_{{\mathcal F}L^1}  \leq \delta_{\varphi_0}.$$
 Thus we obtain
$$ \begin{aligned} |\hat V(t)|_{L^1} & \lesssim \e^K + |B|_0 (1 + \delta_{\varphi_0})(|\hat a|_{L^1} + C_0 \sqrt \e |\ln \e|)   \int_0^t |\hat V(t')|_{L^1} \, dt' + \e^{1/2} \int_0^t |\hat F_V|_{L^1} \, dt', \\
 |\hat W(t)|_{L^1} & \lesssim \e^K + |B| (\delta_{\varphi_0} + C_0 \sqrt \e |\ln \e|) \int_0^t |\hat W(t')|_{L^1} \, dt' + \e^{1/2} \int_0^t |\hat F_W|_{L^1} \, dt'. \end{aligned} $$
With the ${\mathcal F}L^1$ bound for $(F_V,F_W)$ derived from Lemma \ref{lem:bd-F} and $|\dot u|_{{\mathcal F} L^1} \lesssim |V,W|_{{\mathcal F}L^1},$ a consequence of \eqref{action:fouriermultFL1}, this yields, using $t \in J$ and $K_a + 1/2 \geq K:$
$$  |(\hat V, \hat W)(t)|_{L^1} \lesssim \e^K |\ln \e|^* + \Big(|B|_0 (1 + \delta_{\varphi_0})(|\hat a|_{L^1} + C_0 \sqrt \e |\ln \e|) + \e^{\eta_1}\Big) \int_0^t |(\hat V, \hat W)(t')|_{L^1} \, dt',$$
for $\delta_{\varphi_0}$ small enough (depending on $B$ and $|\hat a|_{L^1}$).
   We now let
  \be \label{def:eta1}
   2 \eta_1 :=  K - |B|_0 |\hat a|_{L^1} T_1.
  \ee
Then, for $\e$ small enough, and $\delta_{\varphi_0}$ small enough, depending in particular on $T_1,$ there holds
$$ \frac{3}{2} \eta_1 < K -  |B|_0 |\hat a|_{L^1} T_1 - \Big( \e^{\eta_1} + C_0 \sqrt \e |\ln \e| |B|_0 (1 + \delta_{\varphi_0}) + \delta_{\varphi_0} |B|_0 |\hat a|_{L^1} \Big) T_1.$$
   With the above bound in $|(\hat V, \hat W)|_{L^1}$ and Gronwall's lemma,  this implies, for $\e$ small enough,
  \be \label{infty-bd}
   | {\mathcal F} (V,W)(t)|_{L^1} \leq \e^{(3/2) \eta_1}, \quad t \in J.
   \ee
 Then, by continuity of $t \to |{\mathcal F} (V,W)(t)|_{L^1},$ we obtain that $|{\mathcal F}(V,W)(t + \zeta)|_{L^1} \leq \e^{\eta_1}$ if $\zeta$ and $\e$ are small enough. This concludes the verification that $J$ is open in $(0, T_1 \sqrt \e |\ln \e|).$ The bound \eqref{infty-bd} is then valid over $[0, T_1 \sqrt \e |\ln \e|],$ and this is \eqref{ap-infty}.
  \end{proof}

\begin{prop}\phantomsection
\label{lem:Sob-bd} If $\varphi_0 \equiv 1$ on a large enough ball around $x_0,$ and if $\e$ is small enough, the solution $(V,W)$ to system \eqref{V11} issued from \eqref{datum:VW} is defined over $[0, T_1 |\ln \e|],$ and there holds the bound
 \be \label{ap-Sob}
  \| (V,W)(t) \|_{\e,s} \lesssim \e^K |\ln \e|^* e^{t \g^+}.
  \ee
 \end{prop}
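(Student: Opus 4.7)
The plan is to combine the Duhamel upper bound \eqref{up:0} for $V$, which already delivers the sharp exponential rate $\g^+$ via Proposition \ref{est-flow-sym}, with a straightforward semiclassical Sobolev energy estimate for $W$, and to close both in a single Gronwall loop, using Lemma \ref{lem:Linfty-bd} to bound $|\dot u|_{L^\infty}$ by $|{\mathcal F}\dot u|_{L^1}$. Local well-posedness of \eqref{V11}--\eqref{datum:VW} in the semiclassical Sobolev scale follows from standard semilinear theory; the point is to produce an a priori bound sharp enough both to continue the solution up to $t = T_1 |\ln \e|$ and to reach \eqref{ap-Sob}.

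For $V$, I start from \eqref{up:0} and substitute $|\dot u(\sqrt\e t')|_{L^\infty} \leq |{\mathcal F}\dot u(\sqrt\e t')|_{L^1} \lesssim \e^{\eta_1}$ from Lemma \ref{lem:Linfty-bd}, so that the integrand prefactor $\e^{1/2}(1 + \e^{-1/2}|\dot u|_{L^\infty})$ becomes $\sqrt\e + \e^{\eta_1}$, both small. For $W$, the propagator $\e^{-1/2}\op_\e(i{\bf A})$ is a skew-adjoint Fourier multiplier (its symbol is diagonal, real, and $x$-independent), so it does not contribute to $\tfrac{d}{dt}\|W\|_{\e,s}^2$; only the source $\op_\e(D) W + \sqrt\e F_W$ matters. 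The symbol $D$ carries the spatial factor $(1-\varphi_0)g(\sqrt\e t,\cdot)$, and decay at infinity of $g(t,\cdot) \in H^{s_a}$ lets us enlarge the support of $\varphi_0$ so that $|(1-\varphi_0)g|_{L^\infty} \leq \delta_{\varphi_0}$; the product estimate \eqref{product2:intro} then yields $\|\op_\e(D)W\|_{\e,s} \lesssim \delta_{\varphi_0}\|W\|_{\e,s}$. Lemmas \ref{lem:bd-F} and \ref{lem:Linfty-bd}, together with $K_a + 1/2 \geq K$, bound $\sqrt\e\|F_W\|_{\e,s}$ by $(\sqrt\e + \e^{\eta_1})\|(V,W)\|_{\e,s} + C\e^K$.

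Setting $\Phi(t) := e^{-t\g^+}\|(V,W)(t)\|_{\e,s}$, adding the two resulting integral inequalities and using $e^{-(t-t')\g^+} \leq 1$ on the $W$-piece, I obtain
\[
\Phi(t) \leq C\e^K |\ln\e|^* + C\bigl(|\ln\e|^*(\sqrt\e + \e^{\eta_1}) + \delta_{\varphi_0}\bigr) \int_0^t \Phi(t')\,dt',
\]
and Gronwall gives $\Phi(t) \leq C\e^K |\ln\e|^* \exp\bigl(CT_1|\ln\e|^{*+1}(\sqrt\e+\e^{\eta_1}) + CT_1|\ln\e|\delta_{\varphi_0}\bigr)$ for $t \leq T_1|\ln\e|$. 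The first term in the exponent vanishes polynomially in $\e$. For the second, one lets the support of $\varphi_0$ depend on $\e$ (which is allowed thanks to the decay of $a$), so that $\delta_{\varphi_0} = o(1/\log|\ln\e|)$; the resulting factor is then absorbed into the polylog $|\ln\e|^*,$ yielding \eqref{ap-Sob}. Existence on the full interval $[0, T_1 |\ln \e|]$ follows by a standard bootstrap: while $\|(V,W)\|_{\e,s}$ is under control, so is $\|\dot u\|_{\e,s}$ via \eqref{u-vw}, and the semilinear $H^s$-theory prevents blow-up before $T_1|\ln \e|$.

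The main technical obstacle is the calibration of $\delta_{\varphi_0}$: it must be small enough relative to $|\ln\e|$ that $CT_1|\ln\e|\delta_{\varphi_0}$ does not degrade the $\e^K$ rate beyond a polylog factor, while remaining compatible with the choice of $\varphi_0$ already fixed in Lemma \ref{lem:Linfty-bd}. Once this is arranged, the remainder of the proof is the bookkeeping of the three small parameters $\sqrt\e,$ $\e^{\eta_1},$ and $\delta_{\varphi_0}.$
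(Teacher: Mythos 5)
Your overall architecture is the paper's: you combine the Duhamel upper bound \eqref{up:0} for $V$, a semiclassical energy estimate for $W$ in which $\op_\e(i{\bf A})$ drops out by skew-adjointness, the $L^\infty$ control $|\dot u|_{L^\infty}\lesssim \e^{\eta_1}$ from Lemma \ref{lem:Linfty-bd}, and a continuation argument. The genuine gap is in the Gronwall step. By rescaling with $e^{-t\g^+}$ and throwing away the factor $e^{-(t-t')\g^+}\leq 1$ in the $\delta_{\varphi_0}$-term, you turn the mixed inequality into a single Volterra inequality and obtain growth at rate $\g^+ + C\delta_{\varphi_0}$; at $t=T_1|\ln\e|$ this is a loss $\e^{-CT_1\delta_{\varphi_0}}$, a genuine power of $\e$, not a polylog, so \eqref{ap-Sob} is not reached with a fixed cut-off. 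Your patch --- letting $\delta_{\varphi_0}$ shrink with $\e$ --- does not work as calibrated: to absorb $e^{CT_1\delta_{\varphi_0}|\ln\e|}$ into $|\ln\e|^*$ you need $\delta_{\varphi_0}\lesssim \log|\ln\e|/|\ln\e|$, whereas $\delta_{\varphi_0}=o(1/\log|\ln\e|)$ still gives a super-polylog factor $e^{|\ln\e|\,o(1/\log|\ln\e|)}$. Worse, an $\e$-dependent $\varphi_0$ is structurally incompatible with the rest of the proof: $\varphi_0$ also enters the initial perturbation \eqref{datum-dot-u}, whose $\|\cdot\|_{\e,s}$ norm must stay bounded uniformly in $\e$ (Theorem \ref{theorem1} and Lemma \ref{lem:datum}), and the decay of $a$ is not quantified, so there is no control of how fast the support of $\varphi_0$ (hence $\|\varphi_0\|_{L^2}$) must grow to achieve a prescribed $\delta_{\varphi_0}(\e)$; the parameter ordering of Remark \ref{rem:parameters} and the endgame conditions \eqref{cond:hrho2} also presuppose a fixed $\delta_{\varphi_0}$.

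The missing idea is the mixed Gronwall lemma of Appendix \ref{app:d} (Lemma \ref{lem:g}), which is precisely designed for the inequality you derive: it treats the plain integral term $\delta_{\varphi_0}\int_0^t y$ and the semigroup term $\e^{\eta_1}\int_0^t e^{(t-t')\g^+}y$ \emph{differently} (first Gronwall in the $\delta_{\varphi_0}$-term, then absorb the semigroup term using the smallness $\e^{\eta_1}$ under the condition $\eta_1-T_1\delta_{\varphi_0}>0$), and yields growth at rate $\max(\g^+,\delta_{\varphi_0})=\g^+$ with no $\e$-power loss, for a fixed $\delta_{\varphi_0}<\g|a|_{L^\infty}$. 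With that lemma in place of your crude rescaling, the rest of your write-up (source bounds via Lemmas \ref{lem:bd-F} and \ref{lem:sys-V}, the estimate $\|\op_\e(D)W\|_{\e,s}\lesssim\delta_{\varphi_0}\|W\|_{\e,s}$, the bootstrap for existence) matches the paper's proof.
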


The observation time $T_1$ is introduced at the beginning of this Section, and the amplification rate $\g^+$ is defined in \eqref{def:g+}. The spatial cut-off $\varphi_0$ is introduced just below Notation \ref{notation:cut-off} and intervenes in the equation \eqref{V11}(ii) in $W.$

 \begin{proof} We compute $2 \Re e \, (\Lambda^s \d_t W, \Lambda^s W)_{L^2},$ where $\Lambda^s$ is the Fourier multiplier defined by $\Lambda^s := \op_\e\big(1 + |\cdot|^2)^{s/2}\big),$ and $W$ solves \eqref{V11}(ii). By symmetry, the contribution of ${\bf A}$ is zero, so that
  \be \label{up:w1} \d_t\big( \| W \|_{\e,s}^2\big) \leq 2 \big( | \Lambda^s \op_\e(D) W |_{L^2} + \e^{1/2} \| F_W\|_{\e,s}\big) \| W \|_{\e,s}.\ee
  By \eqref{est:fourier-mult},
  \be \label{up:w2} | \op_\e(D) \Lambda^s W|_{L^2} \lesssim |(1 - \varphi_0) u_0|_{L^\infty} \| W \|_{\e,s} \leq \delta_{\varphi_0} \| W \|_{\e,s},\ee
 where $\delta_{\varphi_0} > 0$ can be made arbitrarily small by letting $\varphi_{0} \equiv 1$ on a very large ball around $x_0,$ since $u_0$ decays at spatial infinity. The commutator is estimated by \eqref{est:lambda-s} (here, we are using again the fact that every entry of $D$ is a tensor product $D_1(x) D_2(\xi)$):
 \be \label{up:w3} \big| [\Lambda^s, \op_\e(D)] W \big|_{L^2} \lesssim \e  \| u_a \|_{H^{s_a}} \| W \|_{\e,s-1}.\ee
By Lemma \ref{lem:Linfty-bd}, there holds for $t \leq T_1 |\ln \e|:$
  \be \label{infty-bd-bis}
   | \dot u(\sqrt \e t)|_{L^\infty} \leq |{\mathcal F} \dot u(\sqrt \e t)|_{L^1} \lesssim |{\mathcal F}(V,W)(t)|_{L^1} \leq \e^{\eta_1},
   \ee
 where $\eta_1$ is defined in \eqref{def:eta1}. A bound for $F_W$ is given in Lemmas \ref{lem:bd-F} and \ref{lem:sys-V}. With \eqref{infty-bd-bis}, this bound is
  $$ \| F_W\|_{\e,s} \lesssim (1 + \e^{-1/2 + \eta_1}) \| V,W\|_{\e,s} + \e^{K_a}.$$
 We obtained, for $\e$ small enough,
 \be \label{bd:W}
  \begin{aligned} \| W(t) \|_{\e,s}^2 \lesssim \e^{2K} & + \delta_{\varphi_0} \int_0^t \| W(t')\|_{\e,s}^2 \,dt' +  \e^{\eta_1} \int_0^t \| (V, W)(t')\|_{\e,s}^2 \, dt' \\ & + \e^{K_a + 1/2} \int_0^t \| W(t') \|_{\e,s} \, dt'.
  \end{aligned} \ee
 Going back to the upper bound \eqref{up:0} for $V$ and exploiting \eqref{infty-bd-bis}, we see that there also holds
  \be \label{bd:V}
 \| V(t) \|_{\e,s} \lesssim  \e^K |\ln \e|^* e^{t \g^+} + \e^{\eta_1} |\ln \e|^*\int_0^t e^{(t - t') \g^+}  \| (V, W)(t') \|_{\e,s}  \, dt'.
  \ee
 From \eqref{bd:W} and \eqref{bd:V}, and $K_a + 1/2 \geq K,$ we find that $y(t) := \max_{t' \in [0,t]} \| V(t')\|_{\e,s} + \| W(t')\|_{\e,s}$ satisfies the bound
$$
  y(t) \lesssim \e^K \e^{t \g^+} |\ln \e|^* + \int_0^t \delta_{\varphi_0} y(t') \, dt' + \e^{\eta_1} |\ln \e|^* \int_0^t e^{(t - t') \g^+} y(t')\,dt'.
$$
 By application of the Gronwall Lemma of Appendix \ref{app:d}, this gives \eqref{ap-Sob}, under extra conditions on the small constant $\delta_{\varphi_0},$ implying conditions on the support of $\varphi_0,$ which are $\eta_1 - \delta_{\varphi_0} T_1 > 0$ and $\g |a|_{L^\infty} > \delta_{\varphi_0}.$

   The fact that the a priori bound \eqref{ap-Sob} translates into a bound from below for the existence time follows from a classical continuation argument, similar to the one detailed in the proof of Lemma \ref{lem:Linfty-bd}.
 \end{proof}

\begin{rema}\phantomsection \label{rem:FL1-bis} It would be tempting to use ${\mathcal F}L^1$ bounds in conjunction with the Duhamel representation \eqref{rep:V} of $V,$ instead of ${\mathcal F}L^1$ bounds for the equation {\rm \eqref{V11}(i)} in $V,$ in the hope of obtaining a better estimate on the existence time, one that would involve $\g |\hat a|_{L^1}$ instead of $|B|_0 |\hat a|_{L^1}.$ This would require ${\mathcal F}L^1 \to {\mathcal F}L^1$ estimates on $\op_\e(S),$ which do not seem to be available.
\end{rema}

\begin{rema}\phantomsection \label{rem:bd-W} Going back to the proof of Proposition {\rm \ref{lem:Sob-bd}} and using in \eqref{bd:W} the bound \eqref{ap-Sob}, we see that $W$ enjoys the better upper bound
 $$
 \| W(t)\|_{\e,s} \lesssim \e^{K - T_1 \delta_{\varphi_0}/2} + \e^{K + (\eta_1 - T_1 \delta_{\varphi_0})/2} |\ln \e|^* e^{t \g^+}, \qquad t < T_1 |\ln \e|.
 $$
This will be useful in Section {\rm \ref{sec:end-insta}}.
\end{rema}

\subsubsection{In $H^s$} \label{sec:Sob}

We revisit here the estimates of the proof of Proposition \ref{lem:Sob-bd}, and give a slightly different existence result. We now consider an observation time $T_2$ such that
 \be \label{def:T2}
  T_2 < \frac{K - d/2}{\g |a|_{L^\infty}}.
 \ee
Recall that $h > 0$ intervenes in the upper rate $\g^+$ defined in \eqref{def:g+}; it plays the role of a security distance from the resonance. By continuity, there holds $\g^+ \to |a|_{L^\infty} \g$ as $h \to 0.$
\begin{prop}\phantomsection
\label{prop:ex-Sob} If $\varphi_0 \equiv 1$ on a large enough ball around $x_0,$ and if $\e$ is small enough, the solution $(V,W)$ to \eqref{V11} issued from \eqref{datum:VW} is defined over $[0, T_2 |\ln \e|],$ and there holds the bound
 \be \label{ap-Sob2}
  \| (V,W)(t) \|_{\e,s} \lesssim \e^K |\ln \e|^* e^{t \g^+}.
  \ee
 \end{prop}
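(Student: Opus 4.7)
The plan is to replicate the structure of the proof of Proposition \ref{lem:Sob-bd}, but replacing the use of the \emph{a priori} ${\mathcal F}L^1$ bound from Lemma \ref{lem:Linfty-bd} (which controlled $|\dot u|_{L^\infty}$ by $\e^{\eta_1}$) with the semi-classical Sobolev embedding \eqref{embed:intro}: $|\dot u(\sqrt\e t)|_{L^\infty} \lesssim \e^{-d/2} \|\dot u(\sqrt\e t)\|_{\e,s} \lesssim \e^{-d/2} \| (V,W)(t)\|_{\e,s}$, for $s>d/2$. This exchange converts the smallness source $\e^{\eta_1}$ into a nonlinear source that must be absorbed by a bootstrap.

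Concretely, I would set up a continuation argument on the open-closed-nonempty set
\[
 J := \Big\{ t \in (0,T_\star(\e)) \cap (0, T_2 \sqrt\e |\ln\e|], \,\,\, y(t') := \|(V,W)(t')\|_{\e,s} \leq A \e^{K} |\ln\e|^{*} e^{t' \g^{+}} \,\, \forall t' \in (0,t) \Big\},
\]
for some fixed large $A>0$, with local-in-time $H^s$ well-posedness giving the maximal existence time $T_\star(\e)$. The datum bound $y(0)=O(\e^K)$ and continuity make $J$ nonempty; closedness is immediate. For openness, I would inject the bootstrap assumption into the bound \eqref{bd:FV} for $F_V$ (using Sobolev embedding instead of \eqref{infty-bd-bis}):
\[
\|F_V\|_{\e,s} \lesssim \|(V,W)\|_{\e,s} + \e^{-(1+d)/2} \|(V,W)\|_{\e,s}^{2} + \e^{K_a},
\]
and the same bound for $F_W$, then feed these into the Duhamel inequality \eqref{up:0} for $V$ and the Sobolev energy estimate \eqref{up:w1}--\eqref{up:w3} for $W$.

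The critical term arises in the quadratic contribution
\[
 \e^{1/2} \int_0^{t} e^{(t-t')\g^{+}} \e^{-(1+d)/2} \|(V,W)(t')\|_{\e,s}^{2}\, dt' \;\lesssim\; A^{2} \e^{-d/2 + 2K} |\ln\e|^{2*} e^{2 t \g^{+}},
\]
which, for $t \leq T_2 \sqrt\e|\ln\e|$, is bounded by $\tfrac{A}{2}\,\e^{K}|\ln\e|^{*} e^{t\g^{+}}$ provided $\e^{-d/2 + K - T_2 \g^{+}} \to 0$, i.e.\ $T_2 \g^{+} < K - d/2$. Since $\g^{+} \to \g |a|_{L^\infty}$ as $h \to 0$ (continuity of $\Re e\,\G^{1/2}$ on the compact ${\mathcal R}_{12}$), the hypothesis $T_2 < (K - d/2)/(\g |a|_{L^\infty})$ suffices after taking the support of $\chi_1$ (governing $h$) small enough. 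The contribution of the $W$-equation works identically, since the only new term is cubic in $y$ and subject to the same condition $T_2\g^+ < K-d/2$, while the spatial localization parameter $\delta_{\varphi_0}$ is used to absorb the non-resonant $\op_\e(D)W$ term exactly as in \eqref{up:w2}. Closing the bootstrap then promotes the a priori bound into the conclusion \eqref{ap-Sob2} and, via the standard continuation argument, into existence over $[0, T_2 \sqrt\e |\ln\e|]$.

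The main obstacle is that the Sobolev embedding loses a large factor $\e^{-d/2}$, so the nonlinear remainder is genuinely non-small \emph{pointwise in time}, and only the combination of the $\e^{K}$ smallness of the datum with the logarithmic observation window makes the bootstrap viable; this is precisely the mechanism that forces the amplification exponent $K_0$ in Theorem \ref{theorem1} to be capped at $d/2$.
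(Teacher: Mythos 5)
Your proposal is correct and follows essentially the same route as the paper's own proof: replace the ${\mathcal F}L^1$ control of $|\dot u|_{L^\infty}$ from Lemma \ref{lem:Linfty-bd} by the semiclassical Sobolev embedding \eqref{embed}, run a continuation argument over the logarithmic observation window, and close via the condition $T_2\,\g^+ < K - d/2,$ available for $h$ small since $\g^+\to\g\,|a|_{L^\infty}.$ The only cosmetic difference is the bootstrap quantity: the paper bootstraps on the pointwise bound $|\dot u|_{L^\infty}\leq\e^{\eta_2}$ with $2\eta_2 = K - d/2 - T_2\g\,|a|_{L^\infty},$ which keeps the integral inequality linear with small coefficients so that Lemma \ref{lem:g} applies directly, whereas you bootstrap on $\|(V,W)\|_{\e,s}$ itself and absorb the quadratic term; both close under the same smallness requirements on $h,$ $\delta_{\varphi_0}$ and $\e.$
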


\begin{proof} We go back to the proof of Proposition \ref{lem:Sob-bd}. Instead of appealing to Lemma \ref{lem:Linfty-bd} to gain control of the $L^\infty$ norm of $\dot u,$ we use the Sobolev embedding
\be \label{embed}
 |v|_{L^\infty} \leq C_{s,d} \e^{-d/2} \| v \|_{\e,s}, \qquad C_{s,d} > 0, \quad v \in H^s, \,\, s> d/2,
\ee
 and control of $\| \dot u\|_{\e,s}$ by $\| V,W\|_{\e,s},$ as in \eqref{u-vw}, as follows:

 Local-in-time well-posedness in $H^s$ is granted by symmetric hyperbolicity, the semilinear nature of the nonlinearity, and $s > d/2.$ Let $T_*(\e)$ be the maximal existence time\footnote{The fact that notation $T_*(\e)$ was already used, with a different meaning, in the proof of Lemma \ref{lem:Linfty-bd} should not be a factor of confusion, since this use of $T_*(\e)$ is confined to the present proof. Same for $J$ below.} in $H^s.$  Consider the set
  $$ J := \Big\{ t \in [0, T_*(\e)) \cap (0, T_2 \sqrt \e |\ln \e|], \,\,\,\, \forall\, t' \in (0,t),\,\,\,\, |\dot u(t')|_{L^\infty} \leq \e^{\eta_2} \Big\},$$
  where $\eta_2 > 0$ will be chosen appropriately below, depending on $T_2.$

 We now prove that, given $t \in J,$ for some $\zeta > 0$ there holds $[t, t+\zeta) \subset J.$ Just like in the proof of Lemma \ref{lem:Linfty-bd}, this will imply $T_*(\e) > T_2 \sqrt \e |\ln \e|$ by a connectedness argument.

 From \eqref{embed} and Lemmas \ref{lem:bd-F} and \ref{lem:sys-V}, for $t \in J$ we deduce for the source term $F_W$ the bound
  $$ \| F_W \|_{\e,s} \lesssim \big(1 + \e^{-1/2 + \eta_2}\big) \|V,W\|_{\e,s} + \e^{K_a}.$$
Combined with estimates \eqref{up:w1}-\eqref{up:w2}-\eqref{up:w3} in $W,$ this gives the bound, for $t \in J:$
 $$ \begin{aligned} \| W(t) \|_{\e,s}^2 \lesssim \e^{2K} & +   \delta_{\varphi_0} \int_0^t \| W(t')\|_{\e,s}^2 \,dt' +  \e^{\eta_2} \int_0^t \| (V, W)(t')\|_{\e,s}^2 \, dt' \\ & + \e^{K_a + 1/2} \int_0^t \| W(t') \|_{\e,s} \, dt'. \end{aligned}$$
By \eqref{up:0}, for $t \in J:$
 $$ \| V(t)\|_{\e,s} \lesssim \e^K |\ln \e|^* e^{t \g^+} + C \e^{\eta_2} |\ln \e|^* \int_0^t e^{(t - t') \g^+} \| (V, W)(t') \|_{\e,s}  \, dt'.$$
By application of Lemma \ref{lem:g} we deduce from the above bounds the inequality
 \be \label{up:up} \| V, W(t)\|_{\e,s} \lesssim \e^K |\ln \e|^* e^{t \g^+}, \qquad t \in J,
 \ee
for $\delta_{\varphi_0}$ small enough (depending on $\eta_2,$ $T_2$ and $\g$).
 By \eqref{embed} and \eqref{u-vw}, this implies
$$
|\dot u(t)|_{L^\infty} \lesssim \e^{K - d/2} |\ln \e|^* e^{t\g^+}, \quad t \in J.
$$
We let
 $$ 2 \eta_2 := K - d/2 - T_2 \g |a|_{L^\infty}.$$
 Then, if $h$ is small enough, there holds
 $(3/2)\eta_2 < K - d/2 - T_2 \g^+.$
 This implies, for $\e$ small enough, the upper bound $|\dot u(t)|_{L^\infty} \leq \e^{(3/2) \eta_2},$ and we conclude as in the proof of Proposition \ref{lem:Sob-bd}: the bound \eqref{up:up}, which we now know to be valid over $[0, T_2 \sqrt \e |\ln \e|],$ is \eqref{ap-Sob2}.
\end{proof}

\subsection{Endgame: proof of the deviation estimate \eqref{est:th3}} \label{sec:end-insta}

Let $T < T_0$ be given, where the limiting observation time $T_0$ is defined in \eqref{def:T0-K0}. By Propositions \ref{lem:Sob-bd} and \ref{prop:ex-Sob}, the solution $(V,W)$ to the prepared system \eqref{V11} is defined over $[0, T |\ln \e|].$

\medskip

Consider first the case
\be \label{T0cond} T_0 = \frac{K}{|B|_0 |\hat a|_{L^1}}, \quad \mbox{so that} \quad K_0 = K \Big( 1 - \frac{\g |a|_{L^\infty}}{|B|_0 |\hat a |_{L^1}}\Big),\ee
and the bounds of Section \ref{sec:FL1} apply.

\medskip

From the Duhamel representation \eqref{rep:V} and the bound \eqref{action:S} for the action of $\op_\e(S),$ we find the lower bound
 \be \label{low:low}
  \begin{aligned} \big| V\big(T |\ln \e|\big)\big|_{L^2(B(x_0,\rho))} & \geq \big| \op_\e(S(0; T|\ln \e|)) V(0) \big|_{L^2(B(x_0,\rho))}  \\ & \qquad - C \sqrt \e |\ln \e|^* \int_0^{T |\ln \e|} e^{(T |\ln \e| - t')\g^+} | \tilde F_V(t')|_{L^2} dt'.%
  \end{aligned}
  \ee
 From the upper bounds \eqref{bd:tildeFV} and \eqref{bd:FV} for $F_V,$ and \eqref{u-vw} and Proposition \ref{lem:Sob-bd}, we deduce
 $$ \sqrt \e |\tilde F_V(t)|_{L^2} \lesssim \e^{K + 1/2} + \e^{K_a + 1/2} + (\e^{1/2} + |\dot u(\sqrt \e t)|_{L^\infty}) \e^K e^{t \g^+}.$$
 By Lemma \ref{lem:Linfty-bd}, there holds $|\dot u(\sqrt \e t)|_{L^\infty} \leq \e^{\eta_1}$ for $t < T,$ under condition \eqref{T0cond}. Together with $K_a + 1/2 \geq K,$ this implies that the above upper bound in $\tilde F_V$ takes the form
 \be \label{up:tildeF} \sqrt \e |\tilde F_V(t)|_{L^2} \lesssim \e^K + \e^{K + \eta_1} e^{t \g^+}.\ee
We now use in \eqref{low:low} the lower bound for $\op_\e(S(0;t)) V(0)$ given in Lemma \ref{lem-S0} and \eqref{up:tildeF}. This shows that $| V(T |\ln \e|)|_{L^2(B(x_0,\rho))}$ is bounded from below by
 $$
 C(\rho) \e^{K - T \g^-} - C |\ln \e|^{*} \e^{K + 1/2 - T \g^+} - C T |\ln \e|^* \e^{K + \eta_1 - T\g^+}.
 $$
Up to a multiplicative constant, we can rewrite this lower bound
 $$ \e^{K - T \g^-} \Big( 1 - C |\ln \e|^* \e^{\eta_1 - T(\g^+ - \g^-)}\Big).$$
  The smaller the exponent $K - T \g^-,$ the better the above lower bound. Under \eqref{T0cond}, there certainly holds
  $$ K_0 \leq K - T \g^-,$$
 since $\g^- \leq \g |a|_{L^\infty}.$ However, for any $K' > K_0,$ by choosing $\rho$ small enough in Lemma \ref{lem-S0}, and by choosing $T_0 - T$ small enough, we can achieve
 \be \label{k-k} K_0 <  K - T \g^- < K'.\ee
 Besides, given $T < T_0,$ we can choose $h$ in \eqref{def:Rh} and $\rho$ small enough, possibly even smaller than above, so that, for $\e$ small enough, the minimal amplification rate $\g^-$ defined just below \eqref{low0} and the maximal amplification rate $\g^+$ defined in \eqref{def:g+} are close enough so that
 \be \label{cond:hrho1} \g^+ - \g^- < \frac{\eta_1}{T},\ee
 where $\eta_1$ is defined in \eqref{def:eta1}.
 This implies
 \be \label{low-fin} \big| V\big(T |\ln \e|\big)\big|_{L^2(B(x_0,\rho))} \geq \frac{1}{2} C(\rho) \e^{K - T \g^-},\ee
 for $T_0 -T,$ $h,$ $\rho,$ and $\e$ small enough.
Now with \eqref{re:checkU},
 $$ \big| \check U\big(T |\ln \e|\big)\big|_{L^2(B(x_0,\rho))} \geq \big| V\big(T |\ln \e|\big)\big|_{L^2(B(x_0,\rho))} - | W(T |\ln \e|) |_{L^2(\R^d)}.$$
 By Remark \ref{rem:bd-W} page \pageref{rem:bd-W}, there holds
$$
\| W(T |\ln \e|) |_{L^2} \lesssim \e^{K + (\eta_1 - T_1 \delta_{\varphi_0})/2 - T \g_+}.
$$
If we now make sure, by choice of $h,$ $\rho,$ that
\be \label{cond:hrho2} \g^+ - \g^- < \frac{\eta_1 - \delta_{\varphi_0} T_1}{2 T_1},\ee
then the exponent $K - T \g^-$ in the lower bound for $V(T|\ln \e|)$ is strictly smaller than the exponent $K + (\eta_1 - \delta_{\varphi_0} T_1)/2 - T \g^+$ in the upper bound for $W(T |\ln \e|),$ and the above shows that $| \check U(T |\ln \e|)|_{L^2(B(x_0,\rho))}$ enjoys the same lower bound \eqref{low-fin} as $V.$

  Still going up the chain of changes of variables, we arrive by \eqref{def:check-u} at
 $$ |U|_{L^2(B(x_0,\rho))} \geq |\check U|_{L^2(B(x_0,\rho))} - \e^{1/2} |R_0 \check U|_{L^2(\R^d)}.$$
Since $|R_0 \check U|_{L^2} \lesssim |V,W|_{L^2},$ it suffices to use Proposition \ref{lem:Sob-bd} again. This time there is no need to further shrink our parameters, and we obtain that $U$ satisfies the same lower bound as $V.$

 Finally, by Remark \ref{rem:for-endgame}, the leading term in $V(T|\ln \e|)$ is the first component $V_1(T |\ln \e|) \in \C^N,$ in the sense that all other components are smaller by a factor $\e^{1/2},$ so that $V_1(T |\ln \e|)$ enjoys the lower bound \eqref{low-fin}. The same is true for $\check U_1(T |\ln \e|),$ and finally for $U_1,$ which shows that
 \be \label{low:fin-fin} |\dot u(\sqrt \e t)|_{L^2(B(x_0,\rho))} \geq C e^{K - T \g^-}.\ee
The lower bound \eqref{low:fin-fin} implies the deviation estimate \eqref{est:th3}, and concludes the proof of the instability statement in Theorem \ref{theorem1}, in the case \eqref{T0cond}.

 In the case
 $$ \frac{K}{|B|_0 |\hat a|_{L^1}} < \frac{K - d/2}{\g |a|_{L^\infty}}, \quad \mbox{so that} \quad K_0 = d/2,$$
 we use the bounds of Section \ref{sec:Sob} instead of the bounds of Section \ref{sec:FL1}, and arrive at \eqref{est:th3} in exactly the same fashion as above.

\begin{rema}\phantomsection \label{rem:parameters} The choice of parameters is made in the following order: under \eqref{T0cond}, given $K' > K_0,$ we choose $T_1$ so that \eqref{k-k} holds for all $\rho < \rho_0(T_1).$ Associated with this $T_1,$ we have $\eta_1$ defined in \eqref{def:eta1}. Depending on $\eta_1,$ $T_1$ and $\g,$ we choose $\varphi_0$ so that the conditions on $\delta_{\varphi_0}$ that are formulated in the proofs of Propositions {\rm \ref{lem:Sob-bd}} and {\rm \ref{prop:ex-Sob}} hold. Then, we choose $h$ and $\rho < \rho_0(T_1),$ so that \eqref{cond:hrho1} and \eqref{cond:hrho2} hold. From there, the final deviation estimate \eqref{est:th3} holds if $\e$ is small enough, depending on all the other parameters.
\end{rema}

\section{Proof of Theorem \lowercase{\ref{theorem1}:} stability}\label{sec:stability1}

 We assume ${\bf \G} < 0$ and  define a perturbation unknown by
 \be \label{dotu-stab}u =: u_a + \e^{\k} \dot u, \quad \mbox{with $(1 + d)/2 \leq \k \leq \min(K, K_a).$}
 \ee
 In a first step, we follow closely the analysis of Section \ref{p-o-th1}. The unknown $\dot u$ satisfies
 $$ \d_t \dot u + \frac{1}{\e} A_0 \dot u + \sum_{1 \leq j \leq d} A_j \d_{x_j} \dot u = \frac{1}{\sqrt \e} B(u_a) \dot u + \e^{\k - 1/2} B(\dot u, \dot u) - \e^{K_a - \k} r_a^\e.$$
 By the change of variables \eqref{de-U}, we arrive at \eqref{sys U}, and verify as in the proof of Lemma \ref{lem:bd-F} that the source term $F$ satisfies the bound
 $$
 \| F \|_{\e,s} \lesssim \big(1 + \e^{\k - 1/2} |\dot u|_{L^\infty}\big) \| \dot u \|_{\e,s} + \e^{K_a - \k}.
 $$
 By the Sobolev embedding $|\dot u|_{L^\infty} \leq C \e^{-d/2} \| \dot u \|_{\e,s},$ and $(1 + d)/2 \leq \k,$ this yields
 \be \label{est:F-stab}
  \| F \|_{\e,s} \lesssim (1 + \| \dot u\|_{\e,s}) \| \dot u \|_{\e,s} + \e^{K_a - \k}.
 \ee
 Then we perform a normal form reduction as in Section \ref{normal-form1}. By Assumption \ref{ass:transp}, Proposition \ref{normal-form-prop} page \pageref{normal-form-prop} holds true. This gives a symbol $Q,$ by which we define
$$
  \check U(t) := \big(\Id + \sqrt \e \op_\e(Q(t))\big)^{-1}U(t),
$$
 corresponding to \eqref{def:check-u} without the rescaling in time. Indeed, we prove here stability in time $O(1),$ whereas the instability analysis of Section \ref{p-o-th1} takes place in short time $O(\sqrt \e |\ln \e|).$

  As in Corollary \ref{cor:normal-form}, we find that the equation in $\check U$ is
$$
 \d_t \check U + \frac{i}{\e} \op_\e({\mathcal A}) \check U = \frac{1}{\sqrt \e} \op_\e(\check {\mathcal B}) \check U + \check F, \qquad \check {\mathcal B} := \left(\begin{array}{cc} \chi_0 {\mathcal B}^r & 0 \\ 0 & 0 \end{array}\right),
$$
 where ${\mathcal A}$ is defined in \eqref{def:calA} page \pageref{def:calA}, ${\mathcal B}^r$ in \eqref{def:Br} page \pageref{def:Br}, and $\check F$ satisfies bound \eqref{est:F-stab}. The compactly supported frequency cut-off $\chi_0,$ introduced in Section \ref{normal-form1} page \pageref{normal-form1}, is identically equal to one in a neighborhood of the resonant set ${\mathcal R}_{12}.$

\subsection{Symmetrizer}

By Assumption \ref{ass-u-a}, there holds $u_{0,1} = g(t,x) \vec e_{1},$ $u_{0,-1} = g(t,x)^* \vec e_{-1},$ with
$g \in \C$ and constant vectors $\vec e_{\pm 1}.$ In particular, the symbol ${\mathcal B}^r,$ defined in \eqref{def:Br} page \pageref{def:Br}, appears as
$$
{\mathcal B}^r = \left(\begin{array}{cc} 0 & g(t,x) {b}^+_{12}(\xi) \\ g(t,x)^* {b}_{21}^-(\xi) & 0 \end{array}\right),
$$
where $b_{12}^+$ and $b_{21}^-$ \eqref{def:int-coeff} are the interaction coefficient associated with resonance $(1,2).$
Since ${\bf \G} < 0,$ going back to the definition of ${\bf \G}$ in \eqref{def:G0}-\eqref{def:trace}, we see that $\G(\xi) < 0$ on the whole resonant set ${\mathcal R}_{12}.$ In particular, if the support of $\chi_1$ is small enough (that is, contains ${\mathcal R}_{12}$ and not much more), there holds
 \be \label{trace-stab}
 \G(\xi) = \mbox{tr}\, {b}^+_{12} {b}^-_{21}(\xi) < 0, \qquad \mbox{for all $\xi$ in the support of $\chi_1.$}
 \ee

\begin{lemm}\phantomsection\label{symmetrizable} Let $C_{12}, C_{21}: \xi \in \Omega \to C_{12}(\xi), C_{21}(\xi) \in \C^{N \times N}$ be smooth families of matrices defined in a bounded open set $\Omega \subset \R^d,$ and such that
\be \label{cond:sym}
 {\rm rank}\, C_{12} \equiv {\rm rank}\,C_{21} \equiv 1 \quad \mbox{and} \quad {\rm tr}\, C_{12} C_{21} \neq 0, \qquad \xi \in \bar \Omega.
 \ee
 Then, there exists smooth scalar maps $c_{12},$ $c_{21}$ and a block-diagonal, smooth family of change of basis $P = \bp P_{11} &0\\0&P_{22},\ep$ such that ${\rm tr}\, C_{12} C_{21} = c_{12} c_{21},$ and, given $\nu_{12}, \nu_{21} \in \C:$
\begin{equation} \label{eq:P}
 \left(\begin{array}{cc} 0 & \nu_{12} C_{12} \\ \nu_{21} C_{21} & 0 \end{array}\right) =P^{-1}\left(\begin{matrix} 0 & \tilde C_{12} \\ \tilde C_{21} & 0
\end{matrix}\right)P, \qquad \tilde C_{ij} = \left(\begin{array}{cc} \nu_{ij} c_{ij} & 0 \\ 0 & 0_{\C^{(N-1)\times(N-1)}} \end{array}\right),
\ee
with
\be \label{bd:P}
\sup_{\xi \in \Omega} |\d_\xi^\a P(\xi)| + |\d_\xi^\a P^{-1}(\xi)| < \infty, \qquad \a \in \N^d.
\ee
\end{lemm}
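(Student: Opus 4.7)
The construction is pointwise linear algebra, rendered smooth by the rank-one assumption. First I would represent each $C_{ij}$ as a smooth outer product. Since $C_{12}(\xi)$ has constant rank one and is smooth on $\bar\Omega$, there exist smooth, nowhere-vanishing maps $u_{12}, v_{12}$ (on a neighborhood of any given point of $\bar\Omega$) with $C_{12}=u_{12} v_{12}^T$, obtained, for example, by selecting a nonvanishing column of $C_{12}$ as $u_{12}$ and solving for $v_{12}$; likewise $C_{21}=u_{21} v_{21}^T$. Setting
$$\tau_{12} := v_{21}^T u_{12}, \qquad \tau_{21} := v_{12}^T u_{21},$$
one has $\tr C_{12}C_{21}=\tau_{12}\tau_{21}$, and by \eqref{cond:sym} both factors are bounded away from zero on $\bar\Omega$. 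I would then set $c_{12}:=\tau_{12}$, $c_{21}:=\tau_{21}$, so that the trace identity $\tr C_{12}C_{21}=c_{12}c_{21}$ of \eqref{eq:P} holds by construction.

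Next I would translate \eqref{eq:P} into block conditions on $P_{11}$ and $P_{22}$. With $\tilde C_{ij}=\nu_{ij}c_{ij} E_{11}$, \eqref{eq:P} is equivalent to
$$ C_{12}=c_{12}\,(P_{11}^{-1}e_1)(e_1^T P_{22}), \qquad C_{21}=c_{21}\,(P_{22}^{-1}e_1)(e_1^T P_{11}), $$
so the off-diagonal blocks depend only on the first columns of $P_{11}^{-1}, P_{22}^{-1}$ and the first rows of $P_{11}, P_{22}$. A valid choice is
$$ P_{11}^{-1}e_1=\tau_{12}^{-1}u_{12},\quad e_1^T P_{11}=v_{21}^T,\quad P_{22}^{-1}e_1=\tau_{21}^{-1}u_{21},\quad e_1^T P_{22}=v_{12}^T; $$
the two internal consistency identities $e_1^T P_{jj}\cdot P_{jj}^{-1}e_1=1$ then reduce exactly to $v_{21}^T u_{12}/\tau_{12}=1$ and $v_{12}^T u_{21}/\tau_{21}=1$, which hold by definition of $\tau_{12}, \tau_{21}$.

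It remains to extend these specifications of one row and one column to full invertible matrices, smoothly in $\xi$. The first row of $P_{11}$ being $v_{21}^T$ forces the remaining $N-1$ columns of $P_{11}^{-1}$ to lie in $\ker v_{21}^T$ in order that $P_{11}P_{11}^{-1}=\Id$. Since $v_{21}$ is smooth and nowhere-vanishing on $\bar\Omega$, $\ker v_{21}^T$ is a smooth hyperplane field; locally, some component $(v_{21})_{i_0}$ is bounded away from zero, and the vectors $e_j-(v_{21})_j(v_{21})_{i_0}^{-1}e_{i_0}$ for $j\neq i_0$ give an explicit smooth frame. Finitely many such charts cover $\bar\Omega$ by compactness, and in the application $\Omega$ may be taken small enough that a single chart suffices. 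The analogous construction for $\ker v_{12}^T$ yields $P_{22}, P_{22}^{-1}$, and the bounds \eqref{bd:P} follow from compactness of $\bar\Omega$ together with smoothness of the building blocks $u_{ij}$, $v_{ij}$, and $1/\tau_{ij}$.

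The principal obstacle is topological: smooth global frames for the line bundle generated by $u_{12}$ or the hyperplane bundle $\ker v_{21}^T$ need not exist on a general $\Omega$. This is harmless here because the lemma will be invoked with $\Omega$ a small open neighborhood of a single resonant component, over which one chart suffices for the local construction; all subsequent calculations only use the resulting uniform derivative bounds.
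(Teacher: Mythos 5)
Your pointwise construction is, after unwinding notation, the same change of basis as in the paper's proof: your first column $\tau_{12}^{-1}u_{12}$ of $P_{11}^{-1}$ spans the range of $C_{12}$ (the paper's vector $e$, the eigenvector of $C_{12}C_{21}$ for the eigenvalue $\tr C_{12}C_{21}$), the remaining columns span $\ker v_{21}^T=\ker C_{21}$ (the paper's $a_1,\dots,a_{N-1}$), and symmetrically for $P_{22}$; the nonvanishing of $\tau_{12}\tau_{21}=\tr C_{12}C_{21}$ is exactly what makes these bases of $\C^N$, and your verification of \eqref{eq:P} and of the identity $\tr C_{12}C_{21}=c_{12}c_{21}$ is correct (the swap between your $c_{12}$ and the paper's is immaterial, since the lemma only asserts existence of such scalars).

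The genuine gap is the smoothness statement. The lemma asserts a \emph{single} smooth family $P(\xi)$ on all of $\Omega$ with the uniform bounds \eqref{bd:P}, and this is what the symmetrizer construction uses: the lemma is applied with $\bar\Omega=\supp\chi_1$, a fixed neighborhood of the whole resonant set ${\mathcal R}_{12}$ (on which $\chi_1\equiv1$ near ${\mathcal R}_{12}^h$), which is typically a compact hypersurface-like set and cannot be shrunk to ``a small neighborhood of a single resonant component'' covered by one chart. Your factorizations $C_{ij}=u_{ij}v_{ij}^T$ and your frames of $\ker v_{21}^T$, $\ker v_{12}^T$ are only local (they depend on which column/component is nonvanishing), and covering $\bar\Omega$ by finitely many such charts does not produce one globally defined $P$: the locally defined matrices do not agree on overlaps, and there is no patching argument in your proposal (invertible frames cannot simply be glued by a partition of unity). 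So as written the proof establishes the lemma only on chart domains, not on $\Omega$. The paper closes exactly this point differently: since $\tr C_{12}C_{21}$ is bounded away from zero on $\bar\Omega$, the image and kernel of $C_{12}C_{21}$ are uniformly separated, hence the spectral projection onto $\ker C_{12}C_{21}=\ker C_{21}$ depends smoothly on $\xi$ on all of $\Omega$, which (by the argument cited from Kato, Section II.4.2) yields globally smooth kernel bases, while $e$ and $f$ are chosen as eigenvectors associated with the simple nonzero eigenvalue and hence also vary smoothly; the bounds \eqref{bd:P} then follow from compactness and the comatrix formula. To repair your proof you would need an argument of this type (global smooth sections of the range and kernel bundles), not merely local charts.
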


\begin{proof} In a first step, we work with fixed $\xi \in \Omega.$ Given $x \in \C^N,$ we denote $x^\sharp = (x,0) \in \C^{2N}$ and $x_\sharp = (0,x) \in \C^{2N}.$

By ${\rm rank}\,C_{12} C_{21}\leq {\rm rank}\,C_{21}=1$ and ${\rm tr}\,C_{12} C_{21}\neq0$,
there holds ${\rm rank}\, C_{12} C_{21}=1,$ and for some $\l_{12} \neq 0$ and some $e \in \C^N,$ there holds $C_{12} C_{21} e = \l_{12} e.$ The vector $e$ generates the range of $C_{12}.$

By symmetry, the same holds for $C_{21} C_{12}:$ for some $f \in \C^N,$ there holds $C_{21} C_{12} f = \l_{21} f,$ with $\l_{12} \neq 0.$ The vector $f$ generates the range of $C_{21}.$

Besides, ${\rm rank}\,C_{12} C_{21}= {\rm rank}\,C_{21}$ implies $\dim \ker C_{12} C_{21} =\dim
\ker C_{21}=N-1,$ hence equality of the kernels: $\ker C_{21} = \ker
C_{12} C_{21}$. Denoting $\{a_1,\cdots,a_{N-1}\}$ a basis of $\ker C_{21}$, we find that
$\{e,a_1,\cdots,a_{N-1}\}$ is a basis of $\C^N,$ since $e \notin \ker C_{21}.$  Then
$\{e^\sharp,a_1^\sharp,\cdots,a_{N-1}^\sharp\}$ is a basis of $\C^N \times \{ 0 \}.$

Similarly, denoting $\{b_1,\cdots,b_{N-1}\}$ a basis of $\ker C_{12} = \ker C_{21} C_{12},$ since $f \notin \ker C_{12},$ the family $\{ f, b_1, \dots, b_{N-1}\}$ is a basis of $\C^N,$ and $\{f_\sharp,b_{1,\sharp},\cdots,b_{n-1,\sharp}\}$ is a basis of
$\{ 0 \} \times \C^N.$

Consider now $C_{21} e$ and $C_{12} f \in \C^N.$ The vector $C_{21} e$ belongs to the range of $C_{21},$ hence it is colinear to $f.$ There cannot be $C_{21} e = 0,$ since then $C_{12} C_{21} e = 0,$ which does not hold. Hence $C_{21} e=  c_{21} f,$ for some $c_{21} \neq 0.$ Similarly, $C_{12} f = c_{12} e,$ with $c_{12} \neq 0.$

In particular, $C_{12} C_{21} e = c_{12} c_{21} e,$ so that $\mbox{tr}\,C_{12} C_{21} = c_{12} c_{21}.$

Then, given the matrix $\un C := \left(\begin{array}{cc} 0 & \nu_{12} C_{12} \\ \nu_{21} C_{21} & 0 \end{array}\right),$ there holds $\un C a_i^\sharp = \nu_{21} (0, C_{21} a_i) = 0,$ and similarly, $\un C b_{i\sharp} = \nu_{12} (C_{12} b_i,0) = 0.$ Besides, $\un C e^\sharp = \nu_{21} (0, C_{21} e) = \nu_{21} c_{21} f_\sharp,$ and $\un C f_\sharp = \nu_{12} (C_{12} f, 0) = \nu_{12} c_{12} e^\sharp.$

The above implies that for the matrix $P$ defined by columns as
\be \label{def:P} P = \mbox{col}\,\big(e^\sharp, a_1^\sharp,\cdots,a_{n-1}^\sharp,f_\sharp,b_{1\sharp},\cdots,b_{n-1\sharp}\big)\ee
there holds \eqref{eq:P}.

The trace of $C_{12} C_{21}$ is bounded away from 0 on the compact $\bar \Omega.$ It is also equal to $\l_{12}.$ This means that the image and kernel of $C_{12} C_{21}$ are strictly separated over $\bar \Omega,$ implying smoothness of the projection onto the kernel and parallel to the image. This, in turn, implies existence of a smooth basis of the kernel (see Kato's treatise \cite{K}, Section II.4.2). Since $\ker C_{12} C_{21} = \ker C_{21},$ this means that we can choose the $a_i$ to vary smoothly over $\Omega.$ Similarly, we can choose the $b_i$ to vary smoothly over $\Omega.$

We can also choose $e$ and $f$ to vary smoothly in $\xi,$ since these are eigenvectors associated to simple eigenvalues. Then $P$ is smooth, and everywhere invertible, with determinant bounded away from 0 on $\bar \Omega.$ This gives regularity of $P^{-1}$ by the comatrix formula, which translates into estimate \eqref{bd:P}. Finally, the $c_{ij}$ are smooth by consideration of \eqref{eq:P}.
\end{proof}

 Assumption \ref{ass:transp}(iii) ensures that the rank condition \eqref{cond:sym} is satisfied by $C_{12} = b_{12}^+,$ $C_{21} = b_{21}^-.$ Thus we apply Lemma \ref{symmetrizable} to these matrices, with $\nu_{12} = g(t,x),$ $\nu_{21} = g(t,x)^*,$  and $\mbox{supp}\, \chi_{1} = \bar \Omega.$ This gives a change of basis $P.$ We coordinatize
$$
 \check U =: (\check U_{12}, W_2^{\rm (s)}) \in \C^{2 N} \times \C^{(J-2)N},
$$
 and let
$$
V^{\rm (s)} := \op_\e(\chi_{1} P^{-1}) \check U_{12}, \quad W_1^{\rm (s)} := \op_\e(1 - \chi_{1}) \check U_{12}, \quad W^{\rm (s)} = (W_1^{\rm (s)},W_2^{\rm (s)}).
$$
so that
$$
 \check U_{12} = \op_\e(P) V^{\rm (s)} + W_1^{\rm (s)}.
$$
Here the exponent (s) indicates that these local unknowns are used only in this stability proof, and distinguishes these from unknowns $V, W_1, W_2$ in the instability proof. This is relatively heavy notation, but we will not carry it very far.

 \begin{lemm}\phantomsection \label{lem:V-stab} There holds
 \be\label{eq:V-stab} \left\{ \begin{aligned} \d_t V^{\rm (s)} + \frac{1}{\e}
\op_\e({\bf M}_{12}) V^{\rm (s)}  & = F_V, \\
 \d_t W^{\rm (s)} + \frac{1}{\e} \op_\e(i {\bf A}) W^{\rm (s)}  & = F_W,\end{aligned}\right.\ee
 where $${\bf M}_{12} := \left(\begin{array}{cc} i (\l_{1,+1} - \o) & -\sqrt \e g \chi_0 {\bf b}_{12}(\xi) \\ -\sqrt \e g^* \chi_0{\bf b}_{21}(\xi) & i \l_2 \end{array}\right), \qquad {\bf b}_{ij}:= \left(\begin{array}{cc} m_{ij} & 0 \\ 0 & 0_{\C^{(N-1) \times (N-1)}} \end{array}\right),$$
 with $m_{ij}(\xi)$ such that
  $$\G(\xi) = {\rm
tr}\, \Pi_{1}(\xi+k)B(\vec e_{1})\Pi_{2}(\xi)B(\vec e_{-1})\Pi_{1}(\xi+k) = m_{12}(\xi) m_{21}(\xi).$$
The diagonal Fourier multiplier ${\bf A}_0$ is
 $$ {\bf A}_0 := \left(\begin{array}{cc} {\mathcal A} & 0 \\ 0 & {\mathcal A}_{3,J} \end{array}\right), \quad {\mathcal A}_{3,J} := {\rm diag}(\l_3, \dots, \l_J).$$
The sources $F_V$ and $F_W$ satisfy bound \eqref{est:F-stab}.
\end{lemm}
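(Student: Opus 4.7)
The plan is to derive \eqref{eq:V-stab} from the equation for $\check U$ (established just before the lemma) via successive projections and pseudo-differential calculus. First I split $\check U = (\check U_{12}, W_2^{\mathrm{(s)}}) \in \C^{2N} \times \C^{(J-2)N}$ according to the block structure of $\check{\mathcal B} = \diag(\chi_0 \mathcal B^r, 0)$. Because $\check{\mathcal B}$ vanishes on the lower block, $W_2^{\mathrm{(s)}}$ directly solves $\d_t W_2^{\mathrm{(s)}} + (i/\e) \op_\e(\mathcal A_{3,J}) W_2^{\mathrm{(s)}} = \check F_{3,J}$, which provides the lower component of \eqref{eq:V-stab}(ii); meanwhile the top block satisfies
\[
\d_t \check U_{12} + \frac{i}{\e} \op_\e(\mathcal A_{12}) \check U_{12} = \frac{1}{\sqrt\e} \op_\e(\chi_0 \mathcal B^r) \check U_{12} + \check F_{12}, \qquad \mathcal A_{12} := \diag(\l_{1,+1} - \o, \l_2).
\]

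Next I apply the Fourier multiplier $\op_\e(\chi_1 P^{-1})$ to the $\check U_{12}$ equation. Since $\chi_1$, $P^{-1}$ and the scalar symbols $\l_{1,+1}-\o, \l_2$ are all functions of $\xi$ alone, they are Fourier multipliers composing exactly as $L^2$ operators, so $[\op_\e(\chi_1 P^{-1}), \op_\e(\mathcal A_{12})] = 0$ and the hyperbolic part transfers unchanged. For the singular source I exploit the tensor product structure $\mathcal B^r(t,x,\xi) = g(t,x) b(\xi)$ with $b(\xi) = \begin{pmatrix} 0 & b_{12}^+\\ b_{21}^- & 0 \end{pmatrix}$: the block-diagonal change of basis $P(\xi)$ produced by Lemma \ref{symmetrizable} conjugates $b$ into $\tilde b = \begin{pmatrix} 0 & {\bf b}_{12} \\ {\bf b}_{21} & 0 \end{pmatrix}$, and the standard composition/commutator calculus of Appendix \ref{app:symbols} yields
\[
\op_\e(\chi_1 P^{-1}) \op_\e(\chi_0 \mathcal B^r) = \op_\e(\chi_0 \tilde{\mathcal B}) \op_\e(\chi_1 P^{-1}) + \e R_0,
\]
with $\tilde{\mathcal B} := g \tilde b$ and $R_0$ a uniform remainder in the sense of Definition \ref{def:ur}; the only nontrivial commutator is that between the Fourier multiplier $\op_\e(\chi_1 P^{-1})$ and multiplication by $g \in C^1 H^{s_a}$, which is $O(\e)$ by \eqref{est:fourier-mult}. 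Division by $\sqrt\e$ turns $\e R_0$ into $\sqrt\e R_0 V^{\mathrm{(s)}}$, which is absorbed into $F_V$. The identification $\tr \tilde b = m_{12} m_{21} = \Gamma$ comes directly from the structure of $\tilde b$ in Lemma \ref{symmetrizable}.

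For $W_1^{\mathrm{(s)}}$ I apply $\op_\e(1-\chi_1)$ to the same equation. The crucial cancellation is $(1-\chi_1)\chi_0 \equiv 0$, a consequence of $\chi_0 \prec \chi \prec \chi_1$ from Notation \ref{notation:cut-off}: composition then gives $\op_\e(1-\chi_1) \op_\e(\chi_0 \mathcal B^r) = \e R_0$, and after division by $\sqrt\e$ only a uniform-remainder source $\sqrt\e R_0 \check U$ survives. Exact commutation of the Fourier multipliers $\op_\e(1-\chi_1)$ and $\op_\e(\mathcal A_{12})$ then yields the top block of \eqref{eq:V-stab}(ii), and juxtaposition with the $W_2^{\mathrm{(s)}}$ equation from step one completes the system. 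The source bound \eqref{est:F-stab} for $F_V, F_W$ follows from the corresponding bound for $\check F$ via the uniform $L^2 \to L^2$ boundedness of the Fourier multipliers in use, together with the control $\|\dot u\|_{\e,s} \lesssim \|(V^{\mathrm{(s)}}, W^{\mathrm{(s)}})\|_{\e,s}$ inherited from \eqref{dotu-U} and the definitions of $V^{\mathrm{(s)}}, W^{\mathrm{(s)}}$. The main obstacle is simply the bookkeeping of composition remainders; the decisive simplification is that $\chi_1$ and $P^{-1}$ are purely $\xi$-dependent, so all $\xi$-symbol products compose pointwise and only the scalar factor $g(t,x)$ generates nontrivial, $O(\e)$ commutators.
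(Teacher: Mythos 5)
Your proposal is correct and follows essentially the same route as the paper: split off the lower block for $W_2^{\rm (s)}$, transfer the diagonal part exactly through the block-diagonal Fourier multiplier built from $\chi_1$ and the change of basis of Lemma \ref{symmetrizable}, handle the singular source via its tensor structure together with the commutator estimate \eqref{est:fourier-mult} for the factor $g$, and use the support relations $\chi_0\chi_1\equiv\chi_0$ and $(1-\chi_1)\chi_0\equiv 0$ to reduce the $W_1^{\rm (s)}$ coupling to a uniform remainder. The only cosmetic differences are that you intertwine the multiplier past the source (producing $\op_\e(\chi_0\tilde{\mathcal B})V^{\rm (s)}$ directly) instead of re-expanding $\check U_{12}$ inside the source as the paper does, and your $P$ versus $P^{-1}$ bookkeeping inherits the same harmless convention ambiguity already present in the paper's own statement and proof.
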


\begin{proof} The change of basis $P$ being block-diagonal \eqref{def:P}, there holds
 $$ P \left(\begin{array}{cc} \l_{1,+1} - \o & 0 \\ 0 & \l_2 \end{array}\right) = \left(\begin{array}{cc} \l_{1,+1} - \o & 0 \\ 0 & \l_2 \end{array}\right) P.$$
 Besides, by estimate \eqref{est:fourier-mult}:
 $$ \op_\e(\chi_{1} P) \op_\e(\chi_0 {\mathcal B}^r) = \op_\e(\chi_{0}  P {\mathcal B}^r) + \e R_0,$$
  where $R_0$ is a uniform remainder in the sense of Definition \ref{def:ur}, so that
 $$ \op_\e(\chi_{1} P) \op_\e(\chi_0 {\mathcal B}^r) \check U_{12} = \op_\e(\chi_{0} P {\mathcal B}^r P^{-1}) V^{\rm (s)} + \op_\e(\chi_{0} P {\mathcal B}^r) W_1^{\rm (s)} + \e R_0(V^{\rm (s)} + W_1^{\rm (s)}).$$
 But then
 $$\op_\e(\chi_{0} P {\mathcal B}^r) W_1^{\rm (s)} = \op_\e(\chi_0 P {\mathcal B}^r) \op_\e(1 - \chi_1) \check U_{12} = \e R_0 \check U_{12},$$
 since $\chi_0(1 - \chi_1) \equiv 0.$ By Lemma \ref{symmetrizable}, there holds for $\xi$ in the support of $\chi_1$ the identity
  $$ P {\mathcal B}^r P^{-1} = \left(\begin{array}{cc} 0 & g {\bf b}_{12} \\ g^* {\bf b}_{21} & 0 \end{array}\right).$$
The above verifies the form of the equation in $V^{\rm (s)}.$ For the equation in $W_1^{\rm (s)},$ we use
 $$ \op_\e(1 - \chi_1) \op_\e(\chi_0 {\mathcal B}^r) = \op_\e\big( (1 - \chi_1) \chi_0 {\mathcal B}^r) + \e R_0 = \e R_0,$$
 by \eqref{est:fourier-mult} and $(1 - \chi_1) \chi_0 \equiv 0.$
 \end{proof}

 The symmetrizer is defined as the Fourier multiplier
$$
{\bf S}(\xi):=\bp \Id_N &0\\0& - m_{12}^*(\xi) m_{21}(\xi)^{-1}\Id_N
\ep, \quad \mbox{for $\xi\in\supp\,\chi_{1},$}
$$
where $m_{12}^*$ denotes complex conjugate of $m_{12}.$ We then choose any extension to all of $\R^d_\xi$ so that ${\bf S}$ is smooth, real diagonal, with diagonal entries bounded and bounded away from zero. We note that since $m_{12} m_{21} \in \R,$ there holds $m_{12}^*/m_{21} \in \R,$ so that, in particular, ${\bf S}^* \equiv {\bf S}.$

The coefficients $m_{ij}$ are bounded on the support of $\chi_1.$ Besides, by \eqref{trace-stab} and the fact that $\G = m_{12} m_{21},$ they are bounded away from zero on the support of $\chi_{1}.$ This implies the bounds
\be\label{sym:low} |u|_{L^2}\lesssim
 | \op_\e({\bf S}^{1/2}) u|_{L^2}, \qquad u \in L^2,
 \ee
 and
 \be \label{sym:up} \quad \|\op_\e({\bf S}) u\|_{\e,s}\lesssim
 \|u\|_{\e,s}, \quad u \in H^s.\ee
The fact that ${\bf S}$ is a symetrizer is expressed in the following lemma:
\begin{lemm}\phantomsection \label{lem:symmetry}
There holds for all $u \in H^s$ the bound
 \be \label{symmetry}
\Re e \, \Big( \op_\e({\bf S})  \op_\e({\bf M}_{12}) u + \Big( \op_\e({\bf S}) \op_\e({\bf M}_{12}) \Big)^*\, u, \, \, u \Big)_{L^2} \lesssim \e \| u \|_{L^2}^2.
\ee
\end{lemm}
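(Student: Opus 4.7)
The plan is to reduce the operator identity to a pointwise identity on symbols, modulo the expected $O(\e)$ error coming from pseudo-differential composition.

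First, by the composition estimate of Appendix \ref{app:symbols} (applied to the Fourier multiplier $\op_\e({\bf S})$ and the pseudo-differential operator $\op_\e({\bf M}_{12})$ whose entries are tensor products of functions of $x$ and of $\xi$), we have
\be \label{comp-plan}
\op_\e({\bf S})\op_\e({\bf M}_{12}) = \op_\e({\bf S}{\bf M}_{12}) + \e R_0,
\ee
with $R_0$ bounded $L^2\to L^2$ uniformly in $\e.$ Taking the adjoint, and using that ${\bf S}$ is a real diagonal Fourier multiplier (so $\op_\e({\bf S})^* = \op_\e({\bf S})$) and the standard identity $\op_\e({\bf M}_{12})^* = \op_\e({\bf M}_{12}^*) + \e R_0,$ we deduce
$$ \big(\op_\e({\bf S})\op_\e({\bf M}_{12})\big)^* = \op_\e({\bf M}_{12}^*{\bf S}) + \e R_0. $$
Summing, the matter reduces to showing the pointwise symbolic identity
\be \label{key-sym}
{\bf S}(\xi){\bf M}_{12}(x,\xi) + {\bf M}_{12}^*(x,\xi){\bf S}(\xi) \equiv 0, \qquad (x,\xi) \in \R^d\times\R^d.
\ee

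To check \eqref{key-sym}, I would distinguish two regions. On $\supp\chi_0 \subset \supp\chi_1,$ direct block computation gives the diagonal entries $i(\l_{1,+1}-\o) - i(\l_{1,+1}-\o) = 0$ and $-i\l_2(m_{12}^*/m_{21}) + i\l_2(m_{12}^*/m_{21}) = 0;$ this uses that $m_{12}^*/m_{21}$ is real, which follows from $m_{12}m_{21} = \G(\xi) < 0$ on $\supp\chi_1$ by \eqref{trace-stab}. The off-diagonal top-right block reads
$$ -\sqrt\e\, g\chi_0\Big({\bf b}_{12} - \tfrac{m_{12}^*}{m_{21}}{\bf b}_{21}^*\Big), $$
and vanishes because ${\bf b}_{12}$ and ${\bf b}_{21}^*$ are supported in the $(1,1)$ entry, with entries $m_{12}$ and $m_{21}^*$ respectively, so the vanishing is equivalent to $m_{12}m_{21} = (m_{12}m_{21})^*,$ which holds since $\G\in\R.$ The bottom-left block is treated symmetrically. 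Off $\supp\chi_0,$ the matrix ${\bf M}_{12}$ is diagonal with purely imaginary entries, hence ${\bf M}_{12}^* = -{\bf M}_{12};$ since ${\bf S}$ is diagonal they commute, and \eqref{key-sym} is immediate.

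Combining \eqref{comp-plan} with \eqref{key-sym}, we obtain
$$ \op_\e({\bf S})\op_\e({\bf M}_{12}) + \big(\op_\e({\bf S})\op_\e({\bf M}_{12})\big)^* = \e \tilde R_0, $$
with $\tilde R_0$ bounded $L^2\to L^2,$ and \eqref{symmetry} follows from Cauchy--Schwarz. The only subtlety is the step \eqref{comp-plan}: since ${\bf M}_{12}$ contains the factor $g(t,x)$ (not compactly supported in $x$) and the matrix entries ${\bf b}_{ij}(\xi),$ care must be taken with the version of the pseudo-differential composition used, but the tensor-product structure places us within the scope of the symbolic calculus of Appendix \ref{app:symbols}, as was already exploited in the proof of Lemma \ref{lem:sys-V}.
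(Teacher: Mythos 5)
Your proof is correct and follows essentially the same route as the paper: the cancellation rests on the same ingredients (self-adjointness of $\op_\e({\bf S})$ from $m_{12}^*/m_{21}\in\R$, reality of $\G=m_{12}m_{21}$ on $\supp\chi_1$, the rank-one structure of ${\bf b}_{ij}$, and the commutator estimate \eqref{est:fourier-mult} to absorb the $x$-dependence through $g$ into $\e R_0$ terms). The only difference is presentational — you package the computation as the exact symbolic identity ${\bf S}{\bf M}_{12}+{\bf M}_{12}^*{\bf S}\equiv 0$ plus $O(\e)$ composition/adjoint errors, whereas the paper carries out the same cancellation block by block at the operator level (its displays \eqref{s1}--\eqref{s2}).
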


Recall that if $z \in \C,$ then $z^*$ denotes complex conjugate, if $z \in \C^{N \times N},$ then $z^*$ denotes complex transpose, and if $z$ is linear bounded $L^2 \to L^2,$ then $z^*$ denotes the adjoint operator. We use the latter in \eqref{symmetry}, and all three in the forthcoming proof.

\begin{proof} We compute
 $$ \op_\e({\bf S}) \op_\e\left(\begin{array}{cc} i ( \l_{1,+1} - \o) & 0 \\ 0 &  \l_2\end{array}\right) = \op_\e \left(\begin{array}{cc} i(\l_{1,+1} - \o) & 0 \\ 0 & i \l_2\end{array}\right) \op_\e({\bf S}),$$
and
$$  \op_\e\left(\begin{array}{cc} i ( \l_{1,+1} - \o) & 0 \\ 0 &  \l_2\end{array}\right)^* = - \op_\e\left(\begin{array}{cc} i ( \l_{1,+1} - \o) & 0 \\ 0 &  \l_2\end{array}\right), \qquad \op_\e({\bf S})^* = \op_\e({\bf S}),$$
so that the diagonal entries of ${\bf M}_{12}$ contribute nothing to \eqref{symmetry}. Next we compute
$$\op_\e(- m_{12}^* m_{21}^{-1}) \op_\e(\chi_0 g^* {\bf b}_{21}) = g^* \op_\e(\chi_0 {\bf b}_{12}^*) + \e R_0,$$
via \eqref{est:fourier-mult} and definition of ${\bf b}_{ij}.$ This implies
  \be \label{s1} \op_\e({\bf S}) \op_\e\left(\begin{array}{cc} 0 & g \chi_0 {\bf b}_{12} \\ g^* \chi_0 {\bf b}_{21} & 0 \end{array}\right) = \op_\e\left(\begin{array}{cc} 0 & g \chi_0 {\bf b}_{12} \\ - g^* \chi_0 {\bf b}_{12}^* & 0 \end{array}\right) + \e R_0.\ee
Besides, using \eqref{est:fourier-mult} once more,
$$ \op_\e\left(\begin{array}{cc}  0 & g \chi_0 {\bf b}_{12} \\ g^* \chi_0 {\bf b}_{21} & 0 \end{array}\right)^* = \op_\e\left(\begin{array}{cc}  0 & g \chi_0 {\bf b}_{21}^* \\ g^* \chi_0 {\bf b}_{12}^* & 0 \end{array}\right) + \e R_0,$$
so that
\be \label{s2} \op_\e\left(\begin{array}{cc}  0 & g \chi_0 {\bf b}_{12} \\ g^* \chi_0 {\bf b}_{21} & 0 \end{array}\right)^* \op_\e({\bf S})^* = \op_\e\left(\begin{array}{cc} 0 & - g \chi_0 {\bf b}_{12} \\ g^* \chi_0 {\bf b}_{12}^* & 0 \end{array}\right) + \e R_0.\ee
With \eqref{s1} and \eqref{s2}, the contribution of the extra-diagonal entries of ${\bf M}_{12}$ to the left-hand side of \eqref{symmetry} has the form $\e \Re e \, (R_0 u, u)_{L^2}.$ This concludes the proof.
\end{proof}

\subsection{Uniform bounds} \label{sec:endsta}

We use the Fourier multiplier $\Lambda^s = \op_\e\big((1 + |\cdot|^2)^{s/2}\big),$ and compute
 $$
  \d_t \Big( \big| \op_\e({\bf S}^{1/2}) \Lambda^s {V^{\rm (s)}} \big|^2_{L^2}\Big) = 2 \Re e\, \big( \op_\e({\bf S}) \Lambda^s \d_t {V^{\rm (s)}}, \Lambda^s {V^{\rm (s)}}\big)_{L^2}.
  $$
 Following \eqref{eq:V-stab}, the above right-hand side decomposes into two terms. The first is
 $$ \begin{aligned} - \frac{2}{\e} \Re e & \, \Big(\op_\e({\bf S}) \op_\e({\bf M}_{12}) \Lambda^s V^{\rm (s)}, \, \Lambda^s V^{\rm (s)}\Big)_{L^2} \\ & = - \frac{2}{\e} \Big( \op_\e({\bf S}) \op_\e({\bf M}_{12}) \Lambda^s V^{\rm (s)} + \Big( \op_\e({\bf S}) \op_\e({\bf M}_{12}) \Big)^* \Lambda^s V^{\rm (s)},\, \Lambda^s V^{\rm (s)}\Big)_{L^2}\,,\end{aligned}$$
and, by Lemma \ref{lem:symmetry}, is controlled by $\| V^{\rm (s)} \|_{\e,s}^2.$
The second term is
 $$ \begin{aligned} \Big| 2 \Re e \, \big(\op_\e({\bf S} \Lambda^s F_V, \, \Lambda^s V^{\rm (s)}\big)_{L^2}\Big| & \lesssim \| F_V\|_{\e,s} \| V^{\rm (s)} \|_{\e,s} \\ & \lesssim \Big( (1 + \| \dot u \|_{\e,s}) \| \dot u\|_{\e,s} + \e^{K_a - \k}\Big) \| V^{\rm (s)}\|_{\e,s}, \end{aligned}$$
 the first inequality by \eqref{sym:up} and the second by estimate \eqref{est:F-stab}. Gathering the above estimates and using $\| \dot u\|_{\e,s} \lesssim \| V^{\rm (s)}, W^{\rm (s)}\|_{\e,s}$ and $K_a - \k \geq 0,$ we obtain
 \be \label{est1} \d_t \big( \big| \op_\e({\bf S}^{1/2}) \Lambda^s {V^{\rm (s)}} \big|^2_{L^2}\Big) \lesssim  \| V^{\rm (s)} \|_{\e,s} + \| V^{\rm (s)}, W^{\rm (s)} \|_{\e,s}^2.
 \ee
Besides, from \eqref{eq:V-stab} we deduce
 \be \label{est2} \d_t (\| W^{\rm (s)} \|_{\e,s}^2) \lesssim \| F_W \|_{\e,s} \| W^{\rm (s)} \|_{\e,s} \lesssim \| W^{\rm (s)} \|_{\e,s} + \| V^{\rm (s)}, W^{\rm (s)} \|_{\e,s}^2.\ee
From \eqref{est1}-\eqref{est2} and the lower bound \eqref{sym:low}, we deduce the stability estimate
$$ \| V^{\rm (s)}, W^{\rm (s)} \|_{\e,s} \leq \e^{K - \k} C(T_a), \qquad 0 \leq t \leq T_a.$$
The stability estimate \eqref{est:stab} then follows from $\| \dot u\|_{\e,s} \lesssim \| V^{\rm (s)}, W^{\rm (s)}\|_{\e,s},$ and definition of $\dot u$ in \eqref{dotu-stab}.

\chapter{Other proofs}


\section{Proof of Theorem \lowercase{\ref{th-two}}} \label{sec:proof-th-extension}

The differences with the main proof (Sections \ref{p-o-th1} and \ref{sec:stability1}) are essentially notational.

\medskip

An issue that we face right away as we consider more than one non-transparent resonance is the definition of the frequency-shifted and projected variable $U.$ Consider indeed the case of a set ${\mathfrak R}_0$ of non-transparent resonances equal to
${\mathfrak R}_0 = \{ (1,2), (2,3), (3,1) \}.$ It is easy to see that shifted and projected variables, following \eqref{de-U}, are not appropriate.
That is, if we let $U_1 = e^{- i \theta} \op_\e(\Pi_1) \dot u$ and $U_2 = \op_\e(\Pi_2) \dot u$ to account for the $(1,2)$ resonance, and then $U_3 = e^{i \theta} \op_\e(\Pi_3) \dot u$ to account for the $(2,3)$ resonance, then the frequency shifts in $U_1$ and $U_3$ are not suitable for the $(3,1)$ resonance.

We overcome this issue by localizing the definitions of the projected variables (Section \ref{sec:several-coord}). This is relatively straightforward, but notations are heavy.

 Then, by taking advantage of the partial transparency hypothesis (Assumption \ref{ass:several-res}(ii)), all couplings are eliminated, except for those describing non-transparent resonances. This is done in Section \ref{sec:several-normal-form}.

 No further difficulty arises, and the estimates, both  in the stable and unstable case, are similar to the estimates in Sections \ref{p-o-th1} and \ref{sec:stability1}.

\subsection{Coordinatization} \label{sec:several-coord}

The perturbation variable $\dot u$ is defined by $u =: u_a + \e^{\k} \dot u,$ with $\k = 0$ in the unstable case, as in \eqref{def:dot-u}, and $(1 + d)/2 \leq \k \leq \min(K, K_a)$ in the stable case, as in \eqref{dotu-stab}.

\medskip

 \label{encore cut-offs} Associated with $(i,j) \in {\mathfrak R}_0,$ we define frequency cut-offs $\chi_{ij},$ $\chi_{ij}^\sharp,$ such that $\chi_{ij} \prec \chi_{ij}^\sharp$ (in the sense of Notation \ref{notation:cut-off} page \pageref{notation:cut-off}), $\chi_{ij} \equiv 1$ on ${\mathcal R}_{ij},$ and the support of $\chi^\sharp_{ij}$ is a small neighborhood of ${\mathcal R}_{ij}.$ All truncations are compactly supported (owing to Assumption \ref{ass:several-res}(i)) and take values in $[0,1].$ The normal form reduction of Section \ref{sec:several-normal-form} will require the supports of $\chi_{ij}^\sharp$ to be not much larger than ${\mathcal R}_{ij}.$

  Let $i$ such that $(i,j) \in {\mathfrak R}_0$ for some $j.$ We let $I^+_i = \{ j \in [1,J],\, (i,j) \in {\mathfrak R}_0\}.$ A local variable associated with resonance $(i,j)$ is defined by
\be \label{u-ij+} \begin{aligned} u_{ij}^+ := \op_\e(\chi_{ij}) e^{-i\theta} \op_\e(\Pi_i) \dot u, \qquad j \in I^+_i.
\end{aligned}\ee
We let  $I^-_i = \{ j' \in [1,J], \, (j',i) \in {\mathfrak R}_0\},$ so that if $i$ is such that $(j',i) \in {\mathfrak R}_0$ for some $j',$ then $I^-_i$ is not empty. A local variable is defined by
\be \label{u-ij-} u_{ij'}^- := \op_\e(\chi_{j'i}) \op_\e(\Pi_i) \dot u, \qquad j' \in I_i^-.
\ee
We now let
\be \label{vi}
 v_i := \op_\e\Big( 1 - \sum_{j \in I^+_i} (\chi_{ij})_{-1} - \sum_{j' \in I^-_i} \chi_{j'i}\Big) \op_\e(\Pi_i) \dot u,
 \ee
where $(\chi_{ij})_{-1} := \chi_{ij}(\cdot - k),$ in accordance with \eqref{def:shift}, so that
\be \label{u-i} \op_\e(\Pi_i) \dot u = v_i + \sum_{j \in I^+_i} e^{i \theta} u^+_{ij} + \sum_{j' \in I^-_{i}} u^-_{ij'},\ee
and of course $\dot u$ is then reconstructed by summation of \eqref{u-i} over $i \in [1,J].$ By convention, sums over empty sets are equal to zero.

\begin{rema}\phantomsection \label{rem:coord} If $(1,2) \in {\mathcal R}_0,$ then the variables that describe the $(1,2)$ resonance are $u_{12}^+$ and $u_{21}^-.$ Indeed, $2 \in I_1^+$ and $1 \in I_2^-.$ Thus $u_{12}^+$ and $u_{21}^-$ will play here the role played by variables $U_1$ and $U_2$ in Section {\rm \ref{sec:proj}}.
\end{rema}

We now introduce a more compact notation. Variables are indexed by resonant indices $(i,j),$ the position $+$ or $-$ in the resonance (meaning first or second term: $i$ or $j$ in $(i,j$)), and also the nature of the variable: ``inner" variables are denoted by the letter $u$ and ``outer'' variables are denoted $v.$

We introduce the set of indices
 \be \label{def:mA} \mA := \big\{ (i,\out), \,\, i \in [1,J] \big\} \, \bigcup \, \big\{ (i,j,p, \inn), \,\, (i,j) \in {\mathfrak R}_0, \,\, p \in \{+,-\} \big\}.\ee
 The associated local variables are $({\bf u}_\a)_{\a \in \mA}:$
  $${\bf u}_\a := \left\{\begin{aligned} u_{ij}^\pm, & \quad \a = (i,j,\pm,\inn) \\ v_{i}, & \quad \a = (i,\out). \end{aligned}\right.$$
   We denote ${\bm \chi}_\a$ the truncation in ${\bf u}_\a;$ this is the truncation that appears in \eqref{u-ij+} or \eqref{u-ij-} or \eqref{vi}, explicitly:
   $$ {\bm \chi}_{\a} := \left\{\begin{aligned} \chi_{ij}, & \quad \a = (i,j,+,\inn), \\ \chi_{ji}, & \quad  \a = (i,j,-,\inn), \\ 1 - \sum_{j \in I^+_i} (\chi_{ij})_{-1} - \sum_{j' \in I^-_i} \chi_{j'i}, & \quad \a = (i,\out).
   \end{aligned}\right.$$
    With the notation
    $$\delta_{\a} := \left\{\begin{aligned} 1, & \quad \a = (i,j,+,\inn), \,\,\mbox{for some $(i,j)$}, \\ 0, & \quad \mbox{otherwise},\end{aligned}\right.$$
  it appears from \eqref{u-ij+}-\eqref{u-ij-}-\eqref{vi} that there holds
\be \label{u-a}
 {\bf u}_\a = \op_\e\big({\bm \chi}_\a^\sharp \Pi_{i,+\delta_{\a}}\big) {\bf u}_\a, \qquad {\bm \chi}_{\a} \prec {\bm \chi}_\a^\sharp.
 \ee

The derivation of the coupled system satisfied by the ${\bf u}_\a$ is essentially identical to the computations of Section \ref{sec:proj}. We find
\be \label{u-a-sys}
 \d_t {\bf u}_\a + \frac{1}{\e} \op_\e(i {\bm \mu}_\a) {\bf u}_\a = \frac{1}{\sqrt \e } \sum_{p = \pm 1} \sum_{\b \in \mA} \op_\e({\bf B}_{p\a\b}) {\bf u}_\b + {\bf F}_\a,
\ee
where
\begin{itemize}
\item the symbols of the propagators are \label{def:bm:mu}
$$ {\bm \mu}_{\a} := \left\{\begin{aligned} \l_{i,+1} - \o, & \quad \a = (i,j,+,\inn), \\ \l_i, & \quad \a \in \big\{ (i,j,-,\inn), \, (i, \out)\big\};\end{aligned}\right.$$
\item using \eqref{u-i} and \eqref{u-a}, we find that the symbols of the coupling terms are
\be \label{b-abp}
 {\bf B}_{p\a\b}
  : = e^{i (p - \delta_{\a} + \delta_{\b}) \theta}   {B}_{p\a\b}, \qquad   {B}_{p\a\b}  : = \chi_{\a, +(p - \delta_{\a} + \delta_{\b})} \chi_\b^\sharp \Pi_{i, +(p + \delta_{\b})} B_p \Pi_{i', +\delta_{\b}},
\ee
for $p \in \{-1,1\},$ $(\a,\b) \in \mA \times \mA,$ where $B_p$ is defined in \eqref{def:Bp} page \pageref{def:Bp};
\item as in Lemma \ref{lem:bd-F} page \pageref{lem:bd-F}, the source term ${\bf F}_\a$ satisfies
\be \label{bd:F-alpha}\| {\bf F}_\a\|_{\e,s} \lesssim (1 + \e^{\k - 1/2} |\dot u|_{L^\infty}) \|\dot u\|_{\e,s} + \e^{K_a - \k}.\ee
\end{itemize}

\subsection{Normal form reduction} \label{sec:several-normal-form}

The goal is to derive from \eqref{u-a-sys} a reduced system in which non-resonant and non-transparent coupling terms do not appear. This is done by a change of variables that is very much similar to the one conducted in Section \ref{normal-form1}, and now described in some detail. The difference with Section \ref{normal-form1} is that in the current context of several non-transparent resonances, a partial transparency assumption bearing on non-transparent resonances (Assumption \ref{ass:several-res}(ii)) has to be introduced in order for the normal form reduction to go through as before.

\medskip

Recall that $R_0$ denotes uniform remainders in the sense of Definition \ref{def:ur}. Here in a slight misuse of notation we also denote $R_0$ any {\it symbol} such that the associated pseudo-differential operators are uniform remainders in the sense of Definition \ref{def:ur}.

\begin{prop}\phantomsection \label{prop:several-res-new} Under Assumption {\rm \ref{ass:several-res}(ii)}, given $(\a,\b)\in \mA \times \mA,$  $p \in \{-1,1\},$ equation
\be \label{homo-new}
 i \big( - \ell \o + {\bm \mu}_{\a,+\ell} - {\bm \mu}_\b \big) Q_{\ell\a\b} = B_{p\a\b} + \e R_0, \qquad \ell = p - \delta_{\a} +  \delta_{\b}
\ee
has a solution $Q_{\ell\a\b} \in S^0$ except if {\rm (a)} $p = 1,$ $\a = (i,j,1,\inn),$ $\b = (j,i,-1,\inn)$ with $(i,j) \in {\mathfrak R}_0,$ or {\rm (b)} $p = -1,$ $\a = (i,j,-1,\inn),$ $\b = (j,i,1,\inn),$ with $(j,i) \in {\mathfrak R}_0.$
\end{prop}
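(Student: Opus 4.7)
The plan is to mirror the approach of Proposition~\ref{normal-form-prop}: compute the phase $-\ell\omega + {\bm \mu}_{\alpha,+\ell} - {\bm \mu}_\beta$ explicitly, recognize it as a resonant phase for an associated pair of eigenvalues, and then use the appropriate transparency hypothesis, combined with the frequency localization carried by the cutoffs in $B_{p\alpha\beta}$, to factor the phase out of the right-hand side of \eqref{homo-new}.

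A direct computation based on the definitions of ${\bm \mu}$ and $B_{p\alpha\beta}$, the relation $\ell = p - \delta_\alpha + \delta_\beta$, and the substitution $\eta := \xi + \delta_\beta k$ transforms \eqref{homo-new} into
$$ i \big( \lambda_i(\eta + pk) - \lambda_{i'}(\eta) - p\omega \big) \tilde Q(\eta) = {\bm \chi}_\alpha\big(\eta + (p-\delta_\alpha)k\big)\, {\bm \chi}_\beta^\sharp(\eta - \delta_\beta k)\, \Pi_i(\eta+pk) B_p \Pi_{i'}(\eta) + \e R_0, $$
where $i,i'$ denote the first indices of $\alpha,\beta$ and $\tilde Q(\eta) := Q_{\ell\alpha\beta}(\eta - \delta_\beta k)$. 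For $p=+1$ this is, up to the cutoff prefactor, precisely an $(i,i')$-interaction coefficient divided by the $(i,i')$-resonant phase; for $p=-1$, the additional substitution $\eta' = \eta - k$ identifies it with an $(i',i)$-interaction coefficient divided by the negative of the $(i',i)$-resonant phase.

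I would then analyze cases according to the status of the pair $(i,i')$ (for $p=+1$) or $(i',i)$ (for $p=-1$). If this pair does not lie in ${\mathfrak R}$, the phase does not vanish on the support of the right-hand side (the compact supports of the inner cutoffs combined with the boundedness of ${\mathfrak R}$ from Assumption~\ref{ass:several-res}(i) handle the outer cutoffs), and simple division produces $\tilde Q \in S^0$. If the pair lies in ${\mathfrak R} \setminus {\mathfrak R}_0$, the first clause of Assumption~\ref{ass:several-res}(ii) factors out the resonant phase globally and we again divide in $S^0$. The substantive case is a pair in ${\mathfrak R}_0$: I would verify that either the product ${\bm \chi}_\alpha \cdot {\bm \chi}_\beta^\sharp$ vanishes on a neighborhood of the corresponding resonant set --- this happens whenever at least one of $\alpha, \beta$ is of outer type, since the outer cutoff $1 - \sum (\chi_{ii'})_{-1} - \sum \chi_{i'i}$ is identically zero on the very ${\mathcal R}_{ii'}$ that concerns us --- or else the product localizes $\eta$ to an intersection of the form ${\mathcal R}_{ii'} \cap ({\mathcal R}_{\star\star} \pm k)$ or ${\mathcal R}_{ii'} \cap {\mathcal R}_{\star\star}$ treated in \eqref{new:1605} or \eqref{new:1605:2}, on a neighborhood of which the partial-transparency clause of Assumption~\ref{ass:several-res}(ii) lets us factor out the phase. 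The excepted configurations (a) and (b) of the statement are precisely those in which ${\bm \chi}_\alpha$ and ${\bm \chi}_\beta^\sharp$ both are the cutoff of the same non-transparent resonance ${\mathcal R}_{ij}$ on which the interaction coefficient cannot be factored: these coupling terms must be retained in the normal form.

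The main obstacle I anticipate is the case-by-case bookkeeping in the ${\mathfrak R}_0$ case: for each of the combinations of inner/outer and $\pm$ types of $\alpha,\beta$ and each sign of $p$, one must identify which of the three mechanisms --- outer-cutoff vanishing, \eqref{new:1605}, or \eqref{new:1605:2} --- applies. The key ingredients are the inclusions ${\mathcal R}_{ij}\cap{\mathcal R}_{ii'}\subset\{\lambda_j = \lambda_{i'}\}$ and ${\mathcal R}_{ij}\cap{\mathcal R}_{j'j}\subset\{\lambda_i(\cdot+k)=\lambda_{j'}(\cdot+k)\}$ recorded below Assumption~\ref{ass:several-res}, the fact that ${\mathfrak R}_0$ contains no auto-resonances (which rules out degenerate sub-cases where e.g.\ $i=i'$), and the $\e R_0$ slack in \eqref{homo-new}, which absorbs smooth remainders arising from the cutoff boundaries when two supports only approximately coincide.
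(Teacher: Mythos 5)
Your overall reduction (identify the phase as the $(i,i')$ or $(i',i)$ resonant phase, split according to whether that pair is non-resonant, in ${\mathfrak R}\setminus{\mathfrak R}_0$, or in ${\mathfrak R}_0$, and recognize the exceptions (a), (b) as the couplings carried by the cutoff of one and the same non-transparent resonance) is the same as the paper's. But there is a genuine gap in your treatment of the ${\mathfrak R}_0$ case: the claim that whenever at least one of $\alpha,\beta$ is of outer type the product ${\bm\chi}_\alpha\,{\bm\chi}_\beta^\sharp$ vanishes on a neighborhood of the relevant resonant set, so that those couplings are trivial. By \eqref{vi}, the outer symbol for $i'$ is $1-\sum_{j'\in I^+_{i'}}(\chi_{i'j'})_{-1}-\sum_{j'\in I^-_{i'}}\chi_{j'i'}$; near ${\mathcal R}_{ii'}$, where $\chi_{ii'}\equiv 1$, it equals \emph{minus the sum of the other inner cutoffs}, which is not identically zero as soon as some ${\mathcal R}_{j'i'}$ ($j'\neq i$) or some translate ${\mathcal R}_{i'j'}+k$ meets a neighborhood of ${\mathcal R}_{ii'}$. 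Nothing in Assumption \ref{ass:several-res} forbids such intersections (a separation hypothesis of that kind is Assumption \ref{ass:12-ampli}, used for Theorem \ref{th-three}, not here; in the Klein--Gordon example of Section \ref{sec:indexKG} one has ${\mathcal R}_{15}\cap({\mathcal R}_{54}+k)=\{0\}\neq\emptyset$). Worse, the cutoff actually entering $B_{p\alpha\beta}$ through \eqref{b-abp} is the extension ${\bm\chi}_\beta^\sharp$, which is $\equiv 1$ on the support of ${\bm\chi}_\beta$, hence certainly does not vanish near ${\mathcal R}_{ii'}$ in that situation. So the in--out and out--out couplings are exactly where the partial transparency conditions are indispensable, not where they can be dispensed with; if your shortcut were correct, most of \eqref{new:1605}--\eqref{new:1605:2} could be dropped from the hypothesis, which is not the case.

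What is true, and what the paper does instead: off the support of $\chi_{ii'}$ the phase is bounded away from zero, so the source may be multiplied by $\chi_{ii'}$ at the cost of a term solvable by direct division; the surviving part of the outer symbol then consists of the \emph{other} inner cutoffs, leaving sources of the form $g\,\chi_{ii'}\chi_{ij}\chi^\sharp_{i'j',-1}b^+_{ii'}$ and $g\,\chi_{ii'}\chi_{ij}\chi^\sharp_{j'i'}b^+_{ii'}$ (with $j'\neq i$), supported near ${\mathcal R}_{ii'}\cap({\mathcal R}_{i'j'}+k)$ and ${\mathcal R}_{ii'}\cap{\mathcal R}_{j'i'}$, i.e.\ exactly the frequency sets covered by \eqref{new:1605} and \eqref{new:1605:2}; the out--out terms produce products of two such sums and are handled the same way. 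With that correction — running the same localization-plus-\eqref{new:1605}/\eqref{new:1605:2} bookkeeping in the in--out and out--out cases as in the in--in case — the rest of your plan coincides with the paper's proof.
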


\begin{proof} Following \eqref{def:int-coeff}, we denote $b_{ij}^+$ and $b_{ji}^-$ the interaction coefficients associated with a given resonance $(i,j):$
 \be \label{def:int-coeff:ij} {b}_{ij}^+ = \Pi_{i+1} B(\vec e_1) \Pi_j, \qquad {b}_{ji}^- = \Pi_j B(\vec e_{-1}) \Pi_{i,+1}.
 \ee
We let $\a = (i,j,q,\inn)$ or $\a = (i,\out),$ and $\b =
(i',j',q',\inn)$ or $\b =(i',\out).$ On the complement of the
support of $B_{p\a\b}$ a trivial solution to \eqref{homo-new} is
$Q_{\ell\a\b} = 0,$ so that it is sufficient to consider
\eqref{homo-new} for frequencies such that $\chi_{\a, +(p -
\delta_{\a} + \delta_{\b})} \neq 0$ and $\chi_\b^\sharp \neq 0.$ We
go over most cases in detail. In all cases Assumption
\ref{ass:several-res}(ii) is invoked. We call ``phase" the scalar $-
\ell \o + {\bm \mu}_{\a,+\ell} - {\bm \mu}\b$ in factor of the
unknown $Q_{\ell\a\b},$ and ``source" the right-hand side $B_{p \a
\b}$ in the homological equation \eqref{homo-new}.

\medskip

$\bullet$ {\it ``\inn-\inn'' coupling terms:} here $\a = (i,j,q,\inn),$ $\b = (i',j',q',\inn).$  There are four subcases.

\medskip

 $\,\,\bullet \bullet$ If $q = q' = 1,$ then the source terms and phases for $p = 1$ and $p = -1$ are
   $$ \begin{aligned} B_{1\a\b} & = g(t,x) \chi_{ij,+1} \chi_{i'j'}^\sharp {b}^+_{ii',+1}, \quad \mbox{with phase $(\l_{i,+1} - \o - \l_{i'})_{+1},$ and }
   \\
    B_{-1\a\b} & = g(t,x)^* \chi_{ij,-1} \chi_{i'j'}^\sharp {b}_{ii'}^-, \quad \mbox{with phase $-(\l_{i',+1} - \o - \l_i),$}
    \end{aligned}$$
    respectively. We recall notation from \eqref{pol-con}: $u_{0,1} = g(t,x) \vec e_1,$ $u_{0,-1} = g(t,x)^* \vec e_{-1}.$

     If $(i,i') \notin {\mathfrak R}_0,$ meaning that $(i,i')$ is a transparent resonance (or is not a resonant pair), then by definition the phase $(\l_{i,+1} - \o - \l_{i'})_{+1}$ factorizes in the interaction coefficient $b_{ii',+1}^+.$ This implies that the symbol
 \be \label{res:homo}  g(t,x) \big(\l_{i,+1} - \o - \l_{i'}\big)_{+1}^{-1} \,  \chi_{ij,+1} \chi_{i'j'}^\sharp {b}^+_{ii',+1}
 \ee
 is bounded, belongs to $S^0,$ and provides a solution to \eqref{homo-new}.

 Otherwise $(i,i') \in {\mathfrak R}_0.$ By definition of the truncations associated with non-transparent resonances, the phase $\l_{i,+1} - \o - \l_{i'}$ is bounded away from zero over the support of $1 - \chi_{ii'}.$ Thus for $p =1$ it suffices to solve \eqref{homo-new} for the source terms
 $$ \begin{aligned} \chi_{ii',+1}   B_{1\a\b} & = g \chi_{ii',+1} \chi_{ij,+1} \chi_{i'j'}^\sharp {b}^+_{ii',+1}, & \quad \mbox{with phase $(\l_{i,+1} - \o - \l_{i'})_{+1}.$}
 \end{aligned}$$
We now invoke the transparency condition \eqref{new:1605} from Assumption \ref{ass:several-res}. Indeed, by \eqref{new:1605}, the interaction coefficient $b_{ii',+1}^+$ is transparent on the support of $\chi_{ii',+1} \chi_{i'j'}^\sharp,$ if the supports of these cut-offs are tightly cut around the corresponding resonance sets. Again, this implies that \eqref{res:homo} is bounded, belongs to $S^0$ and solves \eqref{homo-new}.

 In the case $p = -1,$ the same argument applies.
 \medskip

  $\,\,\bullet\bullet$ If $q = 1,$ $q' = -1,$ then the source terms are
  $$ B_{1\a\b} = g \chi_{ij} \chi_{j'i'}^\sharp {b}_{ii'}^+, \qquad B_{-1\a\b} = g^* \chi_{ij,-2} \chi_{j'i'}^\sharp {b}_{ii',-1}^-.$$
Again, if $(i,i') \notin {\mathfrak R}_0$ (for $p = -1,$ if $(i',i) \notin {\mathfrak R}_0$), the corresponding equation \eqref{homo-new} is solved by dividing the source by the phase. Thus it suffices to solve \eqref{homo-new} for
 $$ \begin{aligned} \chi_{ii'}B_{1\a\b} & = g \chi_{ii'} \chi_{ij} \chi_{j'i'}^\sharp {b}_{ii'}^+, & \quad \mbox{with phase $\l_{i,+1} - \o - \l_{i'};$} \\ \chi_{i'i,-1} B_{-1\a\b} & = g \chi_{i'i,-1} \chi_{ij,-2} \chi_{j'i'}^\sharp {b}_{ii',-1}^-, & \quad \mbox{with phase $-(\l_{i',+1} - \o - \l_{i})_{-1},$}\end{aligned}$$
 assuming $(i,i') \in {\mathfrak R}_0$ in the case $p = 1$ and $(i',i) \in {\mathfrak R}_0$ in the case $p = -1.$

 For $B_{1\a\b}$ ($p = 1$), we use \eqref{new:1605:2} if $j' \neq i$ or $i' \neq j.$ Indeed, given $\xi \in {\mathcal R}_{ii'},$ if in addition $\xi$ belongs the support of $\chi_{ij},$ then $\xi$ belongs to a neighborhood of ${\mathcal R}_{ij}.$ Since frequencies that belong to ${\mathcal R}_{ii'} \cap {\mathcal R}_{ij}$ necessarily belong to $\{ \l_{i'} = \l_j\},$ we can indeed use the partial transparency condition \eqref{new:1605:2}.

  The remaining case for $p = 1$ is $(i',j') = (j,i).$ Then, there holds $\a = (i,j,1,\inn)$ and $\b = (j,i,-1,\inn).$ But then $(i,i') \notin {\mathfrak R}_0,$ otherwise we would be in excluded case (a).

  For $B_{-1\a\b},$ we use \eqref{new:1605} again.

\medskip

 {\it In the remainder of this proof, upon consideration of an interaction coefficient, we will always assume that the relevant resonance, that is $(i,i')$ if $p = 1$ and $(i',i)$ if $p = -1,$ is non-transparent. Otherwise the reduction is trivial, meaning that we do not need to appeal to the partial transparency conditions \eqref{new:1605} and \eqref{new:1605:2} and may simply solve \eqref{homo-new} by dividing the source by the phase, without having to consider the form of the truncation functions.}

 \medskip
 $\,\,\bullet\bullet$ If $q = -1,$ $q' = 1,$ then the source terms are
 $$ B_{1\a\b} = g \chi_{ji,+2} \chi_{i'j'}^\sharp {b}_{ii',+1}^+, \quad B_{-1\a\b} = g^* \chi_{ji} \chi_{i'j'}^\sharp {b}_{ii'}^-.$$
It suffices to solve for
 $$ \begin{aligned} \chi_{ii',+1}  B_{1\a\b} & = g \chi_{ii',+1} \chi_{ji,+2} \chi_{i'j'}^\sharp {b}_{ii',+1}^+, \quad & \mbox{with phase $(\l_{i,+1} - \o - \l_{i'})_{+1};$} \\ \chi_{i'i} B_{-1\a\b} & = g^* \chi_{i'i} \chi_{ji} \chi_{i'j'}^\sharp {b}_{ii'}^-, & \quad \mbox{with phase $-(\l_{i',+1} - \o - \l_{i}),$} \end{aligned}$$
 For the first source term ($p =1$), we use \eqref{new:1605}. For the other source term ($p = -1$) we use \eqref{new:1605:2} unless $(i',j') = (j,i),$ in which case $\a = (i,j,-1,\inn)$ and $\b = (j,i,1,\inn).$ Then, $(j,i) \notin {\mathfrak R}_0,$ otherwise we would be in excluded case (b). Hence the interaction coefficient $b^-_{i'i} = b^-_{ji}$ is transparent on the support of $\chi_{i'i}.$

 \medskip

 $\,\,\bullet\bullet$ If $q = q' = - 1,$ then the source terms are
 $$ B_{1\a\b} = g \chi_{ji,+1} \chi_{j'i'}^\sharp {b}_{ii'}^+, \quad B_{-1\a\b} = g^* \chi_{ji} \chi_{j'i'}^\sharp {b}_{ii',-1}^-.$$
It suffices to solve for
 $$ \begin{aligned} \chi_{ii'}  B_{1\a\b} & = g \chi_{ii'} \chi_{ji,+1} \chi_{j'i'}^\sharp {b}_{ii'}^+, & \quad \mbox{with phase $\l_{i,+1} - \o -\l_{i'};$} \\  \chi_{i'i,-1} B_{-1\a\b} & = g^* \chi_{i'i,-1} \chi_{ji,-1} \chi_{j'i'}^\sharp {b}_{ii',-1}^-, & \quad \mbox{with phase $-(\l_{i',+1} - \o - \l_{i})_{-1},$}\end{aligned}$$
For both terms \eqref{new:1605} applies.

\medskip

$\bullet$ {\it ``in-out'' coupling terms:} we consider first $u_{ij}^\pm/v_{i'}$ coupling terms, that is $\a = (i,j,q,\inn),$ $\b = (i',\out).$ 
\medskip

 $\,\,\bullet\bullet$ If $q = 1$ and $p = 1,$ the source and phase are
  $$ g \chi_{ij} \Big( 1 - \sum_{j' \in I^+_{i'}} (\chi_{i'j'})_{-1} - \sum_{j' \in I^-_{i'}} \chi_{j'i'} \Big)^\sharp \, {b}_{ii'}^+ \quad \mbox{and} \quad \l_{i,+1} - p \o -  \l_{i'}.$$
Since we may assume $(i,i') \in {\mathfrak R}_0,$ there holds $i \in I_{i'}^-,$ and the phase is bounded away from zero on $(1 - \chi_{ii'})^\sharp.$ Therefore it suffices to solve for the source terms
 $$ g \chi_{ii'} \chi_{ij} \chi_{i'j',-1}^\sharp {b}_{ii'}^+, \,\, j' \in I^+_{i'}, \quad \mbox{and} \quad g \chi_{ii'} \chi_{ij} \chi_{j'i'}^\sharp {b}_{ii'}^+, \,\, j' \in I^-_{i'}, \, j'\neq i.$$
Conditions \eqref{new:1605} and \eqref{new:1605:2} apply to the first and second terms above, respectively.

 \medskip

 $\,\,\bullet\bullet$ If $q = 1$ and $p = -1,$ the source and phase are
 $$ g^* \chi_{ij,-2} \Big( 1 - \sum_{j' \in I^+_{i'}} \chi_{i'j',-1} - \sum_{j' \in I^-_{i'}} \chi_{j'i'} \Big)^\sharp \, {b}_{ii',-1}^- \quad \mbox{and} \quad - (\l_{i',+1} - \o -  \l_{i})_{-1}.$$
We apply the same reasoning as in the previous case: we may assume $i \in I_{i'}^+,$ so that the phase factorizes in the source term over the support of $1 - \chi_{i'i},$ and as a consequence it suffices to solve for the source terms
 $$ g^* \chi_{i'i,-1} \chi_{ij,-2} \chi_{i'j',-1}^\sharp {b}_{ii',-1}^-, \,\, j' \in I^+_{i'}, \, j' \neq i,  \quad \mbox{and} \quad  g^* \chi_{i'i,-1} \chi_{ij,-2} \chi_{j'i'}^\sharp{b}_{ii',-1}^-, \,\, j' \in I^-_{i'}.$$
  Conditions \eqref{new:1605:2} and \eqref{new:1605} apply to the first and second terms above, respectively.

 \medskip

 $\,\,\bullet\bullet$ If $q = - 1$ and $p = 1,$ the source and phase are
  $$ g \chi_{ji,+1} \Big( 1 - \sum_{j' \in I^+_{i'}} (\chi_{i'j'})_{-1} - \sum_{j' \in I^-_{i'}} \chi_{j'i'}\Big)^\sharp \,  {b}_{ii'}^+ \quad \mbox{and} \quad \l_{i,+1} - \o - \l_{i'}.$$
Here $i \in I_{i'}^-,$ so that in the above source term, we can multiply by $\chi_{ii'}$ and neglect $(1 - \chi_{ii'})^\sharp.$ The remaining source terms are
 $$ g \chi_{ii'} \chi_{ji,+1} \chi_{i'j',-1}^\sharp {b}_{ii'}^+, \,\, j' \in I^+_{i'}, \quad \mbox{and} \quad g \chi_{ii'} \chi_{ji,+1} \chi_{j'i'}^\sharp {b}_{ii'}^+, \,\, j' \in I^-_{i'}, \, j' \neq i.$$
Conditions \eqref{new:1605} and \eqref{new:1605:2} apply to the first and second terms above, respectively.
\medskip

 $\,\,\bullet\bullet$ If $q = - 1$ and $p = - 1,$ the source and phase are
  $$  g^* \chi_{ji,-1} \Big( 1 - \sum_{j' \in I^+_{i'}} (\chi_{i'j'})_{-1} - \sum_{j' \in I^-_{i'}} \chi_{j'i'}\Big)^\sharp \,  {b}_{ii',-1}^- \quad \mbox{and} \quad -(\l_{i',+1} - \o - \l_i)_{-1}.$$
By the same arguments as above, it suffices to handle the source terms
 $$ g^* \chi_{i'i,-1} \chi_{ji,-1} \chi_{i'j',-1}^\sharp b^-_{ii',-1}, \,\, j' \in I^+_{i'}\, j' \neq i, \quad \mbox{and} \quad g^* \chi_{i'i,-1} \chi_{ji,-1} \chi_{j'i'}^\sharp b_{ii',-1}^-, \,\,j' \in I^-_{i'}. $$
Conditions \eqref{new:1605:2} and \eqref{new:1605} apply to the first and second terms above, respectively.

\medskip

Next we turn to $v_i/u_{i'j'}^\pm$ coupling terms, corresponding to $\a = (i,\out),$ $\b = (i',j',q',\inn):$ 
\medskip

 $\,\,\bullet\bullet$ If $q' = 1$ and $p = 1,$ the source and phase are
   $$ g \Big(1 - \sum_{j\in I^+_i} \chi_{ij,+1} - \sum_{j \in I^-_i} \chi_{ji,+2}\Big) \chi_{i'j'}^\sharp b_{ii',+1}^+ \quad \mbox{and} \quad (\l_{i,+1} - \o - \l_{i'})_{+1}.$$
  By the same arguments as above, it suffices to handle the source terms
  $$ g \chi_{ii',+1} \chi_{ij,+1} \chi_{i'j'}^\sharp b_{ii',+1}^+, \,\, j \in I^+_i, \, j \neq i', \quad \mbox{and} \quad g \chi_{ii',+1} \chi_{ji,+2} \chi_{i'j'}^\sharp b_{ii',+1}^+, \quad j \in I^-_i.$$
  Conditions \eqref{new:1605:2} and \eqref{new:1605} apply to the first and second terms above, respectively.

 \medskip

 $\,\,\bullet\bullet$  The other three terms associated with $v_i/u_{i'j'}^\pm$ couplings are entirely similar.

\medskip

$\bullet$ {\it ``out-out'' coupling terms} and phases are, in the case $p = 1:$
 $$ g \big( 1 - \sum_{j \in I^+_i} \chi_{ij} - \sum_{j \in I^-_i} \chi_{ji,+1}\big)  \big( 1 - \sum_{j' \in I^+_{i'}} \chi_{i'j,-1} - \sum_{j' \in I^-_{i'}} \chi_{j'i'}\big) b_{ii'}^+, \quad \l_{i,+1} - \o - \l_{i'},$$
 and in the case $p = -1:$
 $$g^* \big( 1 - \sum_{j \in I^+_i} \chi_{ij,-2} - \sum_{j \in I^-_i} \chi_{ji,-1}\big) \big( 1 - \sum_{j' \in I^+_{i'}} \chi_{ij,-1} - \sum_{j' \in I^-_{i'}} \chi_{j'i'}\big)^\sharp b_{ii',-1}^-,\quad -(\l_{i',+1} - \o - \l_{i})_{-1}.$$
Employing the same arguments as in the ``in-out" case, we are reduced to considering the source terms, for $p = 1:$
 $$ g \chi_{ii'} \big( \chi_{ij_1} + \chi_{j_2i,+1}\big) \big( \chi_{i'j'_1,-1} + \chi_{j'_2i'}\big) b_{ii'}^+, \quad j_1 \in I^+_i \setminus \{i'\}, \,\, j_2 \in I^-_i, \,\, j'_1 \in I^+_{i'}, \,\, j'_2 \in I^-_{i'} \setminus \{i\}.$$
To the terms involving $j_1$ and $j'_2,$ condition \eqref{new:1605:2} applies. To the terms involving $j_2$ and $j'_1,$ condition \eqref{new:1605} applies. The case $p = -1$ is handled in the same way.
\end{proof}

\begin{rema}\phantomsection \label{rem:ontransp} At first sight it might like look condition \eqref{new:1605} is too strong for our purposes, since it involves intersections of only two resonant sets, while the interaction coefficients $B_{p\a\b}$ involve three frequency cut-offs. A look at \eqref{res:homo} shows however that we cannot do with less than \eqref{new:1605}. Indeed, given $(i,i') \in {\mathfrak R}_0,$ for the normal form reduction to go through the symbol in \eqref{res:homo} has to be bounded for all values of $j,j'$ (such that $(i,j)$ and $(i',j') \in {\mathfrak R}_0$), including $j = i'.$
\end{rema}

System \eqref{u-a-sys} has size $N \times |\mA|,$ where $|\mA|$ is the cardinal of $\mA$ defined in \eqref{def:mA}. We let
$$ {\bf Q}_{(\a,\b)} = \sum_{p \in \pm 1} e^{i (p + \delta_\b - \delta_\a) \theta} Q_{\ell\a\b}, \qquad \ell = p + \delta_\b - \delta_\a,$$
where $Q_{\ell\a\b}$ is given by Proposition \ref{prop:several-res-new} for relevant indices, meaning all $p, \a,\b$ at the exclusion of cases (a) and (b), and $Q_{\ell\a\b} := 0$ otherwise. We then form a large matrix ${\bf Q} \in \C^{N |\mA| \times N |\mA|}$ by assembling the $N \times N$ blocks ${\bf Q}_{(\a,\b)},$ and, similarly to \eqref{def:check-u}, let
\begin{equation} \label{reduced-several} \check {\bf U} := \Big(\Id + \sqrt \e \op_\e\big({\bf Q}(\sqrt \e t)\big)\Big)^{-1} {\bf U}(\sqrt \e t), \quad {\bf U} := ({\bf u}_\a)_{\a \in \mA}, \quad \check {\bf U} =: (\check {\bf u}_\a)_{\a \in \mA},
\end{equation}
It can then be checked, exactly as in the proof of Corollary \ref{cor:normal-form}, that $\check {\bf U}$ solves the reduced system
\be \label{several:reduced} \d_t \check {\bf u}_\a + \frac{1}{\sqrt \e} \op_\e(i {\bm \mu}_\a) \check {\bf u}_\a = \op_\e(\check {\bf B}_{\a\b}) \check {\bf u}_\b + \sqrt \e \check {\bf F}_\a,\ee
where the only remaining source terms correspond to constructive interactions between non-transparent resonances, that is
 \be \label{def:checkbfB} \check {\bf B}_{\a\b} := \left\{\begin{aligned}
  {\bf B}_{1\a\b}, & \quad \mbox{if $\a = (i,j,1,\inn),$ $\b = (j,i,-1,\inn),$ with $(i,j) \in {\mathfrak R}_0,$}\\ {\bf B}_{-1\a\b}, & \quad \mbox{if $\a = (i,j,-1,\inn),$ $\b = (j,i,1,\inn),$ with $(j,i) \in {\mathfrak R}_0,$} \\  0, & \quad \mbox{otherwise.}
   \end{aligned}\right.\ee
According to \eqref{b-abp}, if $\a = (i,j,1,\inn)$ and $\b = (j,i,-1,\inn),$ then ${\bf B}_{1\a\b}$ simplifies into
  $$ {\bf B}_{1\a\b} = \chi_{ij}^\sharp \Pi_{i,+1} B_1 \Pi_{j},$$
  and if $\a = (i,j,-1,\inn)$ and $\b = (j,i,1,\inn),$ then ${\bf B}_{-1\a\b}$ simplifies into
   $$ {\bf B}_{-1\a\b} = \chi_{ji}^\sharp \Pi_i B_{-1} \Pi_{j,+1}. $$
In particular, in \eqref{several:reduced} there are no fast oscillations in the source. System \eqref{several:reduced} is the reduced system, analogous to system \eqref{eq:check-u} in the case of one non-transparent resonance.

\subsection{Space-frequency localization} \label{several:loc}

We now isolate the family, indexed by ${\mathfrak R}_0,$ of $2N \times 2N$ subsystems in \eqref{several:reduced} that correspond to non-zero coupling terms $\check {\bf B}_{\a\b},$ just like the subsystem corresponding to resonance $(1,2)$ was naturally isolated from the rest of \eqref{eq:check-u}.

\medskip

We let $\check U_{ij} := \big( \check u_{ij}^+, \,  \check u_{ji}^-\big),$ and
 \be\label{loc:ij}
 V_{ij} :=\op_\e(\chi_{ij}) \big( \varphi_{ij} \check U_{ij} \big), \quad W_{ij1}  := \op_\e\big( \chi_{ij}\big)\big((1-\varphi_{ij})\check U_{ij}\big), \quad W_{ij2}  := \big(1-\op_\e(\chi_{ij})\big)\check U_{ij},
 \ee
so that, just like in \eqref{re:checkU},
$ \check U_{ij} = V_{ij} + W_{ij1} + W_{ij2}.$ \label{encore x cut}The spatial cut-off $\varphi_{ij}$ is identically equal to one in a large neighborhood of $x_0;$ it is associated with $\varphi_{ij}^\flat,$ $\varphi_{ij}^\sharp$ such that $\varphi_{ij}^\flat \prec \varphi_{ij} \prec \varphi_{ij}^\sharp.$

We then verify exactly as in Lemma \ref{lem:sys-V} that, with this further coordinatization, the reduced system \eqref{several:reduced} takes the form of the prepared system

\be \label{several:prepared}
 \left\{ \begin{aligned} \d_t V_{ij}  + \frac{1}{\sqrt\e} \ope(M_{ij}) V_{ij} & = \sqrt \e F_{V_{ij}},  \quad (i,j) \in {\mathfrak R}_0, \\  \d_t {\bf W} + \frac{1}{\sqrt \e} \op_\e(i {\bf A}) {\bf W} &  = \op_\e(D) {\bf W} + \sqrt \e {F}_{\bf W},\end{aligned}\right.
\ee
where
\begin{itemize}
 \item the interaction matrices $M_{ij}$ are
 \be \label{def:Mij} M_{ij}(\e,t,x,\xi) := \chi_{ij}^\sharp \left(\begin{array}{cc} i  (\l_{i,+1} - \o) & - \sqrt \e  \varphi_{ij}^\sharp g \Pi_{i,+1} B_1 \Pi_j \\ - \sqrt \e  \varphi_{ij}^\sharp  g^* \Pi_j B_{-1} \Pi_{i,+1} &  i\l_j \end{array}\right).\ee
 \item the variable ${\bf W}$ is the collection of all ${\bf u}_\a$ for which the source terms $\check {\bf B}_{\a\b}$ are all equal to zero\footnote{Corresponding, by definition of $\check {\bf B}_{\a\b}$ in \eqref{def:checkbfB}, to all $\a$ such that for all $\b,$ there holds $(\a,\b) \neq ((i,j,1,\inn), (j,i,-1,\inn)),$ for any $(i,j) \in {\mathfrak R}_0,$ and $(\a,\b) \neq  ((i,j,-1,\inn), (j,i,1,\inn)),$ for any $(j,i) \in {\mathfrak R}_0.$},
   and all $(W_{ij1}, W_{ij2}),$ for $(i,j) \in {\mathfrak R}_0,$
  \item the Fourier multiplier ${\bf A}$ is diagonal and purely imaginary,
 \item the source term $D$ depends linearly on $(1 - \varphi_{ij}^\flat) u_{0,\pm 1}.$ In particular it can be made arbitrarily small, by choosing the support of $\varphi_{ij}^\flat$ large enough, since $u_{0,\pm 1}$ are decaying at infinity,
 \item the source terms $F_{V_{ij}}$ and $F_{\bf W}$ are bounded as was $F_\a$ in \eqref{bd:F-alpha}.
 \end{itemize}

In \eqref{several:prepared}, we see $2N \times 2N$ interaction systems, indexed by $(i,j) \in {\mathfrak R}_0,$ that are weakly coupled with a large system in ${\bf W}.$ The system in ${\bf W}$ is symmetric hyperbolic with very small linear and nonlinear source terms. The coupling terms between the subsystem indexed by ${\mathfrak R}_0$ and the system in ${\bf W}$ are the source terms $F$ in the right-hand sides. We call these coupling terms weak because of the $\e^{1/2}$ prefactors. System \eqref{several:prepared} is the prepared system, analogous to system \eqref{V11}.

\subsection{Conclusion} \label{several:estimates}

 From \eqref{several:prepared}, the estimates are as in the case of one non-transparent resonance.

 Assumption \ref{ass:several-res}(iii) allows for large-rank interaction coefficients for those resonances $(i,j) \in {\mathfrak R}_0$ for which one interaction coefficient is identically zero. In such cases, the bounds for the symbolic flow are trivial. Indeed, equation \eqref{S} page \pageref{S} reduces to
  $$ \d_t S + \frac{1}{\sqrt \e} \left(\begin{array}{cc} i \chi_i \mu_i & -\sqrt \e \tilde b_{ij} \\ 0 & i \chi_1 \mu_j \end{array}\right) S = 0,$$
  a triangular system of ordinary differential equations. The corresponding bounds are
  \be \label{bd:g=0} |\d_x^\a S(\t;t)| \lesssim 1 + (t - \t)^r,\ee
  where $r$ is the rank of $b_{ij}^+.$

 For the other pairs $(i,j) \in {\mathfrak R}_0,$ we have an amplification coefficient $\g_{ij}$ defined in \eqref{def:several-gamma}. With $\g_{ij}$ are associated an upper rate of growth $\g_{ij}^+,$ defined as in \eqref{def:g+}, and, unless $\g_{ij} = 0,$ a lower rate of growth $\g_{ij}^-,$ defined as in the statement of Lemma \ref{lem-S0}. Both depend on how tightly we cut around the resonance. This is quantified by parameters $h > 0$ (in frequency space) and $\rho > 0$ (in physical space).

  In the unstable case ${\bf \G} > 0,$ we isolate $(i_0,j_0)$ such that $\g = \g_{i_0j_0} = \max_{(i,j) \in {\mathfrak R}_0} \g_{ij} > 0.$  The other components have slower (maybe not strictly slower) rates of growth.

  Given $(i,j) \in {\mathfrak R}_0,$ if $\g_{ij} > 0$ we use the Duhamel representation formula as in Section \ref{est-upper-V0}, and this gives an upper bound for $V_{ij}$ that is analogous to \eqref{up:0}.

  Given $(i,j) \in {\mathfrak R}_0,$ if $\g_{ij} < 0,$ then we use a symmetrizer to estimate $V_{ij}$ as in Section \ref{sec:stability1}, and if $\g_{ij} = 0,$ we simply use bound \eqref{bd:g=0}.

  This gives upper bounds as in Propositions \ref{lem:Sob-bd} and \ref{prop:ex-Sob}:
  \be \label{up:up2} \| (V_{ij})_{(i,j) \in {\mathfrak R}_0}, {\bf W})(t') \|_{\e,s} \lesssim \e^{K - \k} |\ln \e|^* e^{t \g^+},\ee
where $\k$ intervenes in the definition of $\dot u$ in the first line of Section \ref{sec:several-coord}. The upper rate of growth $\g^+$ is defined from $\g$ by \eqref{def:g+}. The above estimate is valid for $T < T_0,$ where $T_0$ is defined in \eqref{def:T0-K0}.

  By an appropriate choice of the initial datum (namely, \eqref{datum}), we derive a lower bound for the distinguished variable $V_{i_0j_0}$ as in Section \ref{sec:lower}, and conclude as in Section \ref{sec:end-insta}.

  In the stable case ${\bf \G} < 0,$ all resonances $(i,j) \in {\mathfrak R}_0$ are symmetrizable. We define a symmetrizer by blocks, and proceed as in Section \ref{sec:stability1}.

\section{Proof of Theorem \lowercase{\ref{th-three}}} \label{sec:proof-three}

\subsection{Preparation} \label{sec:12-prep}

The perturbation variable is defined by
\be \label{def:dot-u3} u =: u_a + \e^{1/2} \dot u,
\ee
The $\e^{1/2}$ prefactor will provide the necessary cushion as we follow the growth of a component of the solution that might not have maximal growth. There holds $\| \dot u(0)\|_{\e,s} = O(\e^{K - 1/2}).$

 We use the coordinatization of Section \ref{sec:several-coord}, leading to system \eqref{u-a-sys} in the variable
 ${\bf U}:= ({\bf u}_\a)_{\a \in \mA}.$ The source ${\bf F}_\a$ satisfies \eqref{bd:F-alpha} with $\k = 1/2;$ explicitly
 \be \label{bd:F-alpha2}| {\bf F}_\a |_{L^2} \lesssim (1 + |\dot u|_{L^\infty}) |\dot u|_{L^2} + \e^{K_a - 1/2}.\ee

 In the present context, the non-transparent resonance $(1,2)$ plays a distinguished role. The associated variables are $(u_{12}^+, u_{21}^-) \in \C^{2N},$ as defined in \eqref{u-ij+}-\eqref{u-ij-}. We now eliminate from the system in ${\bf U}$ all the coupling terms ${\bf B}_{p\a\b}$ which involve $u_{12}^+$ or $u_{21}^-,$ at the exception of the crucial interaction coefficients $b_{12}^+$ and $b_{21}^-:$

 \begin{prop}\phantomsection \label{prop:all-res-are-amplified} Under Assumption {\rm \ref{ass:12-ampli}(ii)}, equation
\be \label{homo-newbis}
 i \big( - \ell \o + {\bm \mu}_{\a,+\ell} - {\bm \mu}_\b \big) Q_{\ell\a\b} = B_{p\a\b} + \e R_0, \qquad \ell = p - \delta_{\a} +  \delta_{\b}
\ee
has a solution $Q_{\ell\a\b} \in S^0$ for $\a = (1,2,+,\inn)$ and all $(p,\b)$ unless $B_{p\a\b} = \chi_{12}^\sharp b_{12}^+,$ and also for $\a = (2,1,-,\inn)$ and all $(p,\b)$ unless $B_{p\a\b} = \chi_{12}^\sharp b_{21}^-.$ In \eqref{homo-newbis}, $R_0$ is such that $\op_\e(R_0)$ is a uniform remainder in the sense of Definition {\rm \ref{def:ur}}.
\end{prop}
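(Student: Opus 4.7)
The plan is to mirror the case analysis in the proof of Proposition \ref{prop:several-res-new}, with the separation condition \eqref{sep} now playing the role of the partial transparency conditions \eqref{new:1605}--\eqref{new:1605:2}. Fix $\a=(1,2,+,\inn)$; the case $\a=(2,1,-,\inn)$ is symmetric under the exchange $p\leftrightarrow -p$. In the present setting ${\mathfrak R}_0=\{(1,2)\}$, so that $\b$ ranges over $\{(1,2,+,\inn),(2,1,-,\inn)\}\cup\{(i',\out):i'\in[1,J]\}$. By \eqref{b-abp} and \eqref{def:bm:mu}, the source $B_{p\a\b}$ is supported in $\supp\chi_{12}(\cdot+\ell k)\cap\supp\chi_\b^\sharp$ with $\ell=p-1+\delta_\b\in\{-2,-1,0,1\}$, while the coefficient of $Q_{\ell\a\b}$ in \eqref{homo-newbis} is the scalar phase
\[ \Phi_{\ell\a\b}(\xi):=-\ell\o+\l_1\bigl(\xi+(\ell+1)k\bigr)-\o-{\bm \mu}_\b(\xi). \]

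I would first enumerate the four in-in combinations. Direct calculation gives the zero set of $\Phi_{\ell\a\b}$: it equals ${\mathcal R}_{11}-k$ for $p=1,\b=(1,2,+,\inn)$; ${\mathcal R}_{11}$ for $p=-1,\b=(1,2,+,\inn)$; ${\mathcal R}_{21}+k$ for $p=-1,\b=(2,1,-,\inn)$; and ${\mathcal R}_{12}$ itself for $p=1,\b=(2,1,-,\inn)$. In the first three cases, $\supp B_{p\a\b}$ lies in a translate ${\mathcal R}_{12}+sk$ with $|s|\leq 1$, and the separation hypothesis \eqref{sep}, together with the compactness of resonant sets (Assumption \ref{ass:12-ampli}(i)) and continuity of the $\l_i$, forces $|\Phi_{\ell\a\b}|$ to be bounded below on $\supp B_{p\a\b}$. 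The quotient $Q_{\ell\a\b}:=-iB_{p\a\b}/\Phi_{\ell\a\b}$ then defines a compactly-supported symbol in $S^0$ solving \eqref{homo-newbis} up to the $\e R_0$ remainder produced by the composition formula of Proposition \ref{prop:composition}. In the fourth case, $\Phi_{0\a\b}=\l_{1,+1}-\o-\l_2$ vanishes on all of ${\mathcal R}_{12}$, which meets $\supp B_{1\a\b}$ nontrivially; no $S^0$ symbolic solution exists, and \eqref{b-abp} identifies this source (up to the amplitude $g$ and the refinement $\chi_{12}\prec\chi_{12}^\sharp$) as the exceptional $\chi_{12}^\sharp\, g\, b_{12}^+$ of the statement.

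Next I would treat the in-out couplings $\b=(i',\out)$, for which $\chi_\b=1$ if $i'\geq 3$, $1-(\chi_{12})_{-1}$ if $i'=1$, and $1-\chi_{12}$ if $i'=2$. For $i'\neq 2$ the separation condition already places the zero set of $\Phi_{\ell\a\b}$ at positive distance from $\supp\chi_{12}(\cdot+\ell k)$, so division as above produces $Q_{\ell\a\b}\in S^0$. For $i'=2$ with $p=1,\ell=0$, the phase $\l_{1,+1}-\o-\l_2$ does vanish on ${\mathcal R}_{12}$; here I would choose the extension $\chi_\b^\sharp$ of $1-\chi_{12}$ to vanish on a neighborhood of ${\mathcal R}_{12}$, a choice compatible with $\chi_\b\prec\chi_\b^\sharp$, which places $\supp B_{1\a\b}$ outside that neighborhood and restores a lower bound on the phase. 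When $\ell=-2$ (arising for $p=-1$ and $\delta_\b=0$), the zero set of $\Phi_{-2,\a,\b}$ is ${\mathcal R}_{i'1}+k$; the change of variable $\eta=\xi-k$ then reduces the required disjointness to ${\mathcal R}_{i'1}\cap({\mathcal R}_{12}+k)=\emptyset$, which is exactly \eqref{sep} with $q=1$ and $(i,j)=(i',1)\neq(1,2)$.

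The main obstacle is only the bookkeeping of the shift $\ell$: although $\ell$ can reach $|\ell|=2$ while \eqref{sep} controls only $|q|\leq 1$, the explicit form of $\Phi_{\ell\a\b}$ at $\ell=-2$ always permits a change of variable that brings the required disjointness back into the range of \eqref{sep}. No analytic input beyond that of Proposition \ref{prop:several-res-new} is needed, and the $\e R_0$ remainder is handled identically via Proposition \ref{prop:composition}.
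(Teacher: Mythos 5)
Your proof is correct and follows essentially the same route as the paper's: for each coupling you reduce to a disjointness question between the support of the source (an intersection of translated resonance cut-offs, cf. \eqref{b-abp}) and the zero set of the phase, invoke the separation condition \eqref{sep} together with boundedness of the resonant sets to divide by the phase, and identify the two surviving terms $\chi_{12}^\sharp b_{12}^+$ (for $\a=(1,2,+,\inn)$) and $\chi_{12}^\sharp b_{21}^-$ (for $\a=(2,1,-,\inn)$); the paper phrases the same mechanism as ``the products of cut-offs vanish identically,'' which is equivalent to your lower bound on the phase over the support.

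Two caveats. First, your reduction to ${\mathfrak R}_0=\{(1,2)\}$ is not what the paper does: Assumption \ref{ass:12-ampli} makes no transparency hypothesis, so the coordinatization of Section \ref{sec:several-coord} may carry inner variables $\b=(i',j',q',\inn)$ attached to resonances $(i',j')\neq(1,2)$, and the paper's proof treats these couplings explicitly. Your argument extends to them with no new idea — for $|\ell|\leq 1$ the prefactor $\chi_{12,+\ell}\,\chi_{i'j'}^\sharp$ already has empty support by \eqref{sep}, and for $|\ell|=2$ the translation by $k$ you describe applies — but as written your enumeration of the in--in cases is incomplete relative to the statement ``for all $(p,\b)$.'' Second, your claim that in the first three in--in cases the support lies in a translate ${\mathcal R}_{12}+sk$ with $|s|\leq 1$ is off for $p=-1$, $\b=(2,1,-,\inn)$: there the relevant factor is $\chi_{12,-2}$, supported near ${\mathcal R}_{12}+2k$, and the disjointness from the zero set ${\mathcal R}_{21}+k$ follows from \eqref{sep} with $q=1$ only after translating by $-k$ — exactly the device of your final paragraph, so the slip is in the phrasing, not in the argument.
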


 \begin{proof} We revisit the proof of Proposition \ref{prop:several-res-new}. We are going to use the separation condition \eqref{sep}.

 In a first step, $\a = (1,2,+,\inn)$ and $\b = (i',j',q',\inn)$ are given. There are four corresponding coupling terms. For $p = 1$ and $q =1,$ the source has prefactor $\chi_{ij,+1} \chi_{i'j'} \chi_{ii',+1},$ which vanishes identically by the separation condition \eqref{sep}. The same holds, with a different combination of cut-offs, for $p =1$ and $q = -1,$ and $p = -1$ and $q = -1.$ The remaining term is $\chi_{12}^\sharp b_{12}^+.$

 Next $\b = (i',\out)$ is given, with the same $\a.$ Here the reduction is non trivial: we use the fact that any phase is bounded away from its corresponding resonant set to reduce the analysis to frequency sets which are intersections of supports of cut-offs functions, as in the proof of Proposition \ref{prop:several-res-new}. After this is done, we find in the case $p = 1$ products $\chi_{ii'} \chi_{ij} (\chi_{i'j',-1} + \chi_{j''i'}),$ with $j'' \neq i,$ in factor of the source. By separation, these products vanish identically. The case $p = -1$ is similar.

 The case $\a = (2,1,-,\inn)$ is treated in the same way, by examination of the corresponding cases in the proof of Proposition \ref{prop:several-res-new}: all ``in-in" coupling terms are trivial, save for $b_{21}^-,$ and all ``in-out" coupling terms are trivial, except on frequency sets over which the phase is bounded away from zero.
 \end{proof}

With Proposition \ref{prop:all-res-are-amplified}, we define a change of variable ${\bf Q}$ as we did in Section \ref{sec:several-normal-form}, and from there define $\check {\bf U}$ by \eqref{reduced-several}. The variable $\check U_{12} = (\check u_{12}^+, \check u_{21}^-)$ describing resonance $(1,2)$ satisfies
 \be \label{eq:12-ampli} \d_t \check U_{12} + \frac{1}{\sqrt \e} \left(\begin{array}{cc} i \mu_1 & - \chi_{12}^\sharp g(\sqrt \e t, x) b_{12}^+ \\ - \chi_{12}^\sharp g(\sqrt \e t, x)^* b_{21}^- & i \mu_2 \end{array}\right) \check U_{12} = \sqrt \e \check F_{12},\ee
 while the other variables in $\check {\bf U},$ which we denote $({\bf u}_{\a'})_{\a' \in \mA'},$ satisfy
  \be \label{eq:other-ampli} \d_t {\bf u}_{\a'} + \frac{1}{\sqrt \e} \op_\e(i {\bm \mu}_{\a'}) \check {\bf u}_{\a'} = \sum_{p = \pm 1} \sum_{\b' \in \mA'} \op_\e({\bf B}_{p\a'\b'}) \check {\bf u}_{\b'} + \sqrt \e \check {\bf F}_{\a'}.
  \ee
 where $\mA'$ is the set of indices alien to the $(1,2)$ resonance: $\mA' := \mA \setminus \big\{ (1,2,+,\inn), (2,1,-,\inn)\big\}.$ Equations \eqref{eq:12-ampli} and \eqref{eq:other-ampli} are coupled only via the source terms $F.$

 Next we introduce unknowns $V_{12}, W_{12,1}, W_{12,2}$ that are local to the $(1,2)$ resonance, as in \eqref{loc:ij}, and arrive at the prepared system:
\be \label{several:prepared2}
 \left\{ \begin{aligned} \d_t V_{12}  + \frac{1}{\sqrt\e} \ope(M_{12}) V_{12} & = \sqrt \e F_{V_{12}}, \\  \d_t {\bf W} + \frac{1}{\sqrt \e} \op_\e(i {\bf A}) {\bf W}  &  = \op_\e(D) {\bf W}  + \sqrt \e {F}_{{\bf W}},\end{aligned}\right.
\ee
analogous to \eqref{several:prepared},
 where
\begin{itemize}
 \item ${\bf W} = \big( ({\bf u}_{\a'})_{\a' \in \mA'}, W_{12,1}, W_{12,2}),$ so that ${\bf U}$ can be reconstructed from $(V_{12}, {\bf W})$ (as in \eqref{def:check-u} and \eqref{re:checkU}), and there holds $\| V_{12} \|_{\e,s} + \| {\bf W} \|_{\e,s} \lesssim \| {\bf U}\|_{\e,s};$
 \item the interaction matrix $M_{12}$ is defined in \eqref{def:Mij}, with $(i,j) = (1,2);$
  \item the Fourier multiplier ${\bf A}$ is diagonal and purely imaginary;
 \item the important difference with \eqref{several:prepared} is in the source $D,$ which here is
  $$ D = \left(\begin{array}{cc} D_{12} & 0 \\ 0 & \sum_{p = \pm 1} ({\bf B}_{p\a'\b'})_{\a',\b' \in \mA'}\end{array}\right), \quad D_{12} = \left(\begin{array}{cc} (1 - \varphi_{12}^\flat) \check {\mathcal B}_{12} & 0 \\ 0 & 0  \end{array}\right),$$
  where $\check {\mathcal B}_{12}$ is defined as $\check {\mathcal B}$ in \eqref{eq:check-u}. The point is that $D$ is {\it not} small: there holds\footnote{Multiplicative constants do matter here, since upper bounds translate into growth rates. In \eqref{est:D}, $|\cdot|$ denotes the sup norms in $\C^N$ and $\C^{N \times N},$ in accordance with notation set up on page \pageref{matrix:norm}.}
 \be \label{est:D} |D(t,x,\xi)| \leq |u_0(\sqrt \e t,x)| |B|.
 \ee
 \item the source terms $F_{V_{12}}$ and $F_{\bf W}$ satisfy bound \eqref{bd:F-alpha2}.
 \end{itemize}

\subsection{Upper bounds} \label{sec:12-estimates}

The estimates for \eqref{several:prepared2} differ from the analogous ones conducted in Section \ref{est-upper-V0-a} only in the estimate \eqref{est:D} for $D.$

\begin{prop}\phantomsection \label{prop:existence3} Given
\be \label{tildeT0} T < \max\Big(\, \frac{K-1/2}{|B| |\hat a|_{L^1}}\, , \, \frac{K - (d+1)/2}{|B| |a|_{L^\infty}}\, \Big),\ee
if $\e$ is small enough, then the solution to \eqref{several:prepared2} issued from $\| (V_{12}, {\bf W})(0) \|_{\e,s} = O(\e^K),$ with $s > d/2,$ is
defined over $[0, T |\ln \e|],$
with the estimate
\be \label{toprove:propex3} \| (V_{12}, {\bf W})(t) \|_{\e,s} \lesssim \e^{K - 1/2} |\ln \e|^* e^{t |B| |a|_{L^\infty}}.\ee
\end{prop}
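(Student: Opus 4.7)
The plan is to transpose the combined $\mathcal{F}L^1$/Sobolev strategy of Sections \ref{sec:FL1}--\ref{sec:Sob} to the prepared system \eqref{several:prepared2}. The two main differences with the proofs of Propositions \ref{lem:Sob-bd} and \ref{prop:ex-Sob} are: (a) the prefactor $\e^{1/2}$ in the definition \eqref{def:dot-u3} of $\dot u$ accounts for the shift from $K$ to $K-1/2$ in the existence time; (b) the source $D$ in the equation for $\bf W$ is no longer made small by the choice of a large spatial truncation --- the uniform bound \eqref{est:D} only gives $|\op_\e(D){\bf W}|_{L^2}\lesssim |B||u_0|_{L^\infty}\|{\bf W}\|_{\e,s}$, which drives the $L^2$ growth of $\bf W$ at the rate $|B||a|_{L^\infty}$. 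Since $\g\leq|B|_0\leq|B|$, this rate dominates the rate $\g^+$ that governs the growth of $V_{12}$ via the Duhamel representation of Section \ref{est-upper-V0}, and therefore fixes the common upper rate of growth stated in \eqref{toprove:propex3}.

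For the $V_{12}$ component, I would apply Theorem \ref{th:duh} exactly as in Section \ref{est-upper-V0}, using the symbolic bounds of Proposition \ref{est-flow-sym}, to obtain the Duhamel bound
\[
\| V_{12}(t)\|_{\e,s}\lesssim \e^K|\ln\e|^{*}e^{t\g^+}+\e^{1/2}|\ln\e|^{*}\int_0^t e^{(t-t')\g^+}\|F_{V_{12}}(t')\|_{\e,s}\,dt'.
\]
For the $\bf W$ component, the symmetry and purely imaginary nature of $\bf A$ give, after a $\Lambda^s$-energy estimate and the standard commutator control on $[\Lambda^s,\op_\e(D)]$ from \eqref{est:lambda-s},
\[
\d_t(\|{\bf W}\|_{\e,s}^2)\leq 2\big(|B||u_0(\sqrt\e t)|_{L^\infty}\|{\bf W}\|_{\e,s}+\sqrt\e\|F_{\bf W}\|_{\e,s}+\e R_0\big)\|{\bf W}\|_{\e,s},
\]
and, since $|u_0(\sqrt\e t)|_{L^\infty}\to|a|_{L^\infty}$ as $\e\to 0$ uniformly in $t\in[0,T|\ln\e|]$, the leading growth rate is $|B||a|_{L^\infty}\geq \g^+$.

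It remains to control the quadratic source in the bound \eqref{bd:F-alpha2} for $F_{V_{12}}$ and $F_{\bf W}$, that is, to control $|\dot u|_{L^\infty}$. This is the step where the two alternative observation times appear. If $T<(K-1/2)/(|B||\hat a|_{L^1})$, I run the $\mathcal{F}L^1$ continuation argument of Lemma \ref{lem:Linfty-bd}: Young's inequality applied to $\op_\e(D){\bf W}$ and $\ope(M_{12})V_{12}$, together with $|\hat u_0(\sqrt\e t)|_{L^1}\leq |\hat a|_{L^1}+C_0\sqrt\e|\ln\e|$, yields the self-improving bound
\[
|{\mathcal F}(V_{12},{\bf W})(t)|_{L^1}\leq \e^{\eta},\qquad 2\eta:=K-1/2-T|B||\hat a|_{L^1},
\]
on the continuation set, hence on $[0,T|\ln\e|]$ by connectedness. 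This gives $|\dot u|_{L^\infty}\leq |{\mathcal F}\dot u|_{L^1}\leq \e^{\eta-1/2}$ and makes the quadratic term in $F$ subcritical. If instead $T<(K-(d+1)/2)/(|B||a|_{L^\infty})$, I use the Sobolev embedding \eqref{embed} to replace $|\dot u|_{L^\infty}$ by $\e^{-d/2}\|\dot u\|_{\e,s}$; then a second continuation argument, set up as in the proof of Proposition \ref{prop:ex-Sob} with $2\eta:=K-(d+1)/2-T|B||a|_{L^\infty}$, absorbs the quadratic source.

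In either regime, plugging the $L^\infty$ control of $\dot u$ into \eqref{bd:F-alpha2} and then into the Duhamel bound for $V_{12}$ and the differential inequality for $\bf W$, I obtain via the Gronwall lemma of Appendix \ref{app:d} a coupled bound of the form
\[
\| V_{12}(t)\|_{\e,s}+\|{\bf W}(t)\|_{\e,s}\lesssim \e^{K-1/2}|\ln\e|^{*}e^{t|B||a|_{L^\infty}},
\]
which is exactly \eqref{toprove:propex3}; the a priori estimate then propagates into a lower bound on the existence time via the standard continuation argument already used twice in Section \ref{est-upper-V0-a}. The main obstacle I expect is purely bookkeeping: tracking the correct logarithmic powers and ensuring that the cushion $\e^{1/2}$ from \eqref{def:dot-u3} is exactly what is needed for the quadratic source $\e^{-1/2}B(\dot u,\dot u)$ to be absorbed in both the $\mathcal{F}L^1$ and the Sobolev continuation arguments; the linear part of the analysis is a direct transcription of Section \ref{est-upper-V0-a}, with $|B|$ in place of $|B|_0$ and no need for the large-support condition on $\varphi_0$.
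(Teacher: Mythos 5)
Your proposal is correct and follows essentially the same route as the paper's proof in Section \ref{sec:12-estimates}: the three ingredients (Duhamel representation with the symbolic-flow bound for $V_{12}$, a $\Lambda^s$-energy estimate for ${\bf W}$ with the rate $|B|\,|a|_{L^\infty}$ coming from \eqref{est:D}, and the two continuation arguments — ${\mathcal F}L^1$ for the first alternative in \eqref{tildeT0}, Sobolev embedding for the second — closed by the Gronwall lemma of Appendix \ref{app:d}) are exactly those the paper uses. The only cosmetic deviations are the exponent $\e^{\eta-1/2}$ (it should be $\e^{\eta}$, which changes nothing) and your insistence on smallness of $|\dot u|_{L^\infty}$ in the Sobolev regime, where the paper observes that, thanks to the $\e^{1/2}$ rescaling \eqref{def:dot-u3}, mere boundedness ($C(T_2)$ in place of $\e^{\eta_2}$ in the set $J$) already suffices.
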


\begin{proof} We follow Section \ref{est-upper-V0-a}.

{\it First step: ${\mathcal F}L^1$ estimates.} We follow the proof of Lemma \ref{lem:Linfty-bd}. Modulo differences in notation, the only change is in the bound for $D.$ Here we have, using \eqref{est:D}:
$$
| \op_\e(D) {\bf W} |_{{\mathcal F}L^1} \leq |B| |\hat u_0(\sqrt \e t)|_{L^1} |\hat {\bf W}|_{L^1}.
$$
This leads to
 $$  |(\hat V_{12}, \hat {\bf W})(t)|_{L^1} \lesssim \e^{K-1/2} |\ln \e|^* + \Big(|B| (1 + \delta_{\varphi_0})(|\hat a|_{L^1} + O(\e^{\eta_1})\Big) \int_0^t |(\hat V_{12}, \hat {\bf W})(t')|_{L^1} \, dt',$$
having assumed the bound $| \hat V_{12}, \hat {\bf W}|_{L^1} \leq \e^{\eta_1}$ up to time $t.$ Above $\delta_{\varphi_0} > 0$ can be made arbitrarily small. By the same argument as in the proof of Lemma \ref{lem:Linfty-bd}, this shows that the above bound propagates: there holds $\sup_{0 \leq t \leq T_1 |\ln \e|} |(V_{12}, {\bf W})(t)|_{{\mathcal F}L^1} \leq \e^{\eta_1},$ for any $T_1 < (K - 1/2)/(|B| |\hat a|_{L^1}),$ for some $\eta_1 = \eta_1(T_1).$

\medskip

{\it Second step: Sobolev estimates based on ${\mathcal F}L^1$ estimates.} We follow the proof of Proposition \ref{lem:Sob-bd}. In the present context, the only significant difference is that instead of \eqref{up:w2}, we have here
 $$ |\ope(D) \Lambda^s {\bf W}|_{L^2} \leq |B| |u_0(\sqrt \e t)|_{L^\infty} \| {\bf W} \|_{\e,s}.$$
With the above first step, the source satisfies
\be \label{bd:FtildeU} \|F_{{\bf W}}\|_{\e,s} \lesssim \| V_{12}, {\bf W} \|_{\e,s}  + \e^{K_a - 1/2}.\ee
This implies the upper bound
 $$ \| {\bf W} \|_{\e,s}^2 \lesssim \e^{2(K - 1/2)} + \big(|B| |a|_{L^\infty} + \e^{1/2} |\ln \e|\big) \int_0^t \| V_{12}, {\bf W} \|_{\e,s}^2 + \e^{K_a} \int_0^t \| {\bf W}(t')\|_{\e,s} \, dt'.$$
The estimate for $V_{12}$ is identical to the estimate in $V$ \eqref{bd:V}. Just like in the proof of Proposition \ref{lem:Sob-bd}, this implies, via Lemma \ref{lem:g}, the bound \eqref{toprove:propex3} for $\dsp{t \leq \frac{K - 1/2}{|B| |\hat a|_{L^1}}.}$
Here we are using Lemma \ref{lem:g} with $\delta_0 = |B| |a|_{L^\infty},$ $\g_0 = \g_{12},$ and $\max(\delta_0,\g_0) = |B| |a|_{L^\infty}.$

\medskip

{\it Third step: Sobolev estimates based on the $L^\infty \hookrightarrow H^s$ embedding.} In the proof of Proposition \ref{prop:ex-Sob}, we replace $\e^{\eta_2}$ with $C(T_2) > 0$ in the definition of $J.$ Indeed, in Proposition \ref{prop:ex-Sob}, the role of $\eta_2 > 0$ was to guarantee smallness of the nonlinear terms. Here so long as $\dot u$ is bounded, the nonlinear terms are small, since \eqref{def:dot-u3} implied the better estimate \eqref{bd:FtildeU}. This gives, exactly as in the proof of Proposition \ref{prop:ex-Sob}, the bound \eqref{toprove:propex3} for
 $\dsp{t \leq \frac{K - (d+1)/2}{|B| |a|_{L^\infty}}.}$
\end{proof}

\subsection{Conclusion} \label{sec:conclu-th3}

The lower bound is exactly as in the main proof (Section \ref{sec:lower}). Starting from \eqref{low:low} with $V_{12}$ in place of $V,$ this gives
\be \label{low:3} \begin{aligned} |V_{12}(T |\ln \e|)|_{L^2(B(x_0,\rho))} & \geq C(\rho) \e^{K - T \g_{12}^-} - C |\ln \e|^{*} \e^{K + 1/2 - T \g_{12}^+} \\ & - C \sqrt \e |\ln \e|^* \int_0^{T |\ln \e|} e^{(T |\ln \e| - t')\g_{12}^+} | F_{V_{12}}(t')|_{L^2} dt'.%
 \end{aligned}
 \ee
 The source $F_{V_{12}}$ satisfies estimate \eqref{bd:FtildeU}. With Proposition \ref{prop:existence3} and its proof, this gives
 $$ | F_{V_{12}}(t) |_{L^2} \leq \e^{K-1/2} |\ln \e|^* \e^{t |B| |a|_{L^\infty}} + \e^{K_a - 1/2},$$
 so that $|V_{12}(T |\ln \e|)|_{L^2(B(x_0,\rho))}$ is bounded from below by
 $$ \e^{K - 1/2 - T \g_{12}^-} - C |\ln \e|^{*} \e^{K - T \g_{12}^+} - C |\ln \e|^* \e^{K - T |B| |a|_{L^\infty}},$$
 up to a multiplicative constant. We now impose for the final observation time the upper bound
\be \label{cond:T3} \frac{1}{2} - T (|B|  - \g_{12}) |a|_{L^\infty} > 0,
\ee
corresponding to $T < \tilde T_0,$ with notation introduced in \eqref{def:tildeT0}.
For $T$ satisfying \eqref{tildeT0} and \eqref{cond:T3}, for $h, \rho, c$ and $\e$ small enough, there holds the lower bound
$$ |V_{12}(T |\ln \e|)|_{L^2(B(x_0,\rho))} \geq C \e^{K - 1/2 - T \g_{12}^-}.$$
This gives an amplification exponent $K'_0$ that is bounded from below by $K - T \g_{12} |a|_{L^\infty},$
corresponding to \eqref{def:K0prime}. The end of the proof, going up the chain of changes of variables from $V_{12}$ to $\dot u,$ is identical to Section \ref{sec:end-insta}.

\section{Proof of Theorem \lowercase{\ref{th:4}}} \label{sec:pftheobetterloc}

 We posit here $u =: u_a + \e^{1/2} \dot u,$ as in \eqref{def:dot-u3}, and then follow closely the proof of Theorem \ref{th-two}. The $\e^{1/2}$ factor in the definition of $\dot u$ modifies the estimates for the source terms $F,$ as in the proof of Theorem \ref{th-three}. This gives an existence time
  $$ T''_0 = \max\Big(\, \frac{K-1/2}{|B| |\hat a|_{L^1}}\, , \,\, \frac{K - (d+1)/2}{\g |a|_{L^\infty}}\, \Big),$$
  with $\g$ defined in \eqref{def:several-gamma}.
 Indeed, the numerators are as in \eqref{tildeT0}, since the ansatz is as in \eqref{def:dot-u3}, and the denominators are as in $T_0$ \eqref{def:T0-K0}, since the assumptions are the same as in Theorem \ref{th-two}. On $[0, T \sqrt \e |\ln \e|],$ the upper bounds are identical to \eqref{up:up2}, except for the size $O(\e^{K-1/2})$ of the initial datum:
  \be \label{up:up4} \| (V_{ij})_{(i,j) \in {\mathfrak R}_0}, {\bf W})(t') \|_{\e,s} \lesssim \e^{K-1/2} |\ln \e|^* e^{t \g^+},\ee
  with $\g = \g_{i_0j_0}$ \eqref{def:several-gamma}, where $V_{ij}$ and ${\bf W}$ are defined exactly as in Section \ref{several:loc}. The estimate for the source term $F_{V_{i_0 j_0}}$ is identical to \eqref{bd:FtildeU}:
  \be \label{bd:F4} |F_{V_{i_0j_0}}|_{L^2} \lesssim \| (V_{ij})_{(i,j) \in {\mathfrak R}_0}, {\bf W} \|_{L^2} + \e^{K_a}.\ee

 Now the difference is in the lower bound, which we consider on a small ball $B(x_0,\e^\b).$ We follow the proof of Lemma \ref{lem-S0}, until we arrive at \eqref{for:beta}. In the present context, we replace \eqref{for:beta} with
  $$
   \left(\int_{B(x_0,\e^\b)} e^{2 t a(x) \g} \, dx\right)^{1/2} \geq C \e^{\b d/2} e^{t \g |a|_{L^\infty}},
   $$
   for $\e$ small enough. Since by assumption $\b d/2 < 1/2,$ this term is indeed the leading term in the lower bound for the action of the solution operator on the initial datum, and there holds
  \be \label{low:4} \big|\ope(S(0;t)) V_{i_0j_0}(0) \big|_{L^2(B(x_0,\e^\b))} \geq  C \e^{K} \Big(\e^{\b d/2} e^{t \g^-} - \e^{1/2} |\ln \e|^{*} e^{t \g^+}\Big).
  \ee
 From \eqref{up:up4}, \eqref{bd:F4} and \eqref{low:4}, via an integral representation of $V_{i_0j_0}$ (Section \ref{est-upper-V0}), we deduce as in \eqref{low:low} and \eqref{low:3} the lower bound
 $$  |V_{i_0j_0}(T |\ln \e|)|_{L^2(B(x_0,\e^\b))} \geq C \e^{K + \b d/2 - 1/2 - T \g|a|_{L^\infty}} - C |\ln \e|^{*} \e^{K - T \g^+}.
 $$
Since $\b < 1/d,$ given $T < T''_0,$ as soon as $h$ is small enough, the exponent $K + \b d/2 - 1/2 - T \g |a|_{L^\infty}$ is strictly smaller than the exponent $K - T \g^+,$ yielding a deviation estimate.
The lower bound for the amplification exponent is
$$ K + \frac{\b d}{2} - T''_0 \g |a|_{L^\infty},$$
corresponding to $K''_0$ given in \eqref{def:K''0}.

\section{Proof of Theorem \lowercase{\ref{th:5}}} \label{sec:end-insta2}

 We assume that given $T < T_\infty,$ for $\e$ small enough, the solution to \eqref{0}-\eqref{datum} is defined over $[0, T \sqrt \e |\ln \e|],$ belongs to $C^0([0, T \sqrt \e |\ln \e|], H^s(\R^d)),$ and is uniformly bounded in $(\e,t,x).$ From there, we proceed to prove the deviation estimate \eqref{est:th5}.

Let $K' > 0$ be given. The smaller $K',$ the better the amplification in \eqref{est:th5}. In particular we may assume $K'$ to be smaller than $1/2.$ We posit
$$
 u =: u_a + \e^{K'} \dot u,
$$
 and, following the proof of Theorem \ref{th-two}, we arrive at a system that is identical to \eqref{several:prepared}.  The upper bound for the source terms is here
  $$ \| F_{V_{ij}}, F_W \|_{\e,s} \leq  (1 + \e^{K' - 1/2} |\dot u|_{L^\infty}) \|V_{ij}, W\|_{\e,s} + \e^{K_a- K'},$$
   for $t \in [0, T \sqrt \e |\ln \e|],$ for all $T < T_\infty.$

 Next we follow the proof of Proposition \ref{prop:ex-Sob} in order to derive an upper bound. The control of $L^\infty$ is here a priori given, so that we can forgo the Sobolev embedding. In particular, the above estimate for the source term simplifies into
  $$ \| F_{V_{ij}}, F_W \|_{\e,s} \leq  (1 + \e^{- 1/2 +K'}C(T)) |\dot u|_{L^2} + \e^{K_a- K'},$$
  for some $C(T) > 0.$ This gives
 $$ \| (V_{ij})_{(i,j) \in {\mathfrak R}_0}, {\bf W})(t') \|_{\e,s} \lesssim \e^{K - K'} |\ln \e|^* e^{t \g^+},\qquad \mbox{$t \in [0, T \sqrt \e |\ln \e|],$ for all $T < T_\infty,$}$$
 where $\g = \g_{i_0j_0}$ \eqref{def:several-gamma}.
 For the lower bound, we consider small balls with radius $\e^\b,$ as in Section \ref{sec:pftheobetterloc}. We arrive at
 $$ \begin{aligned} |V_{i_0j_0}(T |\ln \e|)|_{L^2(B(x_0,\e^\b))} & \geq C\e^{K - K' + \b d/2 - T \g |a|_{L^\infty}} - C |\ln \e|^{*} \e^{K - K' + 1/2 - T \g^+} \\ & - C \sqrt \e |\ln \e|^* \int_0^{T |\ln \e|} e^{(T |\ln \e| - t')\g^+} | F_{V_{i_0j_0}}(t')|_{L^2} dt'.
 \end{aligned}
 $$
 With the above upper bound for $F_{V_{12}},$ this gives
 $$ \begin{aligned} |V_{i_0j_0}(T |\ln \e|)|_{L^2(B(x_0,\e^\b))} & \geq C \e^{K - K' + \b d/2 - T \g |a|_{L^\infty}} - C |\ln \e|^{*} \e^{K - K' + 1/2 - T \g^+} \\ & \quad - C |\ln \e|^* \e^{K - T \g^+}.\end{aligned}$$
Since $K' < 1/2,$ among the last two terms in the above upper bound, the biggest is the second one. For $T_\infty - T$ small enough, and $\rho,$ $h$ and $\b$ small enough, there holds
 $$ 0 < K - K' + \b d/2 - T \g |a|_{L^\infty} < K - T \g^+,$$
 and we conclude as in the other cases.

\chapter{Examples} \label{chap:ex}

\section{Raman and Brillouin instabilities} \label{sec:BK}

 We consider here systems \be \label{0rb}
  \d_t u + A(\d_x) u = \frac{1}{\sqrt \e} B(u,u), \qquad A(\d_x) = \sum_{1 \leq j \leq d} A_j \d_{x_j},
   \ee
 where $A$ satisfies Assumption \ref{ass:spectral}, and $B$ is bilinear $\C^N \times \C^N \to \C^N.$ We analyze the stability of special solutions of the form
   \be \label{ua}
   u_a(t,x) = {\bf a}(\eta \cdot x - \tau t) \in \C^N,
  \ee
with
\be \label{ua2}
 \tau \in {\rm sp}\,A(\eta), \quad {\bf a}(y) \equiv ({\bf a}(y), \vec e\,) \vec e, \quad \vec e \in {\rm Ker}\,A(\eta) - \tau \Id, \quad          B(\vec e,\vec e\,) = 0,
 \ee
 where $(\cdot,\cdot)$ denotes the Hermitian scalar product in $\C^N.$

 The systems \eqref{0rb} and solutions \eqref{ua} that we are considering here are not exactly typical of our main framework \eqref{0}-\eqref{generic-data}. We chose however to present our application to the Raman and Brillouin instabilities first, since these are connected to the Euler-Maxwell equations, which later on will be relevant for Klein-Gordon systems (Sections \ref{sec:KG} and \ref{sec:KG2}).

 The untypical features of \eqref{0rb}-\eqref{ua2} are as follows:
  \begin{itemize}

  \item the hyperbolic operator in \eqref{0rb} is non-dispersive, in the sense that $A_0 = 0.$ As a consequence, the symbol $A(\xi)$ is $1$-homogeneous in $\xi.$

  \smallskip

  \item The datum for \eqref{ua} is not highly-oscillating, and the reference solution is complex-valued. The reason is that the solution $u$ to \eqref{0rb} describes a vector of complex {\it envelopes} of highly-oscillating fields. That is, systems \eqref{0rb} and solutions \eqref{ua} can be thought of as resulting from a WKB approximation, and the stability analysis is performed on the limiting system.

  \end{itemize}

 The solutions \eqref{ua}-\eqref{ua2} satisfy Assumption \ref{ass-u-a}, with $v_a = 0,$ $r_a^\e = 0,$ if ${\bf a}$ has a large Sobolev regularity. Here $(\o,k) = (0,0):$ the reference solution is not highly oscillating. In particular, in this context, resonances (Definition \ref{def-reson} page \pageref{def-reson}) are crossing points on the variety:
  $$ {\mathcal R}_{ij} = \{ \xi \in \R^d, \, \l_i(\xi) = \l_j(\xi) \},$$
  where $\l_i$ and $\l_j$ are eigenvalues of $A.$ By homogeneity of $A,$ the eigenvalues are $1$-homogeneous in $\xi.$ In particular, if $\xi \in \S^{d-1}$ is a resonant frequency, then the whole line defined by $\xi$ is resonant. As a consequence, the resonant set is bounded (Assumption \ref{ass:several-res}(i)) only if eigenvalues cross only at $\xi = 0.$

\bigskip

A variant of \eqref{0rb} is given by systems
\be \label{0rb2}
    \d_t u + \frac{1}{\e} A(\d_x) u = \frac{1}{\e} B(u,u),
\ee
with the same assumptions on $A$ and $B.$ We posit the ansatz
\be \label{ansatzb}
 u(\e,t,x) = v\big(\e, \, \e^{-1/2} t, \, \e^{1/2} x \big).
 \ee
 Then, $u$ solves \eqref{0rb2} if and only if $v$ solves \eqref{0rb}. Special solutions to \eqref{0rb} of the form \eqref{ua} correspond to special solutions of \eqref{0rb2} of the form
 \be \label{ua:0rb2}
  u_a(\e,t,x) = {\bf a}\Big( \e^{1/2} \big( \eta \cdot x - \frac{\tau t}{\e}\big)\Big).
 \ee
Note that an instability in short time $O(\sqrt \e |\ln \e|)$ for \eqref{0rb} translates into an instability in shorter time $O(\e |\ln \e|)$ for \eqref{0rb2}, expressing the fact that \eqref{0rb2} is more singular than \eqref{0rb}.

\subsection{Three-wave interaction systems} \label{sec:threewave}

We consider here specifically
                          \be\label{raman}\left\{ \begin{aligned} \d_t u_1+ c_1 \d_x u_1 & = \frac{b_1}{\sqrt\e} \bar u_2 u_3 ,\\
\d_t u_2 + c_2 \d_x u_2 & =\frac{b_2}{\sqrt \e} \bar u_1 u_3,\\ \d_t u_3+ c_3 \d_x u_3 & = \frac{b_3}{\sqrt \e} u_1 u_2,
                          \end{aligned} \right.\ee
where $t\in\R_+,$ $x\in\R,$ $u_i\in\C,$ with velocities $c_i \in \R$ and coefficients $b_i \in \R.$
  The reference solution \eqref{ua}-\eqref{ua2} is explicitly
\be \label{ua:raman} u_a(t,x) = \big( a(x - c_1 t), 0, 0 \big) \in \R^3,
\ee
corresponding to $\vec e = (1,0,0) \in \C^3,$ $(\tau,\eta) = (c_1,1).$

\begin{theo}\phantomsection \label{th:raman} The assumptions of Theorem {\rm \ref{theorem1}} are satisfied by \eqref{raman}-\eqref{ua:raman}, with stability index ${\rm sgn} \,{\bf \G} = {\rm sgn} \,b_2 b_3.$ In the case $b_2 b_3 > 0,$ this implies instability of arbitrarily small initial perturbations of $(a(x),0,0)$ in time $O(\sqrt \e |\ln \e|),$ in the sense of Theorems {\rm \ref{theorem1}}, {\rm \ref{th:4}} and {\rm \ref{th:5}}.
\end{theo}

\begin{proof} Assumption \ref{ass:spectral} is trivially satisfied, since the hyperbolic operator is in diagonal form. Assumption \ref{ass-u-a} is obviously satisfied by the exact solution \eqref{ua:raman}. Eigenvalues cross only at $\xi  = 0.$ As noted above, this implies that the only resonance is $\xi = 0,$ and Assumption \ref{ass:transp}(i) is satisfied. The linearized source $B(u_a)$ is block-diagonal
 $$ B(u_a) = \left(\begin{array}{ccc} 0 & 0 & 0 \\ 0 & 0 & b_2 \bar a \\ 0 & b_3 a & 0 \end{array}\right),$$
 and $(2,3)$ is the only non-transparent resonance, so that Assumption \ref{ass:transp}(ii) is satisfied. The trace of the product of the (rank-one, satisfying Assumption \ref{ass:transp}(iii)) interaction coefficients is
  $ \Gamma_{23} = b_2 b_3 |a|^2,$
  implying the result.
\end{proof}

As an example of systems \eqref{0rb2}, we consider
\be\label{bk}\left\{ \begin{aligned}  \d_t u_1+ \frac{c_1}{\e} \d_x u_1 & = \frac{b_1}{\e}  \bar u_2 u_3,\\
 \d_t u_2 + \frac{c_2}{\e} \d_x u_2 & =  \frac{b_2}{\e} \bar u_1 u_3,\\ \d_t u_3+ c_3 \d_x u_3 & = b_3 u_1 u_2,
                          \end{aligned} \right.\ee
corresponding to scaling \eqref{0rb2}, where, as in \eqref{raman}, $t\in\R_+,$ $x\in\R,$ $c_i \in \R,$ $b_i \in \R,$ $u_i\in\C,$ with reference solution
\be \label{ua:brillouin} u_a(\e,t,x) = \sqrt \e \Big( a\big(\e^{1/2}\big( x - c_1 t/\e\big)\big), 0, 0 \Big) \in \C^3,
\ee
corresponding to \eqref{ua:0rb2} with $\vec e = (1,0,0) \in \C^3,$ $(\t,\eta) = (c_1,1).$

\begin{theo}\phantomsection \label{th:brillouin} If $b_2 b_3 > 0,$ then the solution \eqref{ua:brillouin} of \eqref{bk} is unstable under small initial perturbations of the form $\e^{K+1/2} \phi(\e,\e^{1/2} x),$ with $\sup_{0 < \e < 1} \| \phi(\e,\cdot)\|_{\e,s} < \infty.$ The amplification occurs in time $O(\e |\ln \e|),$ in $L^2(\R^d)$ norm.
\end{theo}

 Naturally, the above ``amplification" is meant in the sense of Theorems \ref{theorem1} and \ref{th:5}. There are three notable  differences with Theorem \ref{th:raman}: the form of the admissible perturbations and the time intervals and radii of the balls over which the instability may be recorded. From our general analysis, we can also deduce a stability result in the case $b_2 b_3 < 0;$ its time range, however, is only $O(\sqrt \e).$

\begin{proof} Given the datum
 $$ \e^{1/2} \big( a(\e^{1/2} x),0,0 \big) + \e^{K + 1/2} \phi(\e, \e^{1/2} x),$$
 we posit the ansatz, similar to \eqref{ansatzb}:
 $$ \left(\begin{array}{c} u_1 \\ u_2 \\ u_3 \end{array}\right) = \left(\begin{array}{ccc} \e^{1/2} v_1 \\ \e^{1/2} v_2 \\ v_3 \end{array}\right)(\e^{-1/2} t, \e^{1/2} x).$$
 Then $(v_1,v_2,v_3)$ solves \eqref{raman}, and Theorem \ref{th:raman} applies. The instability occurs in time $O(\sqrt \e |\ln \e|)$ for the corresponding system in the scaling \eqref{0rb}, hence in time $O(\e |\ln \e|)$ for \eqref{bk}. It occurs in small balls $B(x_0,\rho)$ or $B(x_0,\e^\b)$ for $(v_1,v_2,v_3),$ hence in large balls $B(x_0,\e^{-1/2} \rho)$ in the scaling \eqref{bk}. \end{proof}

 We can find more than one change of variables leading from \eqref{bk} to \eqref{raman}. In particular, we can prove instability of reference solutions of amplitude $O(1)$ to \eqref{bk}, assuming slow variations in space, as follows. Consider the family of solutions to \eqref{bk}:
 \be \label{ua:brillouin2}
  u_a(\e,t,x) =  \Big( a(\e x - c_1 t), 0, 0\Big)
 \ee

\begin{theo}\phantomsection \label{th:brillouin2} The stability of solution \eqref{ua:brillouin2} to \eqref{bk} under perturbations of the form $\e^K \phi(\e, \e x),$ with $\sup_{0 < \e < 1} \| \phi(\e,\cdot)\|_{\e,s} < \infty,$ and $\phi = (\phi_1, \phi_2, \e^{1/2} \phi_3),$ is determined by the sign of $b_2 b_3.$ Instability occurs in time $O(\sqrt \e |\ln \e|)$ and is measured in norm $L^2(\R^d).$
\end{theo}

 (In)stability is meant in the sense of Theorems \ref{theorem1} and \ref{th:5}.

\begin{proof} Given the datum
 $$ \Big(a(\e x) + \phi_1(\e, \e x), \, \phi_2(\e, \e x), \, \e^{1/2} \phi_3(\e, \e x)\,\Big),$$
 we posit the ansatz
 $$ \left(\begin{array}{c} u_1 \\ u_2 \\ u_3 \end{array}\right) = \left(\begin{array}{ccc}  v_1 \\  v_2 \\ \e^{1/2} v_3 \end{array}\right)(t, \e x).$$
 Then $(v_1,v_2,v_3)$ solves \eqref{raman}, and Theorem \ref{th:raman} applies. The instability occurs in small balls for $(v_1,v_2,v_3),$ hence in large balls in the scaling \eqref{bk}.
 \end{proof}

\begin{rema}\phantomsection We note that for homogeneous ($x$-independent) fields $u_i,$ the condition $b_2 b_3 > 0$ implies spectral instability of the equilibrium $(a,0,0).$ In Section 9.2 of \cite{Rau}, Rauch observes that for \eqref{raman} or \eqref{bk} with $\e = 1,$ in the case $b_1 < 0,$ $b_2 > 0,$ $b_3 > 0,$ the quantity $a_1 |u_1|_{L^2}^2 + a_2 |u_2|_{L^2}^2 + a_3 |u_3|_{L^2}^2,$ with $a_1 b_1 + a_2 b_2 + a_3 b_3 = 0,$ is conserved, implying Lyapunov {\emph stability} of the trivial solution, in contrast with instability of the progressing wave \eqref{ua:raman} as given by Theorem {\rm \ref{th:raman}}. Section 9.2 of \cite{Rau} also contains a blow-up result in the case $b_i > 0.$
\end{rema}

\subsection{Derivation of three-wave interaction systems from Euler-Maxwell} \label{sec:der3EM}

The Euler-Maxwell equations describe laser-plasma interactions \cite{Chen,DB,SS}.  In the non-dimensional form introduced in \cite{T2}, they are
 $$\mbox{(EM}) \left\{ \begin{aligned}  \d_t B + \nabla \times E  & =  0,  \nonumber \\
       \d_t E -  \nabla \times  B  & =  \frac{1}{{\bm \epsilon}} ( (1 + n_e + f( n_e)) v_e  - \frac{1}{{\bm \epsilon}} \frac{\theta_i}{\theta_e} ( 1  + n_i + f( n_i)) v_i,\nonumber \\
   \d_t v_e + \theta_e (v_e \cdot \nabla) v_e  & = - \theta_e  \nabla n_e -  \frac{1}{{\bm \epsilon}} ( E + \theta_e v_e \times B),  \nonumber \\
  \d_t n_e + \theta_e \nabla \cdot v_e   +  \theta_e ( v_e \cdot \nabla) n_e  & =  0,  \nonumber \\
\d_t v_i + \theta_i (v_i \cdot \nabla) v_i  & =  - \alpha^2 \theta_i \nabla n_i +  \frac{1}{{\bm \epsilon}}\frac{\theta_i}{\theta_e} ( E + \theta_i v_i \times B ),\nonumber \\
  \d_t n_i +  \theta_i \nabla \cdot  v_i   +  \theta_i (v_i \cdot \nabla) n_i  & =  0, \nonumber
\end{aligned} \right. $$
where $(B,E) \in \R^6$ is the electromagnetic field, $(v_e, v_i) \in \R^6$ are the electronic and ionic velocities, and $(n_e,n_i) \in \R^2$ are the electronic and ionic fluctuations of density from a constant background. The function $f$ is $f(x) = e^x - 1 - x.$ The small parameter with respect to which the WKB will be performed is
$$
 {\bm \epsilon} = \frac{1}{\o_{pe} t_0},
$$
 where $\o_{pe}$ is the electronic plasma frequency $\dsp{\o_{pe} = \sqrt{\frac{4 \pi e^2 n_0}{m_e}}}$ and $t_0$ is the duration of the laser pulse. The other parameters are
 $$
\theta_e = \frac{1}{c}  \sqrt{\frac{\gamma_e T_e}{m_e}}, \quad \theta_i = \frac{1}{c} \sqrt{\frac{\gamma_i T_i}{m_i}}, \quad \a = \frac{T_i}{T_e} \, ,
$$
with $-e$ the charge of the electrons, $+e$ the charge of the ions, $m_e$ and $m_i$ the masses, $n_0$ the background density, $c$ the speed of light, $\g_e$ and $\g_i$ the specific heat ratios, $T_e$ and $T_i$ the temperatures.
For plasmas created by lasers, the parameters $\theta_e, \theta_i$ and $\a$ are typically small, and ${\bm \epsilon}$ is even smaller. Since the mass of the ions is much larger than the mass of the electrons, there holds $\theta_i \ll \theta_e.$

In the right-hand side of the Amp\`ere equation in (EM), we find the current density, and in the right-hand sides of the equations of conservation of momentum, we find the pressure terms in $\nabla n$ and the Lorentz forces in $v \times B.$

We denote $x \in \R$ the direction of propagation of the laser pulse, and coordinatize
$$(x,y) = (x,y_1,y_2) \in \R^3, \qquad M = ( M_x, M_y) = (M_x, M_{y_1}, M_{y_2}) \in \C^3.$$
 The hyperbolic operator in (EM) splits into {\it transverse} and {\it longitudinal} components. We denote
 $$ u_\perp = (B_{y_1},B_{y_2}, E_{y_1}, E_{y_2}, v_{ey_1}, v_{ey_2}), \quad u_\parallel = (B_x, E_x, v_{ex}, n_e, v_{ix}, n_i).$$
 Then, (EM) takes the form
$$
\left\{\begin{aligned}
  \big(\d_t + A_\perp + \frac{1}{{\bm \epsilon}} A_{\perp 0}\big) u_\perp & = F_\perp(u), \\ \big(\d_t + A_\parallel + \frac{1}{{\bm \epsilon}} A_{\parallel0}\big) u_\parallel & = F_\parallel(u),
  \end{aligned}\right.
  $$
  where $F_\perp, F_\parallel$ contain all nonlinear terms: convection, current density and Lorentz force. The transverse and longitudinal hyperbolic operators have the form \eqref{0} with $A_0 \neq 0.$ The {\it transverse} eigenvalues $i \o$ are the eigenvalues of $A_\perp(i \vec k) + A_{\perp 0},$ where $$\vec k = (k, 0,0) \in \R^3;$$ they satisfy
  \be \label{eig:t}
  (- i \o)^4 \Big( \o^2 - k^2 - 1 - \frac{\theta_i^2}{\theta_e^2} \Big)^2 = 0.
  \ee
  The {\it longitudinal} eigenvalues $ i\o$ are the eigenvalues of $A_\parallel(i \vec k) + A_{\parallel 0};$ they satisfy
  \be \label{eig:l}
  (- i \o)^2 \Big(  (\o^2 - \a^2 k^2 \theta_i^2) ( \o^2 - 1 - k^2 \theta_e^2 ) - (\o^2 - k^2 \theta_e^2) \frac{\theta^2_i}{\theta^2_e} \Big) = 0.
  \ee
The above longitudinal dispersion relation takes the alternate form
\be \label{eig:lbis}
 1 = \frac{1}{\o^2 - k^2 \theta_e^2} + \frac{\theta_i^2}{\theta_e^2} \frac{1}{\o^2 - k^2 \a^2 \theta_i^2},
\ee
which we will find useful in Section \ref{sec:Oe}.

We denote (t) the branches of non-trivial solutions of \eqref{eig:t}. We denote $(\ell)$ and (s) the branches of non-trivial solutions to \eqref{eig:l}. The (t) branches correspond to electromagnetic waves with a transverse polarization, typically light sources. The waves corresponding to the $(\ell)$ branches are called {\it electronic plasma waves}. They encode part of the response of the plasma to an incident light source. The (s) branches comprise acoustic waves.

 These satisfy the following expansions in the limit $\theta_i \to 0:$
\be \label{dl:em}
   \o_{\mbox{\footnotesize{$(\ell)$}}}^2 = 1 + k^2 \theta_e^2 + O(\theta_i)^2, \quad \o_{\mbox{\footnotesize{(s)}}}^2 = k^2 \theta_i^2 \Big( \a^2 + \frac{1}{1 + k^2 \theta_e^2} \Big) + O(\theta_i^4),\ee
 locally uniformly in $k.$ For large $k,$
 \be \label{asymptotes:em}   \o_{\mbox{\footnotesize{$(\ell)$}}} =  \pm \theta_e k  +  O\Big(\frac{1}{k}\Big), \qquad  \o_{\mbox{\footnotesize{(s)}}} = \pm \a \theta_i k  +  O\Big(\frac{1}{k}\Big).\ee
The characteristic variety for the Euler-Maxwell equations, meaning the collection of all branches of solutions $\o(k)$ to \eqref{eig:t} and \eqref{eig:l} is pictured on Figure \ref{figem}.

\begin{figure}
\begin{center}\includegraphics[scale=.4]{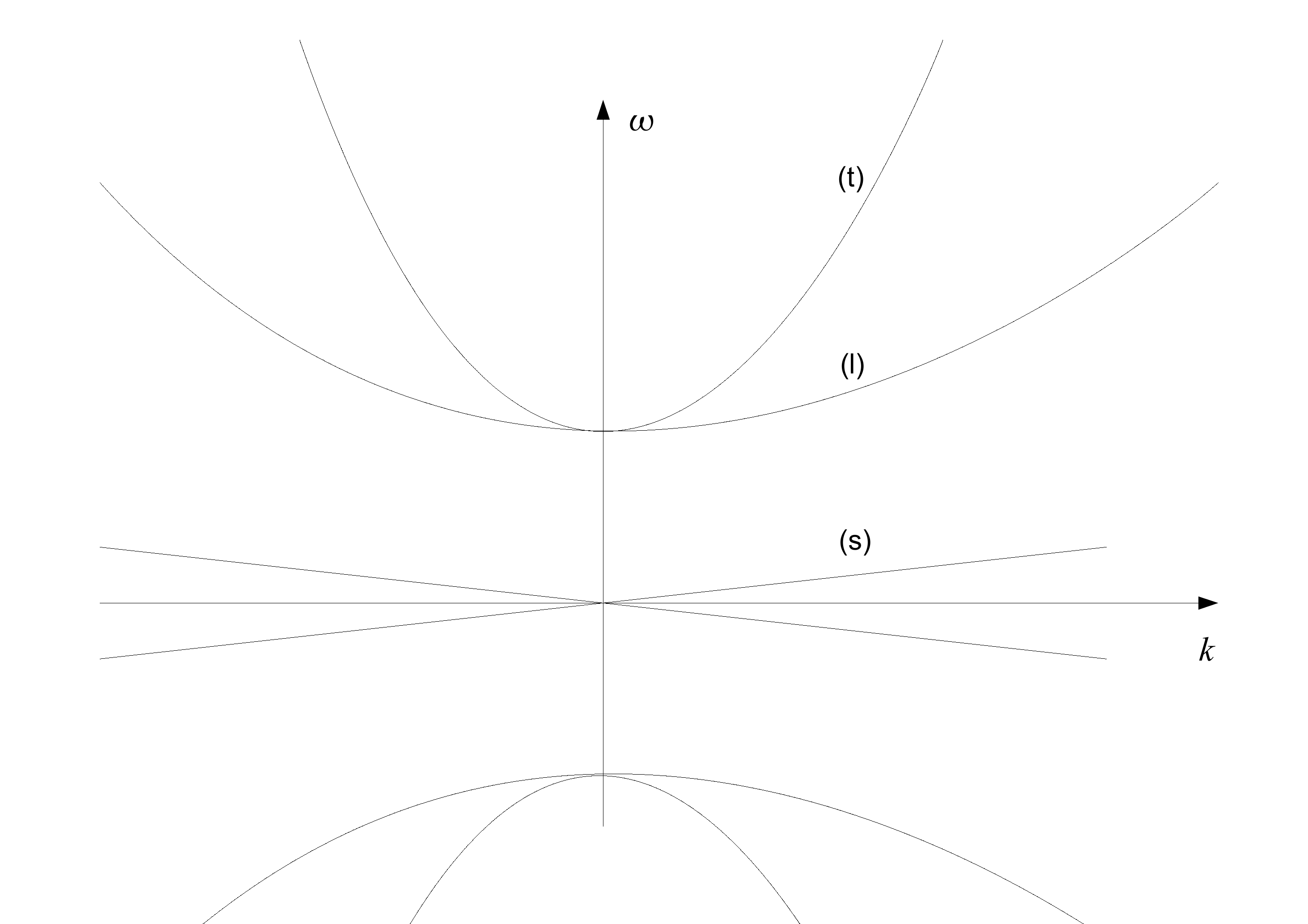}
\caption{The characteristic variety for the Euler-Maxwell equations.}
\label{figem}\end{center}
\end{figure}

\begin{rema}\phantomsection \label{rem:pausader} The proof by Guo, Ionescu and Pausader \cite{GIP} of existence of small, global solutions to the Euler-Maxwell equations is based on an interesting reformulation of {\rm (EM)} as three coupled dispersive equations (system (3.9) in \cite{GIP}) with dispersion relations given by two Klein-Gordon modes and an acoustic mode, in agreement with Figure {\rm \ref{figem}}.
\end{rema}

\subsubsection{Polarization and compatibility conditions} \label{sec:polacompa}

Given $\b = (\o,k),$ we denote ${\bf \Pi}(\b)$ the orthogonal projector onto the kernel of the total operator
$$ \left(\begin{array}{cc} - i \o + A_\perp(i \vec k) + A_{\perp 0} & 0 \\ 0 & - i \o + A_\parallel(i \vec k) + A_{\parallel 0} \end{array}\right).$$

We denote $\b_\perp$ a transverse phase, and $\b_\parallel$ a longitudinal phase.
The transverse polarization condition ${\bf \Pi}(\b_\perp) u  = u$ is explicitly (with $\b_\perp = (\o,k)$):
 \be \label{pola:t} \begin{aligned}
 u_\parallel = 0, \quad (B_{y_1},B_{y_2}) = k \o^{-1} (E_{y_2}, -E_{y_1}), \quad v_{e y_j} = \frac{1}{ip\o} E_{y_j}, \quad v_{i y_j} = - \frac{1}{i p \o} \frac{\theta_i}{\theta_e} E_{y_j}.
 \end{aligned} \ee
The longitudinal polarization condition ${\bf \Pi}(\b_\parallel) u =u$ is explicitly (with $\b_\parallel = (\o,k)$):
\be \label{pola:l} \begin{aligned}
 u_\perp & = 0, \qquad B_x  = 0, \\ v_{ex} & = - \Big(-i \o + i \o^{-1} k^2 \theta_e^2\Big)^{-1} E_x, \qquad v_{iz} = \Big( - i  \o + i \o^{-1} \a^2 k^2 \theta_i^2\Big)^{-1} \frac{\theta_i}{\theta_e} E_x, \\
 n_e & = - \frac{k\theta_e}{\o} \Big(-i \o + i \o^{-1} k^2 \theta_e^2\Big)^{-1} E_x, \\ n_i  & = \frac{\a^2 k \theta_i}{\o} \frac{\theta_i}{\theta_e}  \Big( - i \o + i \o^{-1} \a^2 k^2 \theta_i^2\Big)^{-1} E_x.
 \end{aligned}
 \ee
The transverse compatibility condition ${\bf \Pi}(\b_\perp) u = 0$ is explicitly (with $\b_\perp = (\o,k)$):
 \be \label{compa:t}
  \frac{k}{\o} B_{y_2} = E_{y_1} - \frac{1}{i\o} \Big( v_e - \frac{\theta_i}{\theta_e} v_i\Big)_{y_1}, \quad - \frac{k}{\o} B_{y_1} = E_{y_2} - \frac{1}{i\o} \Big( v_e - \frac{\theta_i}{\theta_e} v_i\Big)_{y_2}.
 \ee
 The longitudinal compatibility condition ${\bf \Pi}(\b_\parallel) u = 0$ is explicitly (with $\b_\parallel = (\o,k)$):
 \be \label{compa:l}
 \begin{aligned}
 \frac{i\o}{\o^2 - k^2 \theta_e^2}\Big( \frac{k\theta_e}{\o} n_e + v_{ex}\Big) - \frac{\theta_i}{\theta_e} \frac{i\o}{\o^2 - k^2 \a^2 \theta_i^2} \Big( \frac{k \a^2 \theta_i}{\o} n_i + v_{ix} \Big) + E_x = 0.
  \end{aligned}\ee

\subsubsection{Initial data and ansatz} \label{sec:init:ansatz}

  We consider initial data with two oscillating phases:
 \be \label{datum:two} u({\bm \epsilon},0,x,y) = \Re e \, \Big( a_1(x,y) e^{i k_1 x/{\bm \epsilon}} + a_2(x,y) e^{i k_2 x/{\bm \epsilon}} \Big),
 \ee
 where the amplitudes $a_1$ and $a_2$ satisfy polarization conditions
 $$
 {\bf \Pi}(\b_1) a_1 = a_1, \quad {\bf \Pi}(\b_2) a_2 = a_2,
 $$
  where $\b_1$ and $\b_2$ are transverse phases. In accordance with \eqref{datum:two}, we consider the two-phase ansatz
 $$ u({\bm \epsilon},t,x,y) = {\bf u}\left({\bm \epsilon}, t, x, y, \frac{k_1 x - \o_1 t}{{\bm \epsilon}}, \frac{k_2 x - \o_2 t}{{\bm \epsilon}}\right),$$
 where ${\bf u}$ is $2\pi$-periodic in both fast variables:
 $$ {\bf u}({\bm \epsilon}, t,x,y,\theta_1,\theta_2) = \sum_{\begin{smallmatrix} p_1 \in \Z \\ p_2 \in \Z \end{smallmatrix}} e^{i p_1 \theta_1 + i p_2 \theta_2} u_{p_1p_2}({\bm \epsilon},t,x,y).$$
 Each amplitude $u_{p_1p_2}$ is decomposed into powers of $\e:$
 \be \label{ansatz:fourier} {\bf u}({\bm \epsilon}, t,x,y,\theta_1,\theta_2) = {\bm \epsilon} \sum_{j \geq 0} \sum_{\begin{smallmatrix} p_1 \in \Z \\ p_2 \in \Z \end{smallmatrix}} e^{i p_1 \theta_1 + i p_2 \theta_2} {\bm \epsilon}^j u_{j,p_1p_2}(t,x,y).\ee
We inject \eqref{ansatz:fourier} into the equation in ${\bf u}$ and sort out powers of ${\bm \epsilon}.$ We will see that a formal solution  that is consistent at order 2 has leading terms which solve the three-wave interaction systems.

\begin{rema}\phantomsection \label{rem:KG EM} It was shown in \cite{T2} that for the Euler-Maxwell equations, the weakly nonlinear regime of geometric optics leads to {\rm linear} transport equations. The weakly nonlinear regime is precisely the one we consider with the ansatz \eqref{ansatz:fourier}. (By constrast, the scaling for the Euler-Maxwell equations that would lead to \eqref{0} is $u = O(\sqrt {\bm \epsilon}).$) However, we derive in Section {\rm \ref{sec:Oe}} below {\rm nonlinear} transport equations, namely the three-wave interaction systems of Section {\rm \ref{sec:threewave}}. The discrepancy with \cite{T2} simply comes from the fact that we are looking at two-phase expansions here, while the result of \cite{T2} holds for single-phase expansions. The Euler-Maxwell equations are {\it less transparent} when two phases are considered, since there are more potential couplings. For instance, the fundamental phase corresponding to Fourier modes $(1,0)$ is produced by the bilinear interaction $(1,0) = (0,0) + (1,0),$ but also by $(1,0) = (1,1) + (0,-1).$ As seen in Section {\rm \ref{sec:Oe}} below, transparency implies $u_{0,(0,0)}= 0,$ so that the interaction $(1,0) = (0,0) + (1,0)$ is not constructive, but the interaction $(1,0) = (1,1) + (0,-1)$ is constructive, as seen on \eqref{evol:t}-\eqref{evol:t2}.
\end{rema}

\subsubsection{Choice of phases} \label{sec:phases}

 The phases $\b_1$ and $\b_2$ are chosen to be transverse, and such that
 \be \label{res:em}\b = \b_1 + \b_2
 \ee is a longitudinal characteristic phase. We let
 $$
  \b_1 = (\o_1, k_1), \quad \b_2 = (\o_2, k_2), \quad \b = (\o, k).
 $$
 In particular, in the following the notation $\b$ does not denote a generic characteristic phase, but {\it the} longitudinal characteristic phase resulting from $\b_1 + \b_2.$

 By symmetry, $-\b_1, -\b_2$ and $-\b$ also are characteristic phases. We assume that $p \b,$ $p \b_1$ and $p \b_2,$ for $p \notin \{-1,0,1\},$ are not characteristic, corresponding to the typical situation for a given $(k_1, k_2).$

 If $\b$ is an electronic plasma wave, then the resonance \eqref{res:em} is associated with the phenomenon known as Raman instability (``scattering of light from optical phonons", \cite{Boyd}, paragraph 8.1). This case is examined in Section \ref{sec:raman}. The corresponding ${\rm (t)}{\rm (t)}(\ell)$ resonance is pictured on Figure \ref{ttl}.

 \begin{figure}\begin{center}
\includegraphics[scale=.4]{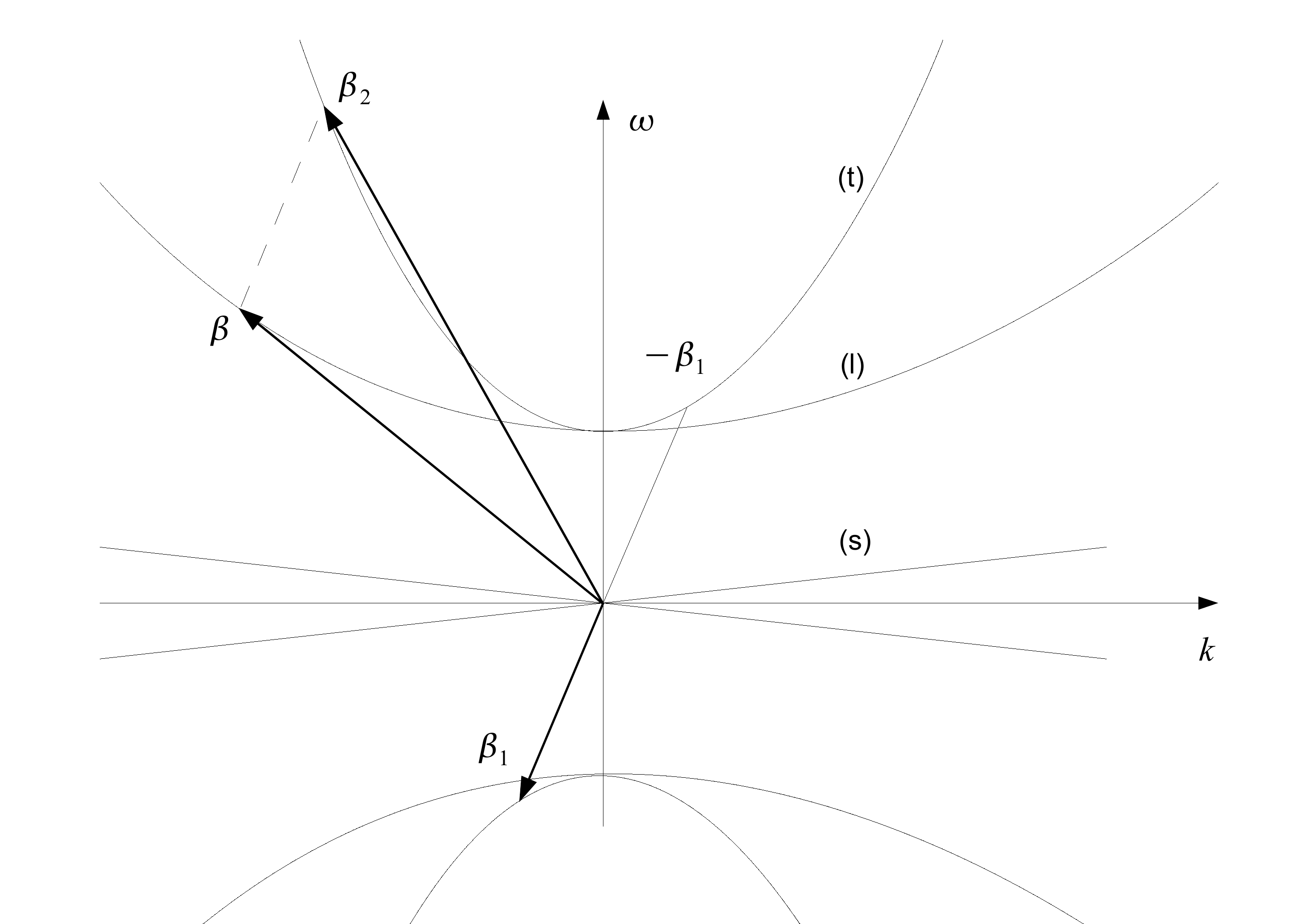}
\caption{Resonance associated with the Raman instability.}
\label{ttl}\end{center}
\end{figure}

 If $\b$ is an acoustic wave, then in \eqref{res:em} we are looking at the Brillouin instability (``scattering of light from acoustic phonons", \cite{Boyd}, paragraph 8.1). This case is examined in Section \ref{sec:brillouin}. The corresponding ${\rm (t)}{\rm (t)}{\rm (s)}$ resonance is pictured on Figure \ref{tts}.

 \begin{figure}\begin{center}
\includegraphics[scale=.4]{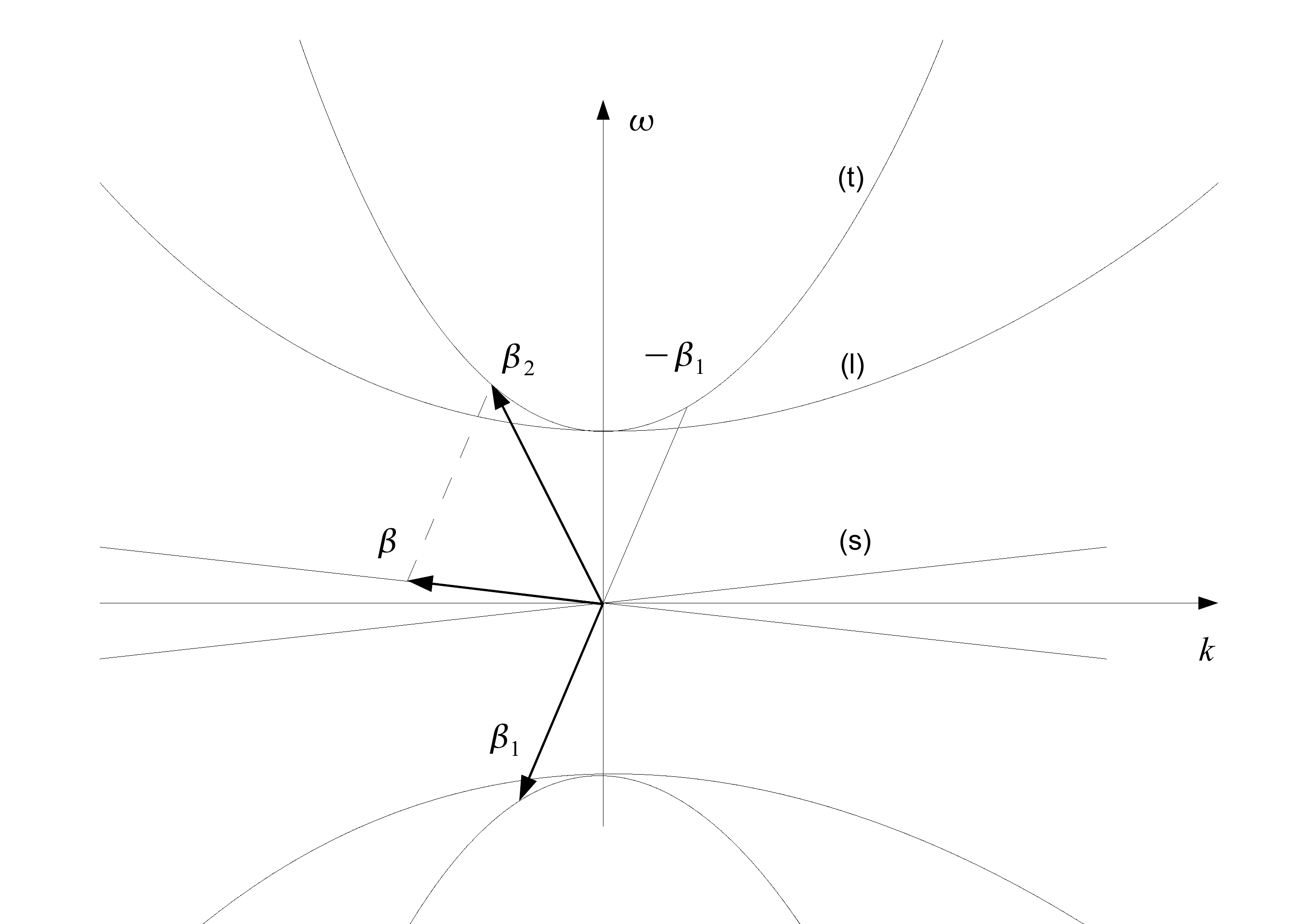}
\caption{Resonance associated with the Brillouin instability.}
\label{tts}\end{center}
\end{figure}

 The notation $u_{j,p_1,p_2},$ introduced in \eqref{ansatz:fourier}, denotes the $(p_1,p_2)$ Fourier mode of the profile ${\bf u}_j,$ which is the $O({\bm \epsilon}^j)$ term in the expansion of ${\bf u}$ into powers of ${\bm \epsilon}.$ In the following,   we often use the slightly different notation
 \be \label{notation:phases}
  u_{j,p\b_1} := u_{j,p,0}, \qquad u_{j,p\b_2} := u_{j,0,p}, \qquad u_{j,p\b} := u_{j,p,p}, \qquad p \in \{-1,0,1\}.
  \ee

\subsubsection{WKB equations: $O(1)$} \label{sec:O1}

The equations for the terms $O(1)$ are
 $$ (-i \o + A_\perp(i k) + A_{\perp0}) {\bf u}_{0\perp} = 0, \quad  (-i \o + A_\parallel(i k) +A_{\parallel0}) {\bf u}_{0\parallel} = 0,$$
 corresponding to the dispersion relations for the phases $\b, \b_1, \b_2$ and the polarization conditions for the amplitudes:
 \be \label{pola:wkb} {\bf \Pi}(p \b_j) u_{0, p\b_j}  = u_{0,p\b_j}, \quad {\bf \Pi}(p \b) u_{0,p \b} = u_{0,p \b}, \quad j \in \{1,2\}, \,\, p \in \{-1,0,1\},\ee
explicitly given by \eqref{pola:t} and \eqref{pola:l} for $p \in \{-1,1\}.$ In accordance with Section \ref{sec:phases}, these are the only non-trivial harmonics. In \eqref{pola:wkb} we used notation \eqref{notation:phases}.

We note that \eqref{pola:wkb} for $p = 0$ implies $E_{0,0} = 0:$ the mean mode of the leading amplitude in the electric field vanishes identically.

 \subsubsection{WKB equations: $O({\bm \epsilon})$} \label{sec:Oe}

The equations for the terms $O({\bm \epsilon})$ are
$$
  (-i \o + A_\perp(i k)+ A_{\perp0} ) {\bf u}_{1\perp} = {\bf F}_{1\perp}, \quad  (-i \o + A_\parallel(i k) +A_{\parallel0}) {\bf u}_{0\parallel} = {\bf F}_{1\parallel}.
$$
  We project onto the kernels. The compatibility conditions
  \be \label{compa:for:t}
   {\bf \Pi}(p \b_1) {F}_{1\perp,p \b_1} =0 ,\qquad {\bf \Pi}(p \b_2) {\bf F}_{1\perp, p \b_2} = 0,
   \ee
  give the evolution equations in $u_{0,p\b_1}$ and $u_{0,p\b_2},$
  and the compatibility condition
  \be \label{compa:for:l}
   {\bf \Pi}(p \b) F_{1\parallel,p\b} = 0\ee
  gives the evolution equations in $u_{0,p\b}.$ There are two types of terms in these evolution equations: transport operators at the group velocities, and bilinear coupling terms. We consider in succession the mean mode $(0,0),$ the transverse modes $p \b_1$ and $p \b_2,$ and the longitudinal modes $p \b,$ with $p = \pm 1.$

\medskip

{\it Mean mode.} The Lorentz force terms are transparent: there holds, given ${\bf u}_0$ and phases as described in Sections \ref{sec:init:ansatz} to \ref{sec:O1}:
 $$ \big({\bf v}_{e0} \times {\bf B}_0\big)_{(0,0)} = 0, \qquad \big({\bf v}_{i0} \times {\bf B}_0\big)_{(0,0)} = 0.$$
From there, and $E_{0,0} = 0$ (Section \ref{sec:O1}), we find by direct computation on the Euler-Maxwell equations that the mean mode is constant. Given the form of the datum, we infer $u_{0,(0,0)} = 0:$ the mean mode of the leading amplitude vanishes identically.

\medskip

{\it Transverse modes.} The nonlinear terms satisfy a form of transparency. This was first observed in \cite{T2} (see Proposition 2.1 from that reference). Given ${\bf u}_0$ and phases as described in Sections \ref{sec:init:ansatz} to \ref{sec:O1}:
 $$
 \begin{aligned}
  \Big( {\bf v}_{e0z} i k {\bf v}_{e0y} + ({\bf v}_{e0} \times {\bf B}_0)_{y}\Big)_{p \b_{j}} = 0, \qquad j \in \{1,2\},\,\, p \in \{-1,1\}, \\
  \Big( {\bf v}_{i0z} i k {\bf v}_{i0y} - \frac{\theta_i}{\theta_e} ({\bf v}_{i0} \times {\bf B}_0)_{y}\Big)_{p \b_j} = 0, \qquad j \in \{1,2\}, \,\, p \in \{-1,1\}.
  \end{aligned}
 $$
 This implies that only the current density contributes to the evolution equations \eqref{compa:for:t} for the transverse amplitudes $u_{0,p\b_j}.$
With \eqref{compa:t}, and the specific form of ${\bf F}_{1\perp}$ and ${\bf F}_{1\parallel}$ as given by the Euler-Maxwell equations, we find that these equations are
 $$
  \d_t E_{0y,p\b_j} + \frac{k_j}{\o_j} \d_x E_{0y,p\b_j} = \Big( {\bf n}_{e0} {\bf v}_{e0y} - \frac{\theta_i}{\theta_e} {\bf n}_{i0} {\bf v}_{i0y} \big)_{p \b_j}\,\,, \qquad p \in \{-1,1\}.
 $$
 By the form of the transverse dispersion relation \eqref{eig:t}, the ratio $k_j/\o_j$ is the group velocity\footnote{We further comment on the form of the transport equations in geometric optics on page \pageref{ref for transp}.}:
  $$ \frac{k_j}{\o_j} = \frac{d\o_{\rm (t)}}{dk}|_{k = k_j}.$$
   The coupling terms are made explicit, in terms of the electrical amplitudes, by use of the polarization conditions \eqref{pola:t} and \eqref{pola:l}. We obtain
  \be \label{evol:t}
  \d_t E_{0y,\b_1} +  \frac{k_1}{\o_1} \d_z E_{0y,\b_1} = \frac{1}{\o_2} \Big( \frac{k \theta_e}{\o^2 - k^2 \theta_e^2} - \frac{\theta_i^3}{\theta_e^3} \frac{\a^2 k \theta_i}{\o^2 - \a^2 k^2 \theta_i^2}\Big) E_{0z, \b} \bar E_{0y,\b_1}.
  \ee
and
  \be \label{evol:t2}
  \d_t  E_{0y,\b_2}  +  \frac{k_2}{\o_2} \d_x  E_{0y,\b_2} = \frac{1}{\o_1} \Big( \frac{k \theta_e}{\o^2 - k^2 \theta_e^2} - \frac{\theta_i^3}{\theta_e^3} \frac{\a^2 k \theta_i}{\o^2 - \a^2 k^2 \theta_i^2}\Big) E_{0z, \b} \bar E_{0y,\b_2} .
  \ee
  By symmetry, $E_{0y,-\b_j} \equiv \bar E_{0y,\b_j}.$

\medskip

{\it Longitudinal modes.} Next we turn to the equation \eqref{compa:for:l} in $u_{0,p \b},$ for $|p| = 1.$ The convective terms are transparent:
$$
 \left( v_{e0x} \cdot ik \left(\begin{array}{c} {\bf n}_{e0} \\ {\bf v}_{e0x} \end{array}\right)\right)_{p\b} = 0, \qquad  \left( v_{i0x} \cdot ik \left(\begin{array}{c} {\bf n}_{i0} \\ {\bf v}_{i0x} \end{array}\right)\right)_{p\b} = 0.
$$
The current density terms also are transparent:
$$
   \Big( {\bf n}_{e0} {\bf v}_{e0x} - \frac{\theta_i}{\theta_e} {\bf n}_{i0} {\bf v}_{i0x} \Big)_{p \b} = 0.
$$
Thus the only nonlinear term in the longitudinal equation comes from the Lorentz force. Just like for the transverse equations, in order to spell out equation \eqref{compa:for:l}, we use the compatibility condition \eqref{compa:l} together with the explicit expression of ${\bf F}_{1\parallel}$ as read on the Euler-Maxwell equations. We find
\be \label{evol:l} \begin{aligned}
 \Big(1 + \frac{\o^2 + k^2 \theta_e^2}{{\bf e}^2} + \frac{\theta_i^2}{\theta_e^2} \frac{\o^2 + \a^2 k^2 \theta_i^2}{{\bf i}^2}\Big) \d_t E_{0x,\b} & + 2k \Big( \frac{\theta_e^2\o}{{\bf e}^2} + \frac{\theta_i^2}{\theta_e^2} \frac{\a^2 \theta_i^2 \o}{{\bf i}^2}\Big) \d_x E_{0x,\b} \\ & = \mbox{Lorentz force term,}
  \end{aligned}
\ee
with notation
 $$
 {\bf e} := \o^2 - k^2 \theta_e^2, \qquad {\bf i} := \o^2 - k^2 \a^2 \theta_i^2.
 $$
 From \eqref{eig:lbis}, we compute by term-by-term differentiation
 $$ 0 = 2 \o(k) \o'(k) \Big(\frac{1}{{\bf e}^2} + \frac{\theta_i^2}{\theta_e^2} \frac{1}{{\bf i}^2}\Big) - 2 k \Big( \frac{\theta_e^2}{{\bf e}^2} + \frac{\theta_i^2}{\theta_e^2}\frac{\a^2 \theta_i^2}{{\bf i}^2}\Big), \qquad \o' = \frac{d\o_{\rm (\ell)}}{dk} \,\,\, \mbox{or} \,\,\, \o' = \frac{d\o_{\rm (s)}}{dk}.$$
Together with \eqref{eig:lbis}, this shows that, for $p  =1,$ \eqref{evol:l} is a transport equation at group velocity:
$$  \d_t E_{0x,\b} + \o'(k) \d_x E_{0x,\b} = \Big(1 + \frac{\o^2 + k^2 \theta_e^2}{{\bf e}^2} + \frac{\theta_i^2}{\theta_e^2} \frac{\o^2 + \a^2 k^2 \theta_i^2}{{\bf i}^2}\Big) ^{-1} \times \big(\mbox{Lorentz force term}\big).$$
 The $\b$-harmonics of the electronic Lorentz force term is
 $$ ({\bf v}_{e0} \times {\bf B}_0\big)_{\b} = v_{e0,\b_1} \times B_{0,\b_2} + v_{e0,\b_2} \times B_{0,\b_1} = \left(\begin{array}{cc} v_{e0y_1,\b_1} \times B_{0y_2,\b_2} - v_{e0y_2,\b_1} B_{0y_1,\b_2} \\ 0 \\ 0 \end{array}\right).$$
 With the polarization \eqref{pola:t}, the $x$ component of the $\b$-harmonics of the electronic Lorentz force appears as
 $$ \frac{k}{i\o_1\o_2} \big( E_{0y_1,\b_1} E_{0y_2,\b_2} + E_{0y_1,\b_1} E_{0y_2,\b_2}\big).$$
Taking into account the ionic component of the Lorentz force, and the compatibility condition \eqref{compa:l}, the longitudinal transport equation finally takes the form
\be \label{transport:l} \begin{aligned}
  \d_t E_{0x,\b} & + \o'(k) \d_x E_{0x,\b} \\ & = \frac{k\o}{\o_1\o_2} \Big( \frac{\theta_e}{\o^2 - k^2 \theta_e^2} + \frac{\theta_i^2}{\theta_e^2} \frac{\theta_i}{\o^2 - k^2 \a^2 \theta_i^2}\Big) \Big( E_{0y_1,\b_1} E_{0y_1,\b_2} + E_{0y_2,\b_1} E_{0y_2,\b_2}\Big). \end{aligned}
\ee
In the case of one spatial transverse dimension $y \in \R,$ or $E_{0y_2} \equiv 0,$ systems \eqref{evol:t}-\eqref{evol:t2}-\eqref{transport:l} fall into the category of three-wave interaction systems as described in Section \ref{sec:threewave}.

\begin{rema}\phantomsection What about {\rm stability} of the WKB expansion that was sketched here ? Correctors can be constructed in a classical way, implying consistency of the WKB approximation in the sense of \eqref{0a}. Short-time stability of small initial perturbations then follows by Sobolev estimates for the singular equations satisfied by the profiles, since the regime is weakly nonlinear.
\end{rema}

 \begin{rema}\phantomsection In paragraph 2.2 of Boyd's treatise \cite{Boyd}, three-wave interactions systems such as \eqref{evol:t}-\eqref{evol:t2}-\eqref{transport:l} are derived from Maxwell's equations, under the assumption of an ad hoc expansion for the nonlinear polarization encoding the nonlinear response of the medium. Another derivation, this time from the three-level Maxwell-Bloch equations, is given in Section 12 of \cite{JMR-TMB}, by means of a similar two-phase expansion in a weakly nonlinear regime. In \cite{BBCNT}, Schr\"odinger-Bloch systems were derived in the high-frequency limit from three-level Maxwell-Bloch systems, and formal arguments were given to further derive three-wave interaction systems from Schr\"odinger-Bloch systems. In \cite{SchW}, three-wave interaction systems are rigorously derived from the gravity-capillary water-wave system. Another derivation from Euler-Maxwell is given in \cite{CC}.
\end{rema}

\subsection{Raman} \label{sec:raman} The Raman instability corresponds to a growth of the electronic plasma waves. Here $\b$ belongs to the ${\rm (\ell)}$ branch of the variety. With \eqref{dl:em}, if the spatial transverse dimension is equal to one, or $E_{0y_2} \equiv 0,$ the three-wave interaction system \eqref{evol:t}-\eqref{evol:t2}-\eqref{transport:l} is
\be \label{raman1}
 \left\{\begin{aligned}\d_t E_{\b_1} + \frac{k_1}{\o_1} \d_x E_{\b_1} & = \left(\frac{k \theta_e}{\o_2} + O(\theta_i^2)\right)\bar E_{\b_2} E_\b, \\
 \d_t E_{\b_2} + \frac{k_2}{\o_2} \d_x E_{\b_2} & = \left(\frac{k \theta_e}{\o_1} + O(\theta_i^2)\right) \bar E_{\b_1} E_\b, \\
 \d_t E_{\b} + \left(\frac{k \theta_e^2}{\o} + O(\theta_i^2)\right)\d_x E_{\b} & = \left(\frac{k \o \theta_e}{\o_1 \o_2} + O(\theta_i^2)\right) E_{\b_1} E_{\b_2},
 \end{aligned}\right.
 \ee
locally uniformly in $k,$ where $E_{\b_j}$ stands for $E_{0y_1,\b_j},$ and similarly $E_{\b} = E_{0x,\b}.$

We consider in \eqref{raman1} the parameters $k$ and $\o, \o_1, \o_2$ to be fixed, $\theta_e$ to be small, and $\theta_i$ to be even smaller than $\theta_e.$ For the solution $U = (E_{\b_1}, E_{\b_2}, E_\b)$ to \eqref{raman1}, we posit the ansatz
$$ U(\theta_e, t,x) = V(\theta_e^{3/2} t, \theta_e^{3/2} x).$$
Then, $V$ solves the $3$-wave interaction system \eqref{raman}, with $\e = \theta_e,$ and $\dsp{\mbox{sgn}\,b_2 b_3 = \mbox{sgn}\, \frac{\o}{\o_2} > 0.}$ Theorem \ref{th:raman} asserts instability of the reference solution $V_a(t,x) = (a(x - c_1 t),0,0),$ $\dsp{c_1 = \frac{k_1}{\o_1}},$ under initial perturbations of the form $\e^K \phi(\e,x),$ in time $O(\sqrt \e |\ln \e|),$ in small balls $B(x_0,\rho).$

In the scaling of \eqref{raman1}, this translates as instability, in the sense of Theorem \ref{theorem1}, of the reference solution
 $$ U_a(t,x) = \Big( E_{\b_1}\Big(\theta_e^{3/2} \big( x - \frac{k_1}{\o_1} t\big)\Big), 0, 0 \Big),$$
under initial perturbations of the form $\e^K \phi(\theta_e, \theta_e^{3/2} x),$ with $\sup_{0 < \theta_e < 1} \| \phi(\theta_e, \cdot)\|_{\theta_e,s} < \infty,$ in long time $O(\theta_e^{-1} |\ln \theta_e|),$ in norm $L^2(\R^d).$

That is, the coupling in \eqref{raman1} is {\it weak}, implying that instability are recorded only in {\it long time,} for which our analysis applies only if initial perturbations are {\it slowly varying} in $x.$

\smallskip

 As shown in particular in Section \ref{sec:end-insta}, the amplification is maximal for the components of the solution associated with the unstable resonance. Here, this means in particular that small perturbations of the initially null electromagnetic plasma field $E_{\b}$ are amplified, corresponding to the Raman instability.

\begin{rema}\phantomsection The 2d model of Colin and Colin \cite{CC} is a refinement of system \eqref{raman1} for the description of Raman scattering.
\end{rema}

\subsection{Brillouin} \label{sec:brillouin} The Brillouin instability corresponds to a growth of the acoustic waves. Here $\b$ belongs to the (s) branch of the variety. With \eqref{dl:em}, if the spatial transverse dimension is equal to one, or $E_{0y_2} \equiv 0,$ the three-wave interaction system \eqref{evol:t}-\eqref{evol:t2}-\eqref{transport:l} is
 \be \label{brillouin1}
 \left\{\begin{aligned}\d_t E_{\b_1} + \frac{k_1}{\o_1} \d_x E_{\b_1} & = \left( \frac{-1}{\o_2 k \theta_e} + O(\theta_i^2)\right) \bar E_{\b_2} E_\b, \\
 \d_t E_{\b_2} + \frac{k_2}{\o_2} \d_x E_{\b_2} & = \left( \frac{-1}{\o_1 k \theta_e}  + O(\theta_i^2)\right) \bar E_{\b_1} E_\b, \\
 \d_t E_{\b} + \left(\theta_i {\bm \a} + O(\theta_i^2)\right) \d_x E_{\b} & = \left( \frac{- 1}{\o_1 \o_2} \frac{\theta_i}{\theta_e} \bm{\a_{}}  + O(\theta_i^2)\right) E_{\b_1} E_{\b_2},
 \end{aligned}\right.
 \ee
 where $\dsp{\bm{\a_{}} := \left(\a^2 + \frac{1}{1 + k^2 \theta_e^2}\right)^{1/2}},$ and $E_{\b_j}$ stands for $E_{0y_1,\b_j},$ similarly $E_{\b} = E_{0z,\b}.$

We consider in \eqref{brillouin1} the parameters $k$ and $\o, \o_1, \o_2$ to be fixed, $\theta_e$ to be small, and $\theta_i$ to be even smaller than $\theta_e;$ for instance
 \be \label{e:bri} \theta_e = \e^{1/2}, \qquad \theta_i = \e.\ee
For the solution $U = (E_{\b_1}, E_{\b_2}, E_\b)$ to \eqref{raman1}, we posit the ansatz
$$ U(\e, t,x) = V(\e^{1/2} t, \e^{-1/2} x).$$
Then, $V$ solves the $3$-wave interaction system \eqref{bk}, with $\e$ defined in \eqref{e:bri}, and
$$ \mbox{sgn}\, b_2 b_3 = \mbox{sgn}\, \frac{\theta_i \bm{\a_{}}}{k \o_2} = \mbox{sgn}\, \Big( \frac{\o}{\o_2} \frac{\bm{\a_{}}}{\a} + O(\theta_i) \Big) > 0.$$
Theorem \ref{th:brillouin} applies. In the scales of \eqref{brillouin1}, it asserts instability of the reference solution
  $$ \sqrt \e \Big( E_{\b_1}( x - \frac{k_1}{\o_1} t),0,0\Big)$$
 under initial perturbations of the form $\e^K \phi(\e, x),$ in time $O(\sqrt \e |\ln \e|),$ in $L^2(B(x_0,\rho))$ or $L^2(B(x_0,\e^\b))$ norms.

 Theorem \ref{th:brillouin2} also applies. It asserts instability of the reference solution
 $$ \Big( E_{\b_1}\Big(\sqrt \e (x - \frac{k_1}{\o_1} t)\Big), 0,0\Big)$$
under initial perturbations of the form $\e^K \phi(\e, \e^{1/2} x),$ with $\phi_3 = O(\e^{1/2}),$ in time $O(|\ln \e|),$ as measured in $L^2(\R^d).$

\smallskip

The proof of Section \ref{p-o-th1} shows that small perturbations of the initially null acoustic field $E_{\b}$ are amplified, corresponding to the Brillouin instability.

\section{Coupled Klein-Gordon systems with equal masses} \label{sec:KG}

Our second class of examples comprises coupled Klein-Gordon systems in $\R^d,$ with equal masses and different velocities. Our motivation here is the Euler-Maxwell system describing laser-plasma interactions, which, when linearized around zero, precisely gives two such Klein-Gordon systems and an acoustic system, as we saw in Section \ref{sec:der3EM} above. This form of the linearized Euler-Maxwell system induces us to think that high-frequency instabilities in the full Euler-Maxwell could be captured by the model systems that we now describe.

We denote
 $$
 \d_t + A_1(\d_x) + \frac{1}{\e} L_0, \qquad \d_t + A_1(\theta_0 \d_x) + \frac{1}{\e} L_0
 $$
 the Klein-Gordon operators, with $0 < \theta_0 < 1,$ implying different velocities, and
 \begin{equation}\label{A-L0-kg} A_1(\d_x)=
\begin{pmatrix}
0 & -\d_x  & 0\\
\d_x\cdot   & 0 & 0\\
0 & 0 & 0
\end{pmatrix},~~
L_0=
\begin{pmatrix}
0 & 0  & 0\\
0& 0&\o_0 \\
0&  -\o_0& 0
\end{pmatrix},
\ee where $\o_0 > 0,$ and $x\in\R^d$. The coupled systems have the form, for $U = (u,v)\in\R^{2(d+2)}$ with
 $u=(u_1,u_2,u_3)$ where $u_1 \in \R^d,$ $u_2 \in \R,$ $u_3 \in \R$,
 and $v=(v_1,v_2,v_3)$ where $v_1 \in \R^d,$ $v_2 \in \R,$ $v_3 \in
 \R$:
\be \label{kg1}
 \left\{ \begin{aligned} \big( \d_t + A_1(\d_x) + L_0\big) u & = \frac{1}{\sqrt\e} B^1(U,U), \\  \big( \d_t + A_1(\theta_0 \d_x) + L_0\big) v & = \frac{1}{\sqrt\e} B^2(U,U),
 \end{aligned}\right.
\ee where $B^1$ and $B^2$ are bilinear $\R^{2(d+2)} \times \R^{2(d+2)} \to
\R^{2(d+2)}.$ The eigenvalues $\l(\xi)$ of matrix $A_1(\xi) + L_0/i$ are
\be \label{fast} \l_1(\xi) = \sqrt{\o_0^2 + |\xi|^2} = -
\l_4(\xi),\ee and a multiplicity-$d$ null branch. Similarly, the
eigenvalues of matrix $A_1(\theta_0 \xi) + L_0/i$ are \be
\label{slow} \l_2(\xi) = \sqrt{\o_0^2 + \theta_0^2 |\xi|^2} = -
\l_3(\xi),\ee
 and a multiplicity-$d$ null branch. We denote $\l_5 \equiv 0$ the null branch for the whole system, with total multiplicity $2d.$ The eigenvalues are depicted on Figure \ref{fig1}. There are two fast Klein-Gordon branches, corresponding to \eqref{fast}, and two slow Klein-Gordon branches (slow since $\theta_0 < 1$), corresponding to \eqref{slow}.

\smallskip

  Thus we see that in approximating Euler-Maxwell by a system of the form \eqref{kg1}, disregarding the specific form of the right-hand side of \eqref{kg1}, we are simply approximating the speed of sound by zero and neglecting convective terms.

\begin{figure}\begin{center}
\includegraphics[scale=.4]{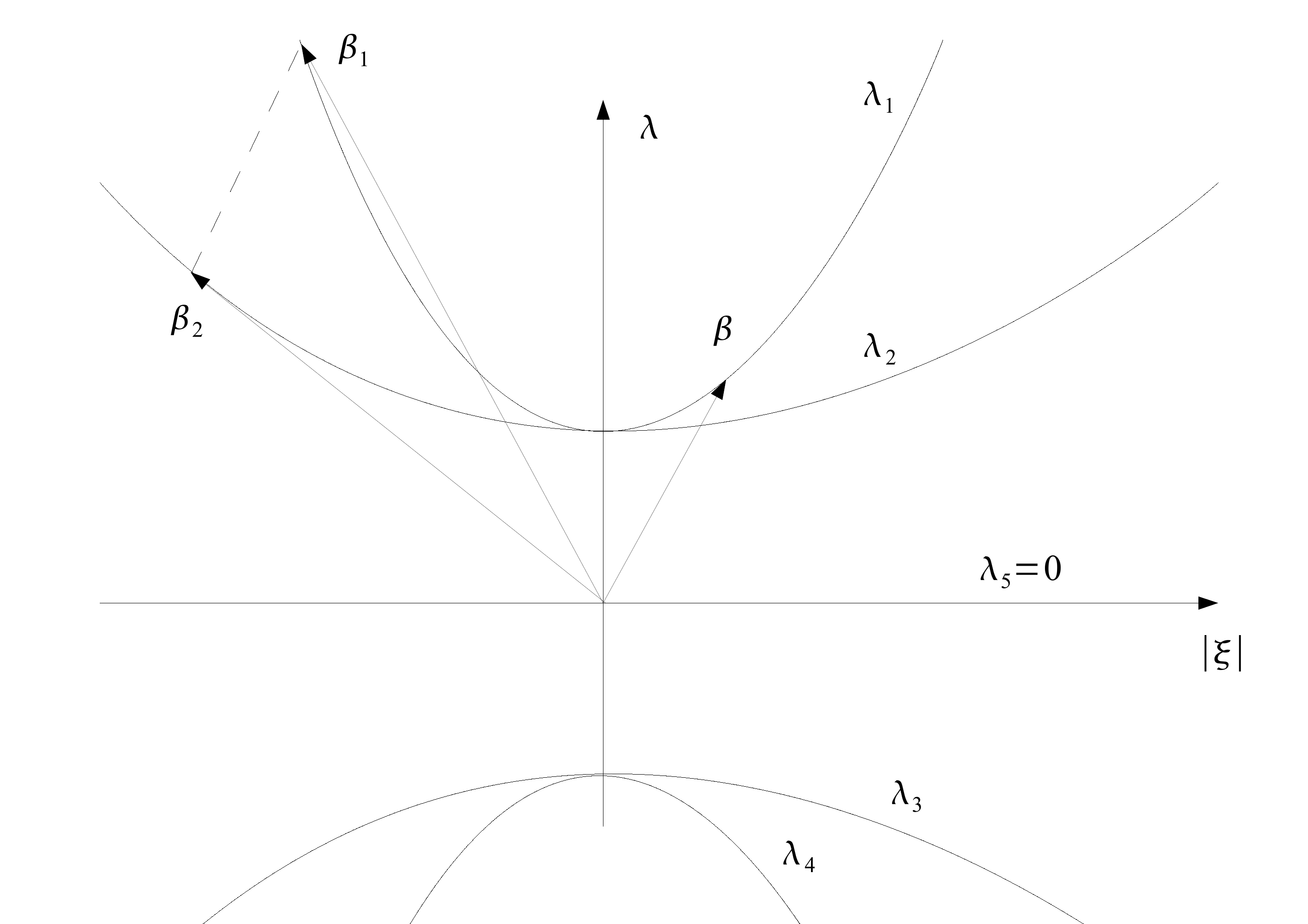}
\caption{Coupled Klein-Gordon with equal masses.}
\label{fig1}\end{center}
\end{figure}

\smallskip

 We show that for these Klein-Gordon operators, some bilinear coupling terms $B^1, B^2$ allow for non-trivial WKB solutions that are unstable, which we believe to be the situation for Euler-Maxwell (in the case of highly-oscillating data; non-oscillating data are known to generate stable WKB solutions \cite{T3}). In Euler-Maxwell, it is the current density in the Amp\`ere equation and the Maxwell-Lorentz force in the equation of conservation of momentum that couple the Klein-Gordon and acoustic systems, as we saw in Section \ref{sec:der3EM}.

For definiteness we will perform computations on the following explicit expressions for these coupling terms:
 coordinatizing $$U = (u_1, u_2, u_3, v_1, v_2, v_3), \qquad U' = (u'_1, u'_2, u'_3, v'_1, v'_2, v'_3),$$ we let
  \be\label{def-F-G}
\begin{aligned} B^1(U,U') & = \frac{1}{2} \left(\begin{array}{ccc} 0 \\  u_3 v'_3+ v_3 u'_3+  v_3 v'_3 \\  0 \end{array}\right), \\
B^2(U,U') & = \frac{1}{2} \left(\begin{array}{ccc} 0 \\  - u_2 u'_2 + v_2v'_3+v_3v'_2 \\  0 \end{array}\right).\end{aligned} \ee

\medskip

We denote
$$
A(\d_x):=\bp A_1(\d_x) &0 \\0&
A_1(\th_0 \d_x)\ep,\quad A_0:=\bp L_0&0
\\0& L_0\ep,\quad B(\cdot,\cdot):=\bp B^1(\cdot,\cdot)
\\B^2(\cdot,\cdot)\ep.
$$
 Then equation \eqref{kg1} takes the form \eqref{0}.

For system \eqref{kg1}, we check that Assumption \ref{ass:spectral} is satisfied (a simple property of the Klein-Gordon operators), that for most initial wavenumbers Assumption \ref{ass-u-a} is satisfied as well (the result of WKB computations performed in Section \ref{sec:WKB}), and finally, in Section \ref{sec:indexKG}, that Assumption \ref{ass:several-res} is satisfied in one space dimension $d = 1,$ with ${\bf \G} > 0.$

Thus the conclusions of Theorems \ref{th-two}, \ref{th:4} and \ref{th:5} apply, proving instability of the WKB solutions described below, if $d = 1.$

\subsection{Verification of Assumption {\rm \ref{ass:spectral}}: smooth spectral decomposition}\label{sec:spKG}

The spectral decomposition is
$$
A_1(\xi)+L_0/i=\l_1 (\xi) P_1(\xi) -\l_1(\xi) P_3(\xi) +0\cdot P_5(\xi).
$$
The eigenvalues are separated, implying regularity of both eigenvalues and eigenprojectors (see for instance Theorem 1.8 in Chapter 2 of \cite{K}).

It remains to prove bounds \eqref{bd:spectral}. By Lemma \ref{lem:regspec} in Appendix \ref{app:regdec}, these follow from a regularity result at infinity. The associated symbol at infinity is
 $$ A_\infty(\bar \omega,x) = A_1(\bar \o) - i x L_0 = \left(\begin{array}{ccc} 0 & \bar \o & 0 \\ \bar \o \cdot & 0 & - i x \o_0 \\ 0 & i x \o_0 & 0 \end{array}\right), \qquad (\bar \o,x) \in \S^{d-1} \times \R.$$
 It has simple eigenvalues $\pm \big( \bar \o^2 + x^2 \o_0^2 \big)^{1/2},$ and a multiplicity-three null eigenvalue. Given $\bar \o \in {\mathbb S}^{d-1},$ the eigenvalues are separated in a neighborhood of $x = 0.$ Hence the spectral decomposition of $A_\infty$ is smooth at $(\bar \o,0),$ for all $\bar \o \in \S^{d-1},$ and Lemma \ref{lem:regspec} applies, implying bounds \eqref{bd:spectral} for $A(\d_x) + L_0.$ Naturally, this also applies to $A_1(\theta_0 \d_x) + L_0,$ hence to the total operator $A(\d_x) + A_0.$

\subsection{Verification of Assumption {\rm \ref{ass-u-a}}: WKB expansion} \label{sec:WKB}

We select a characteristic temporal frequency $\o \in \R$ associated with the initial wavenumber $k \in \R^d,$ such that the following conditions are satisfied:
\begin{itemize}
\item The phase $\b = (\o,k)$ belongs to the fast positive Klein-Gordon branch on the variety:
 $$ \o = \sqrt{\o_0^2 + |k|^2}.$$
\item The only harmonics of $\b$ on the fast Klein-Gordon branches are $p \in \{-1,1\}:$
 $$ p^2 \o^2 = \o_0^2 + p^2 |k|^2 \implies p \in \{-1,1\}.$$
\item No harmonics of $\b$ belongs to the slow branches on the variety:
 $$ p^2 \o^2 \neq \o_0^2 + p^2 \theta_0^2 |k|^2, \quad\mbox{for all $p \in \Z.$}$$
\item There are no auto-resonances: the equations in $\xi \in \R^d$
 $$ \l_1(\xi + k) = \pm \o + \l_1(\xi), \quad \l_2(\xi + k) = \pm \o + \l_2(\xi)$$
 have no solution.
\end{itemize}
While this may seem like a lot of requirements on the fundamental phase, a look at Figure \ref{fig1} should suffice to convince the reader that for most phases $\b$ on the fast positive Klein-Gordon branch, these assumptions are satisfied.

The zeroth harmonics $p = 0$ belongs to the variety. With the above, this implies in particular that condition \eqref{harmonics} on page \pageref{harmonics}, describing $\{-1,0,1\}$ as the set of characteristic harmonics of the fundamental phase $(\o,k),$ is satisfied.

By Proposition \ref{lem:WKB} in Appendix \ref{app:onwkb}, in order to verify Assumption \ref{ass-u-a}, it
then suffices to check that the weak transparency condition
\eqref{weak:transp} is satisfied.

In this view, borrowing notation from Appendix \ref{app:onwkb}, we denote ${\bf \Pi}(p\b)$ the orthogonal projector onto $\ker (-i p\o + A(i pk) + A_0).$ For $|p| = 1,$ these kernels are one-dimensional, generated by $\vec e_1$ and $\vec e_{-1} = (\vec e_1)^*,$ with notation
 \be \label{def:e1}
\vec e_{1}:=\frac{1}{\sqrt 2} \Big(-\frac{k}{\o} ,1,\frac{i\o_0}{\o},0_{\C^{d+2}}\Big) \in \C^{2(d+2)},
\ee
so that
$$ {\bf \Pi}(\b) U = (U, \vec e_1\,) \vec e_1, \qquad {\bf \Pi}(-\b) U = (U,\vec e_1^*) \vec e_1^*,$$
denoting \label{hermit} $(\cdot,\cdot)$ the Hermitian scalar product in $\C^{2(d+2)}.$ The orthogonal projector ${\bf \Pi}(0)$ onto the six-dimensional kernel of $A(0) + A_0/i$ is
 $${\bf \Pi}(0)U=(u_1,0,0,v_1,0,0),$$
 implying, for $B$ given by \eqref{def-F-G}, the identities
 $$ {\bf \Pi}(0) B \equiv 0, \quad B( {\bf \Pi}(0) \cdot, \cdot) \equiv 0, \quad B(\cdot, {\bf \Pi}(0) \cdot) \equiv 0,$$
 which yield \eqref{weak:transp}.

\subsection{Verification of Assumption {\rm \ref{ass:several-res}}: resonances and transparency} \label{sec:indexKG}

We verify here the conditions (i) boundedness, (ii) partial transparency, and (iii) rank-one interaction coefficients of Assumption \ref{ass:several-res}.

By the form of the characteristic variety, and the choice of $\b,$
resonant pairs are
 $${\mathfrak R}:=\big\{ (1,2), \,\, (1,5), \,\, (2,5), \,\, (3,4), \,\, (5,3), \,\, (5,4)\big\}.$$
 A resonant frequency is pictured on Figure \ref{fig1}. It corresponds to a $(1,2)$ resonance: $\b_1 = \b + \b_2,$ that is $\l_1(\xi + k) = \o + \l_2(\xi).$

\medskip

(i) Boundedness of ${\mathfrak R}.$ Here we apply Lemma \ref{lem:resonantset} page \pageref{lem:resonantset}, as we may since the assumptions of Lemma \ref{lem:regspec} have been verified in Section \ref{sec:spKG}. The asymptotic branches on the variety are obviously distinct (Figure \ref{fig1}).

\medskip

(ii) Partial transparency. From the definition of $B$ in \eqref{def-F-G} and $\vec e_1$ in \eqref{def:e1}, we see that
$$
 \begin{aligned} B(\vec e_1) U &
 =\frac{1}{\sqrt 2}\Big(0_{\C^{d+2}}, \frac{i \o_0}{\o} v_3, 0,0_{\C^d}, - u_2, 0\Big), \\
 B(\vec e_{-1}) U &
 =\frac{1}{\sqrt 2}\Big(0_{\C^{d+2}}, \frac{-i \o_0}{\o} v_3, 0,0_{\C^d}, - u_2, 0\Big). \end{aligned}
$$
 An element in the image of $\Pi_5(\xi),$ the orthogonal
projector onto the kernel of $A(\xi) + A_0/i,$ has the form
 $$ U_5(\xi) = \Big( u_1, 0, \frac{-i \xi \cdot u_1}{\o_0}, v_1, 0, \frac{- i \th_0\xi \cdot v_1}{\o_0}\Big), \qquad (u_1,v_1) \in \C^6.$$
In particular, for all $U \in \C^{2(d+2)},$ $B(\vec e_{\pm 1}) U$ belongs to the orthogonal of the range of $\Pi_5(\xi),$ so that
\be \label{5all} \Pi_5(\cdot) B(\vec e_{\pm 1}) \equiv 0.\ee
The other projectors are
$$
\Pi_j(\xi) U= \frac{1}{\sqrt 2} \big(U,\O_j(\xi)\big) \,
\O_j(\xi),$$ where
$$
\O_j(\xi):=\frac{1}{\sqrt 2} \Big(-\frac{\xi}{\l_j}
,1,\frac{i\o_0}{\l_j},0_{\C^{d+2}}\Big),~j=1,4;\qquad \O_{j'}(\xi):=\Big(0_{\C^{d+2}},
-\frac{\th_0\xi}{\l_{j'}} ,1,\frac{i\o_0}{\l_{j'}}\Big),~j'=2,3.
$$
From there, we compute
$$
 \Pi_2(\xi) B(e_{\pm 1}) \Pi_5(\xi') \equiv 0, \quad \Pi_3(\xi) B(\vec e_{\pm 1}) \Pi_5(\xi') \equiv 0.
$$
and together with \eqref{5all} this implies that resonances $(2,5)$ and $(5,3)$ are transparent. Besides,
\be \label{calc:resintcoeffKG}
  \begin{aligned}
   \Big(\Omega_1(\xi'), \, B(\vec e_1) \Omega_2(\xi)\, \Big) & =\frac{-\o_0^2}{2 \o \l_2(\xi)}, \\
   \Big(\Omega_2(\xi), \, B(\vec e_{-1}) \Omega_1(\xi')\,\Big) & = \Big( \Omega_3(\xi), \, B(\vec e_1) \Omega_4(\xi')\,\Big) = \frac{-1}{2}, \\
   \Big( \Omega_1(\xi + k), \, B(\vec e_1) U_5(\xi)\,\Big) & =\frac{\th_0\xi\cdot v_1}{2\o}, \\
    \Big( \Omega_4(\xi), \, B(\vec e_{-1}) U_5(\xi + k) \, \Big) & =-\frac{\th_0(\xi+k)\cdot v_1}{2\o}.
    \end{aligned}
    \ee
 Thus
 $${\mathfrak R}_0:=\big\{(1,2),\, (1,5),\, (3,4),\, (5,4)\big\},$$
 a subset of ${\mathfrak R}$ which does not contain auto-resonances.  The associated resonant sets are
\be\label{reso-sets}\begin{split}
&{\mathcal R}_{12} =\left\{\xi:\sqrt{\o_0^2 + |\xi+k|^2}=\o+\sqrt{\o_0^2 + \th_0^2|\xi|^2}\right\},\\
&{\mathcal R}_{15} =\{\xi:|\xi+k|=|k|\},\\
&{\mathcal R}_{34} = \left\{\xi:\sqrt{\o_0^2 + |\xi|^2}=\o+\sqrt{\o_0^2 + \th_0^2|\xi+k|^2}\right\},\\
& {\mathcal R}_{54}=\{\xi: |\xi|=|k|\},
\end{split} \ee
where we recall
$\o=\sqrt{\o_0^2 + |k|^2}.$

We now turn to the verification of condition \eqref{new:1605} in the partial transparency condition Assumption \ref{ass:several-res}(ii). The only relevant intersection here is
$$
\begin{aligned}
{\mathcal R}_{15}\bigcap \Big({\mathcal
R}_{54}+k\Big)
&=\left\{\xi:|\xi+k|=|k|\right\}\bigcap\big(\left\{\xi:
|\xi-k|=|k|\right\}\big)=\{0\}.
\end{aligned}
$$
The ratio interaction coefficient over phase is
$$
\frac{\Big( \Omega_1(\xi + k), \, B(\vec e_1)
U_5(\xi)\Big)}{\l_1(\xi+k)-\o} ={ \frac{\th_0\xi\cdot
v_1}{4\o}}\frac{\sqrt{\o_0^2 + |\xi+k|^2}+\sqrt{\o_0^2+ |k|^2}}{|\xi|^2+2\xi\cdot
k }. $$
This ratio is bounded in a neighborhood of $\xi = 0$ only in one space dimension. Hence Assumption \ref{ass:several-res}(i) holds in one space dimension only.

\medskip

We finally turn to the verification of
condition \eqref{new:1605:2}. There holds
 $${\mathcal R}_{12} \bigcap {\mathcal R}_{15} \subset \{ \l_2 = \l_5 \} = \emptyset.$$
Similarly, by \eqref{reso-sets}(iii)(-iv),
$${\mathcal R}_{34} \bigcap {\mathcal R}_{54} = \emptyset.$$

\medskip

(iii) Rank-one coefficients: the eigenvalues $\l_j,$ for $j \in \{1,2,3,4\},$ are simple eigenvalues, implying that the interaction coefficients have rank at most one. The kernel $\l_5 \equiv 0$ has multiplicity 5, but by \eqref{5all} the interaction coefficients $b_{51}^-$ and $b_{54}^+$ are identically zero. This verifies Assumption \ref{ass:several-res}(iii).

\subsection{Stability index} \label{sec:KG:index}

There holds
$$\begin{aligned}
  \Pi_1(\xi+ k) B(\vec e_1) \Pi_2(\xi) & B(\vec e_{-1})  \Pi_1(\xi + k) \\ & = \Big( \Omega_1(\xi + k), \, B(\vec e_1) \Omega_2(\xi)\,\Big) \Big(\Omega_2(\xi), \, B(\vec e_{-1}) \Omega_1(\xi + k)\,\Big) \Pi_1(\xi + k),\end{aligned}$$
so that the trace of the product of the $(1,2)$ interaction coefficients is equal to
$$ \G_{12}(\xi) = \Big( \Omega_1(\xi + k), \, B(\vec e_1) \Omega_2(\xi)\,\Big) \Big(\Omega_2(\xi), \, B(\vec e_{-1}) \Omega_1(\xi + k)\,\Big),$$ and with \eqref{calc:resintcoeffKG}(i) and \eqref{calc:resintcoeffKG}(ii) we find
$$ \G_{12}(\xi) = \frac{\o_0^2}{4 \o \l_2(\xi)}.$$
Besides,
$$\G_{34}(\xi)= \frac{\o_0^2}{4 \o \l_2(\xi+k)}, \quad \G_{15}(\xi) = \G_{54}(\xi) \equiv 0.$$
This gives ${\bf \G} > 0,$ implying instability.

\section{Coupled Klein-Gordon systems with different masses} \label{sec:KG2}

Our third class of examples is made up of coupled Klein-Gordon systems with different
velocities and  different masses. As we will see, the assumption that masses are different implies a much smaller resonant set than in our previous example.

 Borrowing notation from Section \ref{sec:KG}, in particular
\eqref{A-L0-kg}, we consider systems
$$
\left\{ \begin{aligned}
                 &\d_t u+A_1(\d_x)u+\frac{1}{\e}\a_0 L_0 u=\frac{1}{\sqrt{\e}}B^3(U,U),\\
                 &\d_t v+A_1(\th_0\d_x)v+\frac{1}{\e}L_0 v=\frac{1}{\sqrt{\e}}B^4(U,U).\\
                          \end{aligned} \right.
$$
We assume here $\a_0 > 1,$ in contrast with \eqref{kg1}. We consider the bilinear forms $B^3$ and $B^4$ defined by
\be\label{def-F-G-kg2}
\begin{split} B^3(U,U') & = \frac{1}{2} \left(\begin{array}{ccc} 0 \\  u_3 v'_3+ v_3 u'_3+  v_3 v'_3 \\  0 \end{array}\right), \\
B^4(U,U') & = \frac{-\iota}{2} \left(\begin{array}{ccc} 0 \\  u_2
u'_2 + u_2v'_2+v_2u'_2 \\  0 \end{array}\right),\quad
\iota\in\{-1,1\},
\end{split}\ee
with
$$ U=\big(u_1,u_2,u_3,v_1,v_2,v_3\big) \in \C^{2(d+2)}, \quad U'=\big(u_1',u_2',u_3',v_1',v_2',v_3'\big) \in \C^{2(d+2)}.$$
For the rest, in particular $t,x,A_1,L_0$ and $\theta_0,$ the notations are borrowed from Section \ref{sec:KG}.

\begin{figure}\begin{center}
\includegraphics[scale=.4]{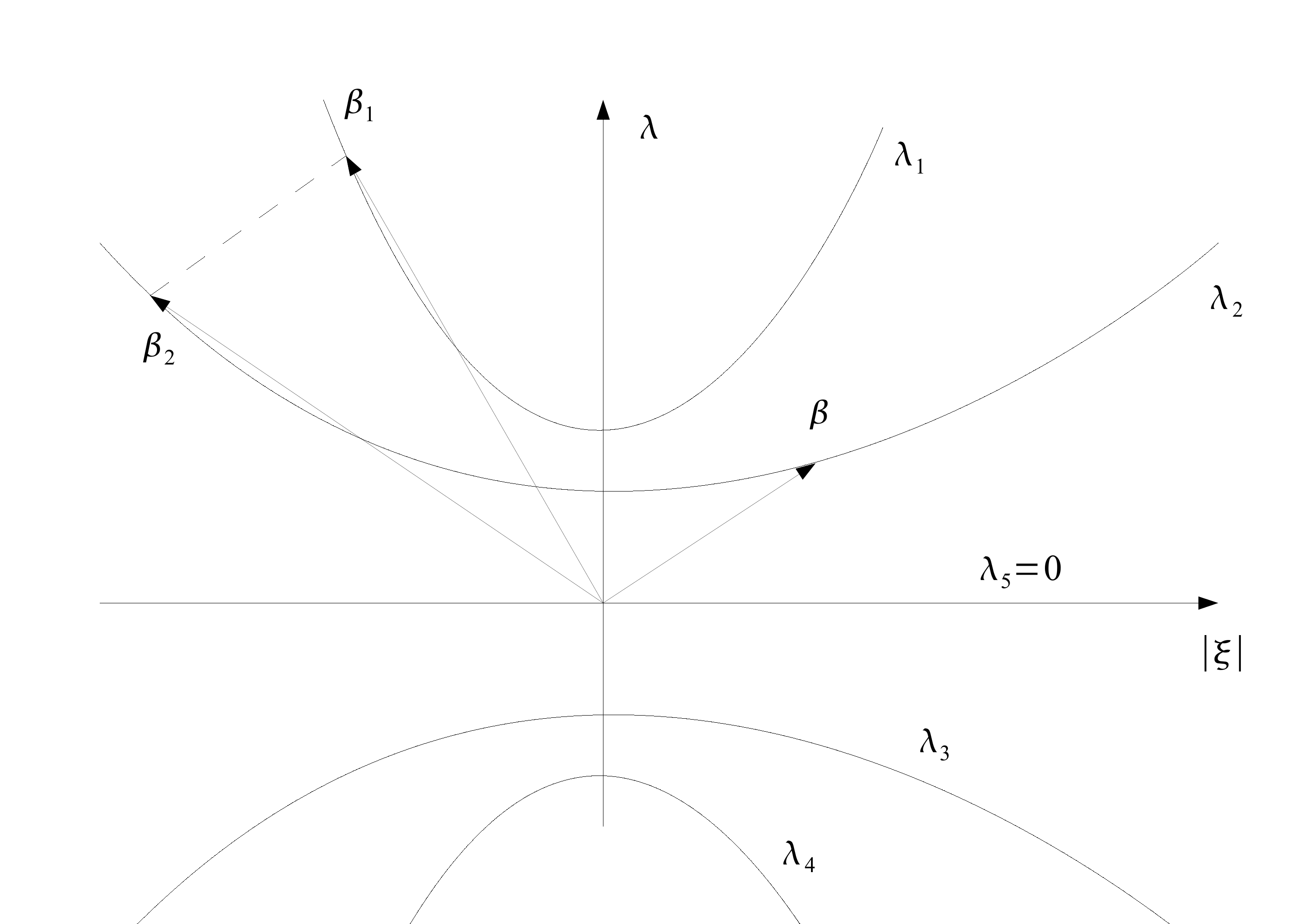}
\caption{Coupled Klein-Gordon with different masses.}
\label{fig2}\end{center}
\end{figure}

The spectral decomposition is
$$
A(\xi)+A_0/i=\sum_{j=1}^5\l_j(\xi)\Pi_j(\xi),
$$
 with
eigenvalues
$$
\l_1(\xi):=\sqrt{\a_0^2\o_0^2+ |\xi|^2}=-\l_4(\xi),\quad
\l_2(\xi):=\sqrt{\o_0^2 + \th_0^2|\xi|^2}=-\l_3(\xi),\quad
\l_5(\xi)\equiv0.
$$

The verification of Assumption \ref{ass:spectral} goes exactly as in Sections \ref{sec:spKG}. In the upcoming Sections, we verify that Assumptions \ref{ass-u-a} and \ref{ass:several-res} are satisfied, then compute $\mbox{sgn}\,{\bf \G} = \mbox{sgn}\,\iota.$ Stability ensues if $\iota = -1,$ and instability if $\iota = 1.$

\subsection{Verification of Assumption {\rm \ref{ass-u-a}}: WKB expansion} \label{sec:WKB-kg2}

We select a fundamental phase $\b = (\o,k) \in \R^{1 + d}$ as follows:
\begin{itemize}
\item The phase $\b$ belongs to the slow positive Klein-Gordon branch on the variety:
 $$\o = \sqrt{\o_0^2 + \theta_0^2 |k|^2}.$$
\item The only harmonics of $\b$ on the slow Klein-Gordon branches are $-1$ and $1.$

\smallskip

\item No harmonics of $\b$ belong to the fast branches on the variety.

\smallskip

\item There are no auto-resonances.
\end{itemize}
 For more details, and comments on these conditions, see Section \ref{sec:WKB}. In addition, we restrict the range of $k$ as we assume
 \be \label{nores}
 |k|^2 < \frac{1}{\theta_0^2}(\a_0^2 - 1)\o_0^2.
 \ee
It is easy to verify that condition \eqref{nores} implies that there are no $(1,5)$ resonances, and no $(5,4)$ resonances.

Denoting ${\bf \Pi}(p\b)$ the orthogonal projector onto $\ker (-i p \o + A(ipk) + A_0),$
we find the identities
$$
{\bf \Pi}(0)U=(u_1,0,0,v_1,0,0), \quad
{\bf \Pi}(\b) U= (U,\vec e_1)\vec e_1,\quad {\bf \Pi}(-\b) U=
(U,\vec e_{-1}\,) \vec e_{-1},
$$
for any
$U=(u_1,u_2,u_3,v_1,v_2,v_3),$
where \be\label{def:e1-kg2} \vec e_{1}:=\big(0_{\C^{d+2}},-\frac{\th_0k}{\o}
,1,\frac{i\o_0}{\o}\big),\quad \vec e_{-1}=(\vec e_1)^*. \ee Then, as in Section \eqref{sec:WKB}, there holds
$$
{\bf \Pi}(0) B\equiv0,\quad B\big({\bf
\Pi}(0),\cdot\big)\equiv0,\quad B\big(\cdot,{\bf
\Pi}(0)\big)\equiv0,
$$
and condition \eqref{weak:transp} is satisfied. By Proposition \ref{lem:WKB}, Assumption \ref{ass-u-a} is then satisfied.

\subsection{Verification of Assumption {\rm \ref{ass:several-res}}: resonances and transparency} \label{sec:indexKG-kg2}

By the form of the characteristic variety, and choice of $\b$ (in particular, condition \eqref{nores}), resonant pairs are
 $$ {\mathfrak R} = \big\{ (1,2), \quad (2,5), \quad  (3,4),\quad (5,3)\big\}.$$
 A $(1,2)$ resonant frequency is pictured on Figure \ref{fig2}.
\medskip

(i) Boundedness of ${\mathfrak R}.$  The asymptotic branches on the
variety are obviously distinct (Figure \ref{fig2}). Lemma
\ref{lem:resonantset} then implies boundedness of ${\mathfrak R}.$

\medskip

(ii) Partial transparency. From the definition of $B$ in
\eqref{def-F-G-kg2} and $\vec e_1$ in \eqref{def:e1-kg2}, we see that
$$
 \begin{aligned} B(\vec e_1) U &  =\frac{1}{\sqrt 2}\Big(0_{\C^{d}}, \frac{i \o_0}{\o} (u_3+v_3), 0,0_{\C^d}, -\iota  u_2, 0\Big), \\
 B(\vec e_{-1}) U &  {=\frac{1}{\sqrt 2}\Big(0_{\C^{d}}, \frac{-i \o_0}{\o} (u_3+v_3), 0,0_{\C^d}, -\iota u_2, 0\Big)}. \end{aligned}
$$
 An element in the image of $\Pi_5(\xi),$ the orthogonal
projector onto the kernel of $A(\xi) + A_0/i,$ has the form
 $$ U_5(\xi) = \Big( u_1, 0, \frac{-i \xi \cdot u_1}{\a_0\o_0}, v_1, 0, \frac{- i \th_0\xi \cdot v_1}{\o_0}\Big), \qquad (u_1,v_1) \in \C^{2d}.$$
In particular, for all $U \in \C^{2(d+2)},$ $B(\vec e_{\pm 1}) U$ belongs to
the orthogonal of the range of $\Pi_5(\xi),$ so that \be
\label{5all-kg2} \Pi_5(\cdot) B(\vec e_{\pm 1}) \equiv 0.\ee The other
projectors are
$$
\Pi_j(\xi) U= \frac{1}{\sqrt 2} \big(U,\O_j(\xi)\big) \,
\O_j(\xi),$$ where
$$
\O_j(\xi):=\frac{1}{\sqrt 2} \Big(-\frac{\xi}{\l_j}
,1,\frac{i\a_0\o_0}{\l_j},0_{\C^{d+2}}\Big),~j=1,4;\qquad
\O_{j'}(\xi):=\Big(0_{\C^{d+2}}, -\frac{\th_0\xi}{\l_{j'}}
,1,\frac{i\o_0}{\l_{j'}}\Big),~j'=2,3.
$$
From there, we compute \be \label{calc:25-kg2}
 \Pi_2(\xi) B(\vec e_{\pm 1}) \Pi_5(\xi') \equiv 0, \quad \Pi_3(\xi) B(\vec e_{\pm 1}) \Pi_5(\xi') \equiv 0.
\ee and together with \eqref{5all-kg2} this implies that resonances
$(2,5)$ and $(5,3)$ are transparent. Besides,
$$
 \begin{aligned} \Big(\Omega_1(\xi'), \, B(\vec e_1) \Omega_2(\xi)\, \Big) & =\frac{-\o_0^2}{2 \o \l_2(\xi)}, \qquad \Big(\Omega_2(\xi'), \, B(\vec e_{-1}) \Omega_1(\xi)\,\Big) & =\frac{-\iota}{2}.\end{aligned}
 $$
In particular, the $(1,2)$ resonance is non-transparent: $(1,2) \in
{\mathfrak R}_0.$ Similarly, $(3,4) \in {\mathfrak R}_0.$

 The partial transparency is automatically
satisfied because the sets in  \eqref{new:1605} and
\eqref{new:1605:2} are all empty, a direct consequence of the form of ${\mathfrak R}_0$ (in which no index appears more than once).
\medskip

(iii) Rank-one coefficients: the eigenvalues $\l_j,$ for $j \in \{1,2,3,4\},$ are simple eigenvalues, implying that the interaction coefficients have rank at most one.

\subsection{Stability index} \label{sec:KG:index2}

Computing as in Section \ref{sec:KG:index} and using \eqref{calc:25-kg2}, we find
$$
\G_{12}(\xi) =\iota \frac{\o_0^2}{4 \o \l_2(\xi)}, \qquad \G_{34}(\xi)=  \iota  \frac{\o_0^2}{4 \o \l_2(\xi+k)},$$
implying ${\rm sgn}\,{\bf\Gamma}={\rm sgn}\,\iota.$

\chapter{Appendix}

\section{Symbols and operators} \label{app:symbols}

Given $m \in \R,$ we denote $S^m$ the set of matrix-valued symbols $a \in C^{\bar s}(\R^d_x; C^\infty(\R^d_\xi)),$ such that for all $\a \in \N^d$ with $|\a| \leq \bar s,$ for all $\b \in \N^d,$ for some $C_{\a\b} > 0,$ for all $(x,\xi),$
 $$ |\d_x^\a \d_\xi^\b a(x,\xi)| \leq C_{\a\b} \langle \xi \rangle^{m - |\b|}, \qquad \langle \xi \rangle := (1 + |\xi|^2)^{1/2}.$$
That is, we consider symbols with a finite, but large, spatial regularity $\bar s,$ in connection with the finite Sobolev regularity $s_a$ of the approximate solution $u_a,$ postulated in Assumption \hyperref[ass-u-a]{\ref*{ass-u-a}}.

 We call $S^m$ the space of classical symbols of order $m.$ Given $a \in S^m,$ the associated family of pseudo-differential operators in semi-classical quantization is denoted $\op_\e(a)$ and formally defined by their action on functions or distributions $u$ in the variable $x:$
\begin{equation} \label{quantiz}
 \op_\e(a) u := \int e^{i x \cdot \xi} a(x,\e \xi) \hat u(\xi) \, d\xi, \qquad \e > 0.
\end{equation}

The semi-classical Sobolev norms $\| \cdot \|_{\e,s}$ are defined by
\begin{equation} \label{def:es} \| u \|_{\e,s}^2 := \int (1 + |\e \xi|^2)^s |\hat u(\xi)|^2 \, d\xi.
\end{equation}

\subsection{Estimates for Fourier multipliers} \label{sec:foumult}

 {\it  Fourier multipliers are pseudo-differential symbols (by extension, the associated operators) which do not depend on $x.$ Examples of Fourier multipliers in the text are given by the eigenprojectors $\Pi_j$ and the eigenvalues $\l_j.$ The interaction coefficients ${\mathcal B},$ $b_{ij},$ the normal form $Q,$ and the interaction matrix $M$ all depend on $(x,\xi),$ but as tensor products $M(x,\xi) = M_1(x) M_2(\xi)$ they are handled just like Fourier multipliers.}

 \medskip

 Given a Fourier multiplier $a \in S^0,$ the associated operators $\op_\e(a)$ map $H^s$ to $H^{s},$ for all $s,$ and for all $u \in H^s,$
 \begin{equation} \label{action:fourier-mult}
  \| \op(a) u\|_{\e,s} \lesssim |a|_{L^\infty} \| u \|_{\e,s}, \qquad a = a(\xi).
  \end{equation}
Also, denoting $|\cdot|_{{\mathcal F}L^1}$ the $L^1$ norm of the Fourier transform: $|u|_{{\mathcal F}L^1} := |\hat u|_{L^1},$ there holds, by Young's convolution inequality, the bound
 \be \label{action:fouriermultFL1}
  \big| a(x) \op_\e(b) u\big|_{{\mathcal F}L^1} \leq |b|_{L^\infty} |a|_{{\mathcal F}L^1} |u|_{{\mathcal F}L^1}, \qquad b = b(\xi) \in L^\infty.
 \ee
Pointwise estimates follow from Hausdorff-Young and Cauchy-Schwartz:
 \be \label{est:bernstein}
  | \op_\e(a) u |_{L^\infty} \leq C |a|_{L^2} |u|_{L^2}, \qquad a = a(\xi).
 \ee
 Given a Fourier multiplier $a \in S^1,$ and $f \in H^{s + d/2 + \eta}$ for some $\eta > 0,$ there holds for all $u \in H^s,$
 \begin{equation} \label{est:fourier-mult}
 \big\| \big[ \op_\e(a), f \big] u \big\|_{\e,s} \lesssim \e \,  |\nabla a|_{L^\infty} \| f \|_{H^{s + d/2 + \eta}} \| u \|_{\e,s}, \qquad a = a(\xi),
 \end{equation}
 and
 \be \label{est:fouriermultFL1}
 \Big| {\mathcal F} \Big( \big[ \op_\e(a), f \big] u \Big) \Big|_{L^1} \lesssim \e \,  |\nabla a|_{L^\infty} | \widehat{\d_x f}|_{L^1} |\hat u|_{L^1}, \qquad a = a(\xi).
 \ee
Similarly, for the Fourier multiplier $\Lambda^s := \op_\e\big(\langle \cdot \rangle^s\big),$ given $f \in H^{s + d/2 + \eta}$ for some $\eta > 0,$ there holds for all $u \in H^{s-1}:$
 \be \label{est:lambda-s}
  \big| \big[ \Lambda^s, f \big] u \big|_{L^2} \lesssim \e \| f \|_{H^{s + d/2 + \eta}} \| u \|_{\e,s-1}.
 \ee

\subsection{Estimates for pseudo-differential operators}

{\it Genuine pseudo-differential operators arise in the proof via $S,$ the flow of $M.$ Essentially, $S$ is the exponential $\exp(t M),$ hence cannot be written as a function of $x$ times a Fourier multiplier.}

\medskip

We first introduce para-differential symbols, which are regularized pseudo-differential symbols. Then we give an action result (Proposition \ref{prop:action}) and a composition result (Proposition \ref{prop:composition}), before giving a comparison result (Proposition \ref{prop:remainder}).

Given $\phi_0 \in C^\infty_c(\R^d),$ $0 \leq \phi_0 \leq 1,$ and real numbers $0 < A <  B < 2 A$ such that
$$
 \mbox{$\phi_0 \equiv 1$ for $|\xi| \leq A,$\, and \, $\phi_0 \equiv 0$ for $|\xi| \geq B.$}
$$
 We let
$$
 \phi_j(\xi) := \phi_0(2^{-j} \xi) - \phi_0(2^{-(j-1)} \xi), \qquad \mbox{for $j \geq 1,$}
$$
 so that for $j \geq 1,$ $\phi_j$ has support included in the annulus ${\mathcal C}_j := \{ A 2^{j-1} \leq |\xi| \leq B {2^{j}}\},$ and is constant equal to one in the annulus $\tilde {\mathcal C}_j := \{ B 2^{j-1} \leq |\xi| \leq {A 2^{j}}\}.$
 The function $\psi: \R^d \times \R^d \to \R$ defined for $N \geq 2$ by
 $$ \psi(\eta,\xi) = \sum_{k \geq 0} \phi_0(2^{-k + N} \eta) \phi_k(\xi).$$
 is called a Bony admissible cut-off \cite{Bony}. It satisfies
$$
\psi(\eta,\xi) \equiv \left\{ \begin{aligned} 1, & \quad |\eta| \leq 2^{-N} \langle \xi \rangle, \\ 0, & \quad |\eta| \geq 2^{1-N} \langle \xi\rangle.
 \end{aligned}\right.
$$

\begin{defi}\phantomsection \label{defi:para} Given a Bony admissible cut-off $\psi$ and $a \in \Gamma^m_k,$ we call para-differential symbol associated with $a$ the symbol
$$
 a^\psi(x,\xi) := {\mathcal F}^{-1} \Big( \psi(\cdot,\xi) \hat a(\cdot, \xi)\Big) (x) = \Big(\big( {\mathcal F}^{-1} \psi(\cdot,\xi) \big) \star a(\cdot,\xi) \Big)(x),
$$
where convolution takes place in the spatial variable $x,$ the smooth function ${\mathcal F}^{-1} \psi(\cdot,\xi)(x)$ being the inverse Fourier transform of $\psi$ in its first variable $\eta.$ The pseudo-differential operator
 $\op^\psi(a) = \op(a^\psi)$
 is said to be the para-differential operator associated with $a$ in classical quantization.
\end{defi}

We define the para-differential operator (precisely, the $\e$-dependent family of operators) associated with $a$ in semi-classical quantization by
\be \label{def:para} \op_\e^\psi(a) := h_\e^{-1} \op^\psi(\tilde a) h_\e, \qquad \mbox{with $\tilde a(x,\xi) := a(\e x, \xi),$}\ee
where
$$
 (h_\e u)(x) := \e^{d/2} u(\e x), \qquad \| h_\e u \|_{H^s} = \| u \|_{\e,s}.
$$

\begin{rema}\phantomsection \label{rem:para} Note that
the maps $a \to \tilde a$ and $a \to a^\psi$ do not commute:
 $$ \op_\e^\psi(a) = h_\e^{-1} \op\big( \big(\tilde a\big)^\psi\big) h_\e \neq h_\e^{-1} \op\big(\widetilde{a^\psi}\big) h_\e =  \op_\e(a^\psi),$$
so that the para-differential operator associated with $a$ in semi-classical quantization is \emph{not}
 $\op_\e(a^\psi).$ The classical symbol of $\ope(a)$ is
 $$  (x,\xi) \to {\bf a}(x,\xi) = \Big( {\mathcal F}^{-1} \psi \star \tilde a\Big)\left(\frac{x}{\e}, \e \xi\right) = \int {\mathcal F}^{-1} \psi(y, \e \xi) a(x - \e y, \e \xi) \, dy,$$
 in the sense that $\ope(a) \equiv \op_1({\bf a}).$
\end{rema}

\begin{rema}\phantomsection \label{rem:cut-off} {\rm An admissible cut-off $\psi$ satisfies the bound
 $$ \|\d_\xi^\b {\mathcal F}^{-1} (\d_\eta^\a \psi)(\cdot,\xi) \|_{L^1(\R^d)} \leq  C_\b \langle \xi \rangle^{-|\b|-|\a|}, \qquad \a, \b \in \N^d, \, C_{\a\b} > 0.$$}
   \end{rema}

\begin{rema}\phantomsection \label{para:tensor} By property $\psi(0,\xi) \equiv 1$ of the Bony admissible cut-off, pseudo- and para-differential operators agree for Fourier multipliers:
 $$ \ope(a) \equiv \op_\e(a), \qquad a = a(\xi).$$
 For tensor products $a(x,\xi) = a_1(x) a_2(\xi),$ there holds $(a_1 a_2)^\psi \equiv a_1^\psi a_2,$ so that
 $$ \ope(a) = \ope(a_1) \op_\e(a_2) \equiv \ope(a_1) \ope(a_2), \qquad a_1 = a_1(x), \,\, a_2 = a_2(\xi).$$
\end{rema}

 \begin{prop}\phantomsection \label{prop:action} Given $m \in \R,$ $a \in S^m,$ there holds for $s \in \R,$ all $u \in H^{s+m}$ the bound
 $$\|\op^\psi_\e(a) u\|_{\e,s} \lesssim {\bf M}^m_{0,d}(a) \|u\|_{\e,s+m},$$
where
\be \label{def:Mnorm}
{\bf M}^m_{k,k'}(a) = \sup_{\begin{smallmatrix} (x,\xi) \in \R^d \times \R^d \\ |\a| \leq k, |\b| \leq k' \end{smallmatrix}} \langle \xi \rangle^{-(m - |\b|)} |\d_x^\a \d_\xi^\b a(x,\xi)|.
\ee
 \end{prop}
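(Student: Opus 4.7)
The plan is to reduce the proposition to an $L^2 \to L^2$ estimate for a para-differential operator of order zero, then establish that bound by dyadic decomposition and almost-orthogonality.

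First, by the rescaling \eqref{def:para} and the identity $\|h_\e u\|_{H^s} = \|u\|_{\e,s}$, together with the fact that ${\bf M}^m_{0,d}$ involves no $x$-derivatives and is thus invariant under $x \mapsto \e x$, it suffices to treat $\e = 1$ and prove $\|\op^\psi(a) u\|_{H^s} \lesssim {\bf M}^m_{0,d}(a) \|u\|_{H^{s+m}}$. I peel off the order by writing $a(x,\xi) = b(x,\xi) \langle \xi \rangle^m$ with $b \in S^0$ and ${\bf M}^0_{0,d}(b) \lesssim {\bf M}^m_{0,d}(a)$: a direct manipulation on the Bony cutoff (using the support of $\psi(\eta,\xi)$ in $|\eta| \lesssim \langle \xi \rangle$) gives $\op^\psi(a) = \op^\psi(b)\, \Lambda^m$ up to a smoothing remainder, where $\Lambda^m = \op(\langle \xi \rangle^m)$ is an isometry $H^{s+m} \to H^s$. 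A further conjugation by $\Lambda^s$, which changes $b$ by a term of the same $S^0$ class with comparable seminorm, reduces the problem to the core estimate
\[ \|\op^\psi(c)\, v\|_{L^2} \lesssim {\bf M}^0_{0,d}(c)\, \|v\|_{L^2}, \qquad c \in S^0. \]

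For this $L^2$-bound I use the decomposition $\psi(\eta,\xi) = \sum_{k \geq 0} \phi_0(2^{-k+N}\eta)\phi_k(\xi)$ built into the Bony cutoff to write $c^\psi = \sum_{k \geq 0} c_k$ with $c_k(x,\xi) := \phi_k(\xi)\,(S_{k-N} c)(x,\xi)$, where $S_j$ denotes the spatial low-frequency cutoff at scale $2^j$. Each $c_k$ has $\xi$-support in the dyadic annulus $\mathcal{C}_k$ and spatial Fourier support in $\{|\eta| \leq 2^{k-N+1}\}$. The integral kernel
\[ K_k(x,y) = \int e^{i(x-y)\cdot\xi}\, c_k(x,\xi)\, d\xi \]
is estimated by $d$ integrations by parts in $\xi$: the bounds $|\d_\xi^\b c_k(x,\xi)| \lesssim {\bf M}^0_{0,d}(c)\, 2^{-k|\b|}$ for $|\b| \leq d$, immediate from $c \in S^0$ and the low-pass action of $S_{k-N}$, yield
\[ |K_k(x,y)| \lesssim {\bf M}^0_{0,d}(c)\, \frac{2^{kd}}{(1+2^k|x-y|)^d}, \]
so that Schur's lemma gives $\|\op(c_k)\|_{L^2 \to L^2} \lesssim {\bf M}^0_{0,d}(c)$ uniformly in $k$. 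To sum in $k$, I observe that $\op(c_k)$ maps the frequency band $\mathcal{C}_k$ into $\mathcal{C}_k + \{|\eta| \leq 2^{k-N+1}\} \subset \bigcup_{|j-k| \leq 1} \mathcal{C}_j$; consequently the cross terms $\op(c_j)^* \op(c_k)$ and $\op(c_j)\, \op(c_k)^*$ are rapidly decaying in $|j-k|$, and the Cotlar-Stein lemma concludes.

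The main obstacle is precisely what distinguishes Proposition \ref{prop:action} from the classical Calder\'on-Vaillancourt theorem: no $x$-derivatives of $a$ appear in the seminorm ${\bf M}^m_{0,d}(a)$, only sup-norm control in $x$. This is the raison d'\^etre of the para-differential construction — the Bony cutoff $\psi$ regularizes $a$ in $x$ at scale $\langle\xi\rangle^{-1}$, so that each dyadic block $c_k$ is automatically smooth in $x$ with derivative gains of $2^k$ per derivative, exactly what is needed to run the frequency-localization and almost-orthogonality argument without ever differentiating $a$ in $x$.
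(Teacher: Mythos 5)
First, note that the paper does not prove this proposition at all: its ``proof'' is a citation to Theorem 4.3.5 of \cite{M}, so your attempt at a self-contained argument is welcome, and your overall architecture (rescaling to $\e=1$, peeling off $\langle\xi\rangle^m$, dyadic decomposition $c^\psi=\sum_k c_k$ with $c_k=\phi_k(\xi)\,S_{k-N}c$, uniform blockwise bounds, almost orthogonality) is indeed the standard route behind the cited theorem. The reductions at the start are fine: the seminorm ${\bf M}^m_{0,d}$ contains no $x$-derivatives, so it is invariant under $x\mapsto \e x$, and $\op^\psi(b\langle\cdot\rangle^m)=\op^\psi(b)\Lambda^m$ is in fact exact, not merely ``up to a smoothing remainder''.

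There is, however, a genuine gap at the Schur step. With $d$ integrations by parts in $\xi$ (which is all that ${\bf M}^m_{0,d}$ permits) you obtain the kernel bound $|K_k(x,y)|\lesssim {\bf M}\,2^{kd}(1+2^k|x-y|)^{-d}$, but this is \emph{not} Schur-summable: $\int_{\R^d}(1+|z|)^{-d}\,dz$ diverges logarithmically, and indeed convolution with $(1+|z|)^{-d}$ is unbounded on $L^2(\R^d)$, so the claimed uniform bound $\|\op(c_k)\|_{L^2\to L^2}\lesssim {\bf M}^0_{0,d}(c)$ does not follow from this kernel estimate, and Cotlar--Stein has nothing to sum. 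To repair it within the stated seminorm you should replace the pointwise-kernel/Schur argument by the weighted Cauchy--Schwarz/Plancherel estimate: for fixed $x$, bound $\|K_k(x,\cdot)\|_{L^1_z}\le \|(1+2^{2k}|z|^2)^{-p/2}\|_{L^2_z}\,\|(1+2^{2k}|z|^2)^{p/2}K_k(x,\cdot)\|_{L^2_z}$ with $p=[d/2]+1\le d$, the second factor being controlled via Plancherel by $\sum_{|\b|\le p}2^{k|\b|}\|\d_\xi^\b c_k(x,\cdot)\|_{L^2_\xi}\lesssim {\bf M}\,2^{kd/2}$; this is essentially how \cite{M} proceeds (and is in the same spirit as the $L^1_x$-based bound of Proposition \ref{prop:actionH}). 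Alternatively, taking $d+1$ integrations by parts makes Schur work but proves only the weaker bound with ${\bf M}^m_{0,d+1}$, which is not the statement. A secondary, lesser issue: the reduction of $H^s$ to $L^2$ by ``conjugation by $\Lambda^s$'' is asserted without justification (the naive composition calculus produces terms involving $x$-derivatives of the para-symbol and no gain in order); it is also unnecessary, since the blocks $\op(c_k)$ take input frequencies in $\mathcal C_k$ to output frequencies in a comparable annulus, so the Littlewood--Paley characterization of $H^s$ gives the bound for every real $s$ directly, without Cotlar--Stein or any conjugation.
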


\begin{proof} See for instance Theorem 4.3.5 of \cite{M}.
\end{proof}

 \begin{prop}\phantomsection  \label{prop:composition} For all $m_1, m_2 \in \R,$ all $r \in \N^*,$ with $r \leq \bar s,$ given $a_1 \in S^{m_1},$ $a_2 \in S^{m_2},$ there holds
 $$ \op^\psi_\e(a_1) \op^\psi_\e(a_2) = \op^\psi_\e\big( a_1 \sharp_\e a_2\big) + \e^{r} R^\psi_r(a_1,a_2),$$
 with the notation
$$
 a_1 \sharp_\e a_2 = \sum_{|\a| < r} \e^{|\a|} \frac{(-i)^{|\a|}}{\a!} \d_\xi^\a a_1 \d_x^\a a_2,
$$
 and the bound, for $d^* = 2 d + r + 1,$ for all $s \in \R,$ all $u \in H^{s + m_1 + m_2 - r},$
 $$ \big\| R^\psi_r(a_1,a_2) u \big\|_{\e,s} \lesssim  \Big( {\bf M}^{m_1}_{0,d^*}(a_1) {\bf M}^{m_2}_{r,d}(a_2) + {\bf M}^{m_1}_{r,d^*}(a_1) {\bf M}^{m_2}_{0,d}(a_2)\Big) \|u\|_{\e,s + m_1 + m_2 -r}.$$
 \end{prop}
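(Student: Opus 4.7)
The plan is to reduce to the classical (non-semiclassical) quantization via the conjugation identity $\ope^\psi(a) = h_\e^{-1} \op^\psi(\tilde a) h_\e$ (where $\tilde a(x,\xi) = a(\e x, \xi)$) defining the semi-classical para-differential calculus in \eqref{def:para}, then apply the standard para-differential composition formula of Bony \cite{Bony} and track the $\e$-dependence generated by the dilation $a \mapsto \tilde a$.

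First, write
$$
\ope^\psi(a_1) \ope^\psi(a_2) = h_\e^{-1} \op^\psi(\tilde a_1) \op^\psi(\tilde a_2) h_\e,
$$
and apply the classical para-differential composition theorem (see for instance Theorem 6.1.4 in \cite{M}) to $\op^\psi(\tilde a_1) \op^\psi(\tilde a_2)$. This yields
$$
\op^\psi(\tilde a_1) \op^\psi(\tilde a_2) = \op^\psi\!\Big(\sum_{|\a|<r} \tfrac{(-i)^{|\a|}}{\a!} \d_\xi^\a \tilde a_1 \,\d_x^\a \tilde a_2\Big) + \tilde R_r,
$$
where the remainder $\tilde R_r$ is bounded from $H^{\sigma+m_1+m_2-r}$ to $H^{\sigma}$ with norm controlled by the symbol seminorms ${\bf M}^{m_j}_{k,k'}(\tilde a_j)$ prescribed in the statement. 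The key observation is that $\d_x^\a \tilde a_j(x,\xi) = \e^{|\a|} (\d_x^\a a_j)(\e x,\xi) = \e^{|\a|} \,\widetilde{\d_x^\a a_j}(x,\xi)$, while $\d_\xi^\b \tilde a_j = \widetilde{\d_\xi^\b a_j}$. Hence
$$
\sum_{|\a|<r} \tfrac{(-i)^{|\a|}}{\a!} \d_\xi^\a \tilde a_1 \,\d_x^\a \tilde a_2 \;=\; \sum_{|\a|<r} \e^{|\a|}\tfrac{(-i)^{|\a|}}{\a!} \,\widetilde{\d_\xi^\a a_1 \,\d_x^\a a_2} \;=\; \widetilde{a_1 \sharp_\e a_2},
$$
so conjugating back by $h_\e^{-1}(\cdot)h_\e$ produces the principal term $\ope^\psi(a_1 \sharp_\e a_2)$.

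For the remainder, the classical expression for $\tilde R_r$ (obtained by an integral-form Taylor remainder at order $r$, applied inside the oscillatory integral defining the composition of para-differential symbols) involves $\d_x^\a \tilde a_2$ with $|\a|=r$ coupled with $\d_\xi^\a \tilde a_1$; each such term carries a prefactor $\e^r$ by the chain-rule identity above. Using the elementary scaling $\sup_x |\d_x^\a \widetilde{f}(x)| = \e^{|\a|} \sup_x |\d_x^\a f(x)|$, the symbol seminorms of $\tilde a_j$ that appear in the classical bound are bounded by $\e^{|\a|} {\bf M}^{m_j}_{|\a|,|\b|}(a_j) \leq {\bf M}^{m_j}_{|\a|,|\b|}(a_j)$, and the total extra factor of $\e^r$ extracted from the $x$-derivatives of order $r$ acting on $\tilde a_2$ is precisely the prefactor $\e^r$ in front of $R^\psi_r$ in the statement. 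Defining $R_r^\psi(a_1,a_2) := \e^{-r} h_\e^{-1} \tilde R_r h_\e$, and converting the classical Sobolev bounds to semi-classical ones via the isometry $\|h_\e u\|_{H^\sigma} = \|u\|_{\e,\sigma}$, one obtains the announced estimate with the combination ${\bf M}^{m_1}_{0,d^*}(a_1) {\bf M}^{m_2}_{r,d}(a_2) + {\bf M}^{m_1}_{r,d^*}(a_1) {\bf M}^{m_2}_{0,d}(a_2)$.

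The main obstacle is purely bookkeeping: one must carefully verify that the classical remainder $\tilde R_r$ indeed contains the full factor $\e^r$ (and not $\e^{r'}$ for some $r' < r$), which requires checking that every contribution to $\tilde R_r$ either comes from an $x$-derivative of $\tilde a_2$ of order exactly $r$ (in the Taylor remainder) or from the regularization effect of the admissible cut-off $\psi$ on $\tilde a_1$, the latter producing additional spatial derivatives that also yield $\e$ factors. The number $d^*=2d+r+1$ of $\xi$-derivatives needed on $a_1$ reflects the control of the oscillatory kernel in the composition formula, and is the standard count from the classical proof; no new analytic difficulty arises in the semi-classical setting beyond the tracking of these $\e$ powers.
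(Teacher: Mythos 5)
Your strategy is legitimate, but it is worth knowing that the paper does not prove this proposition at all: its ``proof'' is a one-line citation to Theorem 6.1.4 of \cite{M} (classical para-differential composition) and to Proposition B.21 of \cite{MZ}, the latter being already a parameter-dependent (semiclassical) composition statement from which the $\e^r$ factor can be read off directly. Your route -- conjugate by the dilation $h_\e$, apply the classical composition theorem to $\tilde a_1,\tilde a_2$, and use $\d_x^\a \tilde a_j = \e^{|\a|}\widetilde{\d_x^\a a_j}$, $\d_\xi^\b\tilde a_j=\widetilde{\d_\xi^\b a_j}$ to identify $\tilde a_1\sharp\tilde a_2=\widetilde{a_1\sharp_\e a_2}$ and hence the principal term $\ope(a_1\sharp_\e a_2)$ -- is the natural self-contained derivation, and the identification of the principal term and the passage between classical and semiclassical Sobolev norms via the isometry $\|h_\e u\|_{H^s}=\|u\|_{\e,s}$ are correct.

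The one point you should not dismiss as ``purely bookkeeping'' is the extraction of $\e^r$ from the remainder. Applying Theorem 6.1.4 of \cite{M} as a black box to $\tilde a_1,\tilde a_2$ gives a remainder bounded by seminorms such as ${\bf M}^{m_2}_{r,d}(\tilde a_2)$; but these seminorms are sups over \emph{all} derivatives of order $\leq r$, so they are dominated by the underived part and are $O(1)$, not $O(\e^r)$. The black-box statement therefore only yields an operator of order $m_1+m_2-r$ with no smallness in $\e$. To obtain the stated factorization $\e^r R^\psi_r$ you must go inside the classical proof and use that every contribution to the remainder carries spatial derivatives of total order exactly $r$ on one of the two (dilated) symbols -- the Taylor integral remainder places them on $\tilde a_2$, and the spectral-localization corrections (including the comparison of different admissible cut-offs) are controlled by $r$ spatial derivatives of $\tilde a_1$ -- each such derivative producing one factor of $\e$. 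This is exactly the mechanism you describe, so your argument is workable, but as written it relies on the internal structure of the cited proof rather than on its statement; the cleaner alternative, and the one the paper implicitly takes, is to invoke the parameter-dependent version (Proposition B.21 of \cite{MZ}) directly.
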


\begin{proof} See for instance Theorem 6.1.4 of \cite{M}, or Proposition B.21 of \cite{MZ}.
\end{proof}

 The need for a different action result, one which involves a smaller number of $\xi$-derivatives, was evoked in Section \ref{sec:criterion} in the introduction.

 \begin{prop}\phantomsection \label{prop:actionH}
Given $a \in S^0,$ there holds for $s \geq 0,$ $r \in \N$ with $r \leq s,$ $u \in H^{s}$ the bound
  \be \label{bd:h} \| \ope(a) u \|_{\e,s} \lesssim \sum_{\begin{smallmatrix} 0 \leq |\a| \leq d+1 \\ 0 \leq |\b| \leq r -1\end{smallmatrix}} \e^\b \sup_{\xi \in \R^d} | \d_x^{\a + \b} a(\cdot,\xi) |_{L^1(\R^d_x)} \| u \|_{\e,s} + \e^r {\bf M}^0_{r,d}(a) \| u \|_{\e,s-r},\ee
with norms ${\bf M}^m_{k,k'}$ defined in \eqref{def:Mnorm}. \end{prop}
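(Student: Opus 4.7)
The plan is to combine two ingredients: the pseudo-differential composition formula of Proposition~\ref{prop:composition}, applied to expand $\Lambda^s_\e \,\ope(a)$ to order $r$ in $\e$, together with a Hörmander-type $L^2$ operator bound that controls $\|\ope(b) u\|_{L^2}$ by spatial $L^1$ norms of $b$ and its $x$-derivatives, with \emph{no} $\xi$-derivatives. The role of the composition formula is to produce the sum over $|\b|<r$ with the correct factors of $\e^{|\b|}$ and spatial derivatives $\d_x^\b a$, while the role of the Hörmander bound is to turn each resulting symbol into an $L^1_x$ control that does not see the rapidly oscillating symbols arising from the $1/\sqrt\e$ scaling in $S_0$ (Section~\ref{est-upper-V0}).

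First I would prove the base lemma: for $b \in S^0$,
\begin{equation*}
|\ope(b) u|_{L^2} \lesssim \sum_{|\a|\leq d+1} \sup_{\xi\in\R^d} |\d_x^\a b(\cdot,\xi)|_{L^1(\R^d_x)} \, |u|_{L^2}.
\end{equation*}
The proof uses the representation of $\widehat{\ope(b)u}(\eta)$ as an integral against $\widehat{b^\psi_x}(\eta-\xi,\e\xi)$, where $\widehat{b^\psi_x}$ denotes the Fourier transform of the para-differential symbol $b^\psi$ in its $x$-variable, followed by Schur's test. The Bony cut-off $\psi$ confines $\zeta := \eta-\xi$ to $|\zeta| \lesssim \langle\e\xi\rangle$, making the row and column integrals finite. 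The elementary bound
\begin{equation*}
|\widehat{b_x}(\zeta,\xi)| \leq \min\Bigl(|b(\cdot,\xi)|_{L^1}, \,|\zeta|^{-(d+1)}\sum_{|\a|=d+1}|\d_x^\a b(\cdot,\xi)|_{L^1}\Bigr),
\end{equation*}
obtained by integration by parts in $x$, together with the elementary integrability $\int_{\R^d}\min(1,|\zeta|^{-(d+1)})d\zeta < \infty$, yields the stated constant.

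Next, the passage to the full statement: apply Proposition~\ref{prop:composition} with $a_1 = \langle\cdot\rangle^s \in S^s$ and $a_2 = a \in S^0$ to obtain
\begin{equation*}
\Lambda^s_\e\, \ope(a) = \sum_{|\b|<r} \frac{(-i\e)^{|\b|}}{\b!}\, \ope\bigl((\d_\xi^\b \langle\cdot\rangle^s)\,\d_x^\b a\bigr) + \e^r R^\psi_r.
\end{equation*}
The remainder bound from Proposition~\ref{prop:composition}, together with the fact that $\langle\cdot\rangle^s$ has no $x$-derivatives so its $\mathbf{M}^s_{\cdot,\cdot}$-norms reduce to absolute constants, directly gives $|\e^r R^\psi_r u|_{L^2} \lesssim \e^r \mathbf{M}^0_{r,d}(a) \|u\|_{\e,s-r}$, matching the stated remainder. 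For each $|\b|<r$, factor $\d_\xi^\b \langle\xi\rangle^s = q_\b(\xi)\langle\xi\rangle^{s-|\b|}$ with $q_\b \in S^0$ bounded, use the composition formula once more at leading order to write $\ope((\d_\xi^\b\langle\cdot\rangle^s)\,\d_x^\b a)\,u = \ope(q_\b\,\d_x^\b a)\,\Lambda^{s-|\b|}_\e u + \e(\ldots)$, and apply the base lemma to the $S^0$ symbol $q_\b(\xi)\d_x^\b a(x,\xi)$, using $\sup_\xi|\d_x^\a(q_\b \d_x^\b a)(\cdot,\xi)|_{L^1} \lesssim \sup_\xi|\d_x^{\a+\b} a(\cdot,\xi)|_{L^1}$. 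Combined with $|\Lambda^{s-|\b|}_\e u|_{L^2} = \|u\|_{\e,s-|\b|} \leq \|u\|_{\e,s}$ (valid since $|\b| < r \leq s$), this yields the stated leading contribution; all secondary $\e(\ldots)$ terms absorb into the $\e^r$ remainder.

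The main obstacle is the base lemma. The standard Calderón--Vaillancourt estimate (Proposition~\ref{prop:action}) requires control of $\xi$-derivatives of the symbol, which is unavailable for the symbols $S_0$ of Section~\ref{est-upper-V0} due to the $1/\sqrt\e$ scaling. Replacing $\xi$-derivatives by spatial $L^1$ norms requires both the support geometry of the Bony cut-off in the Schur argument, and careful handling of the semi-classical scaling through the weight $\min(1,|\zeta|^{-(d+1)})$ — whose $L^1_\zeta$-integrability is precisely what balances the $d+1$ spatial derivatives appearing on the right-hand side.
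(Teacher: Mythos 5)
Your proposal is correct and follows essentially the same route as the paper: the key $L^2$ bound involving only spatial $L^1$ norms of the symbol (which the paper simply cites as Theorem 18.8.1 of H\"ormander's treatise, applied to the classical symbol ${\bf a}$ of Remark \ref{rem:para}, while you re-derive it via Schur's test and integration by parts in $x$), followed by a reduction of the case $s>0$ to $s=0$ using Proposition \ref{prop:composition} with $a_1=\langle\cdot\rangle^s$, $a_2=a,$ and the identity $\ope\big(\d_\xi^\b\langle\cdot\rangle^s\,\d_x^\b a\big)=\ope\big(q_\b\,\d_x^\b a\big)\Lambda^{s-|\b|}.$ The paper organizes this reduction through the commutator $[\Lambda^s,\ope(a)]$ rather than a direct expansion of $\Lambda^s\ope(a),$ which is only a cosmetic difference; note also that your hedge ``$+\,\e(\dots)$'' is unnecessary, since composition with a Fourier multiplier on the right is exact in this quantization, as the paper's own proof uses.
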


\begin{proof} Theorem 18.8.1 in H\"ormander's treatise \cite{Hom3} asserts the bound, for $b \in S^0$ and $u \in L^2:$
 $$ | \op_1(b) u|_{L^2} \lesssim \sum_{|\a| \leq d+1} \sup_{\xi \in \R^d} | \d_x^\a b(\cdot,\xi) |_{L^1(\R^d_x)} | u |_{L^2}.$$
If $s = 0,$ it suffices to apply the above to $b = {\bf a},$ with notation drawn from Remark \ref{rem:para}. In the case $s > 0,$ we introduce a commutator:
 $$ \| \ope(a) u \|_{\e,s} \leq | \ope(a) \Lambda^s u |_{L^2} + \big| \big[ \Lambda^s, \ope(a) \big] u \big|_{L^2}.$$
For the first term in the above upper bound, we use the result for $s = 0.$ For the commutator, we use Remark \ref{para:tensor} and Proposition \ref{prop:composition}:
 $$ \big| \big[ \Lambda^s, \ope(a) \big] u \big|_{L^2} = \big| \big[ \ope(\langle \cdot \rangle^s), \ope(a) \big] u \big|_{L^2} \leq | \ope(\langle \cdot \rangle^s \sharp_\e a) u |_{L^2} + \e^r |R^\psi_r(\langle \cdot \rangle^s,a) u|_{L^2}.$$
 There holds
  $$ \ope\big(\d_\xi^\a \langle \cdot \rangle^s \d_x^\a a\big) u \equiv \ope\Big(\langle \cdot \rangle^{-(s - |\a|)} \d_\xi^\a \langle \cdot \rangle^s \d_x^\a a\Big) \Lambda^{s - |\a|} u,$$
  so that, by \eqref{bd:h} for $s = 0:$
 $$ | \ope(\d_\xi^\a \langle \cdot \rangle^s \d_x^\a a) u |_{L^2} \lesssim \sum_{|\b| \leq d+1} \sup_{\xi \in \R^d} | \d_x^{\a + \b} a(\cdot,\xi) |_{L^1(\R^d_x)} \| u \|_{\e,s - |\a|}.$$
Besides, by Proposition \ref{prop:composition}, $|R^\psi_r(\langle \cdot \rangle^s,a) u|_{L^2} \lesssim {\bf M}^0_{r,d}(a) \| u \|_{\e,s - r}.$
\end{proof}

We finally give two para-linearization estimates:
 \begin{prop}\phantomsection \label{prop:remainder} For all $r \in \N^*,$ $s \geq r,$ given $a \in H^s,$ there holds for all $u \in L^\infty,$
  \be \label{r:para1} \big\| \big( a - \op^\psi_\e(a)\big) u \big\|_{\e,s} \lesssim \| (\e \d_x)^r a \|_{\e,s-r} |u|_{L^\infty},
  \ee
  and for all $u \in L^2,$
  \be \label{r:para2} \big\| \big( a - \op^\psi_\e(a)\big) u \big\|_{\e,s} \lesssim |(\e\d_x)^s a|_{L^\infty} |u|_{L^2}.\ee
 \end{prop}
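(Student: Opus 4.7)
The proof plan is to rescale to $\e = 1$ and then apply a Littlewood--Paley/Bony decomposition; both estimates then follow from the same structural observation about the spectral support of $a - \op^\psi(a)$ regarded as an operator-valued function of the frequency of $u$.

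\textbf{Step 1: reduction.} From $\|w\|_{\e,s} = \|h_\e w\|_{H^s}$ and the defining relation \eqref{def:para}, the operator factorizes as $(a - \op_\e^\psi(a)) u = h_\e^{-1}\big(\tilde a - \op^\psi(\tilde a)\big) h_\e u$, with $\tilde a(x) := a(\e x)$. Direct computation gives $|h_\e u|_{L^\infty} = \e^{d/2} |u|_{L^\infty}$, $|h_\e u|_{L^2} = |u|_{L^2}$, together with the scaling identities $\|\d_x^r \tilde a\|_{H^{s-r}} = \e^{-d/2} \|(\e\d_x)^r a\|_{\e,s-r}$ and $|\d_x^s \tilde a|_{L^\infty} = |(\e\d_x)^s a|_{L^\infty}$. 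The target bounds \eqref{r:para1}--\eqref{r:para2} thus reduce to the classical ($\e=1$) estimates
\begin{equation*}
\|(b - \op^\psi(b)) v\|_{H^s} \lesssim \|\d_x^r b\|_{H^{s-r}} |v|_{L^\infty}, \qquad \|(b - \op^\psi(b)) v\|_{H^s} \lesssim |\d_x^s b|_{L^\infty} |v|_{L^2}.
\end{equation*}

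\textbf{Step 2: spectral decomposition and first estimate.} From the explicit form of $\psi$ and the action of $\op^\psi$, one computes $\op^\psi(b) v = \sum_{k \geq 0} (\phi_0(2^{-k+N} D) b) \Delta_k v$ with $\Delta_k = \phi_k(D)$, so that
$(b - \op^\psi(b)) v = \sum_{k \geq 0} \tilde b_k \Delta_k v$, $\tilde b_k := (I - \phi_0(2^{-k+N} D)) b$. The key point is that $\tilde b_k$ has Fourier support in $\{|\eta| \geq A \cdot 2^{k-N}\}$, bounded below, which on its own spectral region yields $|\tilde b_k|_{L^2} \lesssim 2^{-(k-N) r} \|\d_x^r b\|_{L^2}$; combined with $|\Delta_k v|_{L^\infty} \lesssim |v|_{L^\infty}$ from Young/Bernstein. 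The product $\tilde b_k \Delta_k v$ has spectrum contained in a ball of radius $\lesssim \max(\mathrm{freq}(\tilde b_k), 2^{k+1})$, and one splits the sum along Bony lines into a ``low-high'' part (when $\mathrm{freq}(\tilde b_k) \gg 2^k$; then the output is frequency-localized at the $\tilde b_k$-frequency, so a direct $\ell^2$-summation over those scales applies) and a ``high-high'' remainder (when frequencies are comparable). For the latter one invokes the standard principle that, for sequences $(f_k)$ with $\supp \hat f_k \subset B(0, C 2^k)$ and $s > 0$, $\|\sum_k f_k\|_{H^s}^2 \lesssim \sum_k 2^{2ks} |f_k|_{L^2}^2$ (proved via Littlewood--Paley $\Delta_\ell$-dyadic summation, Young's inequality on $\ell^1 * \ell^2 \to \ell^2$, using the geometric decay $2^{-\ell s}$ away from each $k$). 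Assembling the pieces, $2^{ks} |\tilde b_k|_{L^2} \lesssim 2^{Nr} \cdot 2^{k(s-r)} |\Delta_k \d_x^r b|_{L^2}$ (up to a bounded overlap), and $\ell^2$-summation in $k$ reconstructs $\|\d_x^r b\|_{H^{s-r}}$ on the right-hand side.

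\textbf{Step 3: second estimate and main obstacle.} The bound \eqref{r:para2} follows by the same scheme with the roles of $L^2$ and $L^\infty$ exchanged: one uses $|\tilde b_k|_{L^\infty} \lesssim 2^{-(k-N) s} |\d_x^s b|_{L^\infty}$, obtained from a Mikhlin-type estimate for the annular Fourier multipliers $\phi_\ell(D) |D|^{-s}$ applied to $|D|^s b$ and summing over $\ell \geq k - N$, paired with $|\Delta_k v|_{L^2} \leq |v|_{L^2}$. The main technical obstacle, present in both estimates, is the Littlewood--Paley bookkeeping required to match the $\phi_0(2^{-k+N} D)$ cutoffs of $\psi$ with the Littlewood--Paley pieces $\phi_k(D)$, to split the bilinear expression $\tilde b_k \Delta_k v$ cleanly into frequency-localized low-high and high-high pieces, and to track the $N$-dependent constants throughout. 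The lowest indices $k = 0, 1, \dots, N$ (where the scale $2^{k-N}$ degenerates to $2^{-N}$) are handled directly: the finite number of such terms each contribute a uniformly bounded $L^2$ (resp.~$L^\infty$) factor on $\tilde b_k$ that is still controlled by the target right-hand side since the Fourier support remains bounded away from zero.
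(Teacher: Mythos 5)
You are in a slightly unusual situation here: the paper does not prove this proposition at all, it simply cites M\'etivier's notes \cite{M} (Proposition 5.2.2 for \eqref{r:para1}, Theorem 5.2.8 for \eqref{r:para2}), so any self-contained argument is by construction a different route. Your Step 1 is correct and is the right way to use the definition \eqref{def:para}: conjugation by $h_\e$ together with the scaling identities you list reduces both bounds, with constants uniform in $\e$, to the classical $\e=1$ paralinearization remainder estimates. Your Step 2 for \eqref{r:para1} is essentially the standard proof and closes: writing $(b-\op^\psi(b))v=\sum_k\sum_{\ell>k-N}\Delta_\ell b\,\Delta_k v$, the low--high part reorganizes as $\sum_\ell \Delta_\ell b\, S_{\ell-N'}v$ with output in the annulus $|\xi|\sim 2^\ell$ and $|S_{\ell-N'}v|_{L^\infty}\lesssim|v|_{L^\infty}$, the high--high part uses the ``spectrum in a ball, $s>0$'' summation lemma, and in both cases the $\ell^2_\ell$ summability is carried by the Littlewood--Paley pieces of $b$, i.e. by $\|\d^r b\|_{H^{s-r}}$; the observation that only frequencies of $b$ bounded below by $A2^{-N}$ enter (so that the low-frequency part of $b$ never appears) is exactly what justifies having only $\|(\e\d_x)^ra\|_{\e,s-r}$ on the right.

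Step 3, however, has a genuine gap: \eqref{r:para2} does \emph{not} follow ``by the same scheme with the roles of $L^2$ and $L^\infty$ exchanged.'' The high--high part is fine (there the $\ell^2$ weight is supplied by $|\Delta_k v|_{L^2}$), but for the low--high part $\sum_\ell \Delta_\ell b\, S_{\ell-N'}v$ the exchanged pairing only gives $2^{\ell s}\,|\Delta_\ell b\,S_{\ell-N'}v|_{L^2}\lesssim |\d_x^s b|_{L^\infty}\,|v|_{L^2}$ \emph{uniformly} in $\ell$: the bound $2^{\ell s}|\Delta_\ell b|_{L^\infty}\lesssim|\d^s_x b|_{L^\infty}$ is an $\ell^\infty_\ell$ (i.e.\ $B^s_{\infty,\infty}$) piece of information, not a square-summable one, and $|S_{\ell-N'}v|_{L^2}$ does not decay in $\ell$, so the square-sum over $\ell$ that defines the $H^s$ norm diverges. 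Your scheme therefore yields only a $B^s_{2,\infty}$ bound for this piece, whereas \eqref{r:para2} claims $H^s$. This low--high (paraproduct-in-$v$) piece is precisely the non-trivial content of the cited Theorem 5.2.8 in \cite{M}; to close it one needs an additional ingredient, for instance the pointwise bound $|S_{\ell}v(x)|\lesssim (Mv)(x)$ by the Hardy--Littlewood maximal function combined with a Carleson-measure/square-function estimate for the family $\{2^{\ell s}\Delta_\ell b\}$ (using $\d^s_x b\in L^\infty\subset {\rm BMO}$), or else one simply invokes \cite{M} as the paper does. A further, minor, repair: you bound $|\Delta_\ell b|_{L^\infty}$ via the multiplier $\phi_\ell(D)|D|^{-s}$ acting on $|D|^s b$, which would require control of $\,| |D|^s b|_{L^\infty}$; this is not dominated by $|\d_x^s b|_{L^\infty}$ (Riesz transforms are unbounded on $L^\infty$), but the fix is routine: factor out the monomials $\xi^\alpha$, $|\alpha|=s$, instead of $|\xi|^s$, so that the annular multipliers act directly on the derivatives $\d^\alpha_x b$.
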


\begin{proof} For \eqref{r:para1}, see Proposition 5.2.2 in \cite{M}; for \eqref{r:para2} see Theorem 5.2.8 in \cite{M}.
\end{proof}
\subsection{Product laws in weighted Sobolev spaces}  \label{app:products}

{\it The need for product laws arises from the semilinear nature of the equations \eqref{0}.}

\medskip

 Given $u, v \in H^s \cap L^\infty,$ with $s \geq 0,$ there holds
 \begin{equation} \label{product}
  \| u v \|_{\e,s} \leq C \big( |u |_{L^\infty} \| v \|_{\e,s} + |v |_{L^\infty} \| u \|_{\e,s}\big),
 \end{equation}
 where $C > 0$ does not depend on $u,v.$
 Estimate \eqref{product}, used in connection with the Sobolev embedding $H^s \hookrightarrow L^\infty,$ for $s > d/2,$ gives a product law in $\| \cdot \|_{\e,s}.$ A problem with \eqref{product} is that the Sobolev embedding has a large norm $\sim \e^{-d/2}$ when $H^s$ is equipped with $\| \cdot \|_{\e,s}.$

 A way around this difficulty is given by the following estimate:
 \begin{equation} \label{product2}
 \| u v \|_{\e,s} \leq C \big( |u |_{L^\infty} \| v \|_{\e,s} + |(\e \d_x)^s u|_{L^\infty} |v|_{L^2}\big),
 \end{equation}
 where $C > 0$ does not depend on $u,v.$ Estimate \eqref{product2} is a direct consequence of Proposition \ref{prop:action} and estimate \eqref{r:para2} in Proposition \ref{prop:remainder}.

\section{An integral representation formula} \label{app:duh}

 We adapt to the present context an integral representation formula introduced in \cite{T4}. Consider the initial value problem
 \begin{equation} \label{buff01}
  \begin{aligned} \d_t u + \frac{1}{\sqrt \e} \ope(M) u  = f, \qquad u(0) \in H^s,\end{aligned}
  \end{equation}
 where $f \in L^\infty([0,T_\star |\ln \e|],H^s(\R^d)),$ for some $T_\star > 0.$
We assume that $M = M(\e,t,x,\xi)$ is a family of matrix-valued, time-dependent symbols as follows:

 \begin{assump} \phantomsection \label{ass:B} The family of symbols $M(\e,t,x,\xi) \in S^0$ is constant in $x$ outside the ball $\{ |x| \leq x_\star\},$ for some $x_\star > 0$ which does not depend on $(\e,t,\xi).$ Besides, there holds the uniform bounds
$$ \langle \xi \rangle^{|\b|} \big| \d_x^\a \d_\xi^\b M(\e,t,x,\xi) \big| \leq C_{\a\b} < \infty, \qquad \a, \b \in \N^d,$$
uniformly in $\e \in (0,\e_0),$ $t \in[0, T_\star |\ln \e|],$ and $(x,\xi) \in \R^{2d}.$
\end{assump}
 Above and below, multi-indices $\a$ for $x$-derivatives are restricted to $|\a| \leq \bar s,$ where $\bar s$ is the spatial regularity index introduced in Appendix {\rm \ref{app:symbols}.}

\medskip

We define the {\it flow} $S_0$ of $\e^{-1/2} M$ as the solution to the following system of linear ordinary differential equations, $0 \leq \t \leq t \leq T_0 |\ln \e|:$
 \begin{equation} \label{resolvent0}\d_t S_0(\t;t) + \frac{1}{\sqrt\e} M S_0(\t;t) = 0, \qquad S_0(\t;\t) = \mbox{Id}.\end{equation}
 For $S_0$ we assume an exponential growth in time:

 \begin{assump}\phantomsection \label{ass:BS}
There holds for some $\g^+ > 0,$ for all $\a,$
\be
\label{summ-cor-B} |\d_x^\a S_0(\t,t)| \lesssim |\ln \e|^* \exp\big( (t - \t) \g^+ \big),\ee
 uniformly in $0 \leq \t \leq t \leq T_\star |\ln \e|,$ where $|\ln \e|^* = |\ln \e|^{N_*}$ for some $N_* > 0$ independent of $(\e,\t,t,x,\xi).$
\end{assump}

In \eqref{summ-cor-B}, we do not assume that $x$-derivatives of $S_0$ lead to losses in powers of $\e,$ in spite of the $1/\sqrt \e$ prefactor in front of $M$ in \eqref{resolvent0}. This assumption is tailored for our application to $M$ defined by \eqref{M-spelled-out}, in which $x$-dependent terms have a $\sqrt \e$ prefactor.

\medskip

We introduce correctors $\{ S_q\}_{1 \leq q \leq q_0},$ defined
  as the solutions of the triangular system of linear ordinary differential equations
 \begin{equation} \label{resolventk}
 \left\{\begin{aligned} \d_t S_q + \frac{1}{\sqrt \e} M S_q + \sum_{1 \leq |\a| \leq [(q+1)/2]} \frac{(- i)^{|\a|}}{|\a|!}  \d_\xi^\a M \d_x^\a S_{q+1-2|\a|} & = 0, \\  S_{q}(\t;\t) & = 0. \end{aligned}\right.
  \end{equation}

 \begin{lemm}\phantomsection \label{lem:bd-S} Under Assumptions {\rm \ref{ass:B}} and {\rm \ref{ass:BS}}, there holds, for all
 $q \in [0,q_0],$ all $\a, \b,$ the bounds, for $0 \leq \t \leq t \leq T_\star |\ln \e|:$
  $$ \langle \xi \rangle^{|\b|} |\d_x^\a \d_\xi^\b S_q(\t;t) | \lesssim \e^{-|\b|/2} |\ln \e|^* \exp( (t - \t) \g^+).$$
 \end{lemm}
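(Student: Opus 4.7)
\medskip

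\textbf{Proof proposal for Lemma \ref{lem:bd-S}.}

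My plan is a double induction, first on $|\beta|$ for $S_0$, then on $q$ for the correctors $S_q$, the main device in both being Duhamel's formula with propagator $S_0$ and exploitation of the symbol gain $\partial_\xi^\gamma M = O(\langle \xi \rangle^{-|\gamma|})$ from Assumption \ref{ass:B}. The base case is furnished directly by Assumption \ref{ass:BS}: for $q=0$ and $|\beta|=0$, the bound is exactly \eqref{summ-cor-B}. For $x$-derivatives, I note that $\partial_x^\alpha$ commutes with none of the operations but, since Assumption \ref{ass:BS} asserts that $x$-derivatives of $S_0$ carry no loss in $\e$, and since Assumption \ref{ass:B} gives the same for derivatives of $M$, every $\partial_x^\alpha$ applied to a Duhamel integrand can be distributed by Leibniz without inflating powers of $\e^{-1/2}$; I will therefore suppress $\alpha$-indices in the remainder of the sketch.

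For the induction on $|\beta|$ with $q=0$, differentiating \eqref{resolvent0} yields, by Leibniz,
\[
\partial_t (\partial_\xi^\beta S_0) + \frac{1}{\sqrt\e} M\, \partial_\xi^\beta S_0 \;=\; - \frac{1}{\sqrt\e} \sum_{0 < \beta' \leq \beta} \binom{\beta}{\beta'} \partial_\xi^{\beta'} M \cdot \partial_\xi^{\beta-\beta'} S_0,
\]
with vanishing datum at $t=\tau$. The Duhamel formula against $S_0(t';t)$ then gives
\[
\partial_\xi^\beta S_0(\tau;t) \;=\; -\frac{1}{\sqrt\e} \int_\tau^t S_0(t';t) \sum_{0 < \beta'\leq\beta} \binom{\beta}{\beta'} \partial_\xi^{\beta'} M(t') \, \partial_\xi^{\beta-\beta'} S_0(\tau;t') \, dt'.
\]
Using $|\partial_\xi^{\beta'} M| \lesssim \langle\xi\rangle^{-|\beta'|}$, the induction hypothesis $|\partial_\xi^{\beta-\beta'}S_0| \lesssim \e^{-(|\beta|-|\beta'|)/2} \langle\xi\rangle^{-(|\beta|-|\beta'|)} |\ln\e|^* e^{(t-\tau)\gamma^+}$, and the bound $|S_0(t';t)| \lesssim |\ln\e|^* e^{(t-t')\gamma^+}$, the worst term (corresponding to $|\beta'|=1$) delivers a factor $\e^{-1/2}\cdot \e^{-(|\beta|-1)/2} = \e^{-|\beta|/2}$ together with $\langle\xi\rangle^{-|\beta|}$; the time integral over an interval of length $\leq T_\star |\ln\e|$ contributes only a polylogarithmic factor absorbed in $|\ln\e|^*$. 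This closes the induction for $S_0$.

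For $q\geq 1$, I induct on $q$, treating $|\beta|$ as a secondary induction parameter at each level. The equation \eqref{resolventk} expresses $S_q$ via Duhamel as
\[
S_q(\tau;t) \;=\; -\int_\tau^t S_0(t';t) \sum_{1\leq |\alpha| \leq [(q+1)/2]} \frac{(-i)^{|\alpha|}}{|\alpha|!} \, \partial_\xi^\alpha M(t') \, \partial_x^\alpha S_{q+1-2|\alpha|}(\tau;t') \, dt',
\]
with \emph{no} $\e^{-1/2}$ prefactor in front of the integral. Since every source term involves $S_{q'}$ with $q' = q+1-2|\alpha| \leq q-1$, the outer induction hypothesis applies. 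For the $|\beta|=0$ case, combining the Assumption \ref{ass:BS} bound on $S_0$ with $|\partial_\xi^\alpha M| \lesssim \langle\xi\rangle^{-|\alpha|}$ and the inductive bound on $S_{q-1}, S_{q-3}, \dots$ yields $|\partial_x^\alpha S_q| \lesssim |\ln\e|^* e^{(t-\tau)\gamma^+}$. For $|\beta|\geq 1$, I differentiate \eqref{resolventk} in $\xi$ and re-apply Duhamel; the $\e^{-1/2}$ factor attached to $M S_q$ produces the same $\e^{-|\beta|/2}\langle\xi\rangle^{-|\beta|}$ accumulation mechanism as for $S_0$, while derivatives landing on the explicit source $\partial_\xi^\alpha M \partial_x^\alpha S_{q+1-2|\alpha|}$ are already controlled by the previous inductive levels.

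The main (mild) obstacle is combinatorial bookkeeping: writing $\partial_\xi^\beta S_q$ as a multiply-nested Duhamel iterate and checking that, across all ways of distributing the $|\beta|$ derivatives among $M$-factors and $S_0$-factors, the total loss never exceeds $\e^{-|\beta|/2}\langle\xi\rangle^{-|\beta|}$. The bookkeeping is governed by the inequality $\tfrac{1}{2} + \tfrac{|\beta|-|\beta'|}{2} \leq \tfrac{|\beta|}{2}$, valid as soon as $|\beta'|\geq 1$, which ensures that every term in the Leibniz sum obeys the target bound; the polylogarithmic factors coming from the Gronwall-type iteration over $t\in[0,T_\star|\ln\e|]$ are collectively absorbed into the notation $|\ln\e|^*$ by enlarging the exponent $N_*$ a finite number of times depending only on $q_0$ and $|\beta|$.
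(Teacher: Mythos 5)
Your argument is correct and follows essentially the same route as the paper: the Duhamel representation of $S_q$ against the flow $S_0$ (the paper's formula \eqref{sq}), induction on $q$ with base case Assumption \ref{ass:BS}, and the observation that each $\xi$-derivative costs $\e^{-1/2}$ (from the $1/\sqrt\e$ prefactor on $M$) while gaining $\langle\xi\rangle^{-1}$ from $\d_\xi M\in S^{-1}$-type decay, with time integrals over $[0,T_\star|\ln\e|]$ absorbed into $|\ln\e|^*$. Your write-up merely makes explicit the Leibniz/induction bookkeeping that the paper leaves as "proved similarly by induction."
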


 \begin{proof}  By \eqref{resolvent0} and \eqref{resolventk}, there holds for $q \geq 1$
 \begin{equation} \label{sq} S_q(\t;t) =  \sum_{1 \leq |\a| \leq [(q+1)/2]} \frac{(- i)^{|\a|}}{|\a|!}  \int_{\t}^t S_0(t';t)  \d_\xi^\a M(t') \d_x^\a S_{q+1-2|\a|}(\t;t') \, dt'.
\end{equation}
From there, we see immediately that the bound $|\d_x^\a S_q| \lesssim \exp((t - \t) \g^+),$ which holds true for $q = 0$ by Assumption \ref{ass:BS}, propagates from $q$ to $q+1.$ The case $|\b|  > 0$ is proved similarly by induction. The loss of half a power of $\e$ with each $\xi$-derivative comes of course from the prefactor $1/\sqrt \e$ in front of $M$ in \eqref{resolvent0}.
\end{proof}

 \begin{lemm}\phantomsection \label{lem:bd-actionS} Under Assumptions {\rm \ref{ass:B}} and {\rm \ref{ass:BS}}, there holds, for all
 $q \in [0,q_0],$ all $u \in L^2,$ the bound
   \begin{equation} \label{action-S1} \| \op_\e^\psi(S_q(\t;t)) u \|_{\e,s} \lesssim |\ln \e|^* e^{(t - \t) \g^+} \| u \|_{\e,s}, \qquad 0 \leq \t \leq t \leq T_\star |\ln \e|.
 \end{equation}
 \end{lemm}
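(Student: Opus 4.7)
\medskip

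The plan is to apply the $L^1$-in-$x$ action result of Proposition \ref{prop:actionH} rather than the $L^\infty$-based Proposition \ref{prop:action}. A direct application of the latter would require $d$ derivatives in $\xi$ of $S_q$, incurring a catastrophic loss of $\e^{-d/2}$ via Lemma \ref{lem:bd-S}; Proposition \ref{prop:actionH} instead demands spatial $L^1$ norms of the symbol and its $x$-derivatives, which we will control by a compact-support argument grounded in Assumption \ref{ass:B}.

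\medskip

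First we would decompose each $S_q$ spatially. Since $M(\e,t,\cdot,\xi)$ is constant in $x$ outside the ball $B_\star := \{|x|\leq x_\star\}$, $S_0(\tau;t,x,\xi)$ is itself $x$-independent there, so we write $S_0 = S_0^\infty(\xi) + \tilde S_0(x,\xi)$ with $\mathrm{supp}_x\,\tilde S_0\subset B_\star$. For $q\geq 1$, the formula
$$
S_q(\tau;t) = \sum_{1\leq |\a|\leq [(q+1)/2]} \frac{(-i)^{|\a|}}{|\a|!}\int_\tau^t S_0(t';t)\,\d_\xi^\a M(t')\,\d_x^\a S_{q+1-2|\a|}(\tau;t')\,dt'
$$
shows by induction that $S_q$ is supported in $B_\star$, since every summand contains $\d_x^\a S_{q+1-2|\a|}$ with $|\a|\geq 1$, which vanishes outside $B_\star$ by the preceding step. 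Set $\tilde S_q := S_q$ for $q\geq 1$. For the Fourier-multiplier piece $S_0^\infty$, pseudo- and para-differential quantizations coincide (Remark \ref{para:tensor}), so \eqref{action:fourier-mult} together with Assumption \ref{ass:BS} gives $\|\op_\e^\psi(S_0^\infty)u\|_{\e,s}\lesssim |\ln\e|^* e^{(t-\tau)\g^+}\|u\|_{\e,s}$.

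\medskip

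Next we would apply Proposition \ref{prop:actionH} to $\op_\e(\tilde S_q)$ with a fixed integer $r\geq d/2$ (which is admissible provided the ambient Sobolev index $s$ is large enough, as granted by Assumption \ref{ass-u-a}). The first sum in \eqref{bd:h} involves $\sup_\xi |\d_x^{\a+\b}\tilde S_q(\cdot,\xi)|_{L^1(\R^d)}$; by compact support and Lemma \ref{lem:bd-S},
$$
\sup_\xi |\d_x^{\a+\b}\tilde S_q(\cdot,\xi)|_{L^1(\R^d)} \leq |B_\star|\cdot \sup_{x,\xi}|\d_x^{\a+\b}\tilde S_q(x,\xi)| \lesssim |\ln\e|^*\,e^{(t-\tau)\g^+}.
$$
The remainder term $\e^r\,{\bf M}^0_{r,d}(\tilde S_q)\,\|u\|_{\e,s-r}$ satisfies ${\bf M}^0_{r,d}(\tilde S_q)\lesssim \e^{-d/2}|\ln\e|^*e^{(t-\tau)\g^+}$ again by Lemma \ref{lem:bd-S}, so this contribution is $\e^{r-d/2}|\ln\e|^*e^{(t-\tau)\g^+}\|u\|_{\e,s-r}\lesssim |\ln\e|^*e^{(t-\tau)\g^+}\|u\|_{\e,s}$ as soon as $r\geq d/2$. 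This establishes the claimed bound for the pseudo-differential quantization.

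\medskip

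The main obstacle will be to transfer the bound from $\op_\e(\tilde S_q)$ to $\op_\e^\psi(\tilde S_q)$. By Remark \ref{rem:para}, the classical symbol of $\op_\e^\psi(\tilde S_q)$ is ${\bf S}_q(x,\xi) = \big({\mathcal F}^{-1}\psi \star \tilde S_q(\e\,\cdot,\e\xi)\big)(x/\e)$, where the convolution in $x$ is against a kernel whose $L^1$ norms are controlled uniformly in $\xi$ by Remark \ref{rem:cut-off}. Convolution with an $L^1$-kernel preserves both spatial $L^1$ norms of $x$-derivatives (up to constants depending only on $\psi$) and the $\xi$-derivative estimates of Lemma \ref{lem:bd-S}; it does widen the $x$-support slightly but this enlargement is uniform in $\e$, so the compact-support argument still applies. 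The same chain of estimates then yields the desired bound for $\op_\e^\psi(\tilde S_q) = \op_1({\bf S}_q)$, completing the proof.
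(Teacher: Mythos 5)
Your proof is correct and essentially the same as the paper's: you split off the $x$-independent part of $S_0$ (the paper uses a cut-off $\varphi_\star$ equal to one on $\{|x|\leq x_\star\}$ instead of the decomposition $S_0=S_0^\infty+\tilde S_0$), observe from \eqref{sq} that the correctors $S_q,$ $q\geq 1,$ vanish outside that ball, and apply Proposition \ref{prop:actionH} with the spatial $L^1$ norms controlled by compact support together with Assumption \ref{ass:BS} and Lemma \ref{lem:bd-S}, choosing $r$ large enough to absorb the $\e^{-\cdot/2}$ loss in the remainder term. Your final transfer paragraph is superfluous (and slightly inaccurate, since convolution with the Schwartz kernel ${\mathcal F}^{-1}\psi$ does not preserve compact support, though the $L^1$ bounds do pass through by Young's inequality): Proposition \ref{prop:actionH} is already stated for the para-differential quantization $\ope,$ so it applies directly to $\tilde S_q,$ exactly as in the paper.
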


\begin{proof} We start with $q = 0.$ Let $\varphi_\star \in C^\infty_c(\R^d_x),$ with $0 \leq \varphi_\star \leq 1,$ and such that $\varphi_\star \equiv 1$ on $\{ |x| \leq x_\star\},$ where $x_\star$ is introduced in Assumption \ref{ass:B}. Then, $(1 - \varphi_\star) M,$ and consequently $(1 - \varphi_\star) S_0$ are independent of $x.$ We can thus apply estimate \eqref{est:fourier-mult} for Fourier multipliers to bound the action of $(1 - \varphi_\star) S_0:$
 $$ \| \op_\e((1 - \varphi_\star) S_0) u\|_{\e,s} \leq \| u \|_{\e,s},$$
 since $|(1 - \varphi_\star) S_0| \leq 1.$ For $\varphi_\star S_0,$ we use Proposition \ref{prop:actionH}:
 $$ \| \ope(\varphi_\star S_0) u \|_{\e,s} \lesssim \sum_{\begin{smallmatrix} |\a| \leq d+1 \\ |\b| \leq r -1\end{smallmatrix}} \e^\b \sup_{\xi \in \R^d} | \d_x^{\a + \b} (\varphi_\star S_0)(\cdot,\xi) |_{L^1(\R^d_x)} \| u \|_{\e,s} + \e^r {\bf M}^0_{r,d^*}(\varphi_\star S_0) \| u \|_{\e,s-r}.$$
For the $L^1$ norms, we use compactness of the support of $\varphi_\star$ and Assumption \ref{ass:BS}:
 $$   | \d_x^{\a} (\varphi_\star S_0)(\cdot,\xi) |_{L^1(\R^d_x)} \lesssim \sup_{|\a'| \leq |\a|} |\d_x^{\a'} S_0|_{L^\infty} \lesssim |\ln \e|^* \exp\big( (t - \t) \g^+ \big).$$
We bound the remainder with Lemma \ref{lem:bd-S}:
 $$ \e^r {\bf M}^0_{r,d_*}(\varphi_\star S_0) \lesssim \e^{r - d_*/2} |\ln \e|^* e^{(t - \t)\g^+}.$$
 If $r$ is large enough, then $r - d_*/2 = r/2 - d - 1/2 > 0,$ and we conclude that \eqref{action-S1} holds for $q = 0.$

 From \eqref{sq}, we see that the correctors $S_q,$ for $q \geq 1,$ vanish identically outside $\{ |x| \leq x_\star\}.$ Thus it suffices to consider $\varphi_\star S_q,$ and we use Proposition \ref{prop:actionH} again.
 \end{proof}

We let
$$
S := \sum_{0 \leq q \leq q_0} \e^{q/2} S_q.
$$
 The following Lemma expresses the fact that $\ope(S)$ is an approximate solution operator:
 \begin{lemm}\phantomsection \label{lem:duh-remainder} Under Assumptions {\rm \ref{ass:B}} and {\rm \ref{ass:BS}}, there holds
  \begin{equation} \label{tildeS1-duh} \op_\e^\psi(\d_t S) = \op_\e^\psi(M) \op_\e^\psi(S) + \rho_S,\end{equation}
  where for $0 \leq \t \leq t \leq T_\star |\ln \e|,$ for all $s \in \R$ and $u \in H^s:$
  \begin{equation} \label{tilde-remainder} \| \rho_S(\t;t) u \|_{\e,s} \lesssim \e^{q_0/4 - (d+2)} |\ln \e|^* \exp\big( (t - \t) \g^+\big) \| u \|_{\e,s-1}.\end{equation}
  \end{lemm}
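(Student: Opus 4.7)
The plan is to derive \eqref{tildeS1-duh} by comparing the expansion of $\d_t S$ coming from the cascade \eqref{resolvent0}--\eqref{resolventk} with the symbolic composition of $M$ and $S$ given by Proposition \ref{prop:composition}. Summing the equations for the $S_q$ weighted by $\e^{q/2}$ and performing the change of indices $q' = q + 1 - 2|\a|$ in the double sum, I obtain
\[
\d_t S = - \frac{1}{\sqrt\e} M S - \frac{1}{\sqrt\e}\sum_{|\a|\geq 1}\e^{|\a|}\frac{(-i)^{|\a|}}{|\a|!}\d_\xi^\a M \, \d_x^\a S + \mathcal{E},
\]
where the residual $\mathcal{E}$ collects precisely those terms with $q' + 2|\a| \geq q_0 + 2$, i.e.\ those that overshoot the available correctors. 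The first two terms on the right identify with $-\e^{-1/2}$ times the symbolic composition $M \sharp_\e S$ truncated at order $[q_0/2] + 1$, modulo the same residual $\mathcal E$.

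Applying Proposition \ref{prop:composition} at a fixed expansion order $r$ yields $\op_\e^\psi(M)\op_\e^\psi(S) = \op_\e^\psi(M\sharp_\e S) + \e^r R^\psi_r(M, S)$. Combining this with the symbolic identity above then gives the announced formula \eqref{tildeS1-duh}, with $\rho_S$ collecting $\op_\e^\psi(\mathcal E)$ together with the composition remainder $\e^r R^\psi_r(M,S)$ (the $\sqrt\e$ prefactor is absorbed in the convention of the lemma's statement).

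The main obstacle is to convert the symbolic bound on $\rho_S$ into the operator bound \eqref{tilde-remainder}. Each $\xi$-derivative on a corrector $S_q$ costs a factor $\e^{-1/2}$ by Lemma \ref{lem:bd-S}, so after $r$ derivatives the composition remainder $\e^r R^\psi_r$ has net symbolic size $\e^{r/2}$ times $|\ln\e|^* e^{(t-\t)\g^+}$; likewise, every term in $\mathcal E$ carries at least $\e^{-1/2 + |\a| + q'/2}$ with $q'+2|\a|\geq q_0+2$, giving a net $\e^{q_0/4}$ after balancing $\e$-powers against the $\xi$-derivative losses on $S_{q'}$. To move from symbols to operators I would follow the splitting used in the proof of Lemma \ref{lem:bd-actionS}: the contributions outside $\{|x|\leq x_\star\}$ come from $S_0$ alone (the correctors $S_q$ for $q\geq 1$ are compactly supported in $x$ by the integral representation \eqref{sq}) and reduce to Fourier multipliers controlled by \eqref{action:fourier-mult}, while the spatially compactly supported pieces are handled by H\"ormander's $L^1$-based estimate Proposition \ref{prop:actionH}. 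The latter is the source of the $\e^{-(d+2)}$ loss in \eqref{tilde-remainder}, and an optimal choice $r \approx q_0/2$ then balances the two error terms, yielding the exponent $q_0/4 - (d+2)$ at the cost of a single power of $\Lambda$ (hence the $\| \cdot \|_{\e,s-1}$ norm on the right-hand side of \eqref{tilde-remainder}).
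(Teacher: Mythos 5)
Your proposal is correct, but its bookkeeping differs from the paper's in a way worth spelling out. The paper never produces a leftover symbolic error $\mathcal{E}$: it applies Proposition \ref{prop:composition} separately to each product $\ope(M)\ope(S_q)$ with a $q$-dependent expansion order $1+[(q_0-q+1)/2]$, chosen precisely so that, after the change of index $q'=q-1+2|\a|$, the expansion terms cancel identically against the coupling terms $\sum \e^{q/2}\frac{(-i)^{|\a|}}{|\a|!}\ope\big(\d_\xi^\a M\,\d_x^\a S_{q+1-2|\a|}\big)$ coming from the cascade \eqref{resolventk}. Hence in the paper $\rho_S$ consists purely of composition remainders $R^\psi_{\tilde q}(M,S_q)$, bounded by Proposition \ref{prop:composition} and Lemma \ref{lem:bd-S} alone; no action estimate (in particular no Proposition \ref{prop:actionH}) enters this proof, and the $\e^{-(d+2)}$ loss comes from the $\xi$-derivatives of $S_q$ in the seminorm ${\bf M}^0_{\tilde q,\,2d+1+\tilde q}(S_q)$. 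Your route --- a single fixed-order composition applied to the pair $(M,S)$, with the mismatch between the truncated $M\sharp_\e S$ and the cascade collected in $\mathcal{E}$ --- also works, but it then requires a separate operator bound for $\ope(\mathcal{E})$, and your appeal to Proposition \ref{prop:actionH} together with the support observation is the right way to supply it: every term of $\mathcal{E}$ contains $\d_x^\a S_{q'}$ with $|\a|\geq 1$, which vanishes outside $\{|x|\leq x_\star\}$ for $q'=0$ as well as for $q'\geq 1$, so $\mathcal{E}$ is compactly supported in $x$ and the $L^1$ norms in \eqref{bd:h} are finite. Two corrections to your accounting, neither of which is a gap: in $\mathcal{E}$ the $\xi$-derivatives fall on $M$, not on $S_{q'}$, so each term has symbolic size $O(\e^{(q_0+1)/2})$, better than the $\e^{q_0/4}$ you claim; and the $\e^{-(d+2)}$ loss is not produced by Proposition \ref{prop:actionH} (whose leading term costs no powers of $\e$) but, in your scheme, only by the $d$ $\xi$-derivatives of $S$ in the composition remainder $R^\psi_r(M,S)$, i.e.\ a loss of order $\e^{-d/2}$. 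So your argument in fact proves the stated bound with room to spare; what you lose relative to the paper is the exact cancellation (and hence the leaner error term), what you gain is a conceptually simpler single application of the composition theorem.
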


  \begin{proof}
  By definition of $S$ and \eqref{resolventk},
  \begin{equation} \label{c1}
   - \d_t \ope(S) = {\rm I} + {\rm II},
   \end{equation}
  with the notation
 $$ \begin{aligned}
 {\rm I} & := \sum_{0 \leq q \le q_0} \e^{(q-1)/2} \ope(M S_q), \\ {\rm II} & :=  \sum_{\begin{smallmatrix} 0 \leq q \leq q_0 \\ 1 \leq |\a| \leq [(q+1)/2] \end{smallmatrix}} \e^{q/2} \frac{(- i)^{|\a|}}{|\a|!} \ope\Big(    \d_\xi^\a M \d_x^\a S_{q+1-2|\a|}\Big),
 \end{aligned}
$$
where by convention a sum over the empty set is zero. By Proposition \ref{prop:composition},
$$
 \begin{aligned} \ope(M S_q)  = \ope(M) \ope(S_q) & - \sum_{1 \leq |\a| \leq [(q_0 - q+1)/2]} \e^{|\a|} \frac{(-i)^{|\a|}}{|\a|!}
 \ope\Big(\d_\xi^\a M \d_x^\a S_q\Big) \\ & - \e^{1 + [(q_0 - q+1)/2]} R^\psi_{1 + [(q_0 - q+1)/2]}(M,S_q),
\end{aligned}$$
so that
$$
{\rm I}  = \e^{-1/2} \ope(M) \ope(S) - \sum_{\begin{smallmatrix} 0
\leq q \leq q_0-1 \\1 \leq |\a| \leq [(q_0 - q+1)/2]
\end{smallmatrix}} \e^{(q-1)/2 + |\a|} \frac{(-i)^{|\a|}}{|\a|!}
\ope\Big( \d_\xi^\a M \d_x^\a S_q\Big) - \rho_S,
$$
with
$$ \rho_S := - \sum_{0 \leq q \leq q_0}\e^{(1 + q)/2 + [(q_0 - q+1)/2]} R^\psi_{1 + [(q_0 - q+1)/2]}(M,S_q).$$
Changing variables in the double sum, we find
$$ {\rm I} = \e^{-1/2} \ope(M) \ope(S) - \sum_{\begin{smallmatrix} 1 \leq q' \leq  q_0  \\ 1 \leq |\a| \leq [(q'+1)/2]
\end{smallmatrix}} \e^{q'/2} \frac{(-i)^{|\a|}}{|\a|!} \ope\Big(\d_\xi^\a M \d_x^\a S_{q' +1 - 2 |\a|}\Big) - \rho_S,$$
hence
\begin{equation} \label{c2} {\rm I} + {\rm II} = \e^{-1/2} \ope(M) \ope(S) -
\rho_S.
\end{equation}
Identities \eqref{c1} and \eqref{c2} prove \eqref{tildeS1-duh}. From Proposition \ref{prop:composition} and Assumption \ref{ass:B}, we deduce, for $\tilde q := 1 + [(q_0  - q +1)/2]:$
 $$ \| R^\psi_{\tilde q}(M,S_q) u \|_{\e,s} \lesssim {\bf M}^0_{\tilde q ,2d+ 1 + \tilde q}(S_q) \| u \|_{\e,s - \tilde q},$$
 and with Lemma \ref{lem:bd-S} this implies
 $$ \| R^\psi_{\tilde q}(M,S_q) u \|_{\e,s} \lesssim \e^{- (d + 2) - (q_0 - q)/4} \exp\big( (t - \t) \g^+\big) \| u \|_{\e,s - \tilde q},$$
 whence \eqref{tilde-remainder} follows.
 \end{proof}

 The following theorem gives an integral representation formula for a solution to \eqref{buff01} in terms in $\ope(S):$

  \begin{theo}\phantomsection \label{th:duh} Under Assumptions {\rm \ref{ass:B}} and {\rm \ref{ass:BS}}, the initial value problem \eqref{buff01} has a unique solution $u \in C^0([0, T_\star |\ln \e|, H^s(\R^d)),$ which satisfies the representation
  \begin{equation} \label{buff0.01} u = \op_\e^\psi(S(0;t)) u(0) + \int_0^t \op_\e^\psi(S(t';t))(\Id + \e^\zeta R_1) \big( f + \e^{\zeta} R_2(\cdot) u(0)\big)(t')\, dt',
  \end{equation}
  for some $\zeta > 0,$ where for all $v \in L^\infty H^s,$ all $w \in H^s,$ 
  \begin{equation} \label{bd-R121}
  \| (R_1 v)(t) \|_{\e,s} \lesssim |\ln \e|^* \sup_{0 \leq t' \leq T_\star |\ln \e|} \| v(t') \|_{\e,s}, \quad  \| R_2(t) w \|_{\e,s} \lesssim |\ln \e|^{*} \| w \|_{\e,s},
  \end{equation}
uniformly in $0 \leq t \leq T_\star |\ln \e|.$ 
 \end{theo}

The parameter $\zeta$ is defined in \eqref{def:zeta} below. There we see that the larger $T_\star$ and $\g^+,$ the larger $q_0$ needs to be. In other words, given an observation time $T_\star,$ the spatial regularity that we need for $M$ is a function of $|M|_{L^\infty}.$ Indeed, spatial regularity is connected with $q_0,$ since the proof is essentially a Taylor expansion at order $q_0,$ and the growth rate $\g^+$ is connected with the $L^\infty$ norm of $M:$ think of $M$ as being independent of $t;$ then $S = \exp(-t M/\sqrt \e),$ and $\g^+$ indeed appears as the sup of $M.$

\begin{proof} Let $g \in L^\infty([0,T_\star |\ln \e|,H^s).$ By Lemma \ref{lem:duh-remainder}, the map
 $$
 u := \op_\e^\psi( S(0;t)) u(0) + \int_0^t \op_\e^\psi( S(t';t)) g(t') \, dt'
 $$
   solves \eqref{buff01} if and only if there holds for all $0 \leq t \leq T_1 |\ln \e|,$
\begin{equation} \label{cond-g1}
  (\Id + r) g(t)  = f(t) - \rho_S(0;t) u(0),
   \end{equation}
 where $r$ is the linear integral operator
  $$r: \qquad v \in L^\infty H^s \to \Big(t \to \int_0^t  \rho_S(\t;t) v(\t) \, d\t\Big) \in L^\infty H^s,$$
 and $\rho_S$ is the remainder in Lemma \ref{lem:duh-remainder}. We now choose the index $q_0$ that appears in the definition of $S$ such that
 \begin{equation} \label{def:zeta} \zeta := \frac{q_0}{4} - (d+2) - T_1 \g^+ > 0.
 \end{equation}
 Then, by estimate \eqref{tilde-remainder}, for all $0 \leq t \leq T_\star |\ln \e|,$ the operator $r$ maps $L^\infty(0, T_\star |\ln \e|, H^s)$ to itself, with the bound
  \begin{equation} \begin{aligned} \label{bd-rf1} \sup_{0 \leq t \leq T(\e)} \| (r v)(t) \|_{\e,s}
   & \lesssim \e^{\zeta} \sup_{0 \leq t \leq T(\e)} \| v \|_{\e,s} \end{aligned}
  \end{equation}
  In particular, for $\e$ small enough the operator $\Id + r$ is invertible,
  with inverse $(\Id + r)^{-1}$ bounded as an operator from $L^\infty([0,T_\star |\ln \e|], H^s_\e)$ to itself, uniformly in $\e.$ As a consequence,
  we can solve \eqref{cond-g1} in $L^\infty([0,T_\star |\ln \e|], H^s_\e),$
 and obtain the representation formula \eqref{buff0.01}, with
 $$ \e^\zeta R_1 :=  (\Id + r)^{-1} - \Id, \qquad \e^\zeta R_2(t) := - \rho_S(0;t).$$
Bound \eqref{bd-R121} follows from \eqref{bd-rf1} and \eqref{tilde-remainder}. Since $M \in S^0,$ $\ope(M)$ is linear bounded $H^s \to H^s,$ hence uniqueness is a consequence of the Cauchy-Lipschitz theorem.
\end{proof}

\section{Bounds for the symbolic flow}\label{app:symb-bound}

We give here a proof of Proposition \ref{est-flow-sym} page \pageref{est-flow-sym}, which we reproduce below:

\begin{prop}\phantomsection\label{est-flow-sym-app}
For all $T > 0,$ all $0 \leq \t \leq t \leq T |\ln \e|,$ $\a \in \N^d,$ the solution $S_0$ to
\be \label{S:in app}
 \d_t S_0+\frac{1}{\sqrt\e}M S_0=0, \qquad S_0(\t,\t)=\Id,
\ee
with $M$ defined in \eqref{M-spelled-out},
 satisfies the bound
$$
 |\d_x^\a S_0(\t,t)| \lesssim |\ln \e|^* \exp(t \g^+),
 $$
where the growth rate $\g^+$ is defined in \eqref{def:g+}:
 $\dsp{\g^+ = |a|_{L^\infty} \, \big| \max_{\xi\in {\mathcal R}_{12}^h} \Re e \, \big(\G(\xi)^{1/2}\big)\big|.}$
 \end{prop}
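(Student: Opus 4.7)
The plan is to reduce the problem to an essentially $2\times 2$ system and then split the analysis according to the distance of the frequency $\xi$ from the resonant set ${\mathcal R}_{12}$.

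First I would exploit the block structure of $M$ in \eqref{M-spelled-out}: the lower-right $(J-2)N\times(J-2)N$ part is a purely imaginary diagonal Fourier multiplier, so the associated flow is unitary and contributes nothing to the upper bound beyond the factor $1$. Thus it suffices to bound the flow $S_{12}$ of the upper $2N\times 2N$ block
\[
M_{12} \;=\; \begin{pmatrix} i\chi_1\mu_1 & -\sqrt\e\,\tilde b_{12} \\ -\sqrt\e\,\tilde b_{21} & i\chi_1\mu_2 \end{pmatrix}.
\]
By Assumption~\ref{ass:transp}(iii) the interaction coefficients $\tilde b_{12},\tilde b_{21}$ are rank-one on the support of $\chi_0$. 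After changing basis in each $N$-dimensional block by the smooth change of frame provided by Lemma~\ref{symmetrizable} (applied to $b_{12}^+,b_{21}^-$), only a genuine $2\times 2$ sub-block couples two modes, the remaining $2(N-1)$ modes decouple into a purely imaginary diagonal system whose flow is bounded by $1$. This reduces the problem to estimating the flow of a $2\times 2$ matrix of the form
\[
\widehat M(t,x,\xi)\;=\; \begin{pmatrix} i\chi_1\mu_1 & -\sqrt\e\,g\,\chi_0\,m_{12} \\ -\sqrt\e\,g^*\chi_0\,m_{21} & i\chi_1\mu_2 \end{pmatrix},
\]
whose eigenvalues are given by \eqref{sp:symbol}, with $\Re e$ controlled by $\sqrt\e\,|\Re e\,\Gamma(\xi)^{1/2}|$ up to an error $O(\sqrt\e)$ when $\mu_1-\mu_2$ is small.

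Next, I would split $\xi$-space into a near-resonance region $\mathcal N_\e:=\{\,|\mu_1(\xi)-\mu_2(\xi)|\le \sqrt\e\,|\ln\e|^{N_0}\,\}$ and its complement. On $\mathcal N_\e^c$, the phase $\mu_1-\mu_2$ is bounded away from $0$ by $\sqrt\e\,|\ln\e|^{N_0}$, so a normal form reduction analogous to the one in Section~\ref{normal-form1} (dividing the off-diagonal entries by the phase) conjugates $\widehat M$ to a diagonal symbol modulo an $O(\sqrt\e/|\ln\e|^{N_0})$ error, with a conjugator $P_\e=\Id+O(\sqrt\e/|\ln\e|^{N_0})$ which is invertible and satisfies good symbolic estimates. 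The diagonal flow is purely oscillatory hence bounded by $1$; the error accumulates to at most $\exp(C\,t/|\ln\e|^{N_0})\le C|\ln\e|^\ast$ over $t\le T|\ln\e|$. On $\mathcal N_\e$, one freezes $(x,\xi)$ and bounds the matrix exponential $\exp(-(t-\tau)\widehat M(t_0,x,\xi_0)/\sqrt\e)$ directly: if the two eigenvalues $\nu_\pm$ are separated, the associated eigenprojectors can explode like $1/(\nu_+-\nu_-)\sim 1/\sqrt\e$, giving the $|\ln\e|^\ast$ loss but respecting the growth rate $|\Re e(\nu_\pm/\sqrt\e)|\le \g^+$; at coalescence points one instead uses a Jordan normal form, producing a polynomial prefactor $1+(t-\tau)\,|\ln\e|^\ast$, absorbed into $|\ln\e|^\ast$. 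A slow-variation argument (comparing $\widehat M(t,x,\xi)$ with $\widehat M(t_0,x_0,\xi_0)$ on a cell of size $\sqrt\e\,|\ln\e|^{-N_1}$ in $(t,\xi)$) shows that the frozen-coefficient estimate transfers to the true flow, at the cost of further $|\ln\e|^\ast$ factors.

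Finally, $x$-derivatives are handled by induction on $|\alpha|$: differentiating \eqref{S:in app} gives
\[
\d_t(\d_x^\alpha S_0)\;+\;\frac{1}{\sqrt\e} M\,\d_x^\alpha S_0 \;=\;-\frac{1}{\sqrt\e}\sum_{0<\beta\le\alpha}\binom{\alpha}{\beta}(\d_x^\beta M)\,\d_x^{\alpha-\beta}S_0.
\]
Since the $x$-dependence of $M$ sits only in the $\sqrt\e$-small off-diagonal entries (through $g(\sqrt\e t,x)$ and $\varphi_1(x)$), each factor $\d_x M$ is of size $\sqrt\e$ and the $1/\sqrt\e$ prefactor is compensated; Duhamel and the $|\alpha|=0$ bound, together with Gronwall on $[0,T|\ln\e|]$, propagate the estimate with an extra power of $|\ln\e|$ at each step.

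The main obstacle is the near-resonance region: the eigenprojectors are not uniformly bounded in $\e$, so one cannot just diagonalize, and the standard way to absorb the resulting $1/\sqrt\e$ into the time exponential uses the very rate $\g^+$ we are trying to establish. The key technical point is therefore to prove that, uniformly in the frozen parameters on $\mathcal N_\e$, the matrix $-\widehat M/\sqrt\e$ has numerical range with real part at most $\g^++O(1/|\ln\e|)$ (say by exhibiting an adapted Hermitian form that degenerates only mildly as eigenvalues coalesce), so that its exponential is controlled by $|\ln\e|^\ast\exp((t-\tau)\g^+)$.
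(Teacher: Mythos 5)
Your overall architecture (drop the unitary lower block, isolate the $2N\times 2N$ block, split frequencies according to whether $|\mu_1-\mu_2|\lessgtr\sqrt\e\,|\ln\e|^{N_0}$, treat the non-resonant region by eliminating the off-diagonal terms, then handle the time dependence of $g(\sqrt\e t,x)$ perturbatively and the $x$-derivatives by induction using $\d_x M=O(\sqrt\e)$) matches the paper's proof. But the proposal has a genuine gap exactly at the point that carries the whole proposition: the bound $|\ln\e|^{*}\exp((t-\t)\g^+)$ for the frozen-coefficient exponential on the near-resonant region. Your sketch there does not deliver it. Diagonalizing with eigenprojectors of size $1/(\nu_+-\nu_-)\sim\e^{-1/2}$ gives $\e^{-1/2}e^{t\g^+}$, and $\e^{-1/2}$ is \emph{not} absorbable into $|\ln\e|^{*}$ -- with that loss the representation formula of Appendix \ref{app:duh} and the endgame comparison in Section \ref{sec:end-insta} (where the remainder must carry an extra $\e^{1/2}$ relative to the lower bound) break down, and in any case the stated estimate is not proved. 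Your proposed remedy, ``exhibit an adapted Hermitian form in which the numerical range of $-\widehat M/\sqrt\e$ has real part $\le\g^++O(1/|\ln\e|)$,'' is precisely the assertion that has to be established and is left unconstructed; note that in the standard inner product the Hermitian part of $-\widehat M/\sqrt\e$ has norm of order $|B|_0|a|_{L^\infty}$, which is in general strictly larger than $\g^+$. The paper's Lemma \ref{lem:S-resonance} supplies the missing construction: take a Schur (unitary triangular) form of $\e^{-1/2}\tilde M_0$, conjugate by $Q_\e=\diag(1,|\ln\e|^4,\dots,|\ln\e|^{4(2N-1)})$ so that the super-diagonal entries become $O(|\ln\e|^{-2})$ at the price of an $|\ln\e|^{*}$ factor, and then a scalar energy estimate bounds the growth by $\max_i\Re e\,\tilde U_{ii}+O(|\ln\e|^{-2})$; the eigenvalue real parts are then controlled \emph{exactly} by $\Re e\,(\tr b_{12}b_{21})^{1/2}\le\g^+$, using the monotonicity of $a\mapsto\Re e\,(a+ib)^{1/2}$ under subtraction of the nonnegative number $(\mu_1-\mu_2)^2$. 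Your statement that the real parts are controlled ``up to an error $O(\sqrt\e)$'' would itself be fatal if taken literally, since after division by $\sqrt\e$ it would shift the growth rate by $O(1)$; the point is that no such error occurs.

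Two secondary remarks. First, your preliminary reduction to a genuine $2\times2$ system via Lemma \ref{symmetrizable} is not justified on the whole support of $\chi_0$: that lemma requires ${\rm tr}\,C_{12}C_{21}\neq0$ on $\bar\Omega$, which is not guaranteed on ${\mathcal R}_{12}^{2h}$ in the unstable case (only $\max_{{\mathcal R}_{12}}\Re e\,\G>0$ is assumed). The paper avoids any change of frame and instead computes the spectrum of the $2N\times2N$ block directly from the rank-one structure. Second, the tiling of $(t,\xi)$ into cells of size $\sqrt\e\,|\ln\e|^{-N_1}$ is unnecessary: $S_0$ is a family of ODEs in $t$ with $(x,\xi)$ as frozen parameters, so the only freezing needed is in time, and there the crude global estimate $M=M(\e,0,x,\xi)+\e M_1$ with $|M_1|\lesssim|\ln\e|$ over $[0,T|\ln\e|]$, fed into the Duhamel identity \eqref{rep:S0} and Gronwall, already suffices.
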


In a first step (Section \ref{sec:proof-autonomous}), we approximate $M$ by its value $M(\e,0,x,\xi)$ at $t = 0.$ Then the solution to \eqref{S:in app} is a matrix exponential. The general case follows by a simple perturbation argument (Section \ref{sec:app pert}).

\subsection{The autonomous case} \label{sec:proof-autonomous}

The matrix $M(\e,0,x,\xi)$ is block-diagonal:
\begin{equation*}
\begin{split}M(\e,0,x,\xi)&=\bp i \mu_1 &-\sqrt\e \, b_{12} &0&\cdots&0\\-\sqrt\e \,  b_{21} &i\mu_2&0&\cdots&0\\0&0&i \l_3&\cdots&0\\\vdots & \vdots & \vdots & \ddots & \vdots \\0&0&0&\cdots&i \l_J\ep
\end{split}
\end{equation*}
denoting $\mu_1 = \l_1(\cdot + k) - \o,$ $\mu_2 = \l_2,$ as in \eqref{def:mu}, and
$$
 b_{12}(x,\xi) := \varphi_1(x) a(x) \chi_0(\xi)  b_{12}^+(\xi), \qquad b_{21}(x,\xi) =   \varphi_1(x)  a(x)^* \chi_0(\xi) b_{21}^-(\xi).
 $$
Above, $b_{12}^+$ and $b_{21}^- \in \C^{N \times N}$ are the interaction coefficients associated with resonance $(1,2),$ as in \eqref{def:int-coeff}, $a$ is the initial amplitude \eqref{generic-data}, and $\varphi_1$ and $\chi_0$ are spatial and frequency cut-offs, respectively, as defined on page \pageref{intro cut-offs}.

We prove in this Section the bound:
\be
\label{summ1} \big| \exp\big(-\e^{-1/2} t M(\e,0,x,\xi) \big) \big|  \lesssim
|\ln \e|^* \exp(t \g^+).
\ee

\medskip

By reality of $\l_j,$ and the fact that the truncated interaction coefficients vanish identically outside $\mbox{supp}\,\varphi_1 \times \mbox{supp}\, \chi_0 \subset \mbox{supp}\,\varphi_1 \times {\mathcal R}_{12}^h,$ it suffices to prove the bound
\be \label{for:summ1} |\exp(-\e^{-1/2} t M_0)| \lesssim \exp(t \g^+), \qquad (x,\xi) \in \mbox{supp}\,\varphi_1 \times {\mathcal R}_{12}^h, \quad 0 \leq t \leq T |\ln \e|,
\ee
 where $M_0$ is the top left block of $M(\e,0,x,\xi):$
 $$ M_0 := \bp
i\mu_1 &-\sqrt\e
b_{12} \\-\sqrt\e
b_{21}&i\mu_2\ep \in \C^{2N \times 2N}.$$
The top left block of $M_0$ being a multiple of the identity, we can compute the characteristic polynomial of $M_0$ by blocks:
$$ \det( x \Id - M_0) = \det\big( (x-i\mu_1)(x-i\mu_2) \Id - \e b_{21}b_{12} \big), \qquad x \in \C.$$
The characteristic polynomial of $M_0$ has a zero at $x$ if and only if $(x-i\mu_1)(x-i\mu_2)$ is an eigenvalue
of $\e b_{21}b_{12}$. The ranks of $b_{12}$ and $b_{21}$ are at most one (Assumption \ref{ass:transp}(iii)), implying that the rank of $b_{21} b_{12}$ is at most one. Then the only possible nonzero
eigenvalue for $b_{21}b_{12}$ is its trace $\tr ( b_{12}b_{21})$, and
the spectrum of $M_0$ appears as
\be\label{eig-m}
i\mu_1,~~i\mu_2,~~\mu_{\pm}:=\frac{i}{2} \big(\mu_1 + \mu_2 \big) \pm
\frac{1}{2}
  \Big( 4 \e \tr b_{12} b_{21} - (\mu_1 - \mu_2)^2
  \Big)^{1/2},
\ee
with respective algebraic multiplicities $N-1,$ $N-1,$ one and one.



\begin{lemm}[Near the resonant set] \phantomsection\label{lem:S-resonance} If
$|\mu_1(\xi) - \mu_2(\xi)| \leq \sqrt{\e} |\ln \e|^2,$
 then there holds the bound $$|\exp(-\e^{-1/2} t M_0)| \lesssim |\ln \e|^* \exp(t \g^+),$$ for $(x,\xi) \in \mbox{supp}\,\varphi_1 \times {\mathcal R}_{12}^h,$ and $0 \leq t \leq T |\ln \e|.$
\end{lemm}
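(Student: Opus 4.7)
The plan is to exploit the near-block structure of $M_0$ to reduce the estimate to bounding a $2 \times 2$ matrix exponential, which I will compute in closed form via the algebraic identity $N^2 = \Delta\,\Id$ for the traceless part. The main obstacle will be the coalescence of the eigenvalues of this $2\times 2$ block on the singular locus $\{\Delta = 0\}$, at distance $O(\sqrt\e)$ from the resonance: any approach relying on diagonalization fails because the eigenprojectors blow up there, but the closed form sidesteps this since $\cosh(s\sqrt{\cdot})$ and $\sinh(s\sqrt{\cdot})/\sqrt{\cdot}$ are entire in their argument.

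First I will isolate an invariant $2$-dimensional subspace. Using Assumption~\ref{ass:transp}(iii), write $b_{12} = \vec e_1 \vec f_2^*$ and $b_{21} = \vec e_2 \vec f_1^*$, and let $c_{12}, c_{21}$ be the scalars such that $b_{12}\vec e_2 = c_{12}\vec e_1$ and $b_{21}\vec e_1 = c_{21}\vec e_2$, so that $\tr(b_{12}b_{21}) = c_{12}c_{21} = \varphi_1(x)^2 |a(x)|^2 \chi_0(\xi)^2 \G(\xi)$. The decomposition $\C^{2N} = V_1 \oplus V_2 \oplus V_3$ with $V_1 := \ker b_{21}\times\{0\}$, $V_2 := \{0\}\times\ker b_{12}$ and $V_3 := \mathrm{span}\{(\vec e_1,0),(0,\vec e_2)\}$ is $M_0$-invariant, with $M_0$ acting as $i\mu_j\,\Id$ on $V_j$ for $j\in\{1,2\}$ and, in the basis $\{(\vec e_1,0),(0,\vec e_2)\}$, as the $2\times 2$ matrix $\tilde M = \left(\begin{smallmatrix}i\mu_1 & -\sqrt\e c_{12}\\ -\sqrt\e c_{21} & i\mu_2\end{smallmatrix}\right)$ on $V_3$. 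The corresponding change of basis has condition number that is $O(1)$ in $\e$ on the support of $\chi_0$ (the degenerate case $c_{12}c_{21}=0$ leaves $\tilde M$ triangular with purely imaginary diagonal, yielding $|\exp(-\e^{-1/2}t\tilde M)| \lesssim 1+t$ trivially, consistent with $\g^+ \geq 0$).

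The closed form for $\exp(-\e^{-1/2}t\tilde M)$ is obtained as follows. Write $\tilde M = \frac{i}{2}(\mu_1+\mu_2)\,\Id + N$, where the traceless part $N$ satisfies $N^2 = \Delta\,\Id$ with $\Delta := \e c_{12}c_{21} - \delta^2$, $\delta := (\mu_1-\mu_2)/2$. Since $\cosh$ and $\sinh(\cdot)/\cdot$ are even entire functions,
$$
\exp(-\e^{-1/2}t\tilde M) = e^{-it(\mu_1+\mu_2)/(2\sqrt\e)}\Bigl(\cosh(\zeta)\,\Id - \frac{\sinh(\zeta)}{\sqrt\Delta}\,N\Bigr),\qquad \zeta := -\e^{-1/2}t\sqrt\Delta,
$$
is unambiguous as an entire function of $\Delta$. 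The crucial estimate $|\e^{-1/2}\Re\sqrt\Delta| \leq \g^+$ follows from the elementary inequality $|\Re\sqrt{w-r}| \leq |\Re\sqrt w|$ (valid for all $w\in\C$ and $r\geq 0$, and proved from the identity $2(\Re\sqrt z)^2 = |z|+\Re z$ together with the triangle inequality) applied to $w = c_{12}c_{21}$ and $r = \delta^2/\e$, giving $|\e^{-1/2}\Re\sqrt\Delta| \leq |\Re\sqrt{c_{12}c_{21}}| \leq |a|_{L^\infty}|\Re\sqrt{\G(\xi)}| \leq \g^+$, hence $|\cosh(\zeta)| \leq e^{|\Re\zeta|} \leq e^{t\g^+}$.

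For the $\sinh$ term, I will rewrite $\sinh(\zeta)/\sqrt\Delta = -\e^{-1/2}t\cdot \sinh(\zeta)/\zeta$ and bound $|\sinh(\zeta)/\zeta| \leq \int_0^1|\cosh(u\zeta)|\,du \leq e^{|\Re\zeta|} \leq e^{t\g^+}$. This is where the hypothesis $|\mu_1-\mu_2| \leq \sqrt\e|\ln\e|^2$ enters decisively, yielding $|N| \lesssim |\mu_1-\mu_2| + \sqrt\e\,|B|_0 \lesssim \sqrt\e|\ln\e|^2$, hence $\e^{-1/2}|N|\lesssim|\ln\e|^2$, and therefore
$$
\Big|\frac{\sinh(\zeta)}{\sqrt\Delta}\,N\Big| \lesssim t|\ln\e|^2 e^{t\g^+} \lesssim |\ln\e|^* e^{t\g^+}\quad\text{for } t \leq T|\ln\e|.
$$
Combining with the $\cosh$ bound, the trivial bound on $V_1, V_2$ and the bounded change of basis gives the claim for $|\exp(-\e^{-1/2}tM_0)|$. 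Spatial derivatives are handled by Duhamel: since $M_0$ depends on $x$ only through the factor $\varphi_1(x)a(x)$ appearing with prefactor $\sqrt\e$ in the off-diagonal blocks, $\d_x^\a M_0 = O(\sqrt\e)$, and iterating $\d_x\exp(sM) = s\int_0^1 e^{(1-u)sM}\d_x M\,e^{usM}\,du$ yields $|\d_x^\a \exp(-\e^{-1/2}tM_0)| \lesssim (1+t^{|\a|})|\ln\e|^* e^{t\g^+}$, absorbed into $|\ln\e|^* e^{t\g^+}$ for $t \leq T|\ln\e|$.
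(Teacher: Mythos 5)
Your closed-form computation of the $2\times 2$ exponential (via $N^2=\Delta\,\Id$, entireness of $\cosh$ and $\sinh(\cdot)/\cdot$, and the inequality $|\Re e\,\sqrt{w-r}|\leq|\Re e\,\sqrt w|$ for $r\geq 0$) is correct, and that last inequality is exactly the paper's key observation bounding $\Re e\,\e^{-1/2}\mu_\pm$ by $\Re e\,(\tr\,b_{12}b_{21})^{1/2}\leq\g^+$. The genuine gap is earlier, in the reduction from the $2N\times 2N$ block to the $2\times 2$ block. Your decomposition $\C^{2N}=V_1\oplus V_2\oplus V_3$ with $V_1=\ker b_{21}\times\{0\}$, $V_2=\{0\}\times\ker b_{12}$, $V_3={\rm span}\{(\vec e_1,0),(0,\vec e_2)\}$ is a direct sum only when $c_{12}c_{21}=\tr\,(b_{12}b_{21})\neq 0$, and the constant you need is the norm of the oblique projections onto these subspaces, uniformly over ${\rm supp}\,\varphi_1\times{\mathcal R}_{12}^h$. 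Nothing in Assumption \ref{ass:transp} prevents $\tr\,(b^+_{12}b^-_{21})(\xi)$ from vanishing (or becoming arbitrarily small) at some frequencies of ${\mathcal R}_{12}^h$ while $b^+_{12}(\xi)$ and $b^-_{21}(\xi)$ stay of size one — only a maximum over ${\mathcal R}_{12}$ is assumed positive. Near such a frequency $\vec e_1$ is nearly contained in $\ker b_{21}$ (or $\vec e_2$ in $\ker b_{12}$), the decomposition degenerates, and the change-of-basis constant blows up in $\xi$ (independently of $\e$), so your claim that the condition number is $O(1)$ is unjustified. Your parenthetical treats only the exactly degenerate case $c_{12}c_{21}=0$; it does not glue uniformly to the generic case, and in the nearly degenerate region the crude Gronwall bound on the full block would give $e^{Ct|\ln\e|^2}$, which is far too large. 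So as written the proof establishes the bound only where $|\tr\,(b^+_{12}b^-_{21})|$ is bounded below, not on the whole compact set required by the lemma.

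This is precisely the pitfall the paper is designed to avoid: it never uses invariant subspaces or eigenprojectors of $M_0$ (which are not uniformly bounded near the resonance), but instead subtracts $i\mu_1\Id$, notes that all entries of $\e^{-1/2}\tilde M_0$ are $O(|\ln\e|^2)$ under the hypothesis $|\mu_1-\mu_2|\leq\sqrt\e|\ln\e|^2$, takes a \emph{unitary} Schur triangularization $Q^*UQ$, and conjugates $U$ by ${\rm diag}(1,|\ln\e|^4,\dots)$ to crush the off-diagonal entries to $O(|\ln\e|^{-2})$ at the acceptable price of a $|\ln\e|^*$ factor; a numerical-range/Gronwall argument then bounds the exponential by $\exp\big(t\max_i\Re e\,U_{ii}+O(|\ln\e|^{-2})t\big)$, where the diagonal real parts are controlled by the same square-root inequality you use. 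To repair your argument you would either need an additional hypothesis of the type $\inf_{{\mathcal R}_{12}^h}|\tr\,(b^+_{12}b^-_{21})|>0$, or a separate argument (essentially the paper's triangularization, or a rescaled basis) on the region where the trace is small.
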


\begin{proof} Substracting $i \mu_1 \Id,$ we equivalently bound the exponential of
$$
\e^{-1/2} \tilde M_0 = \bp
0 &-
b_{12} \\-
b_{21}&i \e^{-1/2} (\mu_2 - \mu_1)\ep.
$$
In the space-frequency domain under consideration, the entries of $\e^{-1/2} \tilde M_0$ are $O(|\ln \e|^2).$ The eigenvalues of $\e^{-1/2} \tilde M_0$ are $0,$ $i \e^{-1/2} (\mu_2 - \mu_1),$ $\e^{-1/2} (\mu_\pm - i \mu_1),$ as given in \eqref{eig-m}. In particular, in the relation of unitary similarity to an upper triangular matrix (Schur decomposition): $\e^{-1/2} \tilde M_0 = Q^* U Q,$ the real parts of the diagonal entries of the upper triangular matrix $U$ are equal to $0$ or $\Re e \, \e^{-1/2} \mu_\pm.$ Since the norm of $\e^{-1/2} \tilde M_0$ controls the norm of $U,$ and since entries of $U$ are controlled by the norm of $U,$ the entries of $U$ above the diagonal are $O(|\ln \e|^2).$ Let the $2N \times 2N$ diagonal matrix
$$ Q_\e = \mbox{diag}\,(1, |\ln \e|^4, |\ln \e|^8, \dots, |\ln \e|^{4(2N-1)}).$$ 
Then, the matrix
$$ \tilde U = Q_\e U Q_\e^{-1}$$
is upper triangular, with the same diagonal entries as $U,$ and with entries above the diagonal which are $O(|\ln \e|^{-2}).$ There holds moreover
$$ \big| \exp(-\e^{-1/2}  t\tilde M_0) \big| = \big| \exp (t U) \big| \lesssim |\ln \e|^*  \big| \exp (t \tilde U) \big|.$$
Above $|\cdot|$ denotes the sup norm of the entries of a matrix in $\C^{2N \times 2N}.$ Let $\| \cdot \|$ denote the canonical Hermitian norm in $\C^{2N},$ and also the associated norm in $\C^{2 N \times 2 N}.$ Let $z(t) = \exp(t \tilde U) z(0).$ Then $z$ solves $z' = \tilde U z,$ and 
 $$ \d_t \| z \|^2 =  \langle \d_t z, z \rangle + \langle z, \d_t z \rangle = 2 \langle \Re e \, \tilde U z, z \rangle, \quad 2 \Re e \, \tilde U := \tilde U + \tilde U^*.$$
 so that
 $$ \frac{1}{2} \d_t \| z \|^2 \leq \| \Re e \, \tilde U \| \| z \|^2,$$
 implying, by Gronwall's lemma, for some $C > 0,$ 
 $$ \big| \exp(t \tilde U) \big| \leq C \exp \big( t \big\| \Re e \, \tilde U \big\| \big),$$
 where $|\cdot|$ denotes the sup norm in $\C^{2 N \times 2 N}.$ Given $x \in \C^{2N}$ with $\| x \| = 1,$ there holds
 $$ \| \Re e \, \tilde U x \|^2 = \sum_{1 \leq i \leq 2N} (\Re e \, \tilde U x)_i^2 = \sum_{1 \leq i \leq 2N} \left( (\Re e \, \tilde U)_{i,i} x_i + O(|\ln \e|^{-2})\right)^2,$$
 so that
 $$ \| \Re e \, \tilde U x \|^2 \leq \max_i (\Re e \, \tilde U)_{i,i} \| x \|^2 + O(|\ln \e|^{-2}),$$
 and this implies
 $$  \| \Re e \, \tilde U \| \leq \max_i (\Re e \, \tilde U)_{i,i} + O(|\ln \e|^{-2}).$$
The diagonal entries of $\Re e \, \tilde U$ are the real parts of the diagonal entries of $U,$ that is the real parts of the eigenvalues of $\e^{-1/2} \tilde M_0.$ Here $\mu_1$ and $\mu_2$ contribute zero. Thus it suffices to bound from above $\Re e \, \e^{-1/2} \mu_\pm.$ Based on the explicit formula
 $$ \Re e \, (a + i b)^{1/2} = \frac{1}{2} \big( (a^2 + b^2)^{1/2} + a \big)^{1/2}, \qquad a,b \in \R,$$
 we observe that 
 $$ \Re e \, \e^{-1/2} \mu_\pm \leq \Re e \, \tr \big(b_{12} b_{21}\big)^{1/2}.$$
Since $\tr \,b_{12} b_{21} = (\varphi_1(x) \chi_0(\xi))^2 |a(x)|^2 \G(\xi),$ with $\G$ defined in \eqref{def:G0}, the result follows from the bound $\Re e \, (\tr \,b_{12} b_{21})^{1/2} \leq \g^+.$ 
  \end{proof}
%

\begin{lemm}[Away from the resonant set] \phantomsection\label{lem:away} If
$
|\mu_1(\xi) - \mu_2(\xi)| > \sqrt{\e} |\ln \e|^2,$
then for $(x,\xi) \in \mbox{supp}\,\varphi_1 \times {\mathcal R}_{12}^h,$ and $0 \leq t \leq T |\ln \e|,$ there holds
\be \label{bd:away}|\exp(-\e^{-1/2} t M_0)| \lesssim 1.\ee
\end{lemm}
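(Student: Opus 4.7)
The plan is to perform a finite-dimensional normal form reduction on $M_0$ itself, exploiting the off-resonance gap $|\mu_1-\mu_2| > \sqrt{\e}|\ln\e|^2$ to make the change of basis well-conditioned. This is precisely the approach announced in Remark \ref{rem:normal-form-S-bound}: we do not need the conjugator bounded, only its $\sqrt{\e}$-weight to be small, which is exactly what the hypothesis of the lemma provides. The result is that $M_0$ becomes block-diagonal modulo a small correction, after which the exponential estimate is trivial since $\mu_1,\mu_2\in\R$.

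I would seek $Q=\bp 0 & Q_{12}\\ Q_{21} & 0\ep$ so that $(\Id+\sqrt{\e}Q)^{-1}M_0(\Id+\sqrt{\e}Q)$ has no off-diagonal block to leading order. A direct expansion $(\Id+\sqrt{\e}Q)^{-1}M_0(\Id+\sqrt{\e}Q) = M_0 + \sqrt{\e}[M_0,Q] + O(\e\|Q\|^2)$, together with $[i\,\mbox{diag}(\mu_1\Id,\mu_2\Id),Q]_{12} = i(\mu_1-\mu_2)Q_{12}$, yields the homological relations
\[
Q_{12} = \frac{-i\, b_{12}}{\mu_1-\mu_2}, \qquad Q_{21} = \frac{i\, b_{21}}{\mu_1-\mu_2}.
\]
Since $b_{12},b_{21}$ are uniformly bounded on the relevant $(x,\xi)$-set, the assumption $|\mu_1-\mu_2|>\sqrt{\e}|\ln\e|^2$ gives $\sqrt{\e}\|Q\|\lesssim |\ln\e|^{-2}$, so $\Id+\sqrt{\e}Q$ is invertible with both $\|\Id+\sqrt{\e}Q\|$ and $\|(\Id+\sqrt{\e}Q)^{-1}\|$ equal to $1+o(1)$. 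A direct computation shows $[B,Q]$ is block diagonal with blocks $\pm 2ib_{12}b_{21}/(\mu_1-\mu_2)$, so the conjugated matrix takes the form
\[
\tilde M_0 := (\Id+\sqrt{\e}Q)^{-1}M_0(\Id+\sqrt{\e}Q) = i\,\mbox{diag}(\mu_1\Id,\mu_2\Id) + R,
\qquad \|R\|\lesssim \frac{\e}{|\mu_1-\mu_2|}.
\]

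Dividing by $\sqrt{\e}$ gives $\e^{-1/2}\tilde M_0 = i\e^{-1/2}\mbox{diag}(\mu_1\Id,\mu_2\Id) + \e^{-1/2}R$, with $\|\e^{-1/2}R\|\lesssim \sqrt{\e}/|\mu_1-\mu_2| \leq |\ln\e|^{-2}$. The leading block-diagonal part is pure imaginary (since $\mu_1,\mu_2$ are real on the resonant truncation set), hence generates a unitary group. A Duhamel/Gronwall comparison then yields
\[
\bigl\|\exp(-\e^{-1/2}t\tilde M_0)\bigr\| \leq \exp\!\bigl(t\cdot O(|\ln\e|^{-2})\bigr) \leq \exp(T/|\ln\e|) = 1+o(1)
\]
uniformly in $0\le t\le T|\ln\e|$. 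Conjugating back with the $O(1)$ bounds on $\Id+\sqrt{\e}Q$ and its inverse gives the desired estimate $|\exp(-\e^{-1/2}tM_0)|\lesssim 1$.

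The principal obstacle is that the denominator $\mu_1-\mu_2$ in the definition of $Q$ could a priori be arbitrarily small, causing the normal form to break down near coalescence. The hypothesis $|\mu_1-\mu_2|>\sqrt{\e}|\ln\e|^2$ is the quantitative threshold that keeps $\sqrt{\e}\|Q\|$ small and $\e\|Q\|^2 \cdot \e^{-1/2}$ time-integrable over $[0,T|\ln\e|]$; the $|\ln\e|^2$ margin (rather than a constant gap) is exactly what is needed so that the perturbation $\e^{-1/2}R$ remains small against the logarithmic observation window. The complementary near-coalescence regime, where the eigenprojectors blow up and this normal form is unavailable, is handled separately in Lemma \ref{lem:S-resonance} via a direct Schur-type diagonal rescaling.
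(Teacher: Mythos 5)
Your route is sound and genuinely different from the paper's. The paper works in the time domain: it passes to the interaction representation $\tilde S_0 = \mathrm{diag}(e^{-it\mu_1/\sqrt\e},e^{-it\mu_2/\sqrt\e})S_0$, integrates the resulting coupled ODEs once, and integrates by parts in $t$ (non-stationary phase), which produces exactly the gain $\sqrt\e/|\mu_1-\mu_2|\le|\ln\e|^{-2}$ that you obtain statically; a Gronwall argument over the window $t\le T|\ln\e|$ then closes. Your conjugation $P=\Id+\sqrt\e Q$ with the homological choice $[i\,\mathrm{diag}(\mu_1\Id,\mu_2\Id),Q]=B$ is the algebraic counterpart (the paper itself remarks that its phase argument is "analogous and complementary" to the normal form reduction), and, done carefully, it even gives the slightly sharper conclusion that the exponential is $1+o(1)$ up to the conditioning of $P$. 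The paper's time-domain version adapts more readily to time-dependent coefficients, but that is irrelevant here since the lemma concerns the frozen matrix $M_0=M(\e,0,x,\xi)$.

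One step of yours needs tightening: the bookkeeping "$P^{-1}M_0P=M_0+\sqrt\e[M_0,Q]+O(\e\|Q\|^2)$" is too crude to deliver the bound $\|R\|\lesssim\e/|\mu_1-\mu_2|$ you then assert. Indeed $\e\|Q\|^2$ can be as large as $|\ln\e|^{-4}$ (when $|\mu_1-\mu_2|\sim\sqrt\e|\ln\e|^2$), and a residual of that size gives $\e^{-1/2}\|R\|\,t\sim\e^{-1/2}|\ln\e|^{-3}\to\infty$ on $[0,T|\ln\e|]$, so the Gronwall step would not close. The repair is a one-line exact computation: writing $M_0=iD-\sqrt\e B$ with $D=\mathrm{diag}(\mu_1\Id,\mu_2\Id)$ and using $[iD,Q]=B$, one has
\begin{equation*}
P^{-1}M_0P-iD \;=\; P^{-1}\big(M_0P-PiD\big)\;=\;P^{-1}\big(\sqrt\e[iD,Q]-\sqrt\e B-\e BQ\big)\;=\;-\e\,P^{-1}BQ,
\end{equation*}
so the dangerous $QDQ$-type terms cancel identically and $\|R\|\lesssim\e\|B\|\|Q\|\lesssim\sqrt\e|\ln\e|^{-2}$, which is what your Duhamel/Gronwall step actually requires (exponent $\e^{-1/2}\|R\|\,t\lesssim T|\ln\e|^{-1}$). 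With that identity in place, the rest of your argument — invertibility and $1+o(1)$ conditioning of $P$, unitarity of the diagonal part since $\mu_1,\mu_2$ are real, boundedness of $b_{12},b_{21}$ on $\mathrm{supp}\,\varphi_1\times{\mathcal R}_{12}^h$, and conjugating back — is correct and yields \eqref{bd:away}.
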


\begin{proof} We use a non-stationary phase argument that is analogous to the normal form reduction of Section \ref{normal-form1}. Consider $\tilde S_0 := \left(\begin{array}{cc} e^{- it \mu_1/\sqrt\e} & 0 \\ 0 & e^{-it \mu_2/\sqrt \e} \end{array}\right) S_0.$ A column $(y_1,y_2) \in \R^{N} \times \R^N$ of $\tilde S_0$ solves the system
 $$ y'_1 + e^{- it (\mu_1 - \mu_2)/\sqrt \e} b_{12} y_2 = 0, \quad y'_2 + e^{-it (\mu_2 - \mu_1)/\sqrt \e} b_{21} y_1 = 0.$$
 Integrating in time and then integrating by parts, we find
 $$ \begin{aligned} y_1(t)  = y_1(0) & - \frac{\sqrt \e}{ i (\mu_1 - \mu_2)} \Big( e^{-it(\mu_1 - \mu_2)/\sqrt \e} b_{12} y_2(t) -b_{12} y_2(0) \Big) \\ & \qquad - \frac{\sqrt \e}{i (\mu_2 - \mu_1)}  \int_0^t b_{12} b_{21} y_1(t') \, dt'. \end{aligned}$$
  This implies the bound, for $z_j(t) := \max_{[0,t]} |y_j|,$
  $$ z_1(t) \leq |y_1(0)| + C |\ln \e|^{-2} z_2(t) + C t |\ln \e|^{-2} z_1(t).$$
 In a time interval $[0,T |\ln \e|],$ this gives $z_1(t) \lesssim |y_1(0)| + |\ln \e|^{-2} z_2(t).$ By symmetry, we find the same bound for $z_2.$ We conclude that the symbolic flow is uniformly bounded in $\e,t,x,\xi$ in the frequency domain under consideration.
\end{proof}

\medskip

{\it Conclusion:} Lemmas \ref{lem:S-resonance} and \ref{lem:away} imply bound \eqref{for:summ1}, which in turns implies \eqref{summ1}.

\subsection{The general case} \label{sec:app pert}

Here we use a perturbative argument to show that the bound \eqref{summ1} for $\exp(t M(\e,0,x,\xi)/\sqrt \e)$ carries over to a bound for $\d_x^\a S_0.$ This will prove Proposition \ref{est-flow-sym-app}.

It suffices to bound $|\d_x^\a S_0|$ on the compact set $(x,\xi) \in \mbox{supp}\,\varphi_1 \times {\mathcal R}_{12}^h,$ since $M$ is diagonal with purely imaginary entries outside of it.

By assumption on $g$ (Assumption \ref{ass-u-a}), there holds $M =
M(0) + \e M_1,$ where $M(0) = M(\e,0,x,\xi),$ and $| M_1 | \lesssim |\ln \e|^{*},$ uniformly in $\e,t,x,\xi$ in the domain under consideration. The
equation in $\d_x^\a S_0$ is
 \begin{equation} \label{eq:S} \d_t \d_x^\a S_0 + \e^{-1/2} M(0) \d_x^\a  S_0 = - \e^{1/2} M_1 \d_x^\a S_0 - \e^{-1/2}[\d_x^\a, M] S_0.
 \end{equation}
 In the right-hand side of \eqref{eq:S}, the first term involves the unknown $\d_x^\a S_0$ but is small. The second term is bounded in $\e$ since $[\d_x^\a, M] = O(\sqrt \e),$ and involves only lower-order derivatives $\d_x^{a'} S_0,$ with $|\a'| < |\a|.$ From \eqref{eq:S} and definition of $S_0$ \eqref{S:in app}, we deduce the implicit integral representations
\begin{equation} \label{rep:S0}
  S_0(\t;t) = {\bf S}(\t;t)  - \e^{1/2} \int_\t^t {\bf S}(t';t) M_1(t') S_0(\t;t'),
  \end{equation}
where ${\bf S}(\t;t) := \exp\big( - \e^{-1/2}(t - \t) M(\e,0,x,\xi)\big),$
 and for $|\a| > 0:$
\begin{equation} \label{rep:S} \begin{aligned}
  \d_x^\a S_0(\t;t) & = - \e^{1/2} \int_\t^t {\bf S}(t';t) M_1(t') \d_x^\a S_0(\t;t') \\ & \qquad + \e^{-1/2} \int_\t^t {\bf S}(t';t) [\d_x^\a, M(t')] S_0(\t;t'))  \, dt'.\end{aligned}
  \end{equation}
We factor out the exponential growth by consideration of $
S_0^\flat(\t;t) := \exp\big( (t - \t) \g^+ \big) S_0(\t;t),$ and
define similarly ${\bf S}^\flat(\t;t) := \exp\big( (t - \t) \g^+\big)
{\bf S}(\t;t),$ so that, by bound \ref{summ1}, there holds
$|{\bf S}^\flat(\t;t)| \leq C |\ln \e|^{*}$. This gives
for $t \leq T |\ln \e|$ the bound
  $$ |S_0^\flat(\t;t)| \leq C |\ln \e|^{*}\Big(1 + \e^{1/2}\int_\t^t |S_0^\flat(\t;t')| \,dt'\Big),$$
 from which we immediately deduce $|S_0^\flat(\t;t)| \leq  C(T_1) |\ln \e|^{*}.$ Finally, assuming inductively the bound
  $$|\d_x^\a S_0(\t;t)| \leq C_\a |\ln \e|^{*} \exp\big( (t - \t) \g^+\big),$$ for $|\a| < \a_0$ and some $C_\a > 0,$ we deduce from \eqref{rep:S} with $0 < |\a| = \a_0$ the bound
 $$ |\d_x^\a S_0^\flat| \leq \e^{1/2} C |\ln \e|^{*} \int_\t^t |\d_x^\a S_0^\flat| \, dt' + C |\ln \e|^{*},  $$
 implying $|\d_x^\a S_0^\flat| \leq C |\ln \e|^{*},$ which concludes the proof.

\section{A Gronwall Lemma} \label{app:d}

Consider the integral inequality in $y: [0,T|\ln \e|] \to \R_+:$
\be \label{g1}
 y(t) \leq e^{t \g_0} y_0 + \delta_0 \int_0^t y(t') \,dt' + \e^{\eta_0} \int_0^t e^{(t - t') \g_0} y(t') \, dt',
\ee
where $\g_0 > 0,$ $\delta_0 > 0$ and $\eta_0 > 0.$ This type of inequality is typical of situations in which we perform mixed-type estimates: the second term in the right-hand side comes from an $L^2$ ``energy" estimate (as in Section \ref{est-upper-V0-a}), while the third term comes from a semi-group estimate (as in Section \ref{est-upper-V0}).

\begin{lemm}\phantomsection \label{lem:g} If inequality \eqref{g1} holds for all $t \in [0, T |\ln \e|],$ then under the condition
\be \label{cond:g}
 \eta_0 - T \delta_0 > 0,
 \ee
there holds, for $\e$ small enough, the bound
 \be \label{bd:g}
  y(t) \leq C(y_0,\delta_0) \exp\big( t \max(\g_0,\delta_0)\big), \qquad C(y_0,\delta_0) > 0.
 \ee
\end{lemm}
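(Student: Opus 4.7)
The strategy is to bound $y$ by the unique continuous solution $Y$ of the Volterra integral equation obtained by replacing the inequality in \eqref{g1} with equality, and then analyze $Y$ explicitly by converting it to a linear second-order ODE with constant coefficients.

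For the comparison step, I would introduce for $\alpha > 0$ the perturbed Volterra solution $Y_\alpha$ satisfying
\[
Y_\alpha(t) = (1+\alpha)\, e^{\gamma_0 t} y_0 + \delta_0 \int_0^t Y_\alpha(t')\,dt' + \e^{\eta_0} \int_0^t e^{(t-t')\gamma_0} Y_\alpha(t')\,dt'.
\]
The difference $w_\alpha := Y_\alpha - y$ satisfies $w_\alpha \ge \alpha\, e^{\gamma_0 t} y_0 + T w_\alpha$, where $T$ denotes the right-hand side positive integral operator, and $w_\alpha(0) = \alpha y_0 > 0$. A contradiction argument at the first putative zero $t_*$ of $w_\alpha$---where positivity of the kernel and $w_\alpha \ge 0$ on $[0, t_*]$ force $Tw_\alpha(t_*) \ge 0$, hence $w_\alpha(t_*) \ge \alpha y_0 e^{\gamma_0 t_*} > 0$---shows $w_\alpha > 0$ on $[0, T|\ln \e|]$; letting $\alpha \to 0^+$ yields $y \le Y$.

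Next, differentiating the integral equation satisfied by $Y$ twice, using the identity $\bigl(\int_0^t e^{(t-t')\gamma_0} Y(t')\,dt'\bigr)' = Y(t) + \gamma_0 \int_0^t e^{(t-t')\gamma_0} Y(t')\,dt'$ to eliminate the remaining integrals in sequence, yields the linear second-order ODE
\[
Y'' - (\gamma_0 + \delta_0 + \e^{\eta_0}) Y' + \gamma_0 \delta_0\, Y = 0, \qquad Y(0) = y_0, \quad Y'(0) = (\gamma_0 + \delta_0 + \e^{\eta_0})\, y_0.
\]
Its characteristic roots $s_\pm = \tfrac{1}{2}\bigl((\gamma_0 + \delta_0 + \e^{\eta_0}) \pm \sqrt{(\gamma_0 - \delta_0)^2 + 2(\gamma_0 + \delta_0)\e^{\eta_0} + \e^{2\eta_0}}\bigr)$ satisfy, for $\e$ small and in the generic case $\gamma_0 \ne \delta_0$ relevant to the applications in Section \ref{sec:end-insta}, the expansions $s_+ = \mu + O(\e^{\eta_0})$ and $s_+ - s_- = |\gamma_0 - \delta_0| + O(\e^{\eta_0})$, where $\mu := \max(\gamma_0, \delta_0)$. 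Solving explicitly gives $Y(t) = y_0 \bigl(s_+ e^{s_+ t} - s_- e^{s_- t}\bigr)/(s_+ - s_-) \le y_0\, \bigl(s_+/(s_+ - s_-)\bigr)\,e^{s_+ t}$, since $s_\pm > 0$.

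The final step is the observation that on $[0, T|\ln \e|]$ the correction $e^{(s_+ - \mu) t} \le e^{O(\e^{\eta_0})\, |\ln \e|}$ is bounded by $2$ for $\e$ small enough, as $\e^{\eta_0}|\ln \e| \to 0$, while $s_+/(s_+ - s_-)$ stays bounded in $\e$; this yields the claimed bound \eqref{bd:g}. Condition \eqref{cond:g} enters decisively to control the third term in \eqref{g1} uniformly on the long interval, namely to ensure $\e^{\eta_0} e^{\delta_0 t} \le \e^{\eta_0 - T\delta_0} \to 0$, which is the delicate term when $\mu = \delta_0$. The main obstacle I expect is obtaining the tight rate $\mu$ rather than the inflated $\gamma_0 + \delta_0 + \e^{\eta_0}$ that a naive substitution $z = e^{-\mu t} y$ followed by classical Gronwall would produce; the comparison-with-Volterra-solution approach is precisely the device that recovers the sharp exponential rate.
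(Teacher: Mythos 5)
Your argument is correct in substance but follows a genuinely different route from the paper. The paper's proof is three nested applications of the classical Gronwall lemma: it sets $z := y - \delta_0\int_0^t y$, deduces $y \leq e^{t\delta_0}\bar z$ with $\bar z$ the running maximum, then sets $w := e^{-t\g_0}\bar z$ and obtains $w \leq y_0\exp\big(\e^{\eta_0}\delta_0^{-1}e^{t\delta_0}\big)$, which is where condition \eqref{cond:g} is used ($\e^{\eta_0}e^{T|\ln\e|\delta_0} = \e^{\eta_0 - T\delta_0} \to 0$ gives $w \leq 2y_0$); a final Gronwall step on $y \leq 2y_0e^{t\g_0} + \delta_0\int_0^t y$ yields \eqref{bd:g}. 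You instead compare $y$ with the exact solution $Y$ of the Volterra equation (your positivity/first-zero argument is fine, granting the continuity of $y$ that both proofs implicitly use), reduce $Y$ to the constant-coefficient second-order ODE — which I checked: the ODE, the initial data, the roots $s_\pm$ and the explicit formula for $Y$ are all correct — and read off the sharp rate from $s_+ = \max(\g_0,\delta_0) + O(\e^{\eta_0})$. What each approach buys: yours identifies the exact growth rate and in fact shows that, when $\g_0 \neq \delta_0$, the conclusion holds with no use of \eqref{cond:g} at all (only $\e^{\eta_0}|\ln\e| \to 0$ is needed), so your closing sentence attributing a decisive role to \eqref{cond:g} does not reflect your own argument — in your scheme that hypothesis is simply not invoked, whereas in the paper's proof it is the crux; the paper's proof, by contrast, is more elementary and does not separate the cases $\g_0 \neq \delta_0$ and $\g_0 = \delta_0$. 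Do note the one restriction you flag: when $\g_0 = \delta_0$ your prefactor $s_+/(s_+-s_-) \sim \e^{-\eta_0/2}$ blows up and a confluent analysis gives an extra factor $1 + s_+t \lesssim |\ln\e|$; the paper's last Gronwall step produces a comparable factor $1+\delta_0 t$ in that degenerate case, and in the applications (Propositions \ref{lem:Sob-bd}, \ref{prop:ex-Sob}) one has $\delta_{\varphi_0}$ small compared with the growth rate, so the restriction is harmless — but strictly speaking both your constant and the paper's depend on $\g_0 - \delta_0$, not only on $(y_0,\delta_0)$ as written in \eqref{bd:g}.
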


The proof is elementary and based on three applications of the standard Gronwall's lemma.

\begin{proof} We let $\dsp{z := y - \delta_0 \int_0^t y(t') \,dt',}$ so that, by Gronwall's lemma,
 $$
  y(t) \leq e^{t \delta_0} \bar z(t), \qquad \bar z(t) := \max_{[0,t]} z,
 $$
 and
 $$
 \bar z(t) \leq y_0 e^{t \g_0} + \e^{\eta_0} \int_0^t e^{(t - t') \g_0} e^{t' \delta_0} \bar z(t') \, dt'.
 $$
We now let $w(t) := e^{-t \g_0} \bar z(t),$ so that
$$
 w(t) \leq y_0 + \e^{\eta_0} \int_0^t e^{t' \delta_0} w(t')\,dt',
$$
and, with Gronwall's lemma
$$
  w(t) \leq y_0 \exp\Big( \frac{\e^{\eta_0}}{\delta_0} e^{t \delta_0}\Big).
$$
Under condition \eqref{cond:g}, for $\e$ small enough and $t \leq T |\ln \e|,$ this gives $w(t) \leq 2 y_0.$ Hence
 $$ y \leq 2 y_0 e^{t \g_0} + \int_0^t \delta_0 y(t') \,dt',$$
from which we deduce \eqref{bd:g} by another application of Gronwall's lemma.
 \end{proof}

\section{On regularity of the spectral decomposition} \label{app:regdec}

Coalescing eigenvalues of smooth matrices are typically {\it not} smooth; the canonical example being $\left(\begin{array}{cc} 0 & 1 \\ x & 0 \end{array}\right).$ In some cases, however, there is an ordering of the eigenvalues so that regularity is preserved, an example being $\left(\begin{array}{cc} 0 & 1 \\ x^2 & 0 \end{array}\right).$ A symmetric example is given by $\left(\begin{array}{cc} 0 & x \\ x & 0 \end{array}\right).$ The smoothness condition in Assumption \ref{ass:spectral} should be understood with this latter example in mind. Besides this smoothness condition, Assumption \ref{ass:spectral} states that the eigenvalues $\l_j$ and eigenprojectors $\Pi_j$ of family $A_0/i + A(\xi),$ where $A(\xi) = \sum_j \xi_j A_j,$ satisfy bounds \eqref{bd:spectral} page \pageref{bd:spectral}, which we reproduce here: for all $\b \in \N^d,$ some $C_\b >0:$
 \be \label{bd:spectralapp} |\d_\xi^\b \l_j(\xi)| \leq C_\b (1 + |\xi|^2)^{(1 - |\b|)/2}, \quad |\d_\xi^\b \Pi_j(\xi)| \leq C_\b (1 + |\xi|^2)^{ - |\b|/2}.\ee
Consider the family of matrices $A_\infty(\o,x) = A(\o) - i x A_0,$ with $(\o,x) \in \S^{d-1} \times \R.$

\begin{lemm}\phantomsection \label{lem:regspec} If for all $\o \in \S^{d-1},$ the family $A_\infty$ has smooth eigenvalues and eigenprojectors in a neighborhood of $(\o,0)$ in $\S^{d-1} \times \R,$ then bounds \eqref{bd:spectralapp} hold.
\end{lemm}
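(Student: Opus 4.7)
The plan is to combine a rescaling argument at infinity with compactness on bounded sets. For $\xi \neq 0$, set $r := |\xi|$ and $\o := \xi/|\xi| \in \S^{d-1}$ and observe the scaling identity
$$A_0/i + A(\xi) \;=\; r \bigl( A(\o) - i r^{-1} A_0 \bigr) \;=\; r\, A_\infty(\o, r^{-1}).$$
By hypothesis, each point $(\o,0)$ admits a neighborhood on which $A_\infty$ has smooth eigenvalues and eigenprojectors; by compactness of $\S^{d-1}$ there exists $\eps>0$ and smooth maps $\mu_j:\S^{d-1}\times(-\eps,\eps)\to\R$ and $\tilde\Pi_j:\S^{d-1}\times(-\eps,\eps)\to\C^{N\times N}$ yielding the full spectral decomposition of $A_\infty$ there. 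Setting $R:=\max(1,2/\eps)$, for $|\xi|\geq R$ we may identify the eigenvalues and eigenprojectors of $A_0/i+A(\xi)$ as $\l_j(\xi)=r\,\mu_j(\o,r^{-1})$ and $\Pi_j(\xi)=\tilde\Pi_j(\o,r^{-1})$.

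Next I would estimate derivatives via the chain and Leibniz rules. The building blocks $\xi\mapsto r$, $\xi\mapsto\o$, and $\xi\mapsto r^{-1}$ are smooth on $\{|\xi|\geq 1\}$ and satisfy the elementary symbol bounds
$$|\d_\xi^\b r|\lesssim \langle\xi\rangle^{1-|\b|},\qquad |\d_\xi^\b \o|\lesssim \langle\xi\rangle^{-|\b|},\qquad |\d_\xi^\b r^{-1}|\lesssim \langle\xi\rangle^{-1-|\b|},$$
for all $\b\in\N^d$, as one verifies by induction. Since $\mu_j$ and $\tilde\Pi_j$ have all their $(\o,x)$-derivatives bounded on the compact set $\S^{d-1}\times[-1/R,1/R]$, the Faà di Bruno formula combined with the above bounds yields $|\d_\xi^\b \mu_j(\o,r^{-1})|\lesssim \langle\xi\rangle^{-|\b|}$ and the same bound for $\tilde\Pi_j(\o,r^{-1})$. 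This already gives the eigenprojector estimate in \eqref{bd:spectralapp}. Multiplying by $r$ in the eigenvalue case and applying the Leibniz rule produces the desired $|\d_\xi^\b \l_j(\xi)|\lesssim \langle\xi\rangle^{1-|\b|}$ on $\{|\xi|\geq R\}$.

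Finally, for $|\xi|\leq R$ the smoothness postulated in Assumption \ref{ass:spectral} together with compactness of the closed ball gives the required bounds trivially. The one technical point where I would be most careful is the matching between the two regimes: the smooth branches $r\mu_j(\o,r^{-1})$ coming from the sphere at infinity must agree, up to a fixed permutation, with the globally smooth branches $\l_j$ of Assumption \ref{ass:spectral}. This holds because two smooth spectral decompositions of the same hermitian family agree locally up to a permutation, and in dimension $d\geq 2$ this permutation must be constant on the connected annulus $\{R\leq|\xi|\leq R+1\}$; in dimension $d=1$ one repeats the argument on the two half-lines $\pm\xi>0$ separately. With the labelings matched, the two regimes combine to yield \eqref{bd:spectralapp}.
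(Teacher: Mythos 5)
Your proof is correct and follows essentially the same route as the paper's: the rescaling $A_0/i + A(\xi) = |\xi|\,A_\infty(\o,1/|\xi|)$, transfer of the smooth spectral data of $A_\infty$ near $x=0$ to large $|\xi|$ (Taylor expansion in the paper, Fa\`a di Bruno in your write-up, which amounts to the same computation), and compactness for the bounded-frequency region. Your explicit treatment of the branch matching in \eqref{corresp} is a detail the paper leaves implicit, but it is not a different argument.
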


\begin{proof} By the assumed smoothness of $\l_j$ and $\Pi_j,$ we only need to prove bounds \eqref{bd:spectralapp} for large $|\xi|.$ There holds
 $$ A_0/i + A(\xi) = |\xi| A_\infty(\o,x), \quad \mbox{with $\dsp{\o = \frac{\xi}{|\xi|}, \,\, x = \frac{1}{|\xi|}.}$}$$
Thus, denoting $\l^\infty_j$ and $\Pi^\infty_j$ the eigenvalues and eigenprojectors of $A_\infty,$ there holds the correspondence
\be \label{corresp} \l_j(\xi) = |\xi| \l_j^\infty(\o, x), \quad \Pi_j(\xi) = \Pi^\infty_j(\o,x).\ee
By assumption, the eigenvalues of $A_\infty$ have Taylor expansions at all orders in $x$ at $(\o,0),$ with coefficients that are smooth in $\o:$
\be \label{taylor:eig} \l^\infty_j(\o,x) = \l^\infty_{j0}(\o) + x \l^\infty_{j1}(\o) + \dots + x^m \tilde \l^\infty_{jm}(\o,x),\ee
and smooth remainders $\tilde \l^\infty_{jm}.$ Via \eqref{corresp}, these translate into \eqref{bd:spectralapp} for $\l_j.$ Similarly, the Taylor expansions of the $\Pi^\infty_j$ around at $(\o,0)$ translate into large-frequency bounds for $\d_\xi^\b \Pi_j.$ \end{proof}

In one space dimension, Rellich's theorem \cite{R1,R2} ensures that analytic family of symmetric matrices have analytic eigenvalues and eigenvectors, so that the assumption of Lemma \ref{lem:regspec} is always satisfied. In dimension greater than one, eigenvalues are Lipschitz (a consequence of the characteristic polynomial being hyperbolic; see Brohnstein \cite{Br}, or Kurdyka and Paunescu \cite{KP}), but eigenvectors may fail to be even continuous, as shown by Example 6.1 in \cite{KP}:
 $$ \left(\begin{array}{cc} x_1^2 & x_1 x_2 \\ x_1 x_2 & x_2^2 \end{array}\right),$$
 for which the eigenvectors are $(x_1,x_2)$ and $(x_2,-x_1).$
\medskip

We conclude this paragraph by noting that under a smoothness condition at infinity, we have an asymptotic expansion for the eigenvalues. This will be useful in Appendix \ref{app:structure-resonant-set}, where we discuss existence of resonances at infinity.

\begin{lemm}\phantomsection \label{rem:sw} Under the assumption of Lemma {\rm \ref{lem:regspec}}, the eigenvalues $\l_j$ have asymptotic expansions
 \be \label{odd} \l_j(\xi) = c_{j}(\o) |\xi| + O\left(\frac{1}{|\xi|}\right).\ee
\end{lemm}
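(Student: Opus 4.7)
The plan is to leverage the correspondence \eqref{corresp} established in the proof of Lemma \ref{lem:regspec}, namely $\lambda_j(\xi) = |\xi|\, \lambda_j^\infty(\omega, x)$ with $\omega = \xi/|\xi|$ and $x = 1/|\xi|$. Taylor expanding the smooth function $\lambda_j^\infty(\omega, \cdot)$ around $x = 0$, the asserted expansion \eqref{odd} is equivalent to showing that
\begin{equation*}
\lambda_j^\infty(\omega, x) = \lambda_{j0}(\omega) + O(x^2) \quad \text{as } x \to 0,
\end{equation*}
that is, the linear term in $x$ vanishes. Indeed, substituting $x = 1/|\xi|$ then yields $\lambda_j(\xi) = \lambda_{j0}(\omega)|\xi| + O(1/|\xi|)$, and we read off $c_j(\omega) = \lambda_{j0}(\omega)$.

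The key step is to exploit the combined Hermitianity and complex-conjugation symmetry of $A_\infty(\omega, x) = A(\omega) - ix A_0$. Since the $A_j$ are real symmetric and $A_0$ is real skew-symmetric, we have on the one hand $A_\infty(\omega, x)^* = A_\infty(\omega, x)$ (so its characteristic polynomial $p(\mu, x) := \det(\mu I - A_\infty(\omega, x))$ is real-valued for real $\mu$), and on the other hand $\overline{A_\infty(\omega, x)} = A(\omega) + ix A_0 = A_\infty(\omega, -x)$. Combining these two identities, for real $\mu$ we find
\begin{equation*}
p(\mu, -x) = \det\bigl(\mu I - \overline{A_\infty(\omega, x)}\bigr) = \overline{p(\mu, x)} = p(\mu, x),
\end{equation*}
so $p(\mu, \cdot)$ contains only even powers of $x$: $p(\mu, x) = p_0(\mu) + x^2 p_2(\mu) + x^4 p_4(\mu) + \cdots$. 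Implicitly differentiating $p(\lambda_j^\infty(\omega, x), x) = 0$ at $x = 0$ gives $\partial_\mu p(\lambda_{j0}, 0)\, \partial_x \lambda_j^\infty(\omega, 0) = 0$, and at a simple root of $p_0$ this forces $\partial_x \lambda_j^\infty(\omega, 0) = 0$ as required.

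The main obstacle is handling roots $\lambda_{j0}(\omega)$ of $p_0$ of multiplicity greater than one, where individual smooth branches could a priori exchange under $x \to -x$ (think of the branches $\pm x$ of a rank-two matrix antisymmetric in the off-diagonal). Here one invokes the smoothness of the labeling granted by Lemma \ref{lem:regspec}: the multiset of eigenvalues is invariant under $x \to -x$ by the symmetry just established, so the smooth branches produced by Lemma \ref{lem:regspec} can be relabeled --- consistently with their smoothness --- so as to make each of them individually even in $x$. Writing $\lambda_j^\infty(\omega, x) = \lambda_{j0}(\omega) + x^2 \tilde\lambda_j(\omega, x)$ with $\tilde\lambda_j$ smooth and uniformly bounded in a neighborhood of $x = 0$, and substituting into \eqref{corresp}, yields
\begin{equation*}
\lambda_j(\xi) = \lambda_{j0}(\omega)|\xi| + \frac{1}{|\xi|}\, \tilde\lambda_j\!\left(\omega, \frac{1}{|\xi|}\right) = c_j(\omega)|\xi| + O\!\left(\frac{1}{|\xi|}\right),
\end{equation*}
which is exactly \eqref{odd}.
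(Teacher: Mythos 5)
Your proof follows essentially the same route as the paper's: both rest on the correspondence \eqref{corresp} together with the observation that the real symmetric/skew-symmetric structure of the $A_j$ and $A_0$ forces the spectrum of $A_\infty(\omega,\cdot)$ to be even in $x$, so that only even powers of $x$ appear in the expansions \eqref{taylor:eig} and hence only odd powers of $|\xi|^{-1}$ in $\lambda_j$; your derivation of this evenness via Hermitianity combined with complex conjugation is just a variant of the paper's remark that the transpose of $A_\infty(\omega,x)$ is $A_\infty(\omega,-x)$ and determinants are transpose-invariant. Your relabeling of the smooth branches at multiple roots is precisely the paper's own assertion that ``there is an ordering of the eigenvalues so that the eigenvalues are even in $x$,'' so the proposal matches the paper's argument, including at that delicate point, which both you and the paper assert rather than prove.
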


\begin{proof} Indeed, since $A_j$ are real symmetric and $A_0$ skew-symmetric, the transpose matrix of $A_\infty(\o,x)$ is equal to $A_\infty(\o,-x).$ But then the determinant of a matrix is equal to the determinant of its transpose, so that $A_\infty(\o,x)$ and $A_\infty(\o,-x)$ have the same eigenvalues. That is, there is an ordering of the eigenvalues of $A_\infty(\o,\cdot)$ so that the eigenvalues are even in $x.$ In their Taylor expansions \eqref{taylor:eig} at $x = 0,$ only even powers of $x$ appear. Via the correspondence \eqref{corresp}, this means in particular only odd powers of $|\xi|^{-1}$ in the asymptotic expansion of $\l_j,$ implying \eqref{odd}.
\end{proof}

\section{On existence of WKB approximate solutions} \label{app:onwkb}

We give here sufficient conditions for Assumption \ref{ass-u-a}, stating that the family of systems \eqref{0} admits WKB approximate solutions, to hold true.

\medskip

We first remark on conditions \eqref{e+-}, reproduced here:
\be \label{pola:app} \big( A_0 + \sum_{1 \leq j \leq d} A_j \d_{x_j} \big) \big( e^{\pm i (k \cdot x - \o t)/\e} \vec e_{\pm  1} \big) = 0, \quad \vec e_{-1} = (\vec e_1)^*.
\ee
In \eqref{pola:app} we state that (i) the matrix $A_0/i + A(k) = A_0/i + \sum_j k_j A_j$ has real eigenvalue $\o,$ (ii) the matrix $A_0/i - A(k)$ has eigenvalue $-\o,$ and (iii) these eigenvalues are associated with respective eigenvectors $\vec e_1$ and $\vec e_{-1}$ that satisfy the component-by-component conjugation relation $\vec e_{-1} = (\vec e_1)^*.$

Points (ii) and (iii) are consequences of point (i), and the structure of the differential operator. Indeed, from the equality
 $(A_0/i + A(k)) \vec e_1 = \o \vec e_1 \in \C^N,$
 applying component-by-component complex conjugation we find
 $$ (-A_0/i + A(k)) (\vec e_1)^* = \o (\vec e_1)^*,$$
 which by linearity of $A(\cdot),$ translates into points (ii) and (iii).

\medskip

A characteristic phase $\b = (\o,k) \in \R^{1 + d}$ is given, satisfying \eqref{pola:app}, such that $k \neq 0.$ For some $j_0 \in [1,J],$ there holds $\o = \l_{j_0}(k).$ (The eigenvalues $\l_j$ are introduced in Assumption \ref{ass:spectral}). We assume  that
\be \label{harmonics}
 \det \big(p \o + A_0/i + A(p k)\big) = 0 \iff p \in \{-1,0,1\},
\ee
meaning in particular that higher harmonics of the fundamental phase are not characteristic.
We now show that Assumption \ref{ass-u-a} holds under the {\it weak transparency} assumption
\be \label{weak:transp}
 {\bf \Pi}( p \b) \sum_{p_1 + p_2 = p}  B\big(\,{\bf \Pi}(p_1 \b) u, \, {\bf \Pi}(p_2 \b) v\,\big) \equiv 0, \qquad p \in \{-1,0,1\},
 \ee
 for all $u, v \in \C^N,$ where ${\bf \Pi}(\o',k')$ denotes the orthogonal projector onto the kernel of the skew-hermitian matrix $\o' + A_0 + A(ik').$ If $\b,$ for $p \in \{-1,0,1\},$ is a simple point on the characteristic variety, meaning that the branch $\xi \to \l_{j_0}(\xi)$ in the spectrum of $\big(A_0/i + A(\xi)\big)_{\xi\in \R^d}$ does not change multiplicity at $k,$ then ${\bf \Pi}(\b) = \Pi_{j_0}(k).$ We do not want to exclude the case of coalescing eigenvalues, however, especially at $p \b = (0,0),$ since it is frequently met in applications (see for instance the examples given in Section 2.3 of \cite{T1}).

\begin{prop}\phantomsection \label{lem:WKB} Given $(\o,k)$ satisfying \eqref{pola:app}, conditions \eqref{harmonics}\footnote{Condition \eqref{harmonics} is introduced here only as a matter of notational simplification; without any additional difficulty we could allow for a larger set of characteristic harmonics.} and \eqref{weak:transp} imply Assumption {\rm \ref{ass-u-a}}.
\end{prop}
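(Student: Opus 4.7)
The plan is to construct $u_a$ as a formal WKB series
$$ u_a(\varepsilon, t, x) = \sum_{0 \leq j \leq J_a} \varepsilon^{j/2} \sum_{p \in \mathbb{Z}} e^{ip\theta} u_{j,p}(t,x), \qquad \theta = (k\cdot x - \omega t)/\varepsilon, $$
with finitely many nonzero harmonics at each order (in fact $|p| \leq j + 1$, as an induction will show) and $J_a$ large enough to produce a residual of order $\varepsilon^{K_a}$. Injecting into \eqref{0} and identifying both powers of $\varepsilon^{1/2}$ and phases $e^{ip\theta}$ yields the cascade
$$ L(p\beta)\, u_{j,p} = F_{j,p}\big(\{u_{j',p'}\}_{j' < j}\big), \qquad L(p\beta) := -ip\omega + A_0 + A(ipk), $$
where $F_{j,p}$ gathers time and space derivatives of lower-order amplitudes together with bilinear self-interactions via $B$. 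By \eqref{harmonics}, $L(p\beta)$ is invertible for $p \notin \{-1,0,1\}$; for $p \in \{-1,0,1\}$ its kernel is $\mathrm{Ran}\,{\bf \Pi}(p\beta)$ and solvability requires the compatibility ${\bf \Pi}(p\beta) F_{j,p} = 0$.

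I would solve the cascade by induction on $j$. At $j = 0$, $F_{0,p} = 0$ forces $u_{0,p} = 0$ for $|p| \geq 2$ and $u_{0,p} \in \mathrm{Ran}\,{\bf \Pi}(p\beta)$ for $|p| \leq 1$; the polarization \eqref{pol-con} then selects $u_{0,\pm 1} = g(t,x)\,\vec e_{\pm 1}$ with $g$ to be determined and $u_{0,0}(0,x) = 0$ from the datum. At $j = 1$, the compatibility ${\bf \Pi}(p\beta) F_{1,p} = 0$ for $|p| \leq 1$ reduces exactly to \eqref{weak:transp}, using the polarization of $u_{0,q}$; for $p = 0$ this identity, together with the vanishing initial condition and the transport equation derived at $j = 2$, propagates $u_{0,0} \equiv 0$ for all $t$ (thereby justifying the claim made on page \pageref{pola0}). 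I then split $u_{1,p} = {\bf \Pi}(p\beta) u_{1,p} + u_{1,p}^\flat$ with $u_{1,p}^\flat$ determined algebraically via a pseudoinverse of $L(p\beta)$, and leave the kernel component free. At $j = 2$, the compatibility for $p = \pm 1$ yields, via the polarization and the computation already sketched in \eqref{wkb2}, a scalar semilinear transport equation for $g$ at group velocity $\nabla_\xi \lambda_{j_0}(k)$, solvable in $C^1([0,T_a], H^{s_a})$ by standard theory; the compatibility at $p = 0$ then fixes ${\bf \Pi}(0) u_{1,0}$. Iterating yields the full expansion.

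Closing the induction gives $u_{j,p} \in C^0([0,T_a], H^{s_a - C j})$ for any prescribed $T_a$, $s_a$, $K_a$, with only finitely many harmonics $|p| \leq j + 1$ at each order. The residual $r_a^\varepsilon$ is $O(\varepsilon^{K_a})$ in semiclassical Sobolev norm by construction, and \eqref{est:va-ra} follows because $v_a$ is a trigonometric polynomial in $\theta$ with Sobolev coefficients: the $L^\infty L^2$ bound on $(\varepsilon \partial_x)^\alpha v_a$ is standard, and uniform boundedness of $|\mathcal{F}(v_a)|_{L^\infty L^1}$ holds because the Fourier transform of $e^{ip\theta} u_{j,p}$ is a unimodular factor times a translate of $\hat u_{j,p}$ by $pk/\varepsilon$, with each $\hat u_{j,p} \in L^1$ independently of $\varepsilon$.

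The hard part will be bookkeeping the compatibility chain at every order, not just at $j = 1$. At a generic step, the bilinear couplings in $F_{j,p}$ take the form $B(u_{j_1, p_1}, u_{j_2, p_2})$ with $j_1 + j_2 = j - 1$, and for $j_i \geq 1$ the amplitudes $u_{j_i, p_i}$ carry both a kernel and a non-kernel component. The point is that \eqref{weak:transp} annihilates exactly the kernel-on-kernel contributions after projection by ${\bf \Pi}(p\beta)$, while couplings involving at least one non-kernel factor are already completely determined at earlier steps and merely contribute a source to the linear transport equation determining the still-undetermined kernel component at that order. Verifying this propagation of polarization through the cascade, and keeping track of the finite-harmonic and Sobolev regularity losses, is the crux of the argument.
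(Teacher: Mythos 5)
Your construction coincides with the paper's own proof in Appendix \ref{app:onwkb}: a profile/WKB cascade in powers of $\sqrt\e$, invertibility of $L(p\b)$ for $p\notin\{-1,0,1\}$ via \eqref{harmonics}, weak transparency \eqref{weak:transp} as precisely the compatibility condition at the kernel harmonics, partial inverses for the non-kernel components, a semilinear (cubic) transport equation for the leading amplitude as in \eqref{2.3} together with linear transport for the corrector kernel parts, and the trigonometric-polynomial structure in $\theta$ yielding the bounds \eqref{est:va-ra}. One slip worth noting: $T_a$ cannot be ``any prescribed'' time---it is the local existence time of the leading-order equation \eqref{2.3}---but since Assumption \ref{ass-u-a} only requires some $T_a>0$ (and, at coalescing points of the variety, ${\bf \Pi}(p\b) L_1 {\bf \Pi}(p\b)$ should be read as a family of transport operators, as the paper allows), this does not affect the conclusion.
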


\begin{proof} The goal is to construct $u_a$ satisfying \eqref{0a}, in the form \eqref{def-ua}, such that the polarization conditions \eqref{e+-}-\eqref{pol-con} and the bounds \eqref{est:va-ra} hold. From \eqref{def-ua}, we see in particular that we are considering highly-oscillating data:
\be \label{donnee:ua}
 u_a(\e,0,x) = e^{-ik\cdot x/\e} g(0,x)^* \vec e_{-1} + e^{i k \cdot x/\e} g(0,x) \vec e_1 + \sqrt \e v_a(\e,0,x), \quad k \neq 0.
 \ee
We introduce notation borrowed from \cite{JMR-TMB}:
 $$ \begin{aligned} L(\b \d_\theta) := - \o \d_\theta + A_0 + A(k \d_\theta) = - \o \d_\theta + A_0 + \sum_{1 \leq j \leq d} k_j A_j \d_\theta, \,\,\,\,
   L_1(\d_t, \d_x) := \d_t + A(\d_x),
   \end{aligned}$$
  and look for an approximate solution $u_a$ in the form of a {\it profile}:
 $$
  u_a(\e,t,x) = \big[ {\bf u}_a(\e,t,x,\theta) \big]_{\theta = (k \cdot x - \o t)/\e}
 $$
  where ${\bf u}_a(\e,t,x,\theta)$ is $2\pi$-periodic in the fast variable $\theta.$
  Then, for $u_a$ to satisfy \eqref{0a} it suffices that its representation ${\bf u}_a$ satisfies
  \be \label{eq:profil}
  \frac{1}{\e} L(\b \d_\theta)  {\bf u}_a + L_1(\d_t,\d_x) {\bf u}_a = \frac{1}{\sqrt \e} B({\bf u}_a, {\bf u}_a) + \e^{K_a} {\bf r}_a^\e,  \ee
 for some remainder ${\bf r}_a^\e$ with a trace $r_a^\e$ satisfying bound \eqref{est:va-ra}. We look for a solution to \eqref{eq:profil} in the form of a WKB expansion:
  \be \label{wkb:exp}
  {\bf u}_a = {\bf u}_0 + \e^{1/2} {\bf u}_1 + \e {\bf u}_2 + \dots + \e^{K_a} {\bf u}_{2K_a},
    \ee
    where each ${\bf u}_k$ is a profile and in particular can be expanded in Fourier series in $\theta.$
 We denote $u_{k,p} = u_{k,p}(t,x)$ the $p$-th Fourier coefficient in $\theta$ of the $k$-th profile ${\bf u}_k,$ and {\it assume} that the coefficients $u_{0,-1}$ and $u_{0,1}$ of ${\bf u}_0$ satisfy the polarization conditions \eqref{e+-}-\eqref{pol-con}.

  From \eqref{eq:profil} we derive a cascade of WKB equations, which are sufficient conditions for \eqref{wkb:exp} to solve \eqref{eq:profil}. The first, comprising terms of order $O(1/\e),$ is
  \be \label{eq:0}
  L(\b \d_\theta) {\bf u}_0 = 0.
  \ee
Decomposing in Fourier series, we find that \eqref{eq:0} is equivalent to
\be \label{eq:00}
 L(i p \b) u_{0,p} = 0, \qquad p \in \Z.
 \ee
   Under condition \eqref{harmonics}, equation \eqref{eq:00} is equivalent to
  \be \label{pola:0}
  u_{0,p} = 0, \, p \notin \{-1,0,1\}, \qquad {\bf \Pi}(p \b) u_{0,p} = u_{0,p}, \quad p \in \{-1,0,1\},
  \ee
  where we recall that notation ${\bf \Pi}$ was introduced just below \eqref{weak:transp}.
 In agreement with \eqref{donnee:ua}, we let $u_{0,0} \equiv 0.$ Then, conditions ${\bf \Pi}(\pm \b) u_{0,\pm 1} = u_{0,\pm 1}$  are implied by \eqref{e+-}-\eqref{pol-con}.

 Thus \eqref{eq:00} is satisfied, and we move on to the equation at order $O(1/\sqrt \e):$
  $$ L(p \d_\theta) {\bf u}_1  = B({\bf u}_0, {\bf
  u}_0),
 $$
 or, at the level of the Fourier coefficients:
\be \label{eq:1}
L(p \b) u_{1,p}  =\sum_{p_1+p_2=p} B\big(u_{0,p_1}, u_{0,p_2}\big).
\ee
For $|p| > 2,$ the above right-hand side is identically zero, and since by \eqref{harmonics} for such $p$ the matrix $L(i p \b)$ is invertible, we solve \eqref{eq:1} by $u_{1,p} = 0.$ For $p \in \{-1,1\},$ the right-hand side of \eqref{eq:1} is identically zero as well, since $u_{0,0} \equiv 0,$ so that \eqref{eq:1} reduces to the polarization conditions
\be \label{pola:1}
 u_{1,p} = {\bf \Pi}(p \b) u_{1,p}, \qquad p \in \{-1,1\}.
\ee
For $p \in \{-2,0,2\},$ projecting with ${\bf \Pi}(p \b)$ onto the kernel of matrix $L(i p \b),$ we find
 \be \label{wt:wkb} {\bf \Pi}(p\b) \sum_{p_1+p_2=p} B\big(u_{0,p_1}, u_{0,p_2}\big) = 0.\ee
By the polarization \eqref{pola:0} and the assumed transparency \eqref{weak:transp}, identity \eqref{wt:wkb} holds. The matrix $L(i p\b)$ being skew-hermitian, there holds $\C^N = \ker L(ip\b) \oplus \mbox{ran}\, L(ip\b),$ and we can define a partial inverse $L(i p\b)^{(-1)}$ by
 $$ L(ip\b)^{(-1)}(x+y) = z, \quad x \in \ker L(ip\b), \quad y \in \mbox{ran}\, L(ip\b), \quad y = L(ip\b) z,$$
 so that
 $$ L(ip\b)^{(-1)} L(i p\b) = \Id - {\bf \Pi}(p \b).$$
If $p \in\{-1,0,1\},$ then $L(ip\b)^{(-1)} = L(ip\b)^{-1},$ the actual matrix inverse.
Multiplying \eqref{eq:1} to the left by $L(ip\b)^{(-1)},$ we then find
\be \label{1-pi}
 (1 - {\bf \Pi}(p\b)) u_{1,p} = L(ip\b)^{(-1)}  \sum_{p_1+p_2=p} B\big(u_{0,p_1}, u_{0,p_2}\big), \qquad p \in \{-2,0,2\}.
\ee
At this stage, equation \eqref{eq:1} is solved. The ${\bf \Pi}(p k) u_{1,p},$ for $|p| \leq 1,$ are still undetermined.

  The equations at order $O(1)$ are
$$
   L(\b \d_\theta) {\bf u}_2 + L_1(\d_t,\d_x) {\bf u}_0 = B({\bf u}_0, {\bf u}_1) + B({\bf u}_1, {\bf u}_0),
$$
corresponding to equations
\be \label{eq:2}
 L(ip\b) u_{2,p} + L_1(\d_t, \d_x) u_{0,p} = \sum_{p_1 + p_2 = p} B(u_{0,p_1}, u_{1,p_2}) + B(u_{1,p_1}, u_{0,p_2})
 \ee
for the Fourier coefficients.

For $|p| > 3,$ the above right-hand side is identically zero, and we solve \eqref{eq:2} by $u_{2,p} \equiv 0.$ For $|p| = 2,$ equation \eqref{eq:2} reduces to
$$
 u_{2,p} =  \sum_{|p_1 + p_2| = 2} L(ip\b)^{-1} \Big( B(u_{0,p_1}, u_{1,p_2}) + B(u_{1,p_1}, u_{0,p_2}) \Big), \qquad |p| = 2.
$$
In particular, the coefficients $u_{2,2}$ and $u_{2,-2}$ are determined as soon as ${\bf \Pi}(p \b)u_{1,1}$ and $u_{1,-1}$ are determined. For $p = 0,$ projecting \eqref{eq:2} onto the kernel of $L(i p\b),$ we find that necessarily
$$
 \sum_{p_1 + p_2 = 0} {\bf \Pi}(0) \Big(  B(u_{0,p_1}, u_{1,p_2}) + B(u_{1,p_1}, u_{0,p_2}) \Big) = 0,
$$
an identity which holds indeed by \eqref{weak:transp}, \eqref{pola:0} and \eqref{pola:1}. For $p = 0,$ the other component is
$$
 (1 - {\bf \Pi}(0)) u_{2,0} = \sum_{p_1 + p_2 = 0} L(0)^{(-1)} \Big( B(u_{0,p_1}, u_{1,p_2}) + B(u_{1,p_1}, u_{0,p_2})\Big).
$$
Finally, for $|p| = 1,$ projecting \eqref{eq:2}, we find
$$
 {\bf \Pi}(p \b) L_1(\d_t, \d_x) {\bf \Pi}(p \b) u_{0,p} =  \sum_{|p_1 + p_2| = 1} {\bf \Pi}(p \b) \Big( B(u_{0,p_1}, u_{1,p_2}) + B(u_{1,p_1}, u_{0,p_2})\Big) .
$$
Only $u_{1,\pm 2}$ and $u_{1,0}$ contribute to the above right-hand side. By transparency \eqref{weak:transp}, we see that actually only $(1 - {\bf \Pi}( \pm 2 \b)) u_{1,\pm 2}$ and $(1 - {\bf \Pi}(0)) u_{1,0}$ contribute to the above right-hand side. With \eqref{1-pi}, we obtain
 \be \label{2.3}  \begin{aligned}
 {\bf \Pi} (\b) L_1(\d_t, \d_x) & {\bf \Pi}(\b) u_{0,1}  = {\bf \Pi}(p \b) B\big(u_{0,-1}\big)\, L(2 i k)^{-1} B(u_{0,1}, u_{0,1})
 \\ & + {\bf \Pi}(p \b) B\big( u_{0,1}\big)\, L(0)^{(-1)}\big( B(u_{0,1}) \, u_{0,-1} \big),
 \end{aligned}\ee
 and a similar equation in $u_{0,-1}.$ In \eqref{2.3}, we used Notation \ref{notation:b}: $B(u) \, v = B(u,v) + B(v,u).$

 The operator $\Pi(p \b) L_1(\d_t, \d_x) \Pi(p \b)$ is a transport operator at group velocity, or a family of transport operators, depending on whether $p \b$ is a simple point on the characteristic variety or not. If $\b$ is a simple point on the variety, meaning that there is no change in multiplicity for the branch $\l_{j_0}$ at $\b,$ then the operator $\Pi(\b) L_1(\d_t, \d_x) \Pi(\b)$ is the scalar transport operator
 $$ \Pi_{j_0}(k) L_1(\d_t, \d_x) \Pi_{j_0}(k) = \big( \d_t + \big(\nabla_\xi \l_{j_0})(k) \cdot \d_x \big) \Pi_{j_0}(k).$$
 If there is a change in multiplicity for $\l_{j_0}$ at $\b,$ meaning that several eigenvalues coalesce at $\b,$ then ${\bf \Pi}(\b) L_1(\d_t, \d_x) {\bf \Pi}(\b)$ is a family of transport operators \label{ref for transp}, with velocities given by the directional derivatives of the branches that intersect at $\b$\footnote{In the case of separated eigenvalues, these facts are proved in \cite{La,DJMR}; in the case of coalescing eigenvalues, these facts are proved in \cite{Lt,T1}. The article \cite{T1} contains unified proofs for both simple and coalescing cases, and also for higher-order operators, such as Schr\"odinger, that arise in three-scale approximations.}.

In both cases, simple or coalescing eigenvalues, equation \eqref{2.3} is hyperbolic with a cubic nonlinearity, and can be solved locally in time for smooth Sobolev data.

The other component of \eqref{eq:2} for $|p| = 1$ is
$$ (1 - {\bf\Pi}(p\b)) u_{2,p} =  \sum_{|p_1 + p_2| = 1} L(ipk)^{(-1)} \Big( B(u_{0,p_1}, u_{1,p_2}) + B(u_{1,p_1}, u_{0,p_2})\Big).$$

Summing up, we see that at this stage:
\begin{itemize}
\item the leading term ${\bf u}_0 = u_{0,-1} e^{-i\theta} + u_{0,1} e^{i \theta}$ in \eqref{wkb:exp} is completely determined, with amplitudes $u_{0,\pm1}$ solving semilinear hyperbolic equations \eqref{2.3};
\item the first corrector ${\bf u}_1 = \sum_{|p| \leq 2} e^{i p \theta} u_{1,p}$ is known, except for ${\bf \Pi}(p \b) u_{1,p},$ $|p| \leq 1,$ for which we have no information so far;
\item the second corrector ${\bf u}_2 = \sum_{|p| \leq 3} e^{i p \theta} u_{2,p}$ is known, except for $u_{2,\pm 2},$ which will be determined by ${\bf \Pi}(\pm \b) u_{1,\pm 1},$ and for  ${\bf \Pi}(p \b) u_{2,p},$ $|p| \leq 1,$ for which we have no information so far.
\end{itemize}

  We can go on with the expansion up to any order $2 K_a.$ The components ${\bf \Pi}(p \b) u_{1,p},$ for $|p| \leq 1,$ are determined by the equation at order $O(\sqrt \e):$ they satisfy {\it linear} transport equations
  $$ {\bf \Pi}(p \b) L_1(\d_t, \d_x) {\bf \Pi}(p \b) u_{1,p} = \big({F}_1({\bf u}_0) {\bf u}_1\big)_p,$$
 where a typical term in the source ${F}_1$ is
 $$ {\bf \Pi}(p \b) B\Big( {\bf \Pi}(p \b) u_{1,p}, L(0)^{(-1)} B(u_{0,1}, u_{0,-1})\Big).$$
 Similarly, the components ${\bf \Pi}(p \b) u_{2,p}$ are determined by the equations at order $O(\e);$ more generally, the components ${\bf \Pi}(p \b) u_{\ell, p}$ are determined by the equations at order $O(\e^{\ell/2}),$ they satisfy linear tranport equations, with source terms which are polynomials in the Fourier coefficients of the lower-order profiles ${\bf u}_{\ell'},$ with $0 \leq \ell' < \ell.$ In particular, these equations can be solved over any interval of existence for \eqref{2.3}.

 By construction, the correctors ${\bf v}_a$ and the remainder ${\bf r}_a^\e$ are trigonometric polynomials in $\theta,$ so that bounds \eqref{est:va-ra} hold. This completes the proof of Proposition \ref{lem:WKB}. \end{proof}

Condition \eqref{weak:transp}, introduced by Joly, M\'etivier and Rauch \cite{JMR-TMB}, can be linked to transparency in the sense of Definition {\rm \ref{def:transp}}:

\begin{prop}\phantomsection \label{rem:weaktransp} Assuming \begin{itemize}
\item condition \eqref{harmonics} describing the set of harmonics of the fundamental phase; and denoting $j_0, j_1$ the indices such that $\l_{j_0}(k) = -\l_{j_1}(-k) = \o_0;$
\item assuming that $\vec e_1$ and $\vec e_{-1}$ generate the kernels of $i \o + A_0 + A(k)$ and $-i \o + A_0 + A(-i k)$ respectively;
\item denoting ${\mathcal J}$ the set of indices $j$ such that $\l_j(0) = 0,$ and assuming that the resonances $(j_0,j)$ and $(j,j_1)$ are transparent, for all $j \in {\mathcal J},$
\end{itemize}
 then weak transparency \eqref{weak:transp} holds.
 \end{prop}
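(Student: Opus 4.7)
The plan is to verify the weak transparency identity \eqref{weak:transp} separately for each $p \in \{-1, 0, 1\}$, reducing each case to an algebraic identity supplied by the transparency of an individual resonance of type $(j_0, j)$ or $(j, j_1)$ evaluated at a distinguished resonant frequency. By \eqref{harmonics}, the sum over $p_1 + p_2 = p$ in \eqref{weak:transp} only involves $(p_1, p_2) \in \{-1, 0, 1\}^2$, so for each $p$ one has a short finite list of terms to handle. The first step is to record the identifications: ${\bf \Pi}(\b) = \Pi_{j_0}(k)$ is the rank-one projector onto $\C \vec e_1$, ${\bf \Pi}(-\b) = \Pi_{j_1}(-k)$ is the rank-one projector onto $\C \vec e_{-1}$, and ${\bf \Pi}(0) = \sum_{j \in {\mathcal J}} \Pi_j(0)$ is the orthogonal projector onto $\ker A_0$. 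Consequently every expression ${\bf \Pi}(p\b) B({\bf \Pi}(p_1 \b) u, {\bf \Pi}(p_2 \b) v)$ decomposes, via the decomposition of ${\bf \Pi}(0)$, into finitely many sub-blocks of the form $\Pi_i(\cdot) B(\Pi_{i'}(\cdot)\, \cdot\,, \Pi_{i''}(\cdot)\, \cdot\,)$.

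The key algebraic input comes from evaluating the pointwise transparency bounds of Definition \ref{def:transp} at the resonant points. For $j \in {\mathcal J}$, one has $\lambda_{j_0}(k) - \omega - \lambda_j(0) = 0$, so $\xi = 0$ lies in ${\mathcal R}_{j_0 j}$; the transparency bounds \eqref{factor1}--\eqref{factor2} then force
\[
\Pi_{j_0}(k) B(\vec e_1) \Pi_j(0) = 0, \qquad \Pi_j(0) B(\vec e_{-1}) \Pi_{j_0}(k) = 0,
\]
where $B(\vec u)$ is Notation \ref{notation:b}. Symmetrically, $\lambda_j(0) - \omega - \lambda_{j_1}(-k) = 0$ places $\xi = -k$ in ${\mathcal R}_{j j_1}$, giving
\[
\Pi_j(0) B(\vec e_1) \Pi_{j_1}(-k) = 0, \qquad \Pi_{j_1}(-k) B(\vec e_{-1}) \Pi_j(0) = 0.
\]

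For $p = 1$ the relevant pairs are $(0, 1)$ and $(1, 0)$; using bilinearity and the identification of ${\bf \Pi}(\b)u$ as a scalar multiple of $\vec e_1$, the resulting sum of the two terms regroups into $\Pi_{j_0}(k) \bigl( B(\vec e_1, {\bf \Pi}(0) u) + B({\bf \Pi}(0) u, \vec e_1) \bigr)$ times a scalar coming from $v$; this is exactly $\Pi_{j_0}(k) B(\vec e_1)\, {\bf \Pi}(0) u$, which vanishes by summing the first identity above over $j \in {\mathcal J}$. The case $p = -1$ is entirely analogous, using $(j, j_1)$ transparency in place of $(j_0, j)$. For $p = 0$, the cross pairs $(\pm 1, \mp 1)$ combine into $B(\vec e_1)\, \vec e_{-1}$ projected by ${\bf \Pi}(0)$, which is zero by summing the second identity over $j \in {\mathcal J}$. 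The diagonal pair $(0, 0)$ produces the term $B({\bf \Pi}(0) u, {\bf \Pi}(0) v)$; in the only use made of \eqref{weak:transp} in Proposition \ref{lem:WKB} this term is evaluated on $u_{0,0}$ which is set to zero in accordance with the non-oscillating part of the datum \eqref{donnee:ua}, so this contribution is structurally absent, cf.\ the discussion on page \pageref{compa}.

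The main obstacle is the careful bookkeeping of the pairs $(p_1, p_2)$ and the verification that every term which one would naively be forced to control reduces, after grouping via Notation \ref{notation:b}, to one of the four algebraic identities above. Once this grouping is performed, the proof is a routine summation over $j \in {\mathcal J}$. No estimates are involved: the argument is purely algebraic, exploiting the rank-one structure of ${\bf \Pi}(\pm\b)$ together with the vanishing of the resonant phase $\lambda_{j_0}(k) - \omega - \lambda_j(0)$ (respectively $\lambda_j(0) - \omega - \lambda_{j_1}(-k)$) at the origin (respectively at $-k$), which is precisely what forces the interaction coefficients to vanish there under the transparency hypothesis.
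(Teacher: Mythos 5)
Your proposal is correct and follows essentially the same route as the paper: you reduce \eqref{weak:transp} to the vanishing of the $(j_0,j)$ and $(j,j_1)$ interaction coefficients at the trivial resonant frequencies $\xi=0$ and $\xi=-k$, which is exactly the paper's reduction to \eqref{wt2}--\eqref{wt3}, obtained there too by evaluating the transparency bounds of Definition \ref{def:transp} at these resonant points and summing over $j\in{\mathcal J}$. Your explicit remark on the $(0,0)$ pair at $p=0$ concerns a term the paper's proof passes over silently, and your handling of it (it never arises in the WKB construction since $u_{0,0}\equiv 0$) is consistent with how \eqref{weak:transp} is actually used.
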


\begin{proof} There holds $\Pi_{j_0}(k) = {\bf \Pi}(\b),$ $\Pi_{j_1}(-k) = {\bf \Pi}(-\b),$ and ${\bf \Pi}(0) = \sum_{j \in {\mathcal J}} \Pi_{j}(0),$ so that the weak transparency condition \eqref{weak:transp} is implied by conditions
\be \label{wt2}
\left\{\begin{aligned}
 \Pi_j(0) \Big( B(\vec e_1, \vec e_{-1}) + B(\vec e_{-1}, \vec e_1) \Big) & = 0, \\
 \Pi_{j_0}(k) \Big( B(\vec e_1, \Pi_j(0)\, \cdot\,) + B(\Pi_j(0)\, \cdot\,, \vec e_1) \Big) & = 0,\\
 \Pi_{j_1}(-k) \Big( B(\vec e_{-1}, \Pi_j(0)\, \cdot \,) + B(\Pi_j(0) \,\cdot \,, \vec e_{-1}) \Big) & = 0.
 \end{aligned}\right.
 \ee
 With Notation {\rm \ref{notation:b}}, conditions \eqref{wt2} take the form\footnote{The lack of symmetry between $\vec e_1$ and $\vec e_{-1}$ in \eqref{wt3} is only apparent: it suffices indeed to reformulate the first condition in \eqref{wt3} as $\Pi_j(0) B(\vec e_1) \Pi_{j_1}(-k) = 0,$ associated with resonance \eqref{res1}, to restore symmetry.}
 \be \label{wt3}
 \Pi_j(0) B(\vec e_{-1}) \Pi_{j_0}(k) = 0, \quad \Pi_{j_0}(k) B(\vec e_1) \Pi_j(0) = 0, \quad \Pi_{j_1}(-k) B(\vec e_{-1}) \Pi_j(0) = 0.
 \ee
Since $j \in {\mathcal J},$ the (trivial) resonance
$$
$$
 occurs at $\xi = 0.$ Similarly, the trivial resonance
 \be \label{res1} \o = \l_j(0) - \l_{j_1}(-k) = 0 -  \l_{j_1}(-k)
 \ee occurs at $\xi = - k.$ The first two conditions in \eqref{wt3} can then be seen as a (partial) transparency condition for the resonances $(j_0,j),$ with $j \in {\mathcal J}.$ By partial we mean here that under \eqref{wt3} the bounds of Definition {\rm \ref{def:transp}} hold a priori only at $\xi = 0.$ The third condition in \eqref{wt3} is a (partial, i.e., only at $\xi = -k$) transparency condition for resonances $(j,j_1),$ with $j \in {\mathcal J}.$ Since by assumption the resonances $(j_0,j)$ and $(j,j_1)$ are transparent, condition \eqref{wt3} holds, implying \eqref{weak:transp}.
 \end{proof}

 Propositions \ref{lem:WKB} and \ref{rem:weaktransp} lead to sufficient conditions for Assumption {\rm \ref{ass:several-res}} to imply Assumption {\rm \ref{ass-u-a}}, as follows:

\begin{coro} \phantomsection \label{cor:8-2} Under \eqref{pola:app}-\eqref{harmonics}, if $\vec e_{\pm 1}$ generate the respective kernels of $\pm i \o + A_0 + A(\pm ik),$ and if eigenvalues $\l_j$ such that $\l_j(0) = 0$ are not involved in non-transparent resonances, meaning that for such $j,$ for all $j',$ there holds $(j,j') \notin {\mathfrak R}_0$ and $(j',j) \notin {\mathfrak R}_0,$ then Assumption {\rm \ref{ass:several-res}} implies Assumption {\rm \ref{ass-u-a}.}
\end{coro}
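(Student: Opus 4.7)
The plan is to reduce Corollary \ref{cor:8-2} to a direct application of Propositions \ref{lem:WKB} and \ref{rem:weaktransp}. Since Proposition \ref{lem:WKB} already provides Assumption \ref{ass-u-a} from \eqref{pola:app}, \eqref{harmonics}, and the weak transparency condition \eqref{weak:transp}, it suffices to establish \eqref{weak:transp}. For this I will invoke Proposition \ref{rem:weaktransp}, whose hypotheses are \eqref{harmonics}, the rank--one structure of $\ker(\pm i\o+A_0+A(\pm ik))$ (both granted by assumption), and transparency in the sense of Definition \ref{def:transp} of the resonances $(j_0,j)$ and $(j,j_1)$ for every $j \in \mathcal{J}$, where $\mathcal{J}:=\{j,\,\l_j(0)=0\}$.

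The key step is thus to verify the last item, and this is where the extra hypothesis in Corollary \ref{cor:8-2} comes in. First I would check that for $j \in \mathcal{J}$ the pairs $(j_0,j)$ and $(j,j_1)$ are genuine resonances: evaluating the resonance relation $\o = \l_{j_0}(\xi+k) - \l_j(\xi)$ at $\xi=0$ yields $\l_{j_0}(k) - \l_j(0) = \o - 0 = \o$, so $0 \in \mathcal{R}_{j_0 j}$, and similarly $\o = \l_j(\xi+k) - \l_{j_1}(\xi)$ evaluated at $\xi = -k$ gives $\l_j(0)-\l_{j_1}(-k) = \o$, so $-k \in \mathcal{R}_{jj_1}$. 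Next, by the standing hypothesis of the corollary, $(j_0,j) \notin \mathfrak{R}_0$ and $(j,j_1) \notin \mathfrak{R}_0$ whenever $j \in \mathcal{J}$. By Assumption \ref{ass:several-res}(ii), every resonance in $\mathfrak{R}\setminus\mathfrak{R}_0$ is transparent in the sense of Definition \ref{def:transp}; hence both $(j_0,j)$ and $(j,j_1)$ are transparent.

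With the hypotheses of Proposition \ref{rem:weaktransp} all in hand, that proposition delivers \eqref{weak:transp}, and then Proposition \ref{lem:WKB} produces the WKB approximate solution $u_a$ required by Assumption \ref{ass-u-a}, completing the proof.

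I do not expect any serious obstacle here: the entire content of the corollary is the observation that the transparency hypothesis on the zero--eigenvalue resonances in Assumption \ref{ass:several-res} is \emph{exactly} what is needed to trigger Proposition \ref{rem:weaktransp}, since the only coupling between the fundamental modes and the mean mode in \eqref{weak:transp} is mediated by the branches vanishing at $\xi=0$. The one point deserving some care is simply to match the indexing conventions: to note that $\mathcal{J}$ may contain several indices (the null eigenvalue of $A_0/i+A(0)$ is typically of high multiplicity) and that the decomposition ${\bf \Pi}(0) = \sum_{j\in\mathcal{J}} \Pi_j(0)$ used in the proof of Proposition \ref{rem:weaktransp} reduces the verification to one index $j \in \mathcal{J}$ at a time, which is precisely the form of the assumption in the corollary.
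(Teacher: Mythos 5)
Your proposal is correct and follows exactly the paper's route: the paper proves the corollary by declaring it an immediate consequence of Propositions \ref{lem:WKB} and \ref{rem:weaktransp}, and you simply fill in the routine verification — that the trivial resonances $(j_0,j)$ at $\xi=0$ and $(j,j_1)$ at $\xi=-k$ (for $j\in\mathcal{J}$) lie outside ${\mathfrak R}_0$ by hypothesis and are therefore transparent by Assumption \ref{ass:several-res}(ii), so Proposition \ref{rem:weaktransp} yields \eqref{weak:transp} and Proposition \ref{lem:WKB} then gives Assumption \ref{ass-u-a}. No gaps.
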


\begin{proof} This is an immediate consequence of Propositions \ref{lem:WKB} and \ref{rem:weaktransp}. \end{proof}

\begin{rema}\phantomsection \label{rem:jmr} In different settings, weak transparency conditions such as \eqref{weak:transp} may not guarantee existence of WKB solutions. One possible obstruction is a lack of well-posedness of the limiting equations.

 For instance, the regime considered by Joly, M\'etivier and Rauch in \cite{JMR-TMB} is
\be \label{jmr} \d_t u + \frac{1}{\e} A_0 u + \sum_{1 \leq j \leq d} A_j \d_{x_j} u = \frac{1}{\e} B(u,u).\ee
This is a more singular regime than \eqref{0}, on which we further comment in Remark {\rm \ref{rem:last}} below. In particular, in the context of \eqref{jmr} the profile equations in $\Pi_{j_0}(p k) u_{0,p}$ are typically \emph{quasi-linear}, when \eqref{2.3} was semi-linear. For triangular source terms $B,$ Joly, M\'etivier and Rauch give sufficient conditions, in the form of transparency conditions, for these quasi-linear profiles equations to be well-posed. This is Assumption 2.2 in \cite{JMR-TMB}, and it is strictly stronger than their weak transparency assumption (Assumption 2.1 in \cite{JMR-TMB}) guaranteeing existence of a WKB cascade, and strictly weaker than the conditions that guarantee stability (Assumption 2.5 in \cite{JMR-TMB}).

Another example is given in \cite{T3}. There, the second author considered \emph{quasi-linear} Euler-Maxwell systems in the scaling
\be \label{em2} \d_t u + \frac{1}{\e} A_0 + \frac{1}{\sqrt \e} \sum_{1 \leq j \leq d} A_j \d_{x_j} u + \sum_{1 \leq j \leq d} \tilde A_j(u) \d_{x_j} u = \frac{1}{\sqrt \e} B(u,u),\ee
and proved stability of WKB solutions with leading amplitudes solving the Zakharov system describing Langmuir turbulence. In particular, for \eqref{em2} just like for \eqref{jmr}, and as opposed to \eqref{0}, the well-posed character of the limiting equations is far from trivial. For the Zakharov system, local-in-time well-posedness in smooth Sobolev spaces was first proved by Schochet and Weinstein {\rm \cite{SW}} and Ozawa and Tsutsumi {\rm \cite{OT}}.

\end{rema}

\begin{rema}\phantomsection \label{rem:ghosts} For some physical systems, the weak transparency condition \eqref{weak:transp} is actually \emph{not} satisfied. This was proved for Maxwell-Bloch by Joly, M\'etivier and Rauch \cite{JMR-TMB}, and by the second author for Euler-Maxwell \cite{T2}. WKB solutions can then sometimes be constructed for a restricted set of initial data.

Consider for instance the situation in which after spelling out one component of \eqref{weak:transp}, we arrive at condition
 \be \label{defect} n_{0,0} v_{0,1} = 0,\ee
 where $n$ and $v$ are components of the solution $u$ (with $n$ representing for instance variation of density and $v$ velocity, as in Euler-Maxwell). Here we are using notation introduced in the proof of Proposition {\rm \ref{lem:WKB}}, so that $n_{0,0}$ denotes the mean mode of the leading term in the variation of density. Equation \eqref{defect} can be solved by $n_{0,0} = 0,$ if compatible with the datum, for instance in the case of highly-oscillating initial data in velocity,
with $O(\e)$ initial variations of density.

In such situations, the limiting system often involves both $v_{0,1}$ and $n_{1,0},$ meaning a coupling between leading order terms and corrector terms: the term $\e n_{1,0},$ which vanishes in the limit $\e \to 0,$ in particular cannot be measured, has a measurable effect on the leading term $v_{0,1}.$  This is akin to the \emph{ghost effect} that was studied in depth by the Kyoto school \cite{Sone,TA} for rarefied gas dynamics.
\end{rema}

\begin{rema}\phantomsection \label{rem:last} We finally comment on the link between the specific regime in Joly, M\'etivier and Rauch's article, as described in Remark {\rm \ref{rem:jmr}} above, absence of transparency as described in Remark {\rm \ref{rem:ghosts}}, and our supercritical regime \eqref{0}.

 From \eqref{0}, imagine that the hyperbolic operator is block-diagonal, like for instance system \eqref{kg1} from Section {\rm \ref{sec:KG}.} This is the case for the Maxwell-Bloch equations. Suppose then that in the coordinate system $u = (u_1,u_2)$ in which the hyperbolic operator is block-diagonal, the source $B$ has the form
  $$ B(u,u) = \left(\begin{array}{c} B_1(u_1,u_2) \\ B_2(u_1,u_1) \end{array}\right),$$
  and assume that $B_1$ does {\rm not} satisfy the weak transparency condition \eqref{weak:transp}. Then, rescaling
  $\tilde u = (\tilde u_1, \tilde u_2) := (u_1, u_2/\sqrt \e),$
  we find \eqref{jmr}, with $(1/\e)B$ replaced by $$\dsp{\frac{1}{\e} \left(\begin{array}{c} 0 \\ B_2(\tilde u_1, \tilde u_1) \end{array}\right)  + \left(\begin{array}{c} B_1(\tilde u_1, \tilde u_2) \\ 0 \end{array}\right)}.$$ The regime is now more singular, but the source is more transparent, and has a triangular structure, as in \cite{JMR-TMB}.
\end{rema}

\section{On structure of the resonant set} \label{app:structure-resonant-set}

\begin{lemm}\phantomsection \label{lem:resonantset} Under the assumption of Lemma {\rm \ref{lem:regspec}}, the set ${\mathfrak R}$ comprising all resonant frequencies is bounded as soon as the asymptotic branches on the characteristic variety are distinct: $c_i \neq c_j$ for $i \neq j,$ with notation borrowed from Lemma {\rm \ref{rem:sw}}.
\end{lemm}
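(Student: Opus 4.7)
\medskip

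The plan is to show that the resonance relation $\omega = \lambda_i(\xi+k) - \lambda_j(\xi)$ cannot be satisfied for $|\xi|$ large, by extracting the dominant behavior at infinity from the asymptotic expansion of Lemma \ref{rem:sw}. First I would invoke that lemma to write, for each $j$,
$$\lambda_j(\xi) = c_j(\bar\omega)|\xi| + O(|\xi|^{-1}), \qquad \bar\omega := \xi/|\xi|,$$
with $c_j \in C^\infty(\S^{d-1})$. Combining this with the expansions $|\xi+k| = |\xi| + k\cdot\bar\omega + O(|\xi|^{-1})$ and $\xi/|\xi| - (\xi+k)/|\xi+k| = O(|\xi|^{-1})$, and smoothness of $c_i$ on the sphere, yields
$$\lambda_i(\xi+k) - \lambda_j(\xi) - \omega = \bigl(c_i(\bar\omega) - c_j(\bar\omega)\bigr)|\xi| + c_i(\bar\omega)\,k\cdot\bar\omega - \omega + O(|\xi|^{-1}).$$

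For $i\neq j$, compactness of $\S^{d-1}$ together with continuity of $c_i - c_j$ and the hypothesis $c_i(\bar\omega)\neq c_j(\bar\omega)$ give $\delta_{ij} := \min_{\bar\omega\in \S^{d-1}}|c_i(\bar\omega) - c_j(\bar\omega)| > 0$. Hence the above expression is bounded below in absolute value by $(\delta_{ij}/2)|\xi|$ for $|\xi|$ sufficiently large, which cannot vanish. Therefore each ${\mathcal R}_{ij}$ with $i\neq j$ is contained in a fixed ball. For $i=j$, the $|\xi|$-order term cancels and the resonance function reduces to $c_i(\bar\omega)\, k\cdot\bar\omega - \omega + O(|\xi|^{-1})$; a standard situation encountered in the paper's examples is that $\omega$ avoids the range of the bounded function $\bar\omega \mapsto c_i(\bar\omega)\,k\cdot\bar\omega$ on $\S^{d-1}$ (for instance when $|\omega|$ exceeds $|k|\max_{\bar\omega}|c_i(\bar\omega)|$, which holds automatically when the group velocity is bounded by the phase velocity), again giving boundedness of ${\mathcal R}_{ii}$.

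Finally, ${\mathfrak R}$ is a finite union over $(i,j)\in \{1,\ldots,J\}^2$ of the sets ${\mathcal R}_{ij}$, each of which I have shown to be bounded, so ${\mathfrak R}$ itself is bounded. The main obstacle is really the book-keeping for the case $i = j$: one must check that the $O(1)$ tangential correction $c_i(\bar\omega)\,k\cdot\bar\omega$ is separated from $\omega$ on the sphere, a point that is typically verifiable by inspection of the characteristic variety in concrete examples (see the Klein-Gordon computations of Sections \ref{sec:KG} and \ref{sec:KG2}, where the choice of fundamental phase on a Klein-Gordon branch automatically satisfies $|\omega| > |k|\,\sup_{\bar\omega}|c_i(\bar\omega)|$).
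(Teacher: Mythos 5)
Your argument is essentially the paper's. The paper's proof also rests on the asymptotic expansion of Lemma \ref{rem:sw}: it argues by contradiction that if some ${\mathcal R}_{ij}$ accumulated at infinity, then $c_i|\xi+k| = \omega + c_j|\xi| + O(1/|\xi|)$ along a sequence $|\xi|\to\infty$, forcing $c_i = c_j$; your quantitative version (compactness of $\mathbb{S}^{d-1}$, the uniform gap $\delta_{ij}$, and the lower bound $(\delta_{ij}/2)|\xi|$) is the same computation phrased without a contradiction, and it is correct.

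Two remarks. First, a bookkeeping point: the directions $\xi/|\xi|$ and $(\xi+k)/|\xi+k|$ differ by $O(1/|\xi|)$, and this discrepancy is multiplied by $|\xi+k|\sim|\xi|$, so the remainder in your displayed expansion is only $O(1)$ (it contains a tangential term involving $\nabla_{\bar\omega}c_i$ and the component of $k$ orthogonal to $\xi$), not $O(|\xi|^{-1})$; this is harmless, since an $O(1)$ correction cannot compete with $(c_i(\bar\omega)-c_j(\bar\omega))|\xi|$. Second, concerning auto-resonances: you are right that the hypothesis $c_i\neq c_j$ for $i\neq j$ gives no control on ${\mathcal R}_{ii}$, but the paper's own one-line proof has exactly the same scope --- its conclusion ``$c_i=c_j$'' contradicts the hypothesis only when $i\neq j$ --- and in the examples auto-resonances are excluded by a separate assumption on the fundamental phase. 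So the supplementary condition you introduce for $i=j$ (that $\omega$ avoid the limiting values of $c_i(\bar\omega)\,k\cdot\bar\omega$) is a reasonable way to close that case, but it is not needed to match the paper's argument, and flagging the issue is not a defect of your proof relative to the paper's.
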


\begin{proof} If the resonant set is unbounded, then some ${\mathcal R}_{ij}$ has an accumulation point at infinity. By Lemmas \ref{lem:regspec} and \ref{rem:sw}, this gives an equality
$$ c_i|\xi + k| = \o + c_j |\xi| + O\left(\frac{1}{|\xi|}\right),$$
along a sequence $|\xi| \to\infty,$ implying $c_i = c_j.$
\end{proof}

\subsection{Euler-Maxwell} \label{sec:EM} We verify that the assumptions of Lemma \ref{lem:resonantset} are satisfied by the Euler-Maxwell equations (EM) of Section \ref{sec:der3EM}. First we check that the assumption of Lemma \ref{lem:regspec} is satisfied. Here $A_\infty(\o)$ is block-diagonal:
 $$ A_\infty(\omega) = \left(\begin{array}{cccccc} 0 & \o \times & 0 & 0 & 0 & 0 \\ - \o \times & 0 & 0 & 0 & 0 & 0 \\ 0 & 0 & 0 & \theta_e \o & 0 & 0 \\ 0 & 0 & \theta_e \omega \cdot & 0 & 0 & 0 \\ 0 & 0 & 0 & 0 & 0 & \a^2 \theta_i \o \\ 0 & 0 & 0 & 0 & \theta_i \o \cdot & 0 \end{array}\right).$$
In particular, the eigenvalues of $A_\infty(\o)$ are $\big\{\pm |\o|, \,\, \pm \theta_e |\o|, \,\, \pm \a \theta_i |\o|\big\}.$
Given $\o$ on the sphere, the eigenvalues are separated, since $\theta_i \ll \theta_e \ll 1,$ and $\a < 1.$ By Rouch\'e's theorem, smoothness is preserved under small perturbations. In particular, the eigenvalues of $A_\infty(\o) - i x A_0$ are smooth, with respect to $\o$ and $x,$ locally around $(\o,0) \in \S^{d-1} \times \R.$ This verifies the assumption of Lemma \ref{lem:regspec}.

 Next the asymptotic description of the eigenvalues of (EM) in \eqref{asymptotes:em}, or a look at Figure \ref{figem}, shows that the separation assumption at infinity is satisfied.

 Hence Lemma \ref{lem:resonantset} applies: the resonant set for Euler-Maxwell is bounded.

\subsection{Maxwell-Landau-Lifschitz} \label{sec:MLL}

We conclude this Appendix by giving an example in which the assumptions of Lemma \ref{lem:resonantset} do {\it not} appear to be satisfied.
The Maxwell-Landau-Lifschitz equations are
 \begin{equation}\label{MLL} \left\{\begin{aligned}
 \d_t E-\nabla \times H &= 0, \\
      \d_t H+\nabla \times E &= M \times H, \\  \d_t M & =-M\times H.
                          \end{aligned} \right.
\end{equation}
For the linearized equations around the family of constant solutions
$$ (E, H, M) = (0,\a M_0, M_0) \in \R^9,$$
with $\a \in \R,$ coordinatizing $M_0 = (1,0,0),$ the characteristic variety has equation
$$ \l^3\Big( \l^6-2(2+|\xi|^2)\l^4+(|\xi|^2(6+|\xi|^2)-2\xi_1^2)\l^2-|\xi|^2(2|\xi|^2-\xi_1^2)\Big) = 0.$$
The one-dimensional Maxwell-Landau-Lifshitz are transparent in a strong sense; this was shown by the first author in \cite{Lu}.

\addtocounter{figure}{5}
\begin{figure}\begin{center}
\includegraphics[scale=.4]{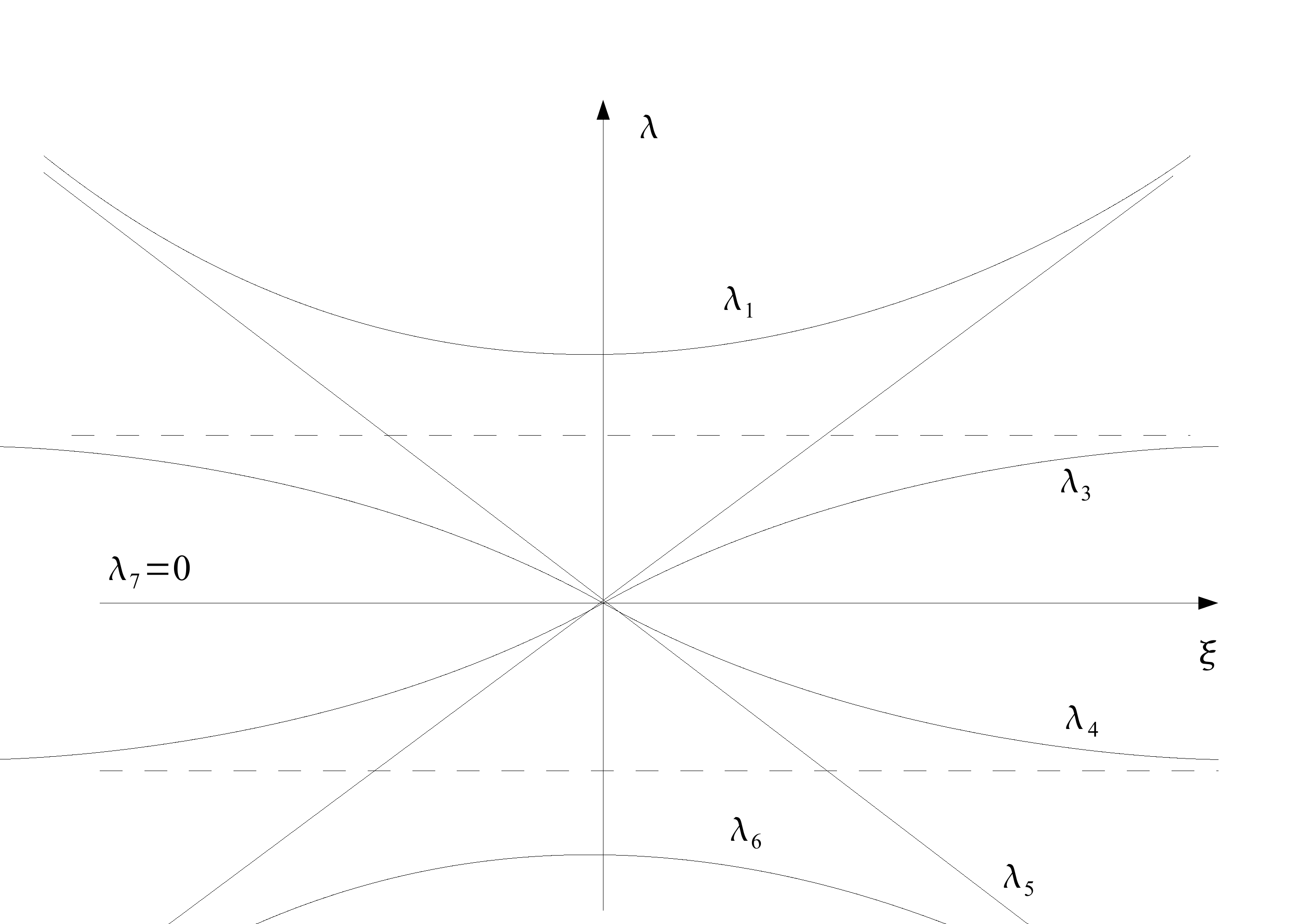}
\caption{The characteristic variety for the Maxwell-Landau-Lifschitz equations.}
\label{fig4}\end{center}
\end{figure}

In one space dimension, the variety is pictured on Figure \ref{fig4}. The asymptotic branches are not distinct, meaning that we cannot apply Lemma \ref{lem:resonantset} in order to prove boundedness of the resonant set.

In three space dimensions, numerical calculations by the first author show existence of resonances for large values of $|\xi|,$ meaning that Assumption \ref{ass:several-res}(i) is probably not satisfied by \eqref{MLL}.  Moreover, these resonances are non-transparent, suggesting instability.

\section{Notation index}

 \setlength{\columnseprule}{0.001cm}
 \begin{multicols}{2}

$\lesssim\,:$ inequality up to a constant, page \pageref{notation:lesssim}

\smallskip

$(\cdot,\cdot):$ scalar product in $\C^N,$ page \pageref{hermit}

\smallskip

$|\cdot|:$ sup norm in $\C^N, \C^{N \times N},$ page \pageref{matrix:norm}

\smallskip

$\|\cdot\|_{\e,s}:$ weighted Sobolev norm, page \pageref{weighted:norm}

\smallskip

$\prec\,:$ binary relation for cut-offs, page \pageref{notation:cut-off}

\smallskip

${\mathcal A}:$ real diagonal symbol, \eqref{def:calA} page \pageref{def:calA}

\smallskip

${\bf A}:$ real diagonal symbol, page \pageref{bfA}

\smallskip

$B(\cdot):$ linearized source term, page \pageref{notation:b}

\smallskip

$B_p:$  avatar of $B,$ \eqref{def:Bp} page \pageref{def:Bp}

\smallskip

${\mathcal B}^r:$ avatar of $B$, \eqref{def:Br} page \pageref{def:Br}

\smallskip

$\check {\mathcal B}:$ avatar of $B$, \eqref{eq:check-u} page \pageref{eq:check-u}

\smallskip

$b_{ij}^\pm:$ interaction coefficients, pages \pageref{def:int-coeff}, \pageref{def:int-coeff:ij}

\smallskip

$\g:$ maximal growth coefficient, page \pageref{def:several-gamma}

\smallskip

$\g_{ij}:$ growth coefficient, page \pageref{def:several-gamma}

\smallskip

$\g^-:$ lower growth rate, page \pageref{low0}

\smallskip

$\g^+:$ upper growth rate, page \pageref{def:g+}

\smallskip

${\bf \G}:$ stability index, pages \pageref{def:trace} and \pageref{def:index}

\smallskip

$\G:$ trace \!of \!interaction \!coefficients, page \pageref{def:G0}

\smallskip

$g:$ leading WKB amplitude, page \pageref{pol-con}

\smallskip

$\theta = (k \cdot x - \o t )/\e,$ page \pageref{def:theta}

\smallskip

$|\ln \e|^*:$ arbitrary power of $|\ln \e|,$ page \pageref{def:g+}

\smallskip

$\l_j:$ eigenvalues, page \pageref{sp dec}

\smallskip

$\mu_j:$ shifted eigenvalues \eqref{def:mu}, page \pageref{def:mu}

\smallskip

${\bm \mu}_\a:$ shifted eigenvalues, page \pageref{def:bm:mu}

\smallskip

$\xi_0:$ frequency argmax, \eqref{def:xi0} page \pageref{def:xi0}

\smallskip

$\ope:$ para-differential operator, page \pageref{def:para}

\smallskip

$\op_\e:$ pseudo-differential operator, page \pageref{quantiz}

\smallskip

$\Pi_j:$ eigenprojectors, page \pageref{sp dec}

\smallskip

$\varphi_{\dots}:$ spatial cut-offs, pages \pageref{intro cut-offs} and \pageref{encore x cut}

\smallskip

$\chi_{\dots}:$ frequency cut-offs, pages \pageref{intro cut-offs} and \pageref{encore cut-offs}

\smallskip

${\mathfrak R}:$ resonant frequencies, page \pageref{ass:several-res}

\smallskip

${\mathfrak R}_0:$ non-transparent resonant frequencies, page \pageref{ass:several-res}

\smallskip

$R_0:$ uniform remainder, page \pageref{uniform remainder}

\smallskip

${\mathcal R}_{ij}:$ $(i,j)$ resonant frequencies, page \pageref{def-reson}

\smallskip

${\mathcal R}_{12}^h:$ neighborhood of $(1,2)$ resonant frequencies, page \pageref{def:Rh}
\smallskip

$S_0(0;t):$ symbolic flow, pages \pageref{rep:V} and \pageref{resolvent0}

\smallskip

$\sigma_{+p}:$ frequency-shifted symbol $\sigma,$ page \pageref{def:shift}

\smallskip

$x_0:$ spatial argmax, \eqref{def:x0} page \pageref{def:x0}
\end{multicols}

\section{Parameter list}

Temporal parameters:
\begin{itemize}
\item $T_0,$ the final observation time, is defined in \eqref{def:T0-K0} page \pageref{def:T0-K0}.
\item $T_1$ is an observation time for mixed ${\mathcal F}L^1$-$H^s$ estimates, defined in \eqref{def:delta} page \pageref{def:delta}.
\item $T_2$ is an observation time for purely $H^s$ estimates, introduced in \eqref{def:T2} page \pageref{def:T2}.
\item $T_\star$ in Appendix \ref{app:duh} is arbitrary.
\item $T$ in Appendix \ref{app:symb-bound} is arbitrary.
\end{itemize}

\medskip
Localization parameters:
\begin{itemize}
\item $h > 0$ is a security distance from the resonance (Section \ref{normal-form1} page \pageref{normal-form1}).
\item $\rho > 0$ is the radius of an observation ball in the unstable case (Theorem \ref{theorem1} page \pageref{theorem1}).
\item $\delta_{\varphi_0} > 0$ controls the size of the support of the spatial truncation $\varphi_0.$ It is introduced in the proof of Lemma \ref{lem:Linfty-bd} page \pageref{def:delta:phi0}. There holds $\varphi_0 \to 1$ as $\delta_{\varphi_0} \to 0.$ The smaller $\delta_{\varphi_0},$ the smaller the errors associated with (spatially) non-localized terms.
\item $x_\star$ in Appendix \ref{app:duh} corresponds to the radius of the support of $\varphi_0$ in the main proof.
\end{itemize}

\medskip
Amplitude parameters:
\begin{itemize}
\item $K_a$ measures the consistency of the WKB approximation (defined in \eqref{0a} page \pageref{0a}).
\item $K$ measures the size of the initial perturbation in \eqref{generic-data}.
\item $K_0$ is the amplification exponent in the main result, introduced in \eqref{def:T0-K0} page \pageref{def:T0-K0}.
\item $K'_0$ and $K''_0,$ introduced respectively in \eqref{def:K0prime} page \pageref{def:K0prime} and \eqref{def:K''0} page \pageref{def:K''0}, are the amplification exponents in Theorems \ref{th-three} and \ref{th:4}.
\item $\eta_1$ controls the size of the ${\mathcal F}L^1$ norm of the solution for times $< T_1.$ It can be made arbitrarily small (in statement of Lemma \ref{lem:Linfty-bd} page \pageref{lem:Linfty-bd}).
\item $\eta_2$ controls the size of the $L^\infty$ norm of the solution for times $< T_2.$ It can be made arbitrarily small (in proof of Proposition \ref{prop:ex-Sob} page \pageref{prop:ex-Sob}).
\end{itemize}




\end{document}